\DeclareSymbolFont{rsfscript}{OMS}{rsfs}{m}{b}
\DeclareSymbolFontAlphabet{\mathrsfs}{rsfscript}
\newenvironment{psmallmatrix}
  {\left(\begin{smallmatrix}}
  {\end{smallmatrix}\right)}
\def\finl{~$\SS{\blacksquare}$}
\def\rfinl{\begin{right}\finl\end{right}}
\newcommand{\mn}{\hspace{0mm}\scalebox{.6}{$-$}\hspace{0.3mm}}
\newcommand{\pn}{\hspace{0mm}\scalebox{.6}{$+$}\hspace{0.3mm}}
\newtheorem{Th}{Theorem}[section]
\newtheorem{Lem}[Th]{Lemma}
\newtheorem{Cor}[Th]{Corollary}
\newtheorem{Prop}[Th]{Proposition}
\newtheorem{Def-Prop}[Th]{Definition-Proposition}
\newtheorem{conjecture}[Th]{Conjecture}
\newtheorem{Obs}[Th]{Observation}
\theoremstyle{definition}
\newtheorem{Def}[Th]{Definition}
\newtheorem{Exa}[Th]{Example}
\newtheorem{Rem}[Th]{Remark}
\numberwithin{equation}{section}
\definecolor{purple}{rgb}{0.8,0.12,0.8}
\definecolor{orange}{rgb}{1.0,0.7,0.0}
\definecolor{pink}{rgb}{1,0.5,0.8}
\definecolor{blackg}{rgb}{0.1,0.25,0.1}
\definecolor{ForestGreen}{cmyk}{0.91,0,0.88,0.42}
\definecolor{Turquoise}{cmyk}{0.85,0,0.20,0}
\definecolor{GreenYellow}{cmyk}{0.15,0,0.69,0} 
\definecolor{Yellow}{cmyk}{0,0,1.,0} 
\definecolor{Goldenrod}{cmyk}{0,0.10,0.84,0} 
\definecolor{Dandelion}{cmyk}{0,0.29,0.84,0} 
\definecolor{Apricot}{cmyk}{0,0.32,0.52,0} 
\definecolor{Peach}{cmyk}{0,0.50,0.70,0} 
\definecolor{Melon}{cmyk}{0,0.46,0.50,0} 
\definecolor{YellowOrange}{cmyk}{0,0.42,1.,0} 
\definecolor{Orange}{cmyk}{0,0.61,0.87,0} 
\definecolor{BurntOrange}{cmyk}{0,0.51,1.,0} 
\definecolor{Bittersweet}{cmyk}{0,0.75,1.,0.24} 
\definecolor{RedOrange}{cmyk}{0,0.77,0.87,0} 
\definecolor{Mahogany}{cmyk}{0,0.85,0.87,0.35} 
\definecolor{Maroon}{cmyk}{0,0.87,0.68,0.32} 
\definecolor{BrickRed}{cmyk}{0,0.89,0.94,0.28} 
\definecolor{Red}{cmyk}{0,1.,1.,0} 
\definecolor{OrangeRed}{cmyk}{0,1.,0.50,0} 
\definecolor{RubineRed}{cmyk}{0,1.,0.13,0} 
\definecolor{WildStrawberry}{cmyk}{0,0.96,0.39,0} 
\definecolor{Salmon}{cmyk}{0,0.53,0.38,0} 
\definecolor{CarnationPink}{cmyk}{0,0.63,0,0} 
\definecolor{Magenta}{cmyk}{0,1.,0,0} 
\definecolor{VioletRed}{cmyk}{0,0.81,0,0} 
\definecolor{Rhodamine}{cmyk}{0,0.82,0,0} 
\definecolor{Mulberry}{cmyk}{0.34,0.90,0,0.02} 
\definecolor{RedViolet}{cmyk}{0.07,0.90,0,0.34} 
\definecolor{Fuchsia}{cmyk}{0.47,0.91,0,0.08} 
\definecolor{Lavender}{cmyk}{0,0.48,0,0} 
\definecolor{Thistle}{cmyk}{0.12,0.59,0,0} 
\definecolor{Orchid}{cmyk}{0.32,0.64,0,0} 
\definecolor{DarkOrchid}{cmyk}{0.40,0.80,0.20,0} 
\definecolor{Purple}{cmyk}{0.45,0.86,0,0} 
\definecolor{Plum}{cmyk}{0.50,1.,0,0} 
\definecolor{Violet}{cmyk}{0.79,0.88,0,0} 
\definecolor{RoyalPurple}{cmyk}{0.75,0.90,0,0} 
\definecolor{BlueViolet}{cmyk}{0.86,0.91,0,0.04} 
\definecolor{Periwinkle}{cmyk}{0.57,0.55,0,0} 
\definecolor{CadetBlue}{cmyk}{0.62,0.57,0.23,0} 
\definecolor{CornflowerBlue}{cmyk}{0.65,0.13,0,0} 
\definecolor{MidnightBlue}{cmyk}{0.98,0.13,0,0.43} 
\definecolor{NavyBlue}{cmyk}{0.94,0.54,0,0} 
\definecolor{RoyalBlue}{cmyk}{1.,0.50,0,0} 
\definecolor{Blue}{cmyk}{1.,1.,0,0} 
\definecolor{Cerulean}{cmyk}{0.94,0.11,0,0} 
\definecolor{Cyan}{cmyk}{1.,0,0,0} 
\definecolor{ProcessBlue}{cmyk}{0.96,0,0,0} 
\definecolor{SkyBlue}{cmyk}{0.62,0,0.12,0} 
\definecolor{Turquoise}{cmyk}{0.85,0,0.20,0} 
\definecolor{TealBlue}{cmyk}{0.86,0,0.34,0.02} 
\definecolor{Aquamarine}{cmyk}{0.82,0,0.30,0} 
\definecolor{BlueGreen}{cmyk}{0.85,0,0.33,0} 
\definecolor{Emerald}{cmyk}{1.,0,0.50,0} 
\definecolor{JungleGreen}{cmyk}{0.99,0,0.52,0} 
\definecolor{SeaGreen}{cmyk}{0.69,0,0.50,0} 
\definecolor{Green}{cmyk}{1.,0,1.,0} 
\definecolor{ForestGreen}{cmyk}{0.91,0,0.88,0.12} 
\definecolor{PineGreen}{cmyk}{0.92,0,0.59,0.25} 
\definecolor{LimeGreen}{cmyk}{0.50,0,1.,0} 
\definecolor{YellowGreen}{cmyk}{0.44,0,0.74,0} 
\definecolor{SpringGreen}{cmyk}{0.26,0,0.76,0} 
\definecolor{OliveGreen}{cmyk}{0.64,0,0.95,0.40} 
\definecolor{RawSienna }{cmyk}{0,0.72,1.,0.45} 
\definecolor{Sepia}{cmyk}{0,0.83,1.,0.70} 
\definecolor{Brown}{cmyk}{0,0.81,1.,0.60} 
\definecolor{Tan}{cmyk}{0.14,0.42,0.56,0} 
\definecolor{Gray}{cmyk}{0,0,0,0.50} 
\definecolor{Black}{cmyk}{0,0,0,1.} 
\definecolor{White}{cmyk}{0,0,0,0} 
\definecolor{pp}{RGB}{215,25,28}
\definecolor{pm}{RGB}{253,174,97}
\definecolor{mp}{RGB}{171,221,164}
\definecolor{mm}{RGB}{43,131,186}
\definecolor{green1}{RGB}{153,216,201}
\definecolor{green2}{RGB}{44,162,95}
\definecolor{blue1}{RGB}{158,202,225}
\definecolor{blue2}{RGB}{49,130,189}
\newcommand{\cB}{\mathcal{B}}
\newcommand{\cD}{\mathcal{D}}
\newcommand{\cE}{\mathcal{E}}
\newcommand{\cH}{\mathcal{H}}
\newcommand{\cL}{\mathcal{L}}
\newcommand{\cLR}{\mathcal{LR}}
\newcommand{\cM}{\mathcal{M}}
\newcommand{\cP}{\mathcal{P}}
\newcommand{\cQ}{\mathcal{Q}}
\newcommand{\cR}{\mathcal{R}}
\newcommand{\cU}{\mathcal{U}}
\newcommand{\cZ}{\mathcal{Z}}
\newcommand{\sH}{\mathsf{H}}
\newcommand{\ba}{\mathbf{a}}
\newcommand{\fc}{\mathfrak{c}}
\newcommand{\sw}{\mathsf{w}}
\newcommand{\sbb}{\mathsf{b}}
\newcommand{\su}{\mathsf{u}}
\newcommand{\sv}{\mathsf{v}}
\newcommand{\sa}{\mathsf{a}}
\renewcommand{\ss}{{\mathsf s}}
\newcommand{\st}{{\mathsf t}}
\newcommand{\se}{\mathsf{e}}
\newcommand{\sq}{\mathsf{q}}
\newcommand{\sh}{\mathsf{h}}
\newcommand{\sT}{\mathsf{T}}
\newcommand{\sB}{\mathsf{B}}
\newcommand{\sP}{\mathsf{P}}
\newcommand{\tsc}{\mathsf{\Lambda}}
\newcommand{\nZ}{\mathbb{Z}}
\newcommand{\nN}{\mathbb{N}}
\newcommand{\nQ}{\mathbb{Q}}
\newcommand{\al}{\alpha}
\newcommand{\si}{\sigma}
\newcommand{\la}{\lambda}
\newcommand{\ga}{\gamma}
\newcommand{\eps}{\varepsilon}
\newcommand{\om}{\omega}
\newcommand{\Ga}{\Gamma}
\newcommand{\De}{\Delta}
\newcommand{\Up}{\Upsilon}
\newcommand{\tG}{\tilde{G}}
\newcommand{\ra}{\rightarrow}
\newcommand{\lra}{\longrightarrow}
\newcommand{\eq}{\Longleftrightarrow}
\newcommand{\lmt}{\longmapsto}
\newcommand{\barr}{\begin{array}{cccccccccc}}
\newcommand{\ear}{\end{array}}
\newcommand{\pmat}{\begin{pmatrix}}
\newcommand{\emat}{\end{pmatrix}}
\DeclareMathOperator{\Tr}{Tr}
\newcommand{\ben}{\begin{enumerate}}
\newcommand{\een}{\end{enumerate}}
\newcommand{\bit}{\begin{itemize}}
\newcommand{\eit}{\end{itemize}}
\newcommand{\ov}{\overline}
\newcommand{\sg}{\langle}
\newcommand{\sd}{\rangle}
\newcommand{\conj}[1]{{\bf P#1}}
\newcommand{\B}[1]{{\bf B#1}}
\newcommand{\quand}{\quad\text{ and }\quad}
\newcommand{\quwhere}{\quad\text{ where }\quad}
\def\SS{\scriptstyle}
\def\finl{~$\SS{\blacksquare}$}
\def\rfinl{\begin{right}\finl\end{right}}
\newcommand{\wt}{\text{wt}}
\newcommand{\irreps}{\mathrm{Irrep}(\overline{\cH})}
\newcommand{\tbu}{{\tiny $\bullet$}}
\newenvironment{maliste}%
{ \begin{list}%
	{\tbu}%
	{\setlength{\labelwidth}{30pt}%
	 \setlength{\leftmargin}{20pt}%
	 \setlength{\itemsep}{.05cm}}}%
{ \end{list} }
\newcommand{\bem}{\begin{maliste}}
\newcommand{\eem}{\end{maliste}}
\definecolor{asse}{HTML}{03AC5F}
\def\sR{\mathsf{R}}
\colorlet{ye}{red!5!yellow!42!}
\colorlet{bl}{blue!70!black!80!}
\colorlet{gr}{green!80!black!70!}
\colorlet{rd}{yellow!10!red!80!}
\colorlet{ora}{orange}
\colorlet{fgr}{ForestGreen}
\definecolor{tur}{cmyk}{0.85,0,0.20,0}
\definecolor{pur}{rgb}{0.8,0.12,0.8}
\newcolumntype{M}[1]{>{\centering\arraybackslash}m{#1}}
\newcolumntype{N}{@{}m{0pt}@{}}
\colorlet{ye}{red!5!yellow!42!}
\colorlet{bl}{blue!70!black!80!}
\colorlet{gr}{green!80!black!70!}
\colorlet{rd}{yellow!10!red!80!}
\colorlet{ora}{orange}
\colorlet{fgr}{ForestGreen}
\definecolor{tur}{cmyk}{0.85,0,0.20,0}
\definecolor{pur}{rgb}{0.8,0.12,0.8}
\newcommand{\tba}{\tilde{\ba}}
\begin{document}

\title{A proof of Lusztig's conjectures for affine type $G_2$ with arbitrary parameters}

\author{J\'er\'emie Guilhot, James Parkinson \thanks{This work was partially supported by the regional project MADACA. Moreover, both authors would like to thank the Universit\'e de Tours for its invited researcher program under which a significant part of this research was undertaken.}}
\date{\today}
\maketitle

\def\finl{~$\SS{\blacksquare}$}
\def\rfinl{\begin{right}\finl\end{right}}

\parindent=0mm

\abstract{We prove Lusztig's conjectures ${\bf P1}$--${\bf P15}$ for the affine Weyl group of type $\tilde{G}_2$ for all choices of parameters. Our approach to compute Lusztig's $\mathbf{a}$-function is based on the notion of a ``balanced system of cell representations'' for the Hecke algebra. We show that for arbitrary Coxeter type the existence of balanced system of cell representations is sufficient to compute the $\mathbf{a}$-function and we explicitly construct such a system in type $\tilde{G}_2$ for arbitrary parameters.  We then investigate the connection between Kazhdan-Lusztig cells and the Plancherel Theorem in type $\tilde{G}_2$, allowing us to prove~${\bf P1}$ and determine the set of Duflo involutions. From there, the proof of the remaining conjectures follows very naturally, essentially from the combinatorics of Weyl characters of types $G_2$ and $A_1$, along with some explicit computations for the finite cells. }

\section*{Introduction}

The theory of Kazhdan-Lusztig cells plays a fundamental role in the representation theory of Coxeter groups and Hecke algebras. In their celebrated paper~\cite{KL1} Kazhdan and Lusztig introduced the theory in the equal parameter case, and in \cite{Lus1p} Lusztig generalised the construction to the case of arbitrary parameters. A very specific feature in the equal parameter case is the geometric interpretation of Kazhdan-Lusztig theory, which implies certain ``positivity properties'' (such as the positivity of the structure constants with respect to the Kazhdan-Lusztig basis). This was proved in the finite and affine cases by Kazhdan and Lusztig in~\cite{KL2}, and the case of arbitrary Coxeter groups was settled only very recently by Elias and Williamson in~\cite{EW:14}. However, simple examples show that these positivity properties no longer hold for unequal parameters, hence the need to develop new methods to deal with the general case. 
\medskip

A major step in this direction was achieved by Lusztig in his book on Hecke algebras with unequal parameters \cite[Chapter~14]{bible} where he introduced 15 conjectures \conj{1}--\conj{15} which capture essential properties of cells for all choices of parameters. In the case of equal parameters these conjectures can be proved using the above mentioned geometric interpretation. In the case of more general parameters {\bf P1}--{\bf P15} are only known to hold in the following situations: 
\bem
\item the \textit{quasisplit} case where a geometric interpretation is also available~\cite[Chapter~16]{bible};
\item finite dihedral type~\cite{Geck:11} and infinite dihedral type~\cite[Chapter~17]{bible} for arbitrary parameters;
\item universal Coxeter groups for arbitrary parameters~\cite{SY:15};
\item type $B_n$ in the ``asymptotic'' case \cite{B-I,Geck:11};
\item $F_4$ for arbitrary parameters \cite{Geck:11}.
\eem
Note that the only infinite Coxeter groups for which conjectures \conj{1}--\conj{15} are known to hold for arbitrary parameters are the universal Coxeter groups (including the infinite dihedral group), where the proof proceeds by explicit computations. In this paper we prove Lusztig's conjectures in type~$\tilde{G}_2$ for arbitrary parameters. This provides the very first example of an affine Coxeter group of rank greater than~$1$ in which the conjectures have been proved. Furthermore, our methods provide a theoretical framework that one may hope to apply to other types of affine Coxeter groups. For instance, the approach outlined in this paper can be applied to the~$\tilde{C}_2$ case, however the analysis is rather involved in this $3$ parameter setting and so we provide the details in~\cite{GP:18}.
\medskip

One of the main challenges in proving Lusztig's conjectures is to compute Lusztig's $\ba$-function since, in principle, it requires us to have information on all the structure constants with respect to the Kazhdan-Lusztig basis. Our approach to this problem is based on the notion of a \textit{balanced system of cell representations} inspired by the work of Geck~\cite{Geck:11} in the finite case. This notion can be defined for an arbitrary Coxeter group~$(W,S)$ with weight function $L:W\to\mathbb{N}$ and associated multi-parameter Hecke algebra $\cH$ defined over $\nZ[\sq,\sq^{-1}]$. Let $\tsc$ be the set of two-sided cells of $W$ with respect to~$L$, and let $\Ga\in \tsc$. We say that a representation $\pi$ is $\Ga$-balanced if it admits a basis such that (1) the maximal degree of the coefficients that appear in the matrix $\pi(T_w)$ is bounded by a constant  $\ba_{\pi}$ (here  $T_w$ denotes the standard basis of~$\cH$) and (2) this bound is attained if and only if $w\in \Ga$. A balanced systems of cell representations is a family~$(\pi_\Ga)_{\Ga\in \tsc}$ of~$\Ga$-balanced representations that satisfy some extra axioms (see Section~\ref{sec:def-balanced}). We show that the existence of such a system is sufficient to compute Lusztig's $\ba$-function, and as a byproduct we obtain an explicit construction of Lusztig's asymptotic algebra~$\mathcal{J}$ \cite[Chapter 18]{bible}.

\medskip

In the case of $\tG_2$, we construct a balanced system of cell representations  for each parameter regime. Our starting point is the partition of $W$ into Kazhdan-Lusztig cells that was proved by the first author in~\cite{guilhot4}. It turns out that the representations associated to finite cells naturally give rise to balanced representations and so most of our work is concerned with the infinite cells. In type $\tilde{G}_2$ there are 3 such cells for each choice of parameters, the lowest two-sided cell $\Ga_0$ and two other cells $\Ga_1$ and $\Ga_2$. To each of these cells we associate a natural finite dimensional representation $\pi_i$ admitting an elegant combinatorial description in terms of alcove walks, which allows us to establish the balancedness of these representations.

\medskip

Next we investigate connections between Kazhdan-Lusztig cells and the Plancherel Theorem, using the explicit formulation of the Plancherel Theorem in type $\tilde{G}_2$ obtained by the second author in~\cite{Par:14}. In particular, we show that in type $\tilde{G}_2$ there is a natural correspondence, in each parameter range, between two-sided cells appearing in the cell decomposition and the representations appearing in the Plancherel Theorem (these are the \textit{tempered} representations of~$\cH$). Moreover we define a \textit{$\sq$-valuation} on the Plancherel measure, and show that in type $\tilde{G}_2$ the $\sq$-valuation of the mass of a tempered representation is twice the value of Lusztig's $\ba$-function on the associated cell. This observation allows us to introduce an \textit{asymptotic Plancherel measure}, and we use this measure to prove \conj{1} and determine the set $\cD$ of Duflo involutions. Moreover we show that the Plancherel theorem ``descends'' to give an inner product on Lusztig's asymptotic algebra~$\mathcal{J}$.

\medskip

Once we have established the existence of a balanced system of cell representations for $\tG_2$ for each choice of parameters, proved \conj{1}, and determined the set $\cD$, conjectures \conj{2}--\conj{14} follow very naturally, essentially from combinatorics of Weyl characters of types $G_2$ and $A_1$ and explicit computations for the finite cells. Conjecture \conj{15} is slightly different in nature and follows from the generalised induction process \cite{guilhot3}.

\medskip

We note that in \cite{Xie:15}, Xie uses a decomposition formula for the Kazhdan-Lusztig basis to reduce \conj{1}--\conj{15} to proving \conj{8} and determining Lusztig's $\ba$-function. However we note that indeed the main work of this paper is precisely the determination of Lusztig's $\ba$-function, which uses our balanced system of cell representations. 

\medskip

We conclude this introduction with an outline of the structure of this paper. In Section~\ref{sec:1} we recall the basics of Kazhdan-Lusztig theory. In Section~\ref{sec:def-balanced} we introduce the axioms of a balanced system of representations, and show in Theorem~\ref{thm:afn} that given these axioms Lusztig's $\ba$-function can be computed. Section~\ref{sec:2} provides background on affine Weyl groups and the affine Hecke algebra. In Section~\ref{sec:KL-cells-G2} we recall the partition of $\tilde{G}_2$ into cells for all choices of parameters, and discuss cell factorisation properties for the infinite cells. In Section~\ref{sec:balanced} we prove that each finite cell admits a balanced cell representation. Some of the results in this section requires explicit computations. These have been carried out using {\sf gap3} \cite{GAP} and the package {\sf CHEVIE} \cite{chevie2,chevie}.
\medskip

Section~\ref{sec:4} deals with the case of the lowest two-sided cell. We note that this case has already been investigated by Xie in~\cite{Xie:17}, however we include our analysis here since it illustrates very clearly in this simpler case the combinatorial methodology that we will employ for the remaining more complicated infinite cells. Section~\ref{sec:5} deals with these remaining cells. We introduce a model based on alcove walks to study the representations associated to these cells. This allows us to give combinatorial proofs of bounds for matrix coefficients and to compute leading matrices for these representations. The analysis of this section is involved due in part to interesting complications arising in the case of non-generic parameters. 

\medskip

In Section~\ref{sec:plancherel} we analyse the connections between the Plancherel Theorem and the cell decomposition. We use the Plancherel Theorem to prove~\conj{1} and determine the set of Duflo involution $\cD$. We also define the ``asymptotic Plancherel measure'', and show that it induces an inner product on Lusztig's asymptotic algebra in type $\tilde{G}_2$. Finally, in Section~\ref{sec:lusztig} we provide our proof of the remaining conjectures \conj{2}--\conj{15}.

\medskip


\section{Kazhdan-Lusztig theory}\label{sec:1}

In this section we recall the setup of Kazhdan-Lusztig theory, including the Kazhdan-Lusztig basis, Kazhdan-Lusztig cells, and the Lusztig's conjectures ${\bf P1}$--${\bf P15}$. In this section $(W,S)$ denotes an arbitrary Coxeter system (with $|S|<\infty$) with length function $\ell:W\to\mathbb{N}=\{0,1,2,\ldots\}$. For $I\subseteq S$ let $W_I$ be the standard parabolic subgroup generated by~$I$. Let $L:W\to\mathbb{N}$ be a \textit{positive weight function} on~$W$. Thus $L(w)>0$ for all $w\in W$ different from the identity, and $L(ww')=L(w)+L(w')$ whenever $\ell(ww')=\ell(w)+\ell(w')$. Let $\sq$ be an indeterminate and let $\sR=\nZ[\sq,\sq^{-1}]$ be the ring of Laurent polynomial in~$\sq$.

\subsection{The Kazhdan-Lusztig basis}\label{sec:1.1}

 The \textit{Hecke algebra} $\cH$ associated to $(W,S,L)$ is the algebra over $\sR$ with basis $\{T_w\mid w\in W\}$ and multiplication given by 
$$T_wT_s=\begin{cases}
T_{ws} &\mbox{if $\ell(ws)=\ell(w)+1$}\\
T_{ws}+(\sq^{L(s)}\mn \sq^{-L(s)})T_{w}  &\mbox{if $\ell(ws)=\ell(w)-1$}.
\end{cases}$$
The basis $\{T_w\mid w\in W\}$ is called the \textit{standard basis} of~$\cH$. We set $\sq_s=\sq^{L(s)}$ for $s\in S$.

\medskip
The involution $\bar{\ }$ on $\sR$ which sends $\sq$ to $\sq^{-1}$ can be extended to an involution on $\cH$ by setting 
$$\ov{\sum_{w\in W} a_w T_w}=\sum_{w\in W} \ov{a_w}\, T_{w^{-1}}^{-1}.$$
In \cite{KL1}, Kazhdan and Lusztig proved that there exists a unique basis $\{C_w\mid w\in W\}$ of $\cH$ such that, for all $w\in W$,
$$
\ov{C_w}=C_w\quad\text{and}\quad C_w=T_w+\sum_{y<w} P_{y,w}T_y\quad\text{where $P_{y,w}\in \sq^{-1}\nZ[\sq^{-1}]$}. 
$$
This basis is called the \textit{Kazhdan-Lusztig basis} of~$\cH$ (the KL basis for short). The polynomials $P_{y,w}$ are called the \textit{Kazhdan-Lusztig polynomials}, and to complete the definition we set $P_{y,w}=0$ whenever $y\not<w$ (here $\leq$ denotes Bruhat order on~$W$). We note that the Kazhdan-Lusztig polynomials, and hence the elements $C_w$, depend on the the weight function $L$. For example, in the dihedral group $I_2(2m)$ with $m\geq 2$, $L(s_1)=a$, and $L(s_2)=b$, we have 
$$
P_{s_1,s_1s_2s_1}=
\begin{cases}
\sq^{-(b-a)}+\sq^{-a-b}&\text{if $a<b$}\\
\sq^{-2a}&\text{if $a=b$}\\
\sq^{-(a+b)}-\sq^{-(a-b)}&\text{if $a>b$}.
\end{cases}
$$ 
In particular, this example shows that the positivity properties enjoyed by $P_{y,z}$ in the equal parameter case (that is, $L=\ell$) do not transfer across to the general case. 
\medskip

Let $x,y\in W$. We denote by $h_{x,y,z}\in \sR$ the structure constants associated to the Kazhdan-Lusztig basis:
$$
C_{x}C_y=\sum_{z\in W} h_{x,y,z}C_z.
$$

\begin{Def}[{\cite[Chapter 13]{bible}}]
The \textit{Lusztig $\ba$-function} is the function $\ba:W\to\mathbb{N}$ defined by
$$\ba(z):=\min\{n\in \nN\mid \sq^{-n}h_{x,y,z}\in \nZ[\sq^{-1}]\text{ for all $x,y\in W$}\}.$$
\end{Def}

When $W$ is infinite it is, in general, unknown whether the $\sa$-function is well-defined. However in the case of affine Weyl groups it is known that $\ba$ is well-defined, and that $\ba(z)\leq L(\sw_0)$ where $\sw_0$ is the longest element of an underlying finite Weyl group $W_0$ (see \cite{bible}). The $\ba$-function is a very important tool in the representation theory of Hecke algebras, and plays a crucial role in the work of Lusztig on the unipotent characters of reductive groups.

\begin{Def}
For $x,y,z\in W$ let $\ga_{x,y,z^{-1}}$ denote the constant term of $\sq^{-\ba(z)}h_{x,y,z}$.
\end{Def}

The coefficients $\gamma_{x,y,z^{-1}}$ are the structure constants of the \textit{asymptotic algebra} $\mathcal{J}$ introduced by Lusztig in \cite[Chapter~18]{bible}.

\subsection{Kazhdan-Lusztig cells and associated representations}\label{sec:1.2}

Define preorders $\leq_{\cL},\leq_{\cR},\leq_{\cLR}$ on $W$ extending the following by transitivity:
\begin{align*}
x&\leq_{\cL}y&&\Longleftrightarrow&&\text{there exists $h\in\cH$ such that $C_x$ appears in the decomposition in the KL basis of $hC_y$}\\
x&\leq_{\cR}y&&\Longleftrightarrow&&\text{there exists $h\in\cH$ such that $C_x$ appears in the decomposition in the KL basis of $C_yh$}\\
x&\leq_{\cLR}y&&\Longleftrightarrow&&\text{there exists $h,h'\in\cH$ such that $C_x$ appears in the decomposition in the KL basis of $hC_yh'$.}
\end{align*}
We associate to these preorders equivalence relations $\sim_{\cL}$, $\sim_{\cR}$, and $\sim_{\cLR}$ by setting (for $*\in \{\cL,\cR,\cLR\}$)
$$\text{$x\sim_{*} y$ if and only if $x\leq_{*} y$ and $y\leq_{*} x$}.$$
The equivalence classes of $\sim_{\cL}$, $\sim_{\cR}$, and $\sim_{\cLR}$ are called \textit{left cells}, \textit{right cells}, and \textit{two-sided cells}.  
\medskip

We denote by $\tsc$ the set of all two-sided cells (note that $\tsc$ depends on the choice of parameters). Given any cell $\Ga$ (left, right, or two-sided) we set
$$\Ga_{\leq_\ast}:=\{w\in W\mid\text{there exists $x\in \Ga$} \text{ such that } w\leq_{\ast} x\}$$
and we define $\Ga_{\geq_\ast}$, $\Ga_{>_\ast}$ and $\Ga_{<_\ast}$ similarly.

\begin{Exa}
Table~\ref{dihedral-cell} records the decomposition of the dihedral group $W=I_2(m)=\langle s_1,s_2\rangle$ into two-sided cells for all choices of weight function $L(s_1)=a$ and $L(s_2)=b$ (up to duality). Lusztig's conjectures are known to hold for dihedral groups. In particular the $\ba$-function is constant on two-sided cells, and we list these values below. This example turns out to be particularly useful -- for all affine rank~$3$ (dimension~$2$) Weyl groups every two-sided cell intersects a finite parabolic subgroup (hence a dihedral group), and so assuming the Lusztig conjectures $\mathbf{P4}$ and $\mathbf{P12}$ the table below gives conjectural values of the $\ba$-function on all cells. These `conjectures' become `theorems' for type $\tilde{G}_2$ due to the results of this paper.
\vspace{-0.3cm}

\begin{table}[H]
$$
\renewcommand{\arraystretch}{1.2}
\begin{array}{|l|l|l|}
\hline
W&\text{two-sided cells}&\text{values of the }\ba\text{-function}\\
\hline\hline
I_2(2),\,\,a\geq b&\{e\},\,\{s_1\},\,\{s_2\},\,\{\sw_0\}&0,\,a,\,b,\,a+b\\
\hline
I_2(m),\,\,a=b,\,\,m\geq 2&\{e\},\,W\backslash\{e,\sw_0\},\,\{\sw_0\}&0,\,a,\,ma\\
\hline
I_2(2m),\,\,a>b,\,\,m\geq 2&\{e\},\,W\backslash\{e,s_2,\sw_0s_2,\sw_0\},\,\{s_2\},\,\{\sw_0s_2\},\,\{\sw_0\}&0,\,a,\,b,\,ma-(m-1)b,\,ma+mb\\
\hline
I_2(\infty),\,\,a=b&\{e\},\,W\backslash\{e\}&0,\,a\\
\hline
I_2(\infty),\,\,a>b&\{e\},\,W\backslash\{e,s_2\},\,\{s_2\}&0,\,a,\,b\\
\hline
\end{array}
$$
\caption{Cells and the $\ba$-function for dihedral groups} 
\label{dihedral-cell}
\end{table}
\end{Exa}

\medskip

To each right cell $\Up$ of $W$ there is a natural right $\cH$-module $\cH_{\Up}$ constructed as follows. Let $\cH_{\leq \Up}$ and $\cH_{<\Up}$ be the $\sR$-modules
\begin{align*}
\cH_{\leq \Up}&=\sg C_{x}\mid x\in \Up_{\leq_{\cR}}\sd\quand \cH_{<\Up}=\sg C_{x}\mid  x\in \Up_{<_{\cR}}\sd.
\end{align*}
Then $\cH_{\leq\Up}$ and $\cH_{<\Up}$ are naturally right $\cH$-modules. For $\cH_{\leq\Up}$ this is immediate from the definition of $\leq_{\cR}$. For $\cH_{<\Up}$ we note that if $x\leq_{\cR}y$ with $y\in\Up$ and if $x\notin\Up$ then, for $h\in\cH$,
$$
C_xh=\sum_{z\leq_{\cR}x}a_zC_z.
$$
If $z\in\Gamma$ then necessarily $a_z=0$ (for otherwise $y\sim_{\cR}z\leq_{\cR}x$ and so $y\leq_{\cR}x$ and $x\leq_{\cR}y$ giving $x\in\Gamma$). Thus $\cH_{<\Gamma}$ is a right $\cH$-module. Hence the quotient
$$\cH_{\Up}:=\cH_{\leq \Up}\slash \cH_{<\Up}$$
is naturally a right $\cH$-module with basis $\{\ov{C}_w\mid w\in \Up\}$ where $\ov{C}_w$ is the class of $C_w$ in $\cH_{\Up}$.

\subsection{Lusztig conjectures}\label{sec:1.3}

Define $\De:W\to \nN$ and $n_z\in \mathbb{Z}\backslash\{0\}$ by the relation
$$P_{e,z}=n_z\sq^{-\De(z)}+\text{ strictly smaller powers of $\sq$.}$$
This is well defined because $P_{x,y}\in \sq^{-1}\nZ[\sq^{-1}]$ for all $x,y\in W$. Let
$$\cD=\{w\in W\mid \De(w)=\ba(w)\}.$$
The elements of $\cD$ are called \textit{Duflo elements} (or, somewhat prematurely, \textit{Duflo involutions}; see \conj{6} below). 
\medskip

In \cite[Chapter 13]{bible},  Lusztig has formulated  the following 15 conjectures, now known as ${\bf P1}$--${\bf P15}$.
\begin{itemize}
\item[\bf P1.] For any $z\in W$ we have $\ba(z)\leq \Delta(z)$.
\item[\bf P2.] If $d \in \cD$ and $x,y\in W$ satisfy $\gamma_{x,y,d}\neq 0$,
then $y=x^{-1}$.
\item[\bf P3.] If $y\in W$ then there exists a unique $d\in \cD$ such that
$\gamma_{y,y^{-1},d}\neq 0$.
\item[\bf P4.] If $z'\leq_{\cLR} z$ then $\ba(z')\geq \ba(z)$. In particular the $\ba$-function is constant on two-sided cells.
\item[\bf P5.] If $d\in \cD$, $y\in W$, and $\gamma_{y,y^{-1},d}\neq 0$, then
$\gamma_{y,y^{-1},d}=n_d=\pm 1$.
\item[\bf P6.] If $d\in \cD$ then $d^2=1$.
\item[\bf P7.] For any $x,y,z\in W$, we have $\gamma_{x,y,z}=\gamma_{y,z,x}$.
\item[\bf P8.] Let $x,y,z\in W$ be such that $\gamma_{x,y,z}\neq 0$. Then
$y\sim_{\cR} x^{-1}$, $z \sim_{\cR} y^{-1}$, and $x\sim_{\cR} z^{-1}$.
\item[\bf P9.] If $z'\leq_{\cL} z$ and $\ba(z')=\ba(z)$, then $z'\sim_{\cL}z$.
\item[\bf P10.] If $z'\leq_{\cR} z$ and $\ba(z')=\ba(z)$, then $z'\sim_{\cR}z$.
\item[\bf P11.] If $z'\leq_{\cLR} z$ and $\ba(z')=\ba(z)$, then
$z'\sim_{\cLR}z$.
\item[\bf P12.] If $I\subseteq S$ then the $\ba$-function of $W_I$ is the restriction of the $\ba$-function of $W$.
\item[\bf P13.] Each right cell $\Ga$ of $W$ contains a unique element
$d\in \cD$. We have $\gamma_{x,x^{-1},d}\neq 0$ for all $x\in \Ga$.
\item[\bf P14.] For each $z\in W$ we have $z \sim_{\cLR} z^{-1}$.
\item[\bf P15.] If $x,x',y,w\in W$ are such that $\ba(w)=\ba(y)$ then 
$$\sum_{y'\in W} h_{w,x',y'}\otimes h_{x,y',y}=\sum_{y'\in W} h_{y',x',y}\otimes h_{x,w,y'} \text{ in } \sR\otimes_{\nZ} \sR.$$
\end{itemize}

As noted in the introduction, these conjectures have been established in the following cases: (1) when $W$ is a Weyl group or an affine Weyl group with equal parameters (see \cite{bible} and the updated version available on $\mathsf{ArXiv}$), (2) in the ``quasisplit case'' \cite[Chapter~16]{bible} where $W$ is obtained by ``twisting'' a larger Coxeter group~$\tilde{W}$, and $L$ is the restriction of the length function on $\tilde{W}$ to $W$, (3) when $W$ is a dihedral group (finite or infinite) for all choices of parameters (see \cite{Geck:11,bible}), (4) when $W=B_n$ in the ``asymptotic case'' (see~\cite{B-I,Geck:11}),  (5) when $W=F_4$ for any choices of parameters (see \cite{Geck:11}). We note that in case (1) and (2) the proof relies on deep results including the positivity of the Kazhdan-Lusztig polynomials in equal parameters. This approach cannot work for the general case, since we have seen that the positivity no longer holds in this case.

\medskip

In this paper we prove all conjectures $\conj{1}$--$\conj{15}$ for $\tilde{G}_2$ for all choices of parameters. Our approach extends naturally to all rank $2$ affine Weyl groups. The analysis for the three parameter case $\tilde{C}_2$ becomes rather involved due to the large number of distinct regimes of cell decompositions, and therefore we will provide the details elsewhere.


\section{Systems of balanced representations and Lusztig $\ba$-function}
\label{sec:def-balanced}

In this section we define a \textit{balanced system of cell representations}, inspired by the work of Geck~\cite{Geck:02,Geck:11} in the finite case. We show that the existence of such a system, plus one additional axiom, is sufficient for the computation of Lusztig's $\ba$-function. This gives us our primary strategy for resolving Lusztig's conjectures in type $\tilde{G}_2$. 

\subsection{Balanced system of cell representations}

In this section we introduce the central notion of the paper, inspired by the work of Geck~\cite{Geck:02,Geck:11} in the finite case.
\medskip

Recall that $\sR=\mathbb{Z}[\sq,\sq^{-1}]$. If $\mathsf{S}$ is an $\sR$-polynomial ring (including the possibility $\mathsf{S}=\sR$), we write $\mathsf{S}^{\leq 0}$ and $\mathsf{S}^0$ for the associated $\mathbb{Z}[\sq^{-1}]$-polynomial and $\mathbb{Z}$-polynomial subrings of $\mathsf{S}$, respectively. In particular $\sR^{\leq 0}=\mathbb{Z}[\sq^{-1}]$ and $\sR^0=\mathbb{Z}$. Let 
$$
\text{sp}_{|_{\sq^{-1}=0}}:\mathsf{S}^{\leq 0}\to \mathsf{S}^0\quad\text{denote the specialisation at $\sq^{-1}=0$}.
$$ 

By a \textit{matrix representation} of $\cH$ we shall mean a triple $(\pi,\cM,\sB)$ where $\cM$ is a right $\cH$-module over an $\sR$-polynomial ring $\mathsf{S}$, and $\sB$ is a basis of~$\cM$. We write (for $h\in\cH$ and $u,v\in\sB$)
$$
\pi(h)\quad\text{and}\quad [\pi(h)]_{u,v}
$$
for the matrix of $\pi(h)$ with respect to the basis $\sB$, and the $(u,v)^{th}$ entry of the matrix $\pi(h)$, respectively. We call a matrix representation $(\pi,\cM,\sB)$ \textit{bounded} if there exists an integer $n\geq 0$ such that 
$$
\sq^{-n}[\pi(C_w)]_{u,v}\in\mathsf{S}^{\leq 0}\quad\text{ for all $u,v\in \sB$ and all $w\in W$}.
$$
In this case we call the integer
$$
\ba_\pi:=\min\{n\in\mathbb{N}\mid \sq^{-n}[\pi(C_w)]_{u,v}\in\mathsf{S}^{\leq 0}\text{ for all $u,v\in \sB$ and all $w\in W$}\}
$$
the \textit{bound} of the matrix representation and we define  the \textit{leading matrices} by
\begin{center}
$\displaystyle{\fc_{\pi_\Ga,w}=\text{sp}_{|_{\sq^{-1}=0}}\left(\sq^{-\ba_{\pi}}\pi(C_w)\right)}\quad\text{for $w\in W$}.$
\end{center}

\begin{Def}\label{def:balanced}
We say that $\cH$ admits a \textit{balanced system of cell representations} if for each two-sided cell $\Gamma\in\tsc$ there exists a matrix representation $(\pi_{\Gamma},\cM_{\Gamma},\sB_{\Ga})$ defined over an $\sR$-polynomial ring $\sR_\Ga$ (where we could have $\sR_\Ga=\sR$) such that the following properties hold:
\begin{enumerate}
\item[\B{1}\textbf{.}] If $w\notin\Gamma_{\geq_{\mathcal{LR}}}$ then $\pi_{\Gamma}(C_w)=0$. 
\item[\B{2}\textbf{.}] The matrix representation $(\pi_{\Gamma},\cM_{\Gamma},\sB_{\Ga})$ is bounded by $\ba_{\pi_\Ga}$. 
\item[\B{3}\textbf{.}] We have $\fc_{\pi_{\Ga},w}\neq 0$ if and only if $w\in \Ga$. 
\item[\B{4}\textbf{.}] The leading matrices $\fc_{\pi_\Ga,w}$ ($w\in \Ga$) are free over $\nZ$.
\item[\B{5}\textbf{.}] If $\Gamma'\leq_{\cLR}\Gamma$ then $\ba_{\pi_{\Ga'}}\geq \ba_{\pi_{\Gamma}}$. 
\end{enumerate}
\end{Def}

%

The natural numbers $(\ba_{\pi_{\Gamma}})_{\Ga\in \tsc}$ are called the \textit{bounds} of the balanced system of cell representations. The main approach of this paper hinges on the construction of a balanced system of cell representations for the Hecke algebra of type $\tilde{G}_2$ in each parameter regime. 
\medskip

Note that \B{1} above does not depend on the choice of basis. A representation with property \B{1} is called a \textit{cell representation} for the two-sided cell $\Gamma$. It is clear that the representations associated to cells that we introduced in Section \ref{sec:1.2} are cell representations. To see this, let $\Upsilon$ be a right cell, and let $\cH_{\Upsilon}$ be as in Section~\ref{sec:1.2}. If $C_w$ acts nontrivially on $\cH_{\Upsilon}$ then there exist $u,v\in \Upsilon$ such that $C_u\cdot C_w=\sum_{z}h_{u,w,z}C_z$ with $h_{u,w,v}\neq 0$. Thus $v\leq_{\cLR} w$.  

\medskip

We say that a representation $(\pi,\cM)$ is \textit{$\Gamma$-balanced} for the two-sided cell $\Gamma$ if $\cM$ admits a basis such that \B{2}, \B{3} and \B{4} hold. We note that in  \B{2} and \B{3}  it is equivalent to replace $C_w$ by $T_w$, because $C_w=T_w+\sum_{v<w}P_{v,w}T_v$ with $P_{v,w}\in\sq^{-1}\mathbb{Z}[\sq^{-1}]$. However in \B{1} one cannot replace $C_w$ by $T_w$.

\begin{Rem}\label{rem:bounds}
Let $\mathsf{S}$ be an $\sR$-polynomial ring. We define the \textit{degree} of an element $f\in\mathsf{S}$ to be the greatest integer $n\in\mathbb{Z}$ such that $\sq^n$ appears in $f$ with nonzero coefficient (with $\deg(0)=-\infty$). Equivalently, $\deg(f)$ is the greatest integer $n\in\mathbb{Z}$ such that $\sq^{-n}f\in\mathsf{S}^{\leq 0}$ and $\text{sp}_{|_{\sq^{-1}=0}}(\sq^{-n}f)\neq 0$. For example, in the case $\mathsf{S}=\mathsf{R}$ we have $\deg(3\sq^{-1}+\sq^{-2})=-1$ and $\deg(3\sq^{-1}+\sq^2)=2$. Then axioms~$\B{2}$ and~$\B{3}$ can be rephrased as: There exists an integer $\ba_{\pi_{\Ga}}\geq 0$ such that for all $w\in W$,
$$
\max\{\deg[\pi_{\Gamma}(C_w)]_{u,v}\mid u,v\in\sB_{\Ga}\}\leq \ba_{\pi_{\Gamma}}\quad\text{with equality if and only if $w\in \Ga$}.
$$
\end{Rem}

\begin{Exa} 
\label{exa:balanced-one-dim}
Let $W$ be an affine Weyl group of type $\tilde{G}_2$ with diagram and weight function defined by \begin{center}
\begin{picture}(150,32)
\put( 40, 10){\circle{10}}
\put( 44,  7){\line(1,0){33}}
\put( 45,  10){\line(1,0){30.5}}
\put( 44, 13){\line(1,0){33}}
\put( 81, 10){\circle{10}}
\put( 86, 10){\line(1,0){29}}
\put(120, 10){\circle{10}}
\put( 38, 20){$a$}
\put( 78, 20){$b$}
\put(118, 20){$b$}
\put( 38, -3){$s_{1}$}
\put( 78, -3){$s_{2}$}
\put(118,-3){$s_{0}$}
\end{picture}
\end{center}
where $a,b$ are positive integers. Let $I\subseteq S$ be a union of conjugacy classes in $S$. We define the one dimensional representation $\rho_I$ of $W$ by
$$
\rho_{I}(T_s)=\begin{cases}
\sq_s & \mbox{ if $s\in I$},\\
-\sq^{-1}_s &\mbox{otherwise.}
\end{cases}
$$
With this notation $\rho_{\emptyset}$ is the \textit{sign representation} and $\rho_{S}$ is the \textit{trivial representation}. It is easy to see that (1) $\rho_{\emptyset}$ is $\Ga_e$-balanced where $\Ga_e$ is the two-sided cell that contains precisely the identity and (2) 
$\rho_{S}$ cannot be balanced for any two-sided cell $\Ga$. 

\medskip

Consider the representation $\rho_{I}$ where $I=\{s_0,s_2\}$ .  We will see in Section \ref{sec:KL-cells-G2} that $\Ga_5:=\{s_0s_2s_0\}$ is a two-sided cell in~$W$ when $a/b>2$. For  $w\in W$ we have $\rho_I(w)=\sq^{b\ell_2(w)}(-\sq)^{-a\ell_1(w)}$ where $\ell_2(w)$ is the number of $s_2$ and $s_0$ generators in any reduced expression of $w$ and $\ell_1(w)$ is the number of $s_1$ generators. 
Saying that the representation $\rho_I$ is $\Ga_5$-balanced for $a/b>2$ means that
$b\ell_{2}(w)-a\ell_1(w)\leq 3b$ for all $w$ and that there is equality if and only if $w=s_0s_2s_0$.
This can be done by studying reduced expressions in $W$, and we will see another proof using Kazhdan-Lusztig theory in Section~\ref{sec:balanced}.

\medskip

Proceeding as above, one can show that the representation $\rho_{I}$ where $I=\{s_1\}$ is $\Ga_7$-balanced  whenever $a/b<1$.  Once again we will see in Section \ref{sec:KL-cells-G2} that $\Ga_7:=\{s_1\}$ is a two-sided cell in $W$  for this parameter range.

\end{Exa}

\subsection{Computing the $\ba$-function}\label{sec:afunction}

In this section we show that axioms $\B{1}$--$\B{5}$, along with an additional axiom $\B{4}'$ introduced below, are sufficient to show that Lusztig's $\ba$-function is constant on two-sided cells, and moreover we are able to compute the value of the $\ba$-function in terms of the bounds $(\ba_{\pi_{\Gamma}})_{\Gamma\in\tsc}$ from $\B{2}$. 
\medskip

Let $(\pi_\Ga)_{\Ga\in \tsc}$ be a balanced system of cell representations for $\cH$ with bounds $\ba_{\pi_\Ga}$ for all $\Ga\in \tsc$. We have

\begin{align}\label{eq:h}
C_xC_y=\sum_{\Ga\in\tsc }\sum_{z\in\Gamma}h_{x,y,z}C_z.
\end{align}

\begin{Prop} 
\label{prop:bounda}
Let $x,y\in W$ and $w\in\Gamma$ where $\Ga\in \tsc$. We have $\deg(h_{x,y,w})\leq \ba_{\pi_\Ga}$.
\end{Prop}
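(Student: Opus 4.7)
Proof plan. The plan is to apply $\pi_\Ga$ to both sides of the identity (\ref{eq:h}), compare the leading degrees of the resulting matrices, and use the linear independence axiom $\B{4}$ to extract a contradiction whenever $\deg h_{x,y,w}$ exceeds $\ba_{\pi_\Ga}$ for some $w\in\Ga$. I proceed by induction on the partial order $\leq_{\cLR}$ on two-sided cells, assuming the proposition has already been established for every $\Ga'$ with $\Ga'>_{\cLR}\Ga$; this is well-founded since in the settings considered in this paper there are only finitely many two-sided cells above~$\Ga$.

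Applying $\pi_\Ga$ to (\ref{eq:h}) and using $\B{1}$ to discard the terms outside $\Ga_{\geq_{\cLR}}$, the right-hand side splits as a sum over $z\in\Ga$ plus a sum over $z\in\Ga_{>_{\cLR}}$. For $z$ in the second piece, let $\Ga'$ denote its two-sided cell; the inductive hypothesis gives $\deg h_{x,y,z}\leq\ba_{\pi_{\Ga'}}$, and $\B{5}$ applied to $\Ga\leq_{\cLR}\Ga'$ yields $\ba_{\pi_{\Ga'}}\leq\ba_{\pi_\Ga}$, so $\deg h_{x,y,z}\leq\ba_{\pi_\Ga}$. Now suppose for a contradiction that $M:=\max_{z\in\Ga}\deg h_{x,y,z}>\ba_{\pi_\Ga}$. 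Multiplying
\[
\pi_\Ga(C_x)\pi_\Ga(C_y) \;=\; \sum_{z\in \Ga_{\geq_{\cLR}}} h_{x,y,z}\,\pi_\Ga(C_z)
\]
by $\sq^{-M-\ba_{\pi_\Ga}}$, every matrix now lies in $\mathsf{S}^{\leq 0}$ thanks to $\B{2}$ and the degree bound just obtained for the $\Ga_{>_{\cLR}}$ part.

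Specialising at $\sq^{-1}=0$, the product side rewrites as $\sq^{\ba_{\pi_\Ga}-M}$ times the product of the bounded matrices $\sq^{-\ba_{\pi_\Ga}}\pi_\Ga(C_x)$ and $\sq^{-\ba_{\pi_\Ga}}\pi_\Ga(C_y)$, so its specialisation is zero because $\sq^{\ba_{\pi_\Ga}-M}$ has strictly negative degree. On the sum side, the terms with $z\in\Ga_{>_{\cLR}}$ also specialise to zero, either because $\sq^{-M}h_{x,y,z}$ has strictly negative degree, or (equivalently) because $\fc_{\pi_\Ga,z}=0$ by $\B{3}$. What remains is $\sum_{z\in\Ga} n_z\,\fc_{\pi_\Ga,z}$, where $n_z$ is the specialisation of $\sq^{-M}h_{x,y,z}$, a nonzero integer precisely when $\deg h_{x,y,z}=M$. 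The choice of $M$ produces some $z_0\in\Ga$ with $n_{z_0}\neq 0$, so the relation $\sum_{z\in\Ga}n_z\,\fc_{\pi_\Ga,z}=0$ is nontrivial, contradicting the $\nZ$-linear independence of the leading matrices $(\fc_{\pi_\Ga,z})_{z\in\Ga}$ required by $\B{4}$.

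The main obstacle is ensuring that the terms indexed by $z\in\Ga_{>_{\cLR}}$ do not interfere with the leading-degree analysis. Axiom $\B{3}$ annihilates their leading matrices, but in order to even normalise by $\sq^{-M-\ba_{\pi_\Ga}}$ and remain in $\mathsf{S}^{\leq 0}$ one also needs independent control on $\deg h_{x,y,z}$ for such $z$, and this is exactly what the inductive hypothesis coupled with $\B{5}$ delivers. The interplay of $\B{3}$, $\B{4}$, $\B{5}$ and the downward induction is what makes the argument close.
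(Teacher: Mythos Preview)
Your argument is correct and follows essentially the same route as the paper: downward induction on $\leq_{\cLR}$, application of $\pi_\Ga$ using $\B{1}$, controlling the $\Ga_{>_{\cLR}}$ terms via the inductive hypothesis and $\B{5}$, and using $\B{4}$ to conclude. The only cosmetic difference is that you phrase the last step as a contradiction (assuming $M>\ba_{\pi_\Ga}$ and specialising), whereas the paper argues directly that the nonvanishing leading coefficient forces $m+\ba_{\pi_\Ga}\leq 2\ba_{\pi_\Ga}$.
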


\begin{proof}
We proceed by downwards induction. Let $\Ga\in \tsc$ and suppose that $\deg(h_{x,y,z})\leq \ba_{\pi_{\Ga'}}$ for all  $z\in \Ga'$ where $\Ga'>_{\cLR} \Ga$. 
 Then applying $\pi_{\Ga}$ to~(\ref{eq:h}) using the fact that $\pi_\Ga$ is a cell representation gives 
\begin{align}\label{eq:XX}
\pi_{\Ga}(C_xC_y)=\sum_{z\in \Ga}h_{x,y,z}\pi_{\Ga}(C_z)+\underset{\Ga'>_{\cLR}\Ga}{\sum_{\Ga'\in \tsc,}}\sum_{z\in \Ga'}h_{x,y,z}\pi_{\Ga}(C_z).
\end{align}
By $\B{2}$ the degree of the matrix coefficients of $\pi_{\Ga}(C_xC_y)=\pi_{\Gamma}(C_x)\pi_{\Gamma}(C_y)$ is bounded by $2\ba_{\pi_\Ga}$. By the induction hypothesis and properties of balanced representations the degree of the matrix coefficients for each term in the double  sum on the right is strictly bounded by $\ba_{\pi_{\Ga'}}+\ba_{\pi_\Ga}\leq 2\ba_{\pi_\Ga}$. Indeed the maximal degree that can appear in $\pi_\Ga(C_z)$ is stricly less than $\ba_{\pi_\Ga}$ since~$z\notin \Ga$ and the bounds of the balanced system are decreasing with respect to $\leq_{\cLR}$. We now show that 
$$\deg(h_{x,y,z})\leq \ba_{\pi_\Ga}\text{ for all }z\in \Ga.$$
Let $m=\max \{\deg(h_{x,y,z})\mid z\in \Ga\}$ and let $\cZ=\{z'\in \Ga\mid \deg(h_{x,y,z'})=m\}\neq\emptyset$.  For $z\in \cZ$ define $\tilde{\gamma}_{x,y,z^{-1}}\in\mathbb{Z}$ by $h_{x,y,z}=\sq^m\tilde{\gamma}_{x,y,z^{-1}}+\text{lower terms}$. By $\B{3}$ we have $\pi_{\Gamma}(C_z)=\sq^{\ba_{\pi_{\Gamma}}}\fc_{\pi_{\Gamma},z}+\text{lower terms}$, with $\fc_{\pi_{\Gamma},z}\neq 0$ the leading matrix. Then the right hand side of~(\ref{eq:XX}) is of the form
$$
\sq^{m+\ba_{\pi_{\Gamma}}}\sum_{z\in \cZ}\tilde{\gamma}_{x,y,z^{-1}}\fc_{\pi_{\Gamma},z}+\text{lower terms},
$$
and by $\B{4}$ the expression in the sum (that is, the coefficient of $\sq^{m+\ba_{\pi_{\Gamma}}}$) is nonzero. By comparing with the lefthand side in (\ref{eq:XX}) it follows that $m+\ba_{\pi_\Ga}\leq 2\ba_{\pi_\Ga}$ that is $m\leq \ba_{\pi_\Ga}$ as required.
 \end{proof}
 
\begin{Cor}\label{cor:tildegamma}
Let $\Gamma\in\tsc$. The subset $\mathcal{J}_{\Ga}$ of $\cM_{\dim(\pi_\Ga)}(\sR_\Ga)$ generated by $\{\fc_{\pi_{\Ga},w}\mid w\in \Ga\}$ is a $\nZ$-subalgebra.
\end{Cor}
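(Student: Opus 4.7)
The plan is to apply the argument from the proof of Proposition~\ref{prop:bounda} to extract a precise identity for products of leading matrices. The set $\cJ_\Ga$ is already a $\nZ$-submodule by construction, so the only thing to verify is that it is closed under matrix multiplication, and that the structure constants are integers.

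First I would fix $x,y\in \Ga$ and apply $\pi_\Ga$ to the identity $C_xC_y=\sum_{\Ga'\in\tsc}\sum_{z\in\Ga'}h_{x,y,z}C_z$. Since $\pi_\Ga$ is a cell representation, only the terms with $z\in\Ga_{\geq_\cLR}$ survive, giving
$$
\pi_\Ga(C_x)\pi_\Ga(C_y)=\sum_{z\in\Ga}h_{x,y,z}\pi_\Ga(C_z)+\underset{\Ga'>_\cLR\Ga}{\sum_{\Ga'\in\tsc,}}\sum_{z\in\Ga'}h_{x,y,z}\pi_\Ga(C_z).
$$
The main step is then to compare the coefficients of $\sq^{2\ba_{\pi_\Ga}}$ on both sides. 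On the left, by $\B{2}$ and the definition of the leading matrix, we get $\fc_{\pi_\Ga,x}\fc_{\pi_\Ga,y}$. On the right, for $z\in\Ga$ Proposition~\ref{prop:bounda} gives $\deg h_{x,y,z}\leq \ba_{\pi_\Ga}$, so if we set $\tilde\gamma_{x,y,z^{-1}}:=\text{sp}_{|_{\sq^{-1}=0}}(\sq^{-\ba_{\pi_\Ga}}h_{x,y,z})\in\nZ$, then combining with $\pi_\Ga(C_z)=\sq^{\ba_{\pi_\Ga}}\fc_{\pi_\Ga,z}+\text{lower}$ the first sum contributes $\sq^{2\ba_{\pi_\Ga}}\sum_{z\in\Ga}\tilde\gamma_{x,y,z^{-1}}\fc_{\pi_\Ga,z}$ plus strictly lower-order terms.

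For the double sum over $\Ga'>_\cLR\Ga$, Proposition~\ref{prop:bounda} gives $\deg h_{x,y,z}\leq \ba_{\pi_{\Ga'}}\leq \ba_{\pi_\Ga}$ (using $\B{5}$), while axioms $\B{2}$ and $\B{3}$ force $\deg[\pi_\Ga(C_z)]_{u,v}<\ba_{\pi_\Ga}$ since $z\notin\Ga$. Hence every entry of each summand has $\sq$-degree strictly less than $2\ba_{\pi_\Ga}$, so these terms do not contribute to the coefficient of $\sq^{2\ba_{\pi_\Ga}}$. Extracting this coefficient yields
$$
\fc_{\pi_\Ga,x}\,\fc_{\pi_\Ga,y}=\sum_{z\in\Ga}\tilde\gamma_{x,y,z^{-1}}\,\fc_{\pi_\Ga,z},\qquad \tilde\gamma_{x,y,z^{-1}}\in\nZ,
$$
which shows that $\cJ_\Ga$ is closed under multiplication with integer structure constants, completing the proof.

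There is no real obstacle here: every ingredient is already in place in Proposition~\ref{prop:bounda}, and the only care required is to track that the contribution from the strictly higher cells $\Ga'>_\cLR\Ga$ is of strictly lower $\sq$-degree than $2\ba_{\pi_\Ga}$, which is guaranteed by the combination of $\B{2}$, $\B{3}$ and $\B{5}$.
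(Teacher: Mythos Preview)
Your proof is correct and follows essentially the same approach as the paper: apply $\pi_\Ga$ to $C_xC_y$, use $\B{1}$ to restrict to $\Ga_{\geq_\cLR}$, and then isolate the top-degree part. The paper phrases the last step as multiplying through by $\sq^{-2\ba_{\pi_\Ga}}$ and specialising at $\sq^{-1}=0$, while you phrase it as extracting the coefficient of $\sq^{2\ba_{\pi_\Ga}}$; these are equivalent, and the key estimates on the $\Ga'>_\cLR\Ga$ terms (via Proposition~\ref{prop:bounda}, $\B{3}$ and $\B{5}$) are handled identically.
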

\begin{proof}
Let $\Ga$ be the two-sided cell an let $x,y\in \Ga$. Applying $\pi_\Ga$ to $C_xC_y$, using \B{1}, and multiplying by $\sq^{-2\ba_{\pi_{\Ga}}}$ we get 
$$
[\sq^{-\ba_{\pi_{\Ga}}}\pi_{\Ga}(C_x)][\sq^{-\ba_{\pi_{\Ga}}}\pi_{\Ga}(C_y)]=\sum_{z\in \Ga}[\sq^{-\ba_{\pi_{\Ga}}}h_{x,y,z}][\sq^{-\ba_{\pi_{\Ga}}}\pi_{\Ga}(C_z)]+\underset{\Ga'>_{\cLR}\Ga}{\sum_{\Ga'\in \tsc,}}\sum_{z\in \Ga'}[\sq^{-\ba_{\pi_{\Ga}}}h_{x,y,z}][\sq^{-\ba_{\pi_{\Ga}}}\pi_{\Ga}(C_z)].
$$
Specialising at $\sq^{-1}=0$ will annihilate all the terms in the double sum. Indeed for $z\in\Gamma'$ with $\Gamma'>_{\cLR}\Gamma$ we have $\deg(h_{x,y,z})\leq \ba_{\pi_{\Ga'}}\leq \ba_{\pi_{\Ga}}$ and the maximal degree that can appear in $\pi_{\Ga}(C_z)$ is strictly less that $\ba_{\pi_\Ga}$. Thus we obtain 
$$
\fc_{\pi_\Ga,x}\fc_{\pi_\Ga,y}=\sum_{z\in \Ga} \tilde{\ga}_{x,y,z^{-1}}\fc_{\pi_\Ga,z}
$$
where $\tilde{\ga}_{x,y,z^{-1}}\in \nZ$ is the coefficient of degree $\ba_{\pi_\Ga}$ of $h_{x,y,z}$. 
\end{proof}

We introduce the following additional axiom, where $\tilde{\ga}_{x,y,z^{-1}}\in \nZ$ is the coefficient of degree $\ba_{\pi_\Ga}$ of $h_{x,y,z}$. 
\ben
\item[$\B{4}'$\textbf{.}] Let $\Ga\in \tsc$. For each $z\in \Ga$, there exists $(x,y)\in \Ga^{2}$ such that $\tilde{\ga}_{x,y,z^{-1}}\neq 0$. 
\een
We can now show that if all axioms $\B{1}$--$\B{5}$ and $\B{4}'$ are satisfied, then we can compute Lusztig's $\ba$-function in terms of the bounds $\ba_{\pi_{\Gamma}}$. 
\begin{Th}\label{thm:afn}
Suppose that $\B{1}$--$\B{5}$ and $\B{4}'$ are satisfied. Then $\ba(w)=\ba_{\pi_\Ga}$ for all $w\in \Ga$.
\end{Th}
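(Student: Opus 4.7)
The statement is essentially an immediate combination of Proposition~\ref{prop:bounda} with the additional axiom~$\B{4}'$. The plan is to exploit the equivalent description of Lusztig's $\ba$-function as
\[
\ba(z) = \max_{x,y\in W}\deg(h_{x,y,z}),
\]
which follows directly from the definition (the minimal $n$ with $\sq^{-n}h_{x,y,z}\in\mathbb{Z}[\sq^{-1}]$ for all $x,y$ is precisely the largest degree occurring among the $h_{x,y,z}$). With this reformulation the theorem splits into a matching upper and lower bound.

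\textbf{Upper bound.} Fix $\Ga\in\tsc$ and $w\in\Ga$. Proposition~\ref{prop:bounda} already provides exactly what is needed: for every $x,y\in W$,
\[
\deg(h_{x,y,w})\leq \ba_{\pi_\Ga}.
\]
Taking the maximum over $x,y$ gives $\ba(w)\leq \ba_{\pi_\Ga}$. No new ingredient is required for this direction.

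\textbf{Lower bound.} This is where axiom~$\B{4}'$ enters. Recall that $\tilde{\ga}_{x,y,z^{-1}}$ denotes the coefficient of $\sq^{\ba_{\pi_\Ga}}$ in $h_{x,y,z}$. By~$\B{4}'$, for the given $w\in\Ga$ there exists a pair $(x,y)\in\Ga^2$ with $\tilde{\ga}_{x,y,w^{-1}}\neq 0$. Combined with the upper bound from Proposition~\ref{prop:bounda}, which forbids any higher power of $\sq$ from appearing, this forces
\[
\deg(h_{x,y,w}) = \ba_{\pi_\Ga},
\]
and hence $\ba(w)\geq \ba_{\pi_\Ga}$.

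\textbf{Conclusion and remarks.} Putting the two bounds together yields $\ba(w)=\ba_{\pi_\Ga}$ for every $w\in\Ga$, as claimed. There is no genuine obstacle here: all the substantive work has already been done in Proposition~\ref{prop:bounda}, whose proof relied on the interplay of axioms~$\B{1}$,~$\B{2}$,~$\B{3}$,~$\B{4}$ and~$\B{5}$ (the induction on the $\leq_{\cLR}$ order together with the non-vanishing of the leading matrices and their $\mathbb{Z}$-freeness). Axiom~$\B{4}'$ plays the role of ensuring that the bound $\ba_{\pi_\Ga}$ is actually attained by some structure constant indexed by elements of~$\Ga$, thereby pinning down the value of $\ba$ exactly rather than merely bounding it. A useful corollary, which I would record immediately, is that the $\ba$-function is constant on two-sided cells, yielding conjecture~$\conj{4}$ as soon as~$\B{5}$ is in force (since the bounds $\ba_{\pi_\Ga}$ themselves are monotone along $\leq_{\cLR}$).
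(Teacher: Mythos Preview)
Your proof is correct and follows essentially the same approach as the paper: the upper bound comes directly from Proposition~\ref{prop:bounda}, and the lower bound is obtained by invoking~$\B{4}'$ to exhibit a structure constant $h_{x,y,w}$ whose degree equals $\ba_{\pi_\Ga}$. The only differences are presentational: you spell out the reformulation $\ba(z)=\max_{x,y}\deg(h_{x,y,z})$ explicitly and add the remark about~$\conj{4}$, both of which are fine.
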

\begin{proof}
According to Proposition \ref{prop:bounda}, we only need to show that $\ba_{\pi_\Ga}\geq \ba(w)$.  To do so it is enough to find $x,y\in W$ such that $\deg(h_{x,y,w})=\ba_{\pi_\Ga}$ or equivalently, to find $x,y\in W$ such that $\tilde{\ga}_{x,y,w}\neq 0$. Hence the result using $\B{4}'.$
\end{proof}

\begin{Cor}\label{cor:J}
Assuming $\B{1}$--$\B{5}$ and $\B{4}'$, the ring $\mathcal{J}_{\Ga}$ is isomorphic to Lusztig's asymptotic algebra associated to $\Ga$. 
\end{Cor}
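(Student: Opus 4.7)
The plan is to observe that, under the standing hypotheses, the structure constants $\tilde\gamma_{x,y,z^{-1}}$ appearing in Corollary~\ref{cor:tildegamma} coincide with Lusztig's structure constants $\gamma_{x,y,z^{-1}}$; once this is recorded, the present corollary becomes a direct consequence of Corollary~\ref{cor:tildegamma} together with axiom~$\B{4}$.

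Concretely, by Proposition~\ref{prop:bounda} we have $\deg(h_{x,y,z})\le \ba_{\pi_\Ga}$ for $z\in\Ga$, and by Theorem~\ref{thm:afn} we have $\ba(z)=\ba_{\pi_\Ga}$ on $\Ga$. Therefore $\sq^{-\ba(z)}h_{x,y,z}\in\nZ[\sq^{-1}]$, and its constant term, which by definition is $\gamma_{x,y,z^{-1}}$, is equal to the coefficient of $\sq^{\ba_{\pi_\Ga}}$ in $h_{x,y,z}$, i.e.\ to $\tilde\gamma_{x,y,z^{-1}}$. Substituting this identification into the product formula
$$\fc_{\pi_\Ga,x}\fc_{\pi_\Ga,y}=\sum_{z\in\Ga}\tilde\gamma_{x,y,z^{-1}}\fc_{\pi_\Ga,z}\quad\text{for }x,y\in\Ga,$$
established inside the proof of Corollary~\ref{cor:tildegamma}, shows that the $\nZ$-linear map $\phi\colon\bigoplus_{w\in\Ga}\nZ\,t_w\to\mathcal{J}_\Ga$ defined by $t_w\mapsto\fc_{\pi_\Ga,w}$ is a ring homomorphism for Lusztig's asymptotic multiplication restricted to the cell $\Ga$.

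It then only remains to check that $\phi$ is bijective. Surjectivity is immediate from the very definition of $\mathcal{J}_\Ga$ as the $\nZ$-algebra generated by the leading matrices $\fc_{\pi_\Ga,w}$ (the displayed formula above already shows that their $\nZ$-span is closed under multiplication, so nothing beyond the generators themselves is needed). Injectivity is exactly axiom~$\B{4}$, which asserts $\nZ$-linear independence of $\{\fc_{\pi_\Ga,w}:w\in\Ga\}$. Hence $\phi$ is the desired $\nZ$-algebra isomorphism.

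I do not expect any genuine obstacle here: the real content lives in Theorem~\ref{thm:afn} and Corollary~\ref{cor:tildegamma}, and the present statement is essentially bookkeeping on top of them. One mild subtlety worth flagging is that associativity of the product $t_xt_y=\sum_{z\in\Ga}\gamma_{x,y,z^{-1}}t_z$ on $\bigoplus_{w\in\Ga}\nZ\,t_w$ is not evident a priori from Lusztig's definition of $\mathcal{J}$ alone (as this truncates the sum to $\Ga$), but is transported for free from the matrix algebra $\mathcal{J}_\Ga$ via the bijection~$\phi$.
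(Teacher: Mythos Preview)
Your proof is correct and follows essentially the same approach as the paper: identify $\tilde\gamma_{x,y,z^{-1}}=\gamma_{x,y,z^{-1}}$ via Theorem~\ref{thm:afn}, then invoke the multiplication formula from Corollary~\ref{cor:tildegamma}. The paper's argument is terser and leaves the bijectivity implicit, whereas you spell out that surjectivity comes from the definition of $\mathcal{J}_\Ga$ and injectivity from $\B{4}$; this added detail, along with your remark on associativity being transported from the matrix algebra, is welcome but does not constitute a different route.
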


\begin{proof}
The elements $\tilde{\ga}_{x,y,z^{-1}}$ are the coefficients of $h_{x,y,z}$ of degree $\ba_{\pi_\Ga}$, and are the structure constants of $\mathcal{J}_{\Ga}$ with respect to the basis $\{\fc_{\pi_{\Ga},w}\mid w\in \Ga\}$. Indeed once we know that $\ba_{\pi_\Ga}=\ba(\Ga)$ we know that the structure constants of $\mathcal{J}_\Ga$ are $\ga_{x,y,z^{-1}}$. 
\end{proof}

We note that our construction above parallels Geck's construction from \cite[\S1.5]{Geck:11book}. Another construction of the asymptotic algebra has been obtained by Koenig and Xi~\cite{KX} in the case that $\cH$ is affine cellular.


\section{Affine Weyl groups, affine Hecke algebras, and alcove walks}\label{sec:2}

We begin this section by recalling basic definitions and terminology concerning affine Weyl groups. While we are primarily interested in $\tilde{G}_2$ in this paper, some of our results apply in arbitrary type, and in any case the general language turns out to be more appropriate for the formulation of our results and their proofs. Next we recall the Bernstein-Lusztig basis of the affine Hecke algebra, and its combinatorial interpretation using alcove walks, following~\cite{Ram:06}. Finally we present a combinatorial formula for the Weyl character that will be used in Section~\ref{sec:4}. 

\subsection{Root systems, Weyl groups, and affine Weyl groups}\label{sec:root}

Let $\Phi$ be a reduced, irreducible, finite, crystallographic root system with simple roots $\alpha_1,\ldots,\alpha_n$ in an $n$-dimensional real vector space $V$ with inner product $\langle\cdot,\cdot\rangle$. Let $\Phi^+$ be the set of positive roots relative to the simple roots $\alpha_1,\ldots,\alpha_n$. Let $W_0$ be the \textit{Weyl group}; the subgroup of $GL(V)$ generated by the reflections $s_{\alpha}$, $\alpha\in \Phi$, where
$$
s_{\alpha}\lambda=\lambda-\langle\lambda,\alpha\rangle\alpha^{\vee}\quad\text{with}\quad \alpha^{\vee}=2\alpha/\langle\alpha,\alpha\rangle.
$$
The group $W_0$ is a finite Coxeter group with distinguished generators $s_1,\ldots,s_n$, where $s_i=s_{\alpha_i}$. Let $\sw_0$ be the longest element of~$W_0$.
\medskip

Let $\mathcal{F}_0$ denote the union of the hyperplanes $H_{\alpha}=\{x\in V\mid \langle x,\alpha\rangle=0\}$ with $\alpha\in \Phi$. The closures of the open connected components of $V\backslash\mathcal{F}_0$ are geometric cones, called \textit{Weyl chambers}. The \textit{fundamental Weyl chamber} is given by 
$$
C_0=\{x\in V\mid  \langle x,\alpha\rangle\geq 0\text{ for all $\alpha\in\Phi^+$}\}.
$$
The Weyl group $W_0$ acts simply transitively on the set of Weyl chambers, and we sometimes use this action to identify the set of Weyl chambers with $W_0$ via $w\leftrightarrow wC_0$.

\medskip

The \textit{dual root system} is $\Phi^{\vee}=\{\alpha^{\vee}\mid \alpha\in \Phi\}$ and the \textit{coroot lattice} of $\Phi$ is 
$
Q=\mathbb{Z}\textrm{-span of }\Phi^{\vee}.
$ The \textit{fundamental coweights} of $\Phi$ are the vectors $\omega_1,\ldots,\omega_n$ where $\langle\omega_i,\alpha_j\rangle=\delta_{ij}$. The coweight lattice is $P=\mathbb{Z}\omega_1+\cdots+\mathbb{Z}\omega_n$, and the cone of \textit{dominant coweights} is $P^+=P\cap C_0=\mathbb{N}\omega_1+\cdots+\mathbb{N}\omega_n$. Note that $Q\subseteq P$. 
\medskip

The Weyl group $W_0$ acts on $Q$ and the \textit{affine Weyl group} is
$
W=Q\rtimes W_0
$
where we identify $\lambda\in Q$ with the translation $t_{\lambda}(x)=x+\lambda$. We have the following standard facts:
\ben
\item $W$ is generated by the orthogonal reflections $s_{\alpha,k}$ in the affine hyperplanes 
$
H_{\alpha,k}=\{x\in V\mid\langle x,\alpha\rangle=k\}
$ with $\alpha\in\Phi$ and $k\in\mathbb{Z}$. Explicitly, $s_{\alpha,k}(x)=x-(\langle x,\alpha\rangle-k)\alpha^{\vee}$, so that $s_{\alpha,k}=t_{k\alpha^{\vee}}s_{\alpha}$.
\item The affine Weyl group $W$ is a Coxeter group with generating set $S=\{s_0,s_1,\ldots,s_n\}$, where $s_0=t_{\varphi^{\vee}}s_{\varphi}$, with $\varphi$ the highest root of $\Phi$.  
\een

\medskip

Each hyperplane $H_{\alpha,k}$ with $\alpha\in \Phi^+$ and $k\in\mathbb{Z}$ divides $V$ into two half spaces, denoted
$$
H_{\alpha,k}^+=\{x\in V\mid \langle x,\alpha\rangle\geq k\}\quad\text{and}\quad H_{\alpha,k}^-=\{x\in V\mid\langle x,\alpha\rangle\leq k\}.
$$
This ``orientation'' of the hyperplanes is called the \textit{periodic orientation}, since it is invariant under translation by~$\lambda\in Q$.

\medskip

If $w\in W$ we define the \textit{final direction} $\theta(w)\in W_0$ and the \textit{translation weight} $\mathrm{wt}(w)\in Q$ by the equation $w=t_{\mathrm{wt}(w)}\theta(w)$. Here we use the fact that each element $w\in W$ can be written uniquely as $w=t_{\lambda}v$ with $v\in W_0$ and $\lambda\in Q$.

\medskip

Let $\mathcal{F}$ denote the union of the hyperplanes $H_{\alpha,k}$ with $\alpha\in \Phi$ and $k\in \mathbb{Z}$. The closures of the open connected components of $V\backslash\mathcal{F}$ are called \textit{alcoves}. The \textit{fundamental alcove} is given by 
$$
A_0=\{x\in V\mid 0\leq \langle x,\alpha\rangle\leq 1\text{ for all $\alpha\in\Phi^+$}\}.
$$
The hyperplanes bounding $A_0$ are called the \textit{walls} of $A_0$. Explicitly these walls are $H_{\alpha_i,0}$ with $i=1,\ldots,n$ and $H_{\varphi,1}$. We say that a \textit{face} of $A_0$ (that is, a codimension~$1$ facet) has type $s_i$ for $i=1,\ldots,n$ if it lies on the wall $H_{\alpha_i,0}$ and of type $s_0$ if it lies on the wall $H_{\varphi,1}$. 

\medskip

The affine Weyl group $W$ acts simply transitively on the set of alcoves, and we use this action to identify the set of alcoves with $W$ via $w\leftrightarrow wA_0$. Moreover, we use the action of $W$ to transfer the notions of walls, faces, and types of faces to arbitrary alcoves. Alcoves $A$ and $A'$ are called \textit{$s$-adjacent}, written $A\sim_s A'$, if $A\neq A'$ and $A$ and $A'$ share a common type $s$ face. Under the identification of alcoves with elements of $W$, the alcoves $w$ and $ws$ are $s$-adjacent. 

\medskip

For any sequence $\vec{w}=(s_{i_{1}},s_{i_2},\ldots,s_{i_{\ell}})$ of elements of $S$ we have
$$
e\sim_{s_{i_1}}s_{i_1}\sim_{s_{i_2}} s_{i_1}s_{i_2}\sim_{s_{i_3}}\cdots\sim_{s_{i_{\ell}}}s_{i_{1}}s_{i_2}\cdots s_{i_\ell}.
$$
In this way, sequences $\vec{w}$ of elements of $S$ determine \textit{alcove walks} of \textit{type} $\vec{w}$ starting at the fundamental alcove~$e=A_0$. We will typically abuse notation and refer to alcove walks of type $\vec{w}=s_{i_1}s_{i_2}\cdots s_{i_{\ell}}$ rather than $\vec{w}=(s_{i_{1}},s_{i_2},\ldots,s_{i_{\ell}})$. Thus ``the alcove walk of type $\vec{w}=s_{i_1}s_{i_2}\cdots s_{i_{\ell}}$'' is the sequence $(v_0,v_1,\ldots,v_{\ell})$ of alcoves, where $v_0=e$ and $v_k=s_{i_1}\cdots s_{i_k}$ for $k=1,\ldots,\ell$.

\medskip

We are, of course, primarily interested in the case where $\Phi$ is a root system of type $G_2$. We outline this example below.

\begin{Exa}
\label{rootG2}
Let $\Phi$ be a root system of type $G_2$ with simple roots $\alpha_1$ and $\alpha_2$. We have $P=Q$, and the dual root system is
$$
\Phi^\vee:=\pm \{\al_1^\vee,\al_2^\vee,\al_1^\vee+\al_2^\vee,\al_1^\vee+2\al_2^\vee,\al_1^\vee+3\al_2^\vee, 2\al_1^\vee+3\al_2^\vee\}.
$$
The fundamental alcove is shaded in Figure~\ref{rootsystem}, and the periodic orientation on some hyperplanes is shown. 

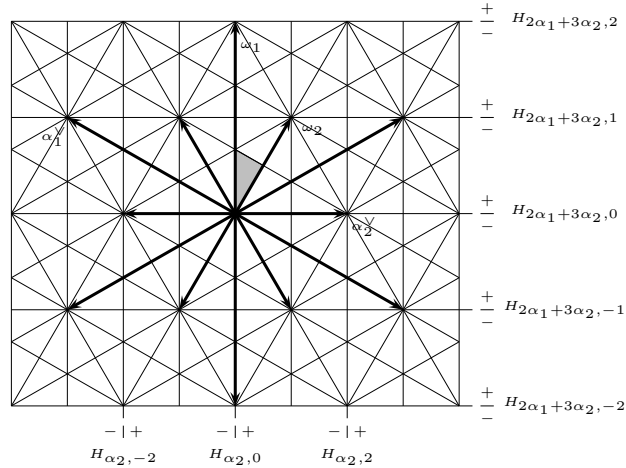
\begin{figure}[H]
\begin{center}
\psset{unit=.85cm}
\begin{pspicture}(-4.5,-4)(4.5,3.3)

\psset{linewidth=0.05mm}

\pspolygon[fillstyle=solid,fillcolor=lightgray](0,0)(0,1)(.433,.75)

\psline[linewidth=.4mm]{->}(0,0)(1.732,0)
\psline[linewidth=.4mm]{->}(0,0)(2.598,1.5)
\psline[linewidth=.4mm]{->}(0,0)(.866,1.5)
\psline[linewidth=.4mm]{->}(0,0)(0,3)
\psline[linewidth=.4mm]{->}(0,0)(-.866,1.5)
\psline[linewidth=.4mm]{->}(0,0)(-2.598,1.5)

\psline[linewidth=.4mm]{->}(0,0)(-1.732,0)
\psline[linewidth=.4mm]{->}(0,0)(-2.598,-1.5)
\psline[linewidth=.4mm]{->}(0,0)(-.866,-1.5)
\psline[linewidth=.4mm]{->}(0,0)(0,-3)
\psline[linewidth=.4mm]{->}(0,0)(.866,-1.5)
\psline[linewidth=.4mm]{->}(0,0)(2.598,-1.5)

\rput(2,-.2){{\tiny $\al^\vee_2$}}
\rput(-2.8,1.2){{\tiny $\al^\vee_1$}}

\rput(.25,2.6){{\tiny $\om_1$}}
\rput(1.2,1.3){{\tiny $\om_2$}}

\rput(5.1,0){{\tiny $H_{2\al_1+3\al_2,0}$}}
\rput(5.1,1.5){{\tiny $H_{2\al_1+3\al_2,1}$}}
\rput(5.1,3){{\tiny $H_{2\al_1+3\al_2,2}$}}
\rput(5.1,-1.5){{\tiny $H_{2\al_1+3\al_2,-1}$}}
\rput(5.1,-3){{\tiny $H_{2\al_1+3\al_2,-2}$}}

\rput(0,-3.8){{\tiny $H_{\al_2,0}$}}

\rput(-1.732,-3.8){{\tiny $H_{\al_2,-2}$}}

\rput(1.732,-3.8){{\tiny $H_{\al_2,2}$}}

\psline[linestyle=dashed](3.464,3)(4,3)
\rput(3.9,3.2){{\tiny $+$}}
\rput(3.9,2.8){{\tiny $-$}}
\psline[linestyle=dashed](3.464,1.5)(4,1.5)
\rput(3.9,1.7){{\tiny $+$}}
\rput(3.9,1.3){{\tiny $-$}}
\psline[linestyle=dashed](3.464,0)(4,0)
\rput(3.9,0.2){{\tiny $+$}}
\rput(3.9,-0.2){{\tiny $-$}}
\psline[linestyle=dashed](3.464,-1.5)(4,-1.5)
\rput(3.9,-1.3){{\tiny $+$}}
\rput(3.9,-1.7){{\tiny $-$}}
\psline[linestyle=dashed](3.464,-3)(4,-3)
\rput(3.9,-2.8){{\tiny $+$}}
\rput(3.9,-3.2){{\tiny $-$}}

\psline(3.464,3)(-3.464,3)
\psline(3.464,-3)(-3.464,-3)
\psline(3.464,3)(3.464,-3)
\psline(-3.464,3)(-3.464,-3)

\psline(3.464,1.5)(-3.464,1.5)
\psline(3.464,0)(-3.464,0)
\psline(3.464,-1.5)(-3.464,-1.5)

\psline(2.598,3)(2.598,-3)
\psline(1.732,3)(1.732,-3)
\psline(.866,3)(.866,-3)
\psline(0,3)(0,-3)
\psline(-2.598,3)(-2.598,-3)
\psline(-1.732,3)(-1.732,-3)
\psline(-.866,3)(-.866,-3)

\psline[linestyle=dashed](0,-3)(0,-3.5)
\rput(-0.2,-3.4){{\tiny $-$}}
\rput(0.2,-3.4){{\tiny $+$}}
\psline[linestyle=dashed](-1.732,-3)(-1.732,-3.5)
\rput(-1.932,-3.4){{\tiny $-$}}
\rput(-1.532,-3.4){{\tiny $+$}}
\psline[linestyle=dashed](1.732,-3)(1.732,-3.5)
\rput(1.532,-3.4){{\tiny $-$}}
\rput(1.932,-3.4){{\tiny $+$}}

\psline(1.732,3)(3.464,2)
\psline(0,3)(3.464,1)
\psline(-1.732,3)(3.464,0)
\psline(-3.464,3)(3.464,-1)
\psline(-3.464,2)(3.464,-2)
\psline(-3.464,1)(3.464,-3)
\psline(-3.464,0)(1.732,-3)
\psline(-3.464,-1)(0,-3)
\psline(-3.464,-2)(-1.732,-3)

\psline(-1.732,3)(-3.464,2)
\psline(0,3)(-3.464,1)
\psline(1.732,3)(-3.464,0)
\psline(3.464,3)(-3.464,-1)
\psline(3.464,2)(-3.464,-2)
\psline(3.464,1)(-3.464,-3)
\psline(3.464,0)(-1.732,-3)
\psline(3.464,-1)(0,-3)
\psline(3.464,-2)(1.732,-3)

\psline(-3.464,0)(-1.732,-3)
\psline(-3.464,3)(0,-3)
\psline(-1.732,3)(1.732,-3)
\psline(0,3)(3.464,-3)
\psline(1.732,3)(3.464,0)

\psline(3.464,0)(1.732,-3)
\psline(3.464,3)(0,-3)
\psline(1.732,3)(-1.732,-3)
\psline(0,3)(-3.464,-3)
\psline(-1.732,3)(-3.464,0)

\end{pspicture}
\end{center}
\caption{The root system of type $G_2$}
\label{rootsystem}
\end{figure}

\end{Exa}

\subsection{Alcove walks and the Bernstein-Lusztig basis of $\cH$}\label{sec:2.2}

Let $W$ be an affine Weyl group as in the previous section. Let $L$ be a weight function on $W$. The standard basis of $\cH$ is well adapted to the Coxeter structure of the affine Weyl group. We now describe another basis of $\cH$, due to Bernstein and Lusztig, that is well adapted to the semi-direct product structure of $W$. Our approach here follows Ram's alcove walk model~\cite{Ram:06}. 
\medskip

Let $\vec w=s_{i_1}s_{i_2}\cdots s_{i_{\ell}}$ be an expression for $w\in W$, and let~$v\in W$. A \textit{positively folded alcove walk of type~$\vec w$ starting at $v$} is a sequence $p=(v_0,v_1,\ldots,v_{\ell})$ with $v_0,\ldots,v_{\ell}\in W$ such that
\begin{enumerate}
\item $v_0=v$,
\item $v_k\in\{v_{k-1},v_{k-1}s_{i_k}\}$ for each $k=1,\ldots,\ell$, and
\item if $v_{k-1}=v_k$ then $v_{k-1}$ is on the positive side of the hyperplane separating $v_{k-1}$ and $v_{k-1}s_{i_k}$. 
\end{enumerate}
The \textit{end} of $p$ is $\mathrm{end}(p)=v_{\ell}$. Let 
$$
\mathcal{P}(\vec{w},v)=\{\text{all positively folded alcove walks of type $\vec{w}$ starting at $v$}\}.
$$

Less formally, a \textit{positively folded alcove walk of type~$\vec w$ starting at $v$} is a sequence of steps from alcove to alcove in $W$, starting at $v$, and made up of the symbols (where the $k$th step has $s=s_{i_k}$ for $k=1,\ldots,\ell$):

\begin{center}
\begin{pspicture}(-6,-1)(6,1)
\psset{unit=.5cm}
\psline(-8.5,-1)(-8.5,1)
\psline{->}(-9,0)(-8,0)
\rput(-9.5,1){{ $-$}}
\rput(-9.5,.2){{ $x$}}
\rput(-7.5,.2){{$xs$}}
\rput(-7.9,1){{ $+$}}
\rput(-8.5,-1.5){{ \text{(positive $s$-crossing)}}}

%
%
\psline(-2,-1)(-2,1)
\psline(-1.5,0)(-2,0)
\psline{<-}(-1.5,-.15)(-2,-.15)
\rput(-3,1){{ $-$}}
\rput(-3,.2){{ $xs$}}
\rput(-1,.2){{ $x$}}
\rput(-1.4,1){{ $+$}}
\rput(-2,-1.5){{ \text{(positive $s$-fold)}}}

\psline(4.5,-1)(4.5,1)
\psline{->}(5,0)(4,0)
\rput(5,1){{ $+$}}
\rput(5.5,.2){{$x$}}
\rput(3.5,.2){{ $xs$}}
\rput(3.6,1){{ $-$}}
\rput(4.5,-1.5){{ \text{(negative $s$-crossing)}}}
\end{pspicture}
\end{center}

If $p$ has no folds we say that $p$ is \textit{straight}.

\medskip

If $p$ is a positively folded alcove walk we define, for each $s\in S$, 
\begin{align*}
f_s(p)&=\#\textrm{(positive $s$-folds in $p$)}\quad\text{and}\quad \mathcal{Q}(p)=\prod_{s\in S}(\sq_{s}-\sq_{s}^{-1})^{f_s(p)}.
\end{align*}

Let $v\in W$ and choose any expression $v=s_{i_1}\cdots s_{i_{\ell}}$ (not necessarily reduced). Consider the associated straight alcove walk $(v_0,v_1\ldots,v_{\ell})$, where $v_0=1$ and $v_k=s_{i_1}\cdots s_{i_k}$. Let $\eps_1,\ldots,\eps_{\ell}$ be defined using the periodic orientation on hyperplanes as follows:
$$
\eps_k=\begin{cases}
+1&\text{if $v_{k-1}\,\,{^-}\hspace{-0.1cm}\mid^+\,\, v_k$\hspace{0.37cm} (that is, a positive crossing)}\\
-1&\text{if $v_{k}\,\,\hspace{0.34cm}{^-}\hspace{-0.1cm}\mid^+\,\,  v_{k-1}$ (that is, a negative crossing).} 
\end{cases}
$$
It turns out that the element 
$$X_{v}=T^{\eps_{1}}_{s_{i_{1}}}\ldots T^{\eps_\ell}_{s_{i_{\ell}}}$$
of $\cH$ does not depend on the particular expression $v=s_{i_{1}}\cdots s_{i_\ell}$ we have chosen (see \cite{Goe:07}). If $\lambda\in Q$ we write 
$$
X^{\lambda}=X_{t_{\lambda}}.
$$
It follows from the above definitions that
$$
X_v=X_{t_{\mathrm{wt}(v)}\theta(v)}=X^{\mathrm{wt}(p)}X_{\theta(v)}=X^{\mathrm{wt}(v)}T_{\theta(v)^{-1}}^{-1}
$$
(the second equality follows since $t_{\mathrm{wt}(v)}$ is on the positive side of every hyperplane through $\mathrm{wt}(v)$, and the third equality follows since $X_u=T_{u^{-1}}^{-1}$ for all $u\in W_0$). Moreover since $X_v=T_v+\text{(lower terms)}$ the set $\{X_v\mid v\in W\}$ is a basis of $\cH$, called the \textit{Bernstein-Lusztig basis}. 

\medskip

Let $\sR[Q]$ be the free $\sR$-module with basis $\{X^\la\mid \la\in Q\}$. We have a natural action of $W_0$ given by $w X^\la =X^{w\la}$. We set 
$$\sR[Q]^{W_0}=\{p\in \sR[Q]\mid w\cdot p=p \text{ for all $w\in W_0$}\}.$$
It is a well-known result that the centre of $\cH$ is $\cZ(\cH)=\sR[Q]^{W_0}$.

\medskip

The combinatorics of positively folded alcove walks encodes the change of basis from the standard basis $(T_w)_{w\in W}$ of $\cH$ to the Bernstein-Lusztig basis $(X_v)_{v\in W}$. This is seen by taking $u=e$ in the following proposition.

\begin{Prop}\label{prop:basischange}(c.f. \cite[Theorem~3.3]{Ram:06}) Let $w,u\in W$, and let $\vec{w}$ be any reduced expression for $w$. Then
\begin{align*}
X_uT_w&=\sum_{p\in\mathcal{P}(\vec{w},u)}\mathcal{Q}(p)X_{\mathrm{end}(p)}.
\end{align*}
\end{Prop}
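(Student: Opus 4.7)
The plan is to induct on $\ell(w)$. The base case $\ell(w)=0$ is immediate: $X_u T_e = X_u$, and the only element of $\mathcal{P}(\emptyset, u)$ is the trivial walk $(u)$ with $\mathcal{Q}(p)=1$ and $\mathrm{end}(p)=u$.

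For the inductive step, I would write $\vec{w} = \vec{w}' s$ where $\vec{w}'$ is a reduced expression for $w'$ with $\ell(w')=\ell(w)-1$ and $s=s_{i_\ell}$. Then $T_w = T_{w'} T_s$, and by the induction hypothesis
$$
X_u T_w = \sum_{p' \in \mathcal{P}(\vec{w}', u)} \mathcal{Q}(p') \, X_{\mathrm{end}(p')} T_s.
$$
So the problem reduces to a key local computation: given $v \in W$, expand $X_v T_s$ in the Bernstein--Lusztig basis. There are two cases, dictated by the periodic orientation of the hyperplane~$H$ separating~$v$ from~$vs$. If $v$ lies on the negative side of~$H$ (so that the step $v\to vs$ is a positive crossing), then from the definition of $X_v$ via straight alcove walks one reads off $X_v T_s = X_{vs}$. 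If instead $v$ lies on the positive side of~$H$ (so the step $v\to vs$ is a negative crossing), then the analogous analysis gives $X_{vs}T_s = X_v$, hence $X_v = X_{vs}T_s^{-1}$; combining this with the Hecke quadratic relation $T_s^2 = (\sq_s - \sq_s^{-1})T_s + 1$ yields
$$
X_v T_s = X_{vs} + (\sq_s - \sq_s^{-1})\, X_v.
$$

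The final step is the bijective matching between the terms of $X_{\mathrm{end}(p')} T_s$ and the extensions of $p'$ to a walk in $\mathcal{P}(\vec{w}, u)$. In the first case (negative side), the only legal extension is a positive $s$-crossing from $v$ to $vs$, contributing one walk~$p$ with $\mathcal{Q}(p) = \mathcal{Q}(p')$ and $\mathrm{end}(p) = vs$, matching the single term $X_{vs}$. In the second case (positive side), there are exactly two legal extensions: a negative $s$-crossing (giving $\mathrm{end}(p) = vs$ with $\mathcal{Q}(p)=\mathcal{Q}(p')$) and a positive $s$-fold (giving $\mathrm{end}(p) = v$ with $\mathcal{Q}(p)=(\sq_s-\sq_s^{-1})\mathcal{Q}(p')$), matching the two terms $X_{vs} + (\sq_s-\sq_s^{-1})X_v$. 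Summing over~$p'$ therefore produces precisely $\sum_{p \in \mathcal{P}(\vec{w}, u)} \mathcal{Q}(p) X_{\mathrm{end}(p)}$, completing the induction.

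The main obstacle is the local computation of $X_v T_s$, and more specifically ensuring that the case analysis (positive versus negative side of~$H$) is intrinsic to~$v$ rather than to any chosen expression for it. This amounts to checking that $X_v$ as defined by a straight alcove walk is expression-independent -- a fact used in Section~\ref{sec:2.2} via the reference to~\cite{Goe:07} -- so once that independence is in hand, the inductive matching of terms is purely bookkeeping against the three symbols (positive crossing, negative crossing, positive fold) defining the walk model.
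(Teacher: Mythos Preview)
Your proof is correct and follows essentially the same approach as the paper: induction on $\ell(w)$, reducing to the local computation of $X_v T_s$ via the two cases of the periodic orientation, and then matching terms with the three possible walk extensions. Note one small slip: from $X_{vs}T_s = X_v$ you should deduce $X_v = X_{vs}T_s$ (equivalently $X_{vs} = X_v T_s^{-1}$), not $X_v = X_{vs}T_s^{-1}$ as written; your final formula $X_v T_s = X_{vs} + (\sq_s-\sq_s^{-1})X_v$ is nonetheless correct.
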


\begin{proof}
Suppose that $\ell(ws)=\ell(w)+1$. Then 
\begin{align*}
X_uT_{ws}&=X_uT_wT_s=\sum_{p\in\mathcal{P}(\vec{w},u)}\mathcal{Q}(p)X_{\mathrm{end}(p)}T_s.
\end{align*}
Now, using the formula $T_s=T_s^{-1}+(\sq_s-\sq_s^{-1})$ in the second case below, we have
$$
X_{\mathrm{end}(p)}T_s=\begin{cases}
X_{\mathrm{end}(p\cdot \epsilon_s^+)}&\text{if $\mathrm{end}(p)\hphantom{s}\,\,{^-}\hspace{-0.1cm}\mid^+\,\, \mathrm{end}(p)s$}\\
X_{\mathrm{end}(p\cdot \epsilon_s^-)}+(\sq_s-\sq_s^{-1})X_{\mathrm{end}(p\cdot f_s^+)}&\text{if $\mathrm{end}(p)s\,\,{^-}\hspace{-0.1cm}\mid^+\,\, \mathrm{end}(p)$}
\end{cases}
$$
where $p\cdot \epsilon_s^+$, $p\cdot \epsilon_s^-$, and $p\cdot f_s^+$ denote, respectively, the path $p$ followed by a positive $s$-crossing, a negative $s$-crossing, and a positive $s$-fold. The result follows by induction.
\end{proof}

\begin{Exa}
\label{presentationG2}
Let $(W,S)$ be the affine Weyl group of type $\tG_{2}$ with diagram and weight function as in Example~\ref{exa:balanced-one-dim}. Write $\sq_1=\sq^{L(s_1)}$ and $\sq_2=\sq^{L(s_2)}=\sq^{L(s_0)}$. The coroot system $\Phi^\vee$ is as in Example \ref{rootG2}. Writing $X_1=X^{\al_1^\vee}$ and $X_2=X^{\al_2^\vee}$, the Hecke algebra $\cH$ asscociated to $W$ has generators $T_1=T_{s_1}$, $T_2=T_{s_2}$, $X_1$ and $X_2$ with relations
\begin{align*}
T_1^2&=1+(\sq_1-\sq_1^{-1})T_1& T_1X_1&=X_1^{-1}T_1+(\sq_1-\sq_1^{-1})(1+X_1)\\
T_2^2&=1+(\sq_2-\sq_2^{-1})T_2 & T_2X_2&=X_2^{-1}T_2+(\sq_2-\sq_2^{-1})(1+X_2)\\
(T_1T_2)^3&=(T_2T_1)^3 & T_2X_1&=X_1X_2^3T_2^{-1}-(\sq_2-\sq_2^{-1})X_1X_2(1+X_2)\\
 X_1X_2&=X_2X_1 &
 T_1X_2&=X_1X_2T_1^{-1}.
\end{align*}
\end{Exa}

\subsection{A formula for the Weyl character}\label{sec:2.3}

In this subsection we use the Hecke algebra as a tool to establish a combinatorial formula for the Weyl character $s_{\lambda}(X)$. It is sufficient for this purpose to consider the Hecke algebra $\mathcal{H}$ with weight function $L=\ell$ (that is, the equal parameter case). Let
$$
\mathbf{1}_0=\sum_{w\in W_0}\sq^{\ell(w)}T_w.
$$
We have $T_w\mathbf{1}_0=\mathbf{1}_0T_w=\sq^{\ell(w)}\mathbf{1}_0$ for all $w\in W_0$. For dominant $\lambda$, the \textit{Macdonald spherical function} is the unique element $P_{\lambda}(X,\sq^{-1})$ of $\sR[Q]$ such that 
$$
P_{\lambda}(X,\sq^{-1})\mathbf{1}_0=\sq^{-2\ell(\sw_0)}\mathbf{1}_0X^{\lambda}\mathbf{1}_0.
$$
The well known explicit formula for $P_{\lambda}(X,\sq^{-1})$, due to Macdonald (see \cite{Mac:71}, and also \cite{Ram:03} for a proof in the Hecke algebra context) is 
$$
P_{\lambda}(X,\sq^{-1})=\sum_{w\in W_0}w\bigg(X^{\lambda}\prod_{\alpha\in\Phi^+}\frac{1-\sq^{-2}X^{-\alpha^{\vee}}}{1-X^{-\alpha^{\vee}}}\bigg),
$$
from which we see that $P_{\lambda}(X,\sq^{-1})\in\sR[Q]^{W_0}$ and that on specialising $\sq^{-1}=0$ we have $P_{\lambda}(X,0)=\ss_{\lambda}(X)$.
\medskip

Let $w,u\in W$ and let $\vec{w}$ be any reduced expression for $w$. Let
\begin{align}\label{eq:PP}
\mathbb{P}(\vec{w},u)=\{p\in\mathcal{P}(\vec{w},u)\mid f(p)=\ell(\sw_0)\}.
\end{align}
The following theorem is well known, however we sketch the proof for completeness.

\begin{Th}\label{thm:Schur}
If $\lambda\in Q\cap P^+$ then
$$
\ss_{\lambda}(X)=\sum_{p\in\mathbb{P}(\vec{\sw}_0\cdot \vec{t}_{\lambda},e)}X^{\mathrm{wt}(p)},
$$
\end{Th}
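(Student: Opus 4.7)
The plan is to combine Proposition~\ref{prop:basischange} with the defining identity
$$\mathbf{1}_0 X^\lambda \mathbf{1}_0=\sq^{2\ell(\sw_0)}P_\lambda(X,\sq^{-1})\mathbf{1}_0$$
of the Macdonald spherical function, and then specialise at $\sq^{-1}=0$ using $P_\lambda(X,0)=\ss_\lambda(X)$.

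First I would observe that since $\lambda\in Q\cap P^+$ is dominant, $\vec{\sw}_0\cdot\vec{t}_\lambda$ is a reduced expression for $\sw_0t_\lambda$ and the straight alcove walk from $e$ to $t_\lambda$ lies in the dominant chamber, so every crossing is positive. Hence $T_{t_\lambda}=X^\lambda$ and $T_{\sw_0 t_\lambda}=T_{\sw_0}X^\lambda$. Proposition~\ref{prop:basischange} with $u=e$ then gives
$$T_{\sw_0 t_\lambda}=\sum_{p\in\mathcal{P}(\vec{\sw}_0\cdot\vec{t}_\lambda,\,e)}\mathcal{Q}(p)\,X_{\mathrm{end}(p)}.$$

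Next I would use the identity $X_{\mathrm{end}(p)}=X^{\mathrm{wt}(p)}T_{\theta(p)^{-1}}^{-1}$ together with the easily verified relation $\mathbf{1}_0 T_w=T_w\mathbf{1}_0=\sq^{\ell(w)}\mathbf{1}_0$ for $w\in W_0$ (a short Hecke-algebra computation from $T_s\mathbf{1}_0=\sq\mathbf{1}_0$). Multiplying the previous equation on the left and on the right by $\mathbf{1}_0$ and applying the Macdonald identity on the left-hand side yields
$$\sq^{3\ell(\sw_0)}P_\lambda(X,\sq^{-1})\mathbf{1}_0=\sum_{p\in\mathcal{P}(\vec{\sw}_0\cdot\vec{t}_\lambda,\,e)}\mathcal{Q}(p)\,\sq^{-\ell(\theta(p))}\,\mathbf{1}_0 X^{\mathrm{wt}(p)}\mathbf{1}_0.$$

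Finally I would divide by $\sq^{3\ell(\sw_0)}$ and specialise $\sq^{-1}=0$. Since $\sq^{-\ell(\sw_0)}\mathbf{1}_0\to T_{\sw_0}$, $P_\lambda(X,0)=\ss_\lambda(X)$, and $\mathcal{Q}(p)=(\sq-\sq^{-1})^{f(p)}$ has top $\sq$-degree $f(p)$, a degree count combined with the $W_0$-symmetrised Macdonald-type identity for $\mathbf{1}_0 X^{\mathrm{wt}(p)}\mathbf{1}_0/\mathbf{1}_0$ shows that only paths for which the coefficient's $\sq$-degree is maximal survive the specialisation. One verifies the geometric bound $f(p)\leq\ell(\sw_0)+\ell(\theta(p))$ on positively folded walks of the prescribed type starting at $e$, with equality forcing $\theta(p)=e$ and $f(p)=\ell(\sw_0)$; thus only paths in $\mathbb{P}(\vec{\sw}_0\cdot\vec{t}_\lambda,\,e)$ contribute, each with weight $X^{\mathrm{wt}(p)}$.

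The main obstacle is this last degree inequality, together with its equality condition, which encodes the geometric fact that each extra fold past $\ell(\sw_0)$ must be ``paid for'' by a non-trivial final direction; this is classical in the alcove walk model but requires a careful inductive analysis on the length of the walk.
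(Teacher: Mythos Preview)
There are two genuine gaps in your proposal.

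First, the bound you state is the wrong one. For a positively folded walk $p$ of type $\vec{\sw}_0\cdot\vec{t}_\lambda$ starting at $e$ the correct inequality is
\[
f(p)\leq \ell(\sw_0)-\ell(\theta(p)),
\]
with a minus sign, not a plus. This is what the partial-folding argument gives: each fold strictly decreases the length of the final direction, so summing over the $f(p)$ folds yields $\ell(\theta(p_0))-\ell(\theta(p))\geq f(p)$ with $\theta(p_0)=\sw_0\cdot e=\sw_0$. Your stated bound $f(p)\leq \ell(\sw_0)+\ell(\theta(p))$ is too weak: equality there does \emph{not} force $\theta(p)=e$, so you could not conclude that only paths with $f(p)=\ell(\sw_0)$ survive.

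Second, and more seriously, multiplying on \emph{both} sides by $\mathbf{1}_0$ destroys the monomial structure you need. After your manipulation the right-hand side carries terms $\mathbf{1}_0 X^{\mathrm{wt}(p)}\mathbf{1}_0$, and by the Satake isomorphism each such term equals $c_{\mathrm{wt}(p)}(X,\sq)\mathbf{1}_0$ for some $W_0$-symmetric polynomial $c_{\mathrm{wt}(p)}$. Specialising at $\sq^{-1}=0$ you would obtain $\ss_\lambda(X)$ as a sum of $W_0$-symmetric functions indexed by the surviving paths, not as a sum of bare monomials $X^{\mathrm{wt}(p)}$; this is not the statement of the theorem. (Concretely, for dominant $\mathrm{wt}(p)$ the specialisation of $c_{\mathrm{wt}(p)}$ is $\ss_{\mathrm{wt}(p)}(X)$, not $X^{\mathrm{wt}(p)}$.)

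The paper avoids this by expanding only the \emph{left} copy of $\mathbf{1}_0$ in $\mathbf{1}_0 X^\lambda\mathbf{1}_0$ as $\sum_{u\in W_0}\sq^{\ell(u)}T_u$, applying Proposition~\ref{prop:basischange} to each $T_{ut_\lambda}$, and then using only $X_{\mathrm{end}(p)}\mathbf{1}_0=\sq^{-\ell(\theta(p))}X^{\mathrm{wt}(p)}\mathbf{1}_0$ on the right. This yields an equality of the shape $P_\lambda(X,\sq^{-1})\mathbf{1}_0=\big(\sum\cdots X^{\mathrm{wt}(p)}\big)\mathbf{1}_0$, and since $F(X)\mathbf{1}_0=G(X)\mathbf{1}_0$ implies $F=G$ (the elements $X^\mu\mathbf{1}_0$ are linearly independent), one may cancel $\mathbf{1}_0$ and keep the monomials. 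The degree count together with the correct bound then shows that only $u=\sw_0$, $\theta(p)=e$, $f(p)=\ell(\sw_0)$ survive at $\sq^{-1}=0$.
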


\begin{proof}
Let $\cH$ be the Hecke algebra with $L=\ell$. Since $T_uT_{t_{\lambda}}=T_{ut_{\lambda}}$ for all $u\in W_0$ we have, by Proposition~\ref{prop:basischange},
\begin{align*}
P_{\lambda}(X,\sq^{-1})\mathbf{1}_0&=\sq^{-2\ell(\sw_0)}\sum_{u\in W_0}\sq^{\ell(u)}T_uT_{t_{\lambda}}\mathbf{1}_0=\sq^{-2\ell(\sw_0)}\sum_{u\in W_0}\sum_{p\in\mathcal{P}(\vec{u}\cdot\vec{t}_{\lambda},1)}\sq^{\ell(u)}(\sq-\sq^{-1})^{f(p)}X_{\mathrm{end}(p)}\mathbf{1}_0.
\end{align*}
Since $X_{\mathrm{end}(p)}\mathbf{1}_0=X^{\mathrm{wt}(p)}T_{\theta(p)^{-1}}^{-1}\mathbf{1}_0=\sq^{-\ell(\theta(p))}X^{\mathrm{wt}(p)}\mathbf{1}_0$ it follows that
\begin{align*}
P_{\lambda}(X,\sq^{-1})&=\sum_{u\in W_0}\sum_{p\in\mathcal{P}(\vec{u}\cdot\vec{t}_{\lambda},1)}(\sq^{-1})^{2\ell(\sw_0)-\ell(u)-f(p)+\ell(\theta(p))}(1-\sq^{-2})^{f(p)}X^{\mathrm{wt}(p)}.
\end{align*}
For each positively folded alcove walk $p\in\mathcal{P}(\vec{u}\cdot\vec{t}_{\lambda},e)$, let $p_0,\ldots,p_{f(p)}$ be the \textit{partial folding sequence} of $p$, where $p_j$ is the positively folded alcove walk that agrees with $p$ up to (and including) the $j$th fold of $p$, and is straight thereafter. It is simple to see (either using the technique of Lemma~\ref{lem:tech1} in this paper, or see \cite{Ram:06}) that $\ell(\theta(p_{j+1}))<\ell(\theta(p_j))$ for all $j=0,\ldots,f(p)-1$. Thus $\ell(\theta(p_j))-\ell(\theta(p_{j+1}))-1\geq 0$, and it follows by summing that 
$
\ell(\theta(p_0))-\ell(\theta(p_{f(p)}))-f(p)\geq 0,
$
and hence 
$$
f(p)\leq \ell(u)-\ell(\theta(p)),
$$
with equality if and only if $\ell(\theta(p_j))-\ell(\theta(p_{j+1}))-1= 0$ for each $j=0,\ldots,f(p)-1$. Thus the exponent of $\sq^{-1}$ in the above formula for $P_{\lambda}(X,\sq^{-1})$ is
\begin{align*}
2\ell(\sw_0)-\ell(u)-f(p)+\ell(\theta(p))&\geq 2(\ell(\sw_0)-\ell(u)+\ell(\theta(p)))=2(\ell(\sw_0u^{-1})+\ell(\theta(p)))\geq 0,
\end{align*}
with equality if and only if $f(p)=\ell(u)-\ell(\theta(p))$, $\ell(\sw_0u^{-1})=0$, and $\ell(\theta(p))=0$. Thus equality occurs if and only if $u=\sw_0$, $\theta(p)=e$, and $f(p)=\ell(\sw_0)$. Therefore, upon specialising at $\sq^{-1}=0$ only the terms with $u=\sw_0$ and $f(p)=\ell(\sw_0)$ survive, hence the result.
\end{proof}


\section{Kazhdan-Lusztig cells in type $\tilde{G}_2$}\label{sec:KL-cells-G2}

In this section we recall the decomposition of $\tilde{G}_2$ into right cells and two-sided cells for all choices of parameters $(a,b)\in\mathbb{N}^2$ from~\cite{guilhot4}.  We also recall some ``cell factorisation'' properties for the infinite two-sided cells from~\cite{GM:12}.

\subsection{The partition of $\tilde{G}_2$ into cells}\label{sec:cells}

Let $W$ be an affine Weyl group of type $\tilde{G}_2$ with diagram and weight function $L(s_1)=a$ and $L(s_2)=L(s_0)=b$ as in Example~\ref{exa:balanced-one-dim}. The partition of $W$ into two-sided cells depends only on the ratio $r=a/b$ of the parameters, and it turns out that there are precisely $7$ distinct regimes. We recall these decompositions in the diagrams below where
\bem
\item $w$ and $w'$ are in the same two-sided cell if and only if they have the same colour;
\item $w$ and $w'$ are in the same right cell if and only if they have the same colour and lie in a common connected~component;
\item the graphs represent the two-sided order on two-sided cells for all regimes from $r>2$ on the left to $r<1$ on the right.
\eem

\psset{unit=.45cm}
\begin{figure}[H]
\begin{subfigure}{.33\textwidth}

\begin{center}
\begin{pspicture}(-4.33,-4)(4.33,6.5)

\psset{linewidth=.05mm}


\pspolygon[fillstyle=solid,fillcolor=red!5!yellow!42!](0.866,4.5)(0.866,6)(1.732,6)
\pspolygon[fillstyle=solid,fillcolor=red!5!yellow!42!](0.866,1.5)(3.46,6)(4.33,6)(4.33,3.5)
\pspolygon[fillstyle=solid,fillcolor=red!5!yellow!42!](2.598,1.5)(4.33,1.5)(4.33,2.5)
\pspolygon[fillstyle=solid,fillcolor=red!5!yellow!42!](1.732,0)(4.33,0)(4.33,-1.5)
\pspolygon[fillstyle=solid,fillcolor=red!5!yellow!42!](2.598,-1.5)(4.33,-2.5)(4.33,-3)(3.464,-3)
\pspolygon[fillstyle=solid,fillcolor=red!5!yellow!42!](0.866,-1.5)(0.866,-3)(1.732,-3)
\pspolygon[fillstyle=solid,fillcolor=red!5!yellow!42!](0,0)(0,-3)(-1.732,-3)
\pspolygon[fillstyle=solid,fillcolor=red!5!yellow!42!](-2.598,-1.5)(-3.464,-3)(-4.33,-3)(-4.33,-2.5)
\pspolygon[fillstyle=solid,fillcolor=red!5!yellow!42!](-1.732,0)(-4.33,0)(-4.33,-1.5)
\pspolygon[fillstyle=solid,fillcolor=red!5!yellow!42!](-2.598,1.5)(-4.33,1.5)(-4.33,2.5)
\pspolygon[fillstyle=solid,fillcolor=red!5!yellow!42!](-0.866,1.5)(-3.46,6)(-4.33,6)(-4.33,3.5)
\pspolygon[fillstyle=solid,fillcolor=red!5!yellow!42!](0,3)(0,6)(-1.732,6)

\pspolygon[fillstyle=solid,fillcolor=blue!70!black!80!](0,0)(0,-3)(0.866,-3)(0.866,-1.5)
\pspolygon[fillstyle=solid,fillcolor=blue!70!black!80!](1.732,0)(4.33,-1.5)(4.33,-2.5)(2.598,-1.5)
\pspolygon[fillstyle=solid,fillcolor=blue!70!black!80!](0.866,1.5)(2.598,1.5)(4.33,2.5)(4.33,3.5)
\pspolygon[fillstyle=solid,fillcolor=blue!70!black!80!](0,3)(0,6)(0.866,6)(0.866,4.5)
\pspolygon[fillstyle=solid,fillcolor=blue!70!black!80!](-0.866,1.5)(-2.598,1.5)(-4.33,2.5)(-4.33,3.5)
\pspolygon[fillstyle=solid,fillcolor=blue!70!black!80!](-1.732,0)(-4.33,-1.5)(-4.33,-2.5)(-2.598,-1.5)

\pspolygon[fillstyle=solid,fillcolor=green!80!black!70!](0.433,2.25)(0,3)(1.732,6)(3.46,6)(1.732,3)
\pspolygon[fillstyle=solid,fillcolor=green!80!black!70!](0,1.5)(0,3)(-1.732,6)(-3.46,6)(-0.866,1.5)
\pspolygon[fillstyle=solid,fillcolor=green!80!black!70!](-0.433,0.75)(-0.866,1.5)(-4.33,1.5)(-4.33,0)(-1.732,0)
\pspolygon[fillstyle=solid,fillcolor=green!80!black!70!](-0.866,0)(-0.866,-1.5)(-1.732,-3)(-3.464,-3)(-1.732,0)
\pspolygon[fillstyle=solid,fillcolor=green!80!black!70!](0.866,0)(0.866,-1.5)(1.732,-3)(3.464,-3)(1.732,0)
\pspolygon[fillstyle=solid,fillcolor=green!80!black!70!](0.433,0.75)(0.866,1.5)(4.33,1.5)(4.33,0)(1.732,0)

\pspolygon[fillstyle=solid,fillcolor=yellow!10!red!80!](0,0)(-0.433,0.75)(-1.732,0)(-0.866,0)(-0.866,-1.5)
\pspolygon[fillstyle=solid,fillcolor=yellow!10!red!80!](0,0)(0.433,0.75)(1.732,0)(0.866,0)(0.866,-1.5)
\pspolygon[fillstyle=solid,fillcolor=yellow!10!red!80!](0,1.5)(0,3)(0.433,2.25)(1.732,3)(0.866,1.5)

\pspolygon[fillstyle=solid,fillcolor=orange](0,1)(0,1.5)(0.866,1.5)(0.433,0.75)
\pspolygon[fillstyle=solid,fillcolor=orange](0,1)(-.866,1.5)(0,0)

\pspolygon[fillstyle=solid,fillcolor=ForestGreen](0,1)(-.866,1.5)(0,1.5)

\pspolygon[fillstyle=solid, fillcolor=black](0,0)(0,1)(0.433,0.75)


\psline(-4.33,6)(4.33,6)
\psline(-4.33,4.5)(4.33,4.5)
\psline(-4.33,3)(4.33,3)
\psline(-4.33,1.5)(4.33,1.5)
\psline(-4.33,0)(4.33,0)
\psline(-4.33,-1.5)(4.33,-1.5)
\psline(-4.33,-3)(4.33,-3)

\psline(-4.33,-3)(-4.33,6)
\psline(-3.464,-3)(-3.464,6)
\psline(-2.598,-3)(-2.598,6)
\psline(-1.732,-3)(-1.732,6)
\psline(-.866,-3)(-.866,6)
\psline(0,-3)(0,6)
\psline(.866,-3)(.866,6)
\psline(1.732,-3)(1.732,6)
\psline(2.598,-3)(2.598,6)
\psline(3.464,-3)(3.464,6)
\psline(4.33,-3)(4.33,6)

\psline(-4.33,5.5)(-3.464,6)
\psline(-4.33,4.5)(-1.732,6)
\psline(-4.33,3.5)(0,6)
\psline(-4.33,2.5)(1.732,6)
\psline(-4.33,1.5)(3.464,6)
\psline(-4.33,.5)(4.33,5.5)
\psline(-4.33,-.5)(4.33,4.5)
\psline(-4.33,-1.5)(4.33,3.5)
\psline(-4.33,-2.5)(4.33,2.5)
\psline(-3.464,-3)(4.33,1.5)
\psline(-1.732,-3)(4.33,.5)
\psline(0,-3)(4.33,-.5)
\psline(1.732,-3)(4.33,-1.5)
\psline(3.464,-3)(4.33,-2.5)

\psline(4.33,5.5)(3.464,6)
\psline(4.33,4.5)(1.732,6)
\psline(4.33,3.5)(0,6)
\psline(4.33,2.5)(-1.732,6)
\psline(4.33,1.5)(-3.464,6)
\psline(4.33,.5)(-4.33,5.5)
\psline(4.33,-.5)(-4.33,4.5)
\psline(4.33,-1.5)(-4.33,3.5)
\psline(4.33,-2.5)(-4.33,2.5)
\psline(3.464,-3)(-4.33,1.5)
\psline(1.732,-3)(-4.33,.5)
\psline(0,-3)(-4.33,-.5)
\psline(-1.732,-3)(-4.33,-1.5)
\psline(-3.464,-3)(-4.33,-2.5)

\psline(-4.33,-1.5)(-3.464,-3)
\psline(-4.33,1.5)(-1.732,-3)
\psline(-4.33,4.5)(0,-3)
\psline(-3.464,6)(1.732,-3)
\psline(-1.732,6)(3.464,-3)
\psline(0,6)(4.33,-1.5)
\psline(1.732,6)(4.33,1.5)
\psline(3.464,6)(4.33,4.5)

\psline(4.33,-1.5)(3.464,-3)
\psline(4.33,1.5)(1.732,-3)
\psline(4.33,4.5)(0,-3)
\psline(3.464,6)(-1.732,-3)
\psline(1.732,6)(-3.464,-3)
\psline(0,6)(-4.33,-1.5)
\psline(-1.732,6)(-4.33,1.5)
\psline(-3.464,6)(-4.33,4.5)

\rput(0,-3.5){{\footnotesize $r>2$}}
\end{pspicture}

\end{center}
\end{subfigure}
\begin{subfigure}{.33\textwidth}

%
%

\begin{center}
\begin{pspicture}(-4.33,-4)(4.33,6.5)

\psset{linewidth=.05mm}


\pspolygon[fillstyle=solid,fillcolor=red!5!yellow!42!](0.866,4.5)(0.866,6)(1.732,6)
\pspolygon[fillstyle=solid,fillcolor=red!5!yellow!42!](0.866,1.5)(3.46,6)(4.33,6)(4.33,3.5)
\pspolygon[fillstyle=solid,fillcolor=red!5!yellow!42!](2.598,1.5)(4.33,1.5)(4.33,2.5)
\pspolygon[fillstyle=solid,fillcolor=red!5!yellow!42!](1.732,0)(4.33,0)(4.33,-1.5)
\pspolygon[fillstyle=solid,fillcolor=red!5!yellow!42!](2.598,-1.5)(4.33,-2.5)(4.33,-3)(3.464,-3)
\pspolygon[fillstyle=solid,fillcolor=red!5!yellow!42!](0.866,-1.5)(0.866,-3)(1.732,-3)
\pspolygon[fillstyle=solid,fillcolor=red!5!yellow!42!](0,0)(0,-3)(-1.732,-3)
\pspolygon[fillstyle=solid,fillcolor=red!5!yellow!42!](-2.598,-1.5)(-3.464,-3)(-4.33,-3)(-4.33,-2.5)
\pspolygon[fillstyle=solid,fillcolor=red!5!yellow!42!](-1.732,0)(-4.33,0)(-4.33,-1.5)
\pspolygon[fillstyle=solid,fillcolor=red!5!yellow!42!](-2.598,1.5)(-4.33,1.5)(-4.33,2.5)
\pspolygon[fillstyle=solid,fillcolor=red!5!yellow!42!](-0.866,1.5)(-3.46,6)(-4.33,6)(-4.33,3.5)
\pspolygon[fillstyle=solid,fillcolor=red!5!yellow!42!](0,3)(0,6)(-1.732,6)

\pspolygon[fillstyle=solid,fillcolor=blue!70!black!80!](0,0)(0,-3)(0.866,-3)(0.866,-1.5)
\pspolygon[fillstyle=solid,fillcolor=blue!70!black!80!](1.732,0)(4.33,-1.5)(4.33,-2.5)(2.598,-1.5)
\pspolygon[fillstyle=solid,fillcolor=blue!70!black!80!](0.866,1.5)(2.598,1.5)(4.33,2.5)(4.33,3.5)
\pspolygon[fillstyle=solid,fillcolor=blue!70!black!80!](0,3)(0,6)(0.866,6)(0.866,4.5)
\pspolygon[fillstyle=solid,fillcolor=blue!70!black!80!](-0.866,1.5)(-2.598,1.5)(-4.33,2.5)(-4.33,3.5)
\pspolygon[fillstyle=solid,fillcolor=blue!70!black!80!](-1.732,0)(-4.33,-1.5)(-4.33,-2.5)(-2.598,-1.5)

\pspolygon[fillstyle=solid,fillcolor=green!80!black!70!](0.433,2.25)(0,3)(1.732,6)(3.46,6)(1.732,3)
\pspolygon[fillstyle=solid,fillcolor=green!80!black!70!](0,1)(0,3)(-1.732,6)(-3.46,6)(-0.866,1.5)(0,1)
\pspolygon[fillstyle=solid,fillcolor=green!80!black!70!](-0.433,0.75)(-0.866,1.5)(-4.33,1.5)(-4.33,0)(-1.732,0)
\pspolygon[fillstyle=solid,fillcolor=green!80!black!70!](-0.866,0)(-0.866,-1.5)(-1.732,-3)(-3.464,-3)(-1.732,0)
\pspolygon[fillstyle=solid,fillcolor=green!80!black!70!](0.866,0)(0.866,-1.5)(1.732,-3)(3.464,-3)(1.732,0)
\pspolygon[fillstyle=solid,fillcolor=green!80!black!70!](0.433,0.75)(0.866,1.5)(4.33,1.5)(4.33,0)(1.732,0)

\pspolygon[fillstyle=solid,fillcolor=yellow!10!red!80!](0,0)(-0.433,0.75)(-1.732,0)(-0.866,0)(-0.866,-1.5)
\pspolygon[fillstyle=solid,fillcolor=yellow!10!red!80!](0,0)(0.433,0.75)(1.732,0)(0.866,0)(0.866,-1.5)
\pspolygon[fillstyle=solid,fillcolor=yellow!10!red!80!](0,1.5)(0,3)(0.433,2.25)(1.732,3)(0.866,1.5)

\pspolygon[fillstyle=solid,fillcolor=orange](0,1)(0,1.5)(0.866,1.5)(0.433,0.75)
\pspolygon[fillstyle=solid,fillcolor=orange](0,1)(-.866,1.5)(0,0)


\pspolygon[fillstyle=solid, fillcolor=black](0,0)(0,1)(0.433,0.75)


\psline(-4.33,6)(4.33,6)
\psline(-4.33,4.5)(4.33,4.5)
\psline(-4.33,3)(4.33,3)
\psline(-4.33,1.5)(4.33,1.5)
\psline(-4.33,0)(4.33,0)
\psline(-4.33,-1.5)(4.33,-1.5)
\psline(-4.33,-3)(4.33,-3)

\psline(-4.33,-3)(-4.33,6)
\psline(-3.464,-3)(-3.464,6)
\psline(-2.598,-3)(-2.598,6)
\psline(-1.732,-3)(-1.732,6)
\psline(-.866,-3)(-.866,6)
\psline(0,-3)(0,6)
\psline(.866,-3)(.866,6)
\psline(1.732,-3)(1.732,6)
\psline(2.598,-3)(2.598,6)
\psline(3.464,-3)(3.464,6)
\psline(4.33,-3)(4.33,6)

\psline(-4.33,5.5)(-3.464,6)
\psline(-4.33,4.5)(-1.732,6)
\psline(-4.33,3.5)(0,6)
\psline(-4.33,2.5)(1.732,6)
\psline(-4.33,1.5)(3.464,6)
\psline(-4.33,.5)(4.33,5.5)
\psline(-4.33,-.5)(4.33,4.5)
\psline(-4.33,-1.5)(4.33,3.5)
\psline(-4.33,-2.5)(4.33,2.5)
\psline(-3.464,-3)(4.33,1.5)
\psline(-1.732,-3)(4.33,.5)
\psline(0,-3)(4.33,-.5)
\psline(1.732,-3)(4.33,-1.5)
\psline(3.464,-3)(4.33,-2.5)

\psline(4.33,5.5)(3.464,6)
\psline(4.33,4.5)(1.732,6)
\psline(4.33,3.5)(0,6)
\psline(4.33,2.5)(-1.732,6)
\psline(4.33,1.5)(-3.464,6)
\psline(4.33,.5)(-4.33,5.5)
\psline(4.33,-.5)(-4.33,4.5)
\psline(4.33,-1.5)(-4.33,3.5)
\psline(4.33,-2.5)(-4.33,2.5)
\psline(3.464,-3)(-4.33,1.5)
\psline(1.732,-3)(-4.33,.5)
\psline(0,-3)(-4.33,-.5)
\psline(-1.732,-3)(-4.33,-1.5)
\psline(-3.464,-3)(-4.33,-2.5)

\psline(-4.33,-1.5)(-3.464,-3)
\psline(-4.33,1.5)(-1.732,-3)
\psline(-4.33,4.5)(0,-3)
\psline(-3.464,6)(1.732,-3)
\psline(-1.732,6)(3.464,-3)
\psline(0,6)(4.33,-1.5)
\psline(1.732,6)(4.33,1.5)
\psline(3.464,6)(4.33,4.5)

\psline(4.33,-1.5)(3.464,-3)
\psline(4.33,1.5)(1.732,-3)
\psline(4.33,4.5)(0,-3)
\psline(3.464,6)(-1.732,-3)
\psline(1.732,6)(-3.464,-3)
\psline(0,6)(-4.33,-1.5)
\psline(-1.732,6)(-4.33,1.5)
\psline(-3.464,6)(-4.33,4.5)

\rput(0,-3.5){{\footnotesize $r=2$}}

\end{pspicture}

\end{center}

\end{subfigure}
\begin{subfigure}{.33\textwidth}

%
%

\begin{center}
\begin{pspicture}(-4.33,-4)(4.33,6.5)

\psset{linewidth=.05mm}


\pspolygon[fillstyle=solid,fillcolor=red!5!yellow!42!](0.866,4.5)(0.866,6)(1.732,6)
\pspolygon[fillstyle=solid,fillcolor=red!5!yellow!42!](0.866,1.5)(3.46,6)(4.33,6)(4.33,3.5)
\pspolygon[fillstyle=solid,fillcolor=red!5!yellow!42!](2.598,1.5)(4.33,1.5)(4.33,2.5)
\pspolygon[fillstyle=solid,fillcolor=red!5!yellow!42!](1.732,0)(4.33,0)(4.33,-1.5)
\pspolygon[fillstyle=solid,fillcolor=red!5!yellow!42!](2.598,-1.5)(4.33,-2.5)(4.33,-3)(3.464,-3)
\pspolygon[fillstyle=solid,fillcolor=red!5!yellow!42!](0.866,-1.5)(0.866,-3)(1.732,-3)
\pspolygon[fillstyle=solid,fillcolor=red!5!yellow!42!](0,0)(0,-3)(-1.732,-3)
\pspolygon[fillstyle=solid,fillcolor=red!5!yellow!42!](-2.598,-1.5)(-3.464,-3)(-4.33,-3)(-4.33,-2.5)
\pspolygon[fillstyle=solid,fillcolor=red!5!yellow!42!](-1.732,0)(-4.33,0)(-4.33,-1.5)
\pspolygon[fillstyle=solid,fillcolor=red!5!yellow!42!](-2.598,1.5)(-4.33,1.5)(-4.33,2.5)
\pspolygon[fillstyle=solid,fillcolor=red!5!yellow!42!](-0.866,1.5)(-3.46,6)(-4.33,6)(-4.33,3.5)
\pspolygon[fillstyle=solid,fillcolor=red!5!yellow!42!](0,3)(0,6)(-1.732,6)

\pspolygon[fillstyle=solid,fillcolor=blue!70!black!80!](0,0)(0,-3)(0.866,-3)(0.866,-1.5)
\pspolygon[fillstyle=solid,fillcolor=blue!70!black!80!](1.732,0)(4.33,-1.5)(4.33,-2.5)(2.598,-1.5)
\pspolygon[fillstyle=solid,fillcolor=blue!70!black!80!](0.866,1.5)(2.598,1.5)(4.33,2.5)(4.33,3.5)
\pspolygon[fillstyle=solid,fillcolor=blue!70!black!80!](0,3)(0,6)(0.866,6)(0.866,4.5)
\pspolygon[fillstyle=solid,fillcolor=blue!70!black!80!](-0.866,1.5)(-2.598,1.5)(-4.33,2.5)(-4.33,3.5)
\pspolygon[fillstyle=solid,fillcolor=blue!70!black!80!](-1.732,0)(-4.33,-1.5)(-4.33,-2.5)(-2.598,-1.5)

\pspolygon[fillstyle=solid,fillcolor=green!80!black!70!](0.866,2.5)(.866,4.5)(1.732,6)(3.464,6)(1.732,3)
\pspolygon[fillstyle=solid,fillcolor=green!80!black!70!](0,1)(0,3)(-1.732,6)(-3.46,6)(-0.866,1.5)(0,1)
\pspolygon[fillstyle=solid,fillcolor=green!80!black!70!](-0.866,0.5)(-2.598,1.5)(-4.33,1.5)(-4.33,0)(-1.732,0)
\pspolygon[fillstyle=solid,fillcolor=green!80!black!70!](0.866,0.5)(2.598,1.5)(4.33,1.5)(4.33,0)(1.732,0)
\pspolygon[fillstyle=solid,fillcolor=green!80!black!70!](-.866,-.5)(-.866,-1.5)(-1.732,-3)(-3.464,-3)(-2.598,-1.5)
\pspolygon[fillstyle=solid,fillcolor=green!80!black!70!](.866,-.5)(.866,-1.5)(1.732,-3)(3.464,-3)(2.598,-1.5)


\pspolygon[fillstyle=solid,fillcolor=Turquoise](0,3)(.866,4.5)(.866,2.5)(.433,2.25)
\pspolygon[fillstyle=solid,fillcolor=Turquoise](0.433,.75)(.866,1.5)(2.598,1.5)(.866,.5)
\pspolygon[fillstyle=solid,fillcolor=Turquoise](-0.433,.75)(-.866,1.5)(-2.598,1.5)(-.866,.5)
\pspolygon[fillstyle=solid,fillcolor=Turquoise](.866,0)(.866,-.5)(2.598,-1.5)(1.732,0)
\pspolygon[fillstyle=solid,fillcolor=Turquoise](-.866,0)(-.866,-.5)(-2.598,-1.5)(-1.732,0)

\pspolygon[fillstyle=solid,fillcolor=yellow!10!red!80!](0,0)(-0.433,0.75)(-1.732,0)(-0.866,0)(-0.866,-1.5)
\pspolygon[fillstyle=solid,fillcolor=yellow!10!red!80!](0,0)(0.433,0.75)(1.732,0)(0.866,0)(0.866,-1.5)
\pspolygon[fillstyle=solid,fillcolor=yellow!10!red!80!](0,1.5)(0,3)(0.433,2.25)(1.732,3)(0.866,1.5)

\pspolygon[fillstyle=solid,fillcolor=orange](0,1)(0,1.5)(0.866,1.5)(0.433,0.75)
\pspolygon[fillstyle=solid,fillcolor=orange](0,1)(-.866,1.5)(0,0)


\pspolygon[fillstyle=solid, fillcolor=black](0,0)(0,1)(0.433,0.75)


\psline(-4.33,6)(4.33,6)
\psline(-4.33,4.5)(4.33,4.5)
\psline(-4.33,3)(4.33,3)
\psline(-4.33,1.5)(4.33,1.5)
\psline(-4.33,0)(4.33,0)
\psline(-4.33,-1.5)(4.33,-1.5)
\psline(-4.33,-3)(4.33,-3)

\psline(-4.33,-3)(-4.33,6)
\psline(-3.464,-3)(-3.464,6)
\psline(-2.598,-3)(-2.598,6)
\psline(-1.732,-3)(-1.732,6)
\psline(-.866,-3)(-.866,6)
\psline(0,-3)(0,6)
\psline(.866,-3)(.866,6)
\psline(1.732,-3)(1.732,6)
\psline(2.598,-3)(2.598,6)
\psline(3.464,-3)(3.464,6)
\psline(4.33,-3)(4.33,6)

\psline(-4.33,5.5)(-3.464,6)
\psline(-4.33,4.5)(-1.732,6)
\psline(-4.33,3.5)(0,6)
\psline(-4.33,2.5)(1.732,6)
\psline(-4.33,1.5)(3.464,6)
\psline(-4.33,.5)(4.33,5.5)
\psline(-4.33,-.5)(4.33,4.5)
\psline(-4.33,-1.5)(4.33,3.5)
\psline(-4.33,-2.5)(4.33,2.5)
\psline(-3.464,-3)(4.33,1.5)
\psline(-1.732,-3)(4.33,.5)
\psline(0,-3)(4.33,-.5)
\psline(1.732,-3)(4.33,-1.5)
\psline(3.464,-3)(4.33,-2.5)

\psline(4.33,5.5)(3.464,6)
\psline(4.33,4.5)(1.732,6)
\psline(4.33,3.5)(0,6)
\psline(4.33,2.5)(-1.732,6)
\psline(4.33,1.5)(-3.464,6)
\psline(4.33,.5)(-4.33,5.5)
\psline(4.33,-.5)(-4.33,4.5)
\psline(4.33,-1.5)(-4.33,3.5)
\psline(4.33,-2.5)(-4.33,2.5)
\psline(3.464,-3)(-4.33,1.5)
\psline(1.732,-3)(-4.33,.5)
\psline(0,-3)(-4.33,-.5)
\psline(-1.732,-3)(-4.33,-1.5)
\psline(-3.464,-3)(-4.33,-2.5)

\psline(-4.33,-1.5)(-3.464,-3)
\psline(-4.33,1.5)(-1.732,-3)
\psline(-4.33,4.5)(0,-3)
\psline(-3.464,6)(1.732,-3)
\psline(-1.732,6)(3.464,-3)
\psline(0,6)(4.33,-1.5)
\psline(1.732,6)(4.33,1.5)
\psline(3.464,6)(4.33,4.5)

\psline(4.33,-1.5)(3.464,-3)
\psline(4.33,1.5)(1.732,-3)
\psline(4.33,4.5)(0,-3)
\psline(3.464,6)(-1.732,-3)
\psline(1.732,6)(-3.464,-3)
\psline(0,6)(-4.33,-1.5)
\psline(-1.732,6)(-4.33,1.5)
\psline(-3.464,6)(-4.33,4.5)

\rput(0,-3.5){{\footnotesize $2>r>3/2$}}

\end{pspicture}

\end{center}

\end{subfigure}
\begin{subfigure}{.33\textwidth}

%
%

\begin{center}
\begin{pspicture}(-4.33,-4)(4.33,6.5)

\psset{linewidth=.05mm}


\pspolygon[fillstyle=solid,fillcolor=red!5!yellow!42!](0.866,4.5)(0.866,6)(1.732,6)
\pspolygon[fillstyle=solid,fillcolor=red!5!yellow!42!](0.866,1.5)(3.46,6)(4.33,6)(4.33,3.5)
\pspolygon[fillstyle=solid,fillcolor=red!5!yellow!42!](2.598,1.5)(4.33,1.5)(4.33,2.5)
\pspolygon[fillstyle=solid,fillcolor=red!5!yellow!42!](1.732,0)(4.33,0)(4.33,-1.5)
\pspolygon[fillstyle=solid,fillcolor=red!5!yellow!42!](2.598,-1.5)(4.33,-2.5)(4.33,-3)(3.464,-3)
\pspolygon[fillstyle=solid,fillcolor=red!5!yellow!42!](0.866,-1.5)(0.866,-3)(1.732,-3)
\pspolygon[fillstyle=solid,fillcolor=red!5!yellow!42!](0,0)(0,-3)(-1.732,-3)
\pspolygon[fillstyle=solid,fillcolor=red!5!yellow!42!](-2.598,-1.5)(-3.464,-3)(-4.33,-3)(-4.33,-2.5)
\pspolygon[fillstyle=solid,fillcolor=red!5!yellow!42!](-1.732,0)(-4.33,0)(-4.33,-1.5)
\pspolygon[fillstyle=solid,fillcolor=red!5!yellow!42!](-2.598,1.5)(-4.33,1.5)(-4.33,2.5)
\pspolygon[fillstyle=solid,fillcolor=red!5!yellow!42!](-0.866,1.5)(-3.46,6)(-4.33,6)(-4.33,3.5)
\pspolygon[fillstyle=solid,fillcolor=red!5!yellow!42!](0,3)(0,6)(-1.732,6)

\pspolygon[fillstyle=solid,fillcolor=blue!70!black!80!](0,0)(0,-3)(0.866,-3)(0.866,-1.5)
\pspolygon[fillstyle=solid,fillcolor=blue!70!black!80!](.866,0)(1.732,0)(4.33,-1.5)(4.33,-2.5)(.866,-.5)
\pspolygon[fillstyle=solid,fillcolor=blue!70!black!80!](-.866,0)(-1.732,0)(-4.33,-1.5)(-4.33,-2.5)(-.866,-.5)
\pspolygon[fillstyle=solid,fillcolor=blue!70!black!80!](.433,.75)(0.866,1.5)(4.33,3.5)(4.33,2.5)(.866,.5)
\pspolygon[fillstyle=solid,fillcolor=blue!70!black!80!](-.433,.75)(-0.866,1.5)(-4.33,3.5)(-4.33,2.5)(-.866,.5)
\pspolygon[fillstyle=solid,fillcolor=blue!70!black!80!](0,3)(0,6)(.866,6)(.866,2.5)(.433,2.25)

\pspolygon[fillstyle=solid,fillcolor=green!80!black!70!](0.866,2.5)(.866,4.5)(1.732,6)(3.464,6)(1.732,3)
\pspolygon[fillstyle=solid,fillcolor=green!80!black!70!](0,1)(0,3)(-1.732,6)(-3.46,6)(-0.866,1.5)(0,1)
\pspolygon[fillstyle=solid,fillcolor=green!80!black!70!](-0.866,0.5)(-2.598,1.5)(-4.33,1.5)(-4.33,0)(-1.732,0)
\pspolygon[fillstyle=solid,fillcolor=green!80!black!70!](0.866,0.5)(2.598,1.5)(4.33,1.5)(4.33,0)(1.732,0)
\pspolygon[fillstyle=solid,fillcolor=green!80!black!70!](-.866,-.5)(-.866,-1.5)(-1.732,-3)(-3.464,-3)(-2.598,-1.5)
\pspolygon[fillstyle=solid,fillcolor=green!80!black!70!](.866,-.5)(.866,-1.5)(1.732,-3)(3.464,-3)(2.598,-1.5)


\pspolygon[fillstyle=solid,fillcolor=yellow!10!red!80!](0,0)(-0.433,0.75)(-1.732,0)(-0.866,0)(-0.866,-1.5)
\pspolygon[fillstyle=solid,fillcolor=yellow!10!red!80!](0,0)(0.433,0.75)(1.732,0)(0.866,0)(0.866,-1.5)
\pspolygon[fillstyle=solid,fillcolor=yellow!10!red!80!](0,1.5)(0,3)(0.433,2.25)(1.732,3)(0.866,1.5)

\pspolygon[fillstyle=solid,fillcolor=orange](0,1)(0,1.5)(0.866,1.5)(0.433,0.75)
\pspolygon[fillstyle=solid,fillcolor=orange](0,1)(-.866,1.5)(0,0)


\pspolygon[fillstyle=solid, fillcolor=black](0,0)(0,1)(0.433,0.75)


\psline(-4.33,6)(4.33,6)
\psline(-4.33,4.5)(4.33,4.5)
\psline(-4.33,3)(4.33,3)
\psline(-4.33,1.5)(4.33,1.5)
\psline(-4.33,0)(4.33,0)
\psline(-4.33,-1.5)(4.33,-1.5)
\psline(-4.33,-3)(4.33,-3)

\psline(-4.33,-3)(-4.33,6)
\psline(-3.464,-3)(-3.464,6)
\psline(-2.598,-3)(-2.598,6)
\psline(-1.732,-3)(-1.732,6)
\psline(-.866,-3)(-.866,6)
\psline(0,-3)(0,6)
\psline(.866,-3)(.866,6)
\psline(1.732,-3)(1.732,6)
\psline(2.598,-3)(2.598,6)
\psline(3.464,-3)(3.464,6)
\psline(4.33,-3)(4.33,6)

\psline(-4.33,5.5)(-3.464,6)
\psline(-4.33,4.5)(-1.732,6)
\psline(-4.33,3.5)(0,6)
\psline(-4.33,2.5)(1.732,6)
\psline(-4.33,1.5)(3.464,6)
\psline(-4.33,.5)(4.33,5.5)
\psline(-4.33,-.5)(4.33,4.5)
\psline(-4.33,-1.5)(4.33,3.5)
\psline(-4.33,-2.5)(4.33,2.5)
\psline(-3.464,-3)(4.33,1.5)
\psline(-1.732,-3)(4.33,.5)
\psline(0,-3)(4.33,-.5)
\psline(1.732,-3)(4.33,-1.5)
\psline(3.464,-3)(4.33,-2.5)

\psline(4.33,5.5)(3.464,6)
\psline(4.33,4.5)(1.732,6)
\psline(4.33,3.5)(0,6)
\psline(4.33,2.5)(-1.732,6)
\psline(4.33,1.5)(-3.464,6)
\psline(4.33,.5)(-4.33,5.5)
\psline(4.33,-.5)(-4.33,4.5)
\psline(4.33,-1.5)(-4.33,3.5)
\psline(4.33,-2.5)(-4.33,2.5)
\psline(3.464,-3)(-4.33,1.5)
\psline(1.732,-3)(-4.33,.5)
\psline(0,-3)(-4.33,-.5)
\psline(-1.732,-3)(-4.33,-1.5)
\psline(-3.464,-3)(-4.33,-2.5)

\psline(-4.33,-1.5)(-3.464,-3)
\psline(-4.33,1.5)(-1.732,-3)
\psline(-4.33,4.5)(0,-3)
\psline(-3.464,6)(1.732,-3)
\psline(-1.732,6)(3.464,-3)
\psline(0,6)(4.33,-1.5)
\psline(1.732,6)(4.33,1.5)
\psline(3.464,6)(4.33,4.5)

\psline(4.33,-1.5)(3.464,-3)
\psline(4.33,1.5)(1.732,-3)
\psline(4.33,4.5)(0,-3)
\psline(3.464,6)(-1.732,-3)
\psline(1.732,6)(-3.464,-3)
\psline(0,6)(-4.33,-1.5)
\psline(-1.732,6)(-4.33,1.5)
\psline(-3.464,6)(-4.33,4.5)


\rput(0,-3.5){{\footnotesize $r=3/2$}}

\end{pspicture}

\end{center}

\end{subfigure}
\begin{subfigure}{.33\textwidth}

%
%

\begin{center}
\begin{pspicture}(-4.33,-4)(4.33,6.5)

\psset{linewidth=.05mm}


\pspolygon[fillstyle=solid,fillcolor=red!5!yellow!42!](0.866,4.5)(0.866,6)(1.732,6)
\pspolygon[fillstyle=solid,fillcolor=red!5!yellow!42!](0.866,1.5)(3.46,6)(4.33,6)(4.33,3.5)
\pspolygon[fillstyle=solid,fillcolor=red!5!yellow!42!](2.598,1.5)(4.33,1.5)(4.33,2.5)
\pspolygon[fillstyle=solid,fillcolor=red!5!yellow!42!](1.732,0)(4.33,0)(4.33,-1.5)
\pspolygon[fillstyle=solid,fillcolor=red!5!yellow!42!](2.598,-1.5)(4.33,-2.5)(4.33,-3)(3.464,-3)
\pspolygon[fillstyle=solid,fillcolor=red!5!yellow!42!](0.866,-1.5)(0.866,-3)(1.732,-3)
\pspolygon[fillstyle=solid,fillcolor=red!5!yellow!42!](0,0)(0,-3)(-1.732,-3)
\pspolygon[fillstyle=solid,fillcolor=red!5!yellow!42!](-2.598,-1.5)(-3.464,-3)(-4.33,-3)(-4.33,-2.5)
\pspolygon[fillstyle=solid,fillcolor=red!5!yellow!42!](-1.732,0)(-4.33,0)(-4.33,-1.5)
\pspolygon[fillstyle=solid,fillcolor=red!5!yellow!42!](-2.598,1.5)(-4.33,1.5)(-4.33,2.5)
\pspolygon[fillstyle=solid,fillcolor=red!5!yellow!42!](-0.866,1.5)(-3.46,6)(-4.33,6)(-4.33,3.5)
\pspolygon[fillstyle=solid,fillcolor=red!5!yellow!42!](0,3)(0,6)(-1.732,6)

\pspolygon[fillstyle=solid,fillcolor=blue!70!black!80!](0,-1)(0,-3)(0.866,-3)(0.866,-1.5)(.433,-.75)
\pspolygon[fillstyle=solid,fillcolor=blue!70!black!80!](.866,0)(1.732,0)(4.33,-1.5)(4.33,-2.5)(.866,-.5)
\pspolygon[fillstyle=solid,fillcolor=blue!70!black!80!](-.866,0)(-1.732,0)(-4.33,-1.5)(-4.33,-2.5)(-.866,-.5)
\pspolygon[fillstyle=solid,fillcolor=blue!70!black!80!](.433,.75)(0.866,1.5)(4.33,3.5)(4.33,2.5)(.866,.5)
\pspolygon[fillstyle=solid,fillcolor=blue!70!black!80!](-.433,.75)(-0.866,1.5)(-4.33,3.5)(-4.33,2.5)(-.866,.5)
\pspolygon[fillstyle=solid,fillcolor=blue!70!black!80!](0,3)(0,6)(.866,6)(.866,2.5)(.433,2.25)

\pspolygon[fillstyle=solid,fillcolor=green!80!black!70!](0.866,2.5)(.866,4.5)(1.732,6)(3.464,6)(1.732,3)
\pspolygon[fillstyle=solid,fillcolor=green!80!black!70!](0,1)(0,3)(-1.732,6)(-3.46,6)(-0.866,1.5)(0,1)
\pspolygon[fillstyle=solid,fillcolor=green!80!black!70!](-0.866,0.5)(-2.598,1.5)(-4.33,1.5)(-4.33,0)(-1.732,0)
\pspolygon[fillstyle=solid,fillcolor=green!80!black!70!](0.866,0.5)(2.598,1.5)(4.33,1.5)(4.33,0)(1.732,0)
\pspolygon[fillstyle=solid,fillcolor=green!80!black!70!](-.866,-.5)(-.866,-1.5)(-1.732,-3)(-3.464,-3)(-2.598,-1.5)
\pspolygon[fillstyle=solid,fillcolor=green!80!black!70!](.866,-.5)(.866,-1.5)(1.732,-3)(3.464,-3)(2.598,-1.5)

\pspolygon[fillstyle=solid,fillcolor=purple](0,0)(0,-1)(.433,-.75)

\pspolygon[fillstyle=solid,fillcolor=yellow!10!red!80!](0,0)(-0.433,0.75)(-1.732,0)(-0.866,0)(-0.866,-1.5)
\pspolygon[fillstyle=solid,fillcolor=yellow!10!red!80!](0,0)(0.433,0.75)(1.732,0)(0.866,0)(0.866,-1.5)
\pspolygon[fillstyle=solid,fillcolor=yellow!10!red!80!](0,1.5)(0,3)(0.433,2.25)(1.732,3)(0.866,1.5)

\pspolygon[fillstyle=solid,fillcolor=orange](0,1)(0,1.5)(0.866,1.5)(0.433,0.75)
\pspolygon[fillstyle=solid,fillcolor=orange](0,1)(-.866,1.5)(0,0)


\pspolygon[fillstyle=solid, fillcolor=black](0,0)(0,1)(0.433,0.75)


\psline(-4.33,6)(4.33,6)
\psline(-4.33,4.5)(4.33,4.5)
\psline(-4.33,3)(4.33,3)
\psline(-4.33,1.5)(4.33,1.5)
\psline(-4.33,0)(4.33,0)
\psline(-4.33,-1.5)(4.33,-1.5)
\psline(-4.33,-3)(4.33,-3)

\psline(-4.33,-3)(-4.33,6)
\psline(-3.464,-3)(-3.464,6)
\psline(-2.598,-3)(-2.598,6)
\psline(-1.732,-3)(-1.732,6)
\psline(-.866,-3)(-.866,6)
\psline(0,-3)(0,6)
\psline(.866,-3)(.866,6)
\psline(1.732,-3)(1.732,6)
\psline(2.598,-3)(2.598,6)
\psline(3.464,-3)(3.464,6)
\psline(4.33,-3)(4.33,6)

\psline(-4.33,5.5)(-3.464,6)
\psline(-4.33,4.5)(-1.732,6)
\psline(-4.33,3.5)(0,6)
\psline(-4.33,2.5)(1.732,6)
\psline(-4.33,1.5)(3.464,6)
\psline(-4.33,.5)(4.33,5.5)
\psline(-4.33,-.5)(4.33,4.5)
\psline(-4.33,-1.5)(4.33,3.5)
\psline(-4.33,-2.5)(4.33,2.5)
\psline(-3.464,-3)(4.33,1.5)
\psline(-1.732,-3)(4.33,.5)
\psline(0,-3)(4.33,-.5)
\psline(1.732,-3)(4.33,-1.5)
\psline(3.464,-3)(4.33,-2.5)

\psline(4.33,5.5)(3.464,6)
\psline(4.33,4.5)(1.732,6)
\psline(4.33,3.5)(0,6)
\psline(4.33,2.5)(-1.732,6)
\psline(4.33,1.5)(-3.464,6)
\psline(4.33,.5)(-4.33,5.5)
\psline(4.33,-.5)(-4.33,4.5)
\psline(4.33,-1.5)(-4.33,3.5)
\psline(4.33,-2.5)(-4.33,2.5)
\psline(3.464,-3)(-4.33,1.5)
\psline(1.732,-3)(-4.33,.5)
\psline(0,-3)(-4.33,-.5)
\psline(-1.732,-3)(-4.33,-1.5)
\psline(-3.464,-3)(-4.33,-2.5)

\psline(-4.33,-1.5)(-3.464,-3)
\psline(-4.33,1.5)(-1.732,-3)
\psline(-4.33,4.5)(0,-3)
\psline(-3.464,6)(1.732,-3)
\psline(-1.732,6)(3.464,-3)
\psline(0,6)(4.33,-1.5)
\psline(1.732,6)(4.33,1.5)
\psline(3.464,6)(4.33,4.5)

\psline(4.33,-1.5)(3.464,-3)
\psline(4.33,1.5)(1.732,-3)
\psline(4.33,4.5)(0,-3)
\psline(3.464,6)(-1.732,-3)
\psline(1.732,6)(-3.464,-3)
\psline(0,6)(-4.33,-1.5)
\psline(-1.732,6)(-4.33,1.5)
\psline(-3.464,6)(-4.33,4.5)


\rput(0,-3.5){{\footnotesize $3/2>r>1$}}

\end{pspicture}

\end{center}

\end{subfigure}
\begin{subfigure}{.33\textwidth}

%
%

\begin{center}
\begin{pspicture}(-4.33,-4)(4.33,6.5)

\psset{linewidth=.05mm}


\pspolygon[fillstyle=solid,fillcolor=red!5!yellow!42!](0.866,4.5)(0.866,6)(1.732,6)
\pspolygon[fillstyle=solid,fillcolor=red!5!yellow!42!](0.866,1.5)(3.46,6)(4.33,6)(4.33,3.5)
\pspolygon[fillstyle=solid,fillcolor=red!5!yellow!42!](2.598,1.5)(4.33,1.5)(4.33,2.5)
\pspolygon[fillstyle=solid,fillcolor=red!5!yellow!42!](1.732,0)(4.33,0)(4.33,-1.5)
\pspolygon[fillstyle=solid,fillcolor=red!5!yellow!42!](2.598,-1.5)(4.33,-2.5)(4.33,-3)(3.464,-3)
\pspolygon[fillstyle=solid,fillcolor=red!5!yellow!42!](0.866,-1.5)(0.866,-3)(1.732,-3)
\pspolygon[fillstyle=solid,fillcolor=red!5!yellow!42!](0,0)(0,-3)(-1.732,-3)
\pspolygon[fillstyle=solid,fillcolor=red!5!yellow!42!](-2.598,-1.5)(-3.464,-3)(-4.33,-3)(-4.33,-2.5)
\pspolygon[fillstyle=solid,fillcolor=red!5!yellow!42!](-1.732,0)(-4.33,0)(-4.33,-1.5)
\pspolygon[fillstyle=solid,fillcolor=red!5!yellow!42!](-2.598,1.5)(-4.33,1.5)(-4.33,2.5)
\pspolygon[fillstyle=solid,fillcolor=red!5!yellow!42!](-0.866,1.5)(-3.46,6)(-4.33,6)(-4.33,3.5)
\pspolygon[fillstyle=solid,fillcolor=red!5!yellow!42!](0,3)(0,6)(-1.732,6)

\pspolygon[fillstyle=solid,fillcolor=blue!70!black!80!](0,-1)(0,-3)(0.866,-3)(0.866,-1.5)(.433,-.75)
\pspolygon[fillstyle=solid,fillcolor=blue!70!black!80!](.866,0)(1.732,0)(4.33,-1.5)(4.33,-2.5)(.866,-.5)
\pspolygon[fillstyle=solid,fillcolor=blue!70!black!80!](-.866,0)(-1.732,0)(-4.33,-1.5)(-4.33,-2.5)(-.866,-.5)
\pspolygon[fillstyle=solid,fillcolor=blue!70!black!80!](.433,.75)(0.866,1.5)(4.33,3.5)(4.33,2.5)(.866,.5)
\pspolygon[fillstyle=solid,fillcolor=blue!70!black!80!](-.433,.75)(-0.866,1.5)(-4.33,3.5)(-4.33,2.5)(-.866,.5)
\pspolygon[fillstyle=solid,fillcolor=blue!70!black!80!](0,3)(0,6)(.866,6)(.866,2.5)(.433,2.25)

\pspolygon[fillstyle=solid,fillcolor=green!80!black!70!](0.866,2.5)(.866,4.5)(1.732,6)(3.464,6)(1.732,3)
\pspolygon[fillstyle=solid,fillcolor=green!80!black!70!](0,1)(0,3)(-1.732,6)(-3.46,6)(-0.866,1.5)(0,1)
\pspolygon[fillstyle=solid,fillcolor=green!80!black!70!](-0.866,0.5)(-2.598,1.5)(-4.33,1.5)(-4.33,0)(-1.732,0)
\pspolygon[fillstyle=solid,fillcolor=green!80!black!70!](0.866,0.5)(2.598,1.5)(4.33,1.5)(4.33,0)(1.732,0)
\pspolygon[fillstyle=solid,fillcolor=green!80!black!70!](-.866,-.5)(-.866,-1.5)(-1.732,-3)(-3.464,-3)(-2.598,-1.5)
\pspolygon[fillstyle=solid,fillcolor=green!80!black!70!](.866,-.5)(.866,-1.5)(1.732,-3)(3.464,-3)(2.598,-1.5)


\pspolygon[fillstyle=solid,fillcolor=yellow!10!red!80!](0,0)(0,1)(-.866,1.5)(-.433,.75)(-1.732,0)(-0.866,0)(-0.866,-1.5)
\pspolygon[fillstyle=solid,fillcolor=yellow!10!red!80!](0,0)(0.433,0.75)(1.732,0)(0.866,0)(0.866,-1.5)(.433,-.75)(0,-1)
\pspolygon[fillstyle=solid,fillcolor=yellow!10!red!80!](0,1)(0,3)(0.433,2.25)(1.732,3)(0.433,.75)


\pspolygon[fillstyle=solid, fillcolor=black](0,0)(0,1)(0.433,0.75)


\psline(-4.33,6)(4.33,6)
\psline(-4.33,4.5)(4.33,4.5)
\psline(-4.33,3)(4.33,3)
\psline(-4.33,1.5)(4.33,1.5)
\psline(-4.33,0)(4.33,0)
\psline(-4.33,-1.5)(4.33,-1.5)
\psline(-4.33,-3)(4.33,-3)

\psline(-4.33,-3)(-4.33,6)
\psline(-3.464,-3)(-3.464,6)
\psline(-2.598,-3)(-2.598,6)
\psline(-1.732,-3)(-1.732,6)
\psline(-.866,-3)(-.866,6)
\psline(0,-3)(0,6)
\psline(.866,-3)(.866,6)
\psline(1.732,-3)(1.732,6)
\psline(2.598,-3)(2.598,6)
\psline(3.464,-3)(3.464,6)
\psline(4.33,-3)(4.33,6)

\psline(-4.33,5.5)(-3.464,6)
\psline(-4.33,4.5)(-1.732,6)
\psline(-4.33,3.5)(0,6)
\psline(-4.33,2.5)(1.732,6)
\psline(-4.33,1.5)(3.464,6)
\psline(-4.33,.5)(4.33,5.5)
\psline(-4.33,-.5)(4.33,4.5)
\psline(-4.33,-1.5)(4.33,3.5)
\psline(-4.33,-2.5)(4.33,2.5)
\psline(-3.464,-3)(4.33,1.5)
\psline(-1.732,-3)(4.33,.5)
\psline(0,-3)(4.33,-.5)
\psline(1.732,-3)(4.33,-1.5)
\psline(3.464,-3)(4.33,-2.5)

\psline(4.33,5.5)(3.464,6)
\psline(4.33,4.5)(1.732,6)
\psline(4.33,3.5)(0,6)
\psline(4.33,2.5)(-1.732,6)
\psline(4.33,1.5)(-3.464,6)
\psline(4.33,.5)(-4.33,5.5)
\psline(4.33,-.5)(-4.33,4.5)
\psline(4.33,-1.5)(-4.33,3.5)
\psline(4.33,-2.5)(-4.33,2.5)
\psline(3.464,-3)(-4.33,1.5)
\psline(1.732,-3)(-4.33,.5)
\psline(0,-3)(-4.33,-.5)
\psline(-1.732,-3)(-4.33,-1.5)
\psline(-3.464,-3)(-4.33,-2.5)

\psline(-4.33,-1.5)(-3.464,-3)
\psline(-4.33,1.5)(-1.732,-3)
\psline(-4.33,4.5)(0,-3)
\psline(-3.464,6)(1.732,-3)
\psline(-1.732,6)(3.464,-3)
\psline(0,6)(4.33,-1.5)
\psline(1.732,6)(4.33,1.5)
\psline(3.464,6)(4.33,4.5)

\psline(4.33,-1.5)(3.464,-3)
\psline(4.33,1.5)(1.732,-3)
\psline(4.33,4.5)(0,-3)
\psline(3.464,6)(-1.732,-3)
\psline(1.732,6)(-3.464,-3)
\psline(0,6)(-4.33,-1.5)
\psline(-1.732,6)(-4.33,1.5)
\psline(-3.464,6)(-4.33,4.5)


\rput(0,-3.5){{\footnotesize $r=1$}}

\end{pspicture}

\end{center}

\end{subfigure}
\end{figure}

\begin{figure} 
\ContinuedFloat
\begin{subfigure}{.33\textwidth}

%
%

\begin{center}
\begin{pspicture}(-4.33,-4)(4.33,6.5)

\psset{linewidth=.05mm}


\pspolygon[fillstyle=solid,fillcolor=red!5!yellow!42!](0.866,4.5)(0.866,6)(1.732,6)
\pspolygon[fillstyle=solid,fillcolor=red!5!yellow!42!](0.866,1.5)(3.46,6)(4.33,6)(4.33,3.5)
\pspolygon[fillstyle=solid,fillcolor=red!5!yellow!42!](2.598,1.5)(4.33,1.5)(4.33,2.5)
\pspolygon[fillstyle=solid,fillcolor=red!5!yellow!42!](1.732,0)(4.33,0)(4.33,-1.5)
\pspolygon[fillstyle=solid,fillcolor=red!5!yellow!42!](2.598,-1.5)(4.33,-2.5)(4.33,-3)(3.464,-3)
\pspolygon[fillstyle=solid,fillcolor=red!5!yellow!42!](0.866,-1.5)(0.866,-3)(1.732,-3)
\pspolygon[fillstyle=solid,fillcolor=red!5!yellow!42!](0,0)(0,-3)(-1.732,-3)
\pspolygon[fillstyle=solid,fillcolor=red!5!yellow!42!](-2.598,-1.5)(-3.464,-3)(-4.33,-3)(-4.33,-2.5)
\pspolygon[fillstyle=solid,fillcolor=red!5!yellow!42!](-1.732,0)(-4.33,0)(-4.33,-1.5)
\pspolygon[fillstyle=solid,fillcolor=red!5!yellow!42!](-2.598,1.5)(-4.33,1.5)(-4.33,2.5)
\pspolygon[fillstyle=solid,fillcolor=red!5!yellow!42!](-0.866,1.5)(-3.46,6)(-4.33,6)(-4.33,3.5)
\pspolygon[fillstyle=solid,fillcolor=red!5!yellow!42!](0,3)(0,6)(-1.732,6)

\pspolygon[fillstyle=solid,fillcolor=blue!70!black!80!](0,-1)(0,-3)(0.866,-3)(0.866,-1.5)(.433,-.75)
\pspolygon[fillstyle=solid,fillcolor=blue!70!black!80!](.866,0)(1.732,0)(4.33,-1.5)(4.33,-2.5)(.866,-.5)
\pspolygon[fillstyle=solid,fillcolor=blue!70!black!80!](-.866,0)(-1.732,0)(-4.33,-1.5)(-4.33,-2.5)(-.866,-.5)
\pspolygon[fillstyle=solid,fillcolor=blue!70!black!80!](.433,.75)(0.866,1.5)(4.33,3.5)(4.33,2.5)(.866,.5)
\pspolygon[fillstyle=solid,fillcolor=blue!70!black!80!](-.433,.75)(-0.866,1.5)(-4.33,3.5)(-4.33,2.5)(-.866,.5)
\pspolygon[fillstyle=solid,fillcolor=blue!70!black!80!](0,3)(0,6)(.866,6)(.866,2.5)(.433,2.25)

\pspolygon[fillstyle=solid,fillcolor=green!80!black!70!](0.866,2.5)(.866,4.5)(1.732,6)(3.464,6)(1.732,3)
\pspolygon[fillstyle=solid,fillcolor=green!80!black!70!](0,1)(0,3)(-1.732,6)(-3.46,6)(-0.866,1.5)(0,1)
\pspolygon[fillstyle=solid,fillcolor=green!80!black!70!](-0.866,0.5)(-2.598,1.5)(-4.33,1.5)(-4.33,0)(-1.732,0)
\pspolygon[fillstyle=solid,fillcolor=green!80!black!70!](0.866,0.5)(2.598,1.5)(4.33,1.5)(4.33,0)(1.732,0)
\pspolygon[fillstyle=solid,fillcolor=green!80!black!70!](-.866,-.5)(-.866,-1.5)(-1.732,-3)(-3.464,-3)(-2.598,-1.5)
\pspolygon[fillstyle=solid,fillcolor=green!80!black!70!](.866,-.5)(.866,-1.5)(1.732,-3)(3.464,-3)(2.598,-1.5)


\pspolygon[fillstyle=solid,fillcolor=purple](0,0)(.433,.75)(.866,.5)

\pspolygon[fillstyle=solid,fillcolor=yellow!10!red!80!](0,0)(0,1)(-.866,1.5)(-.433,.75)(-1.732,0)(-0.866,0)(-0.866,-.5)
\pspolygon[fillstyle=solid,fillcolor=yellow!10!red!80!](0,0)(0.866,.5)(1.732,0)(0.866,0)(0.866,-1.5)(.433,-.75)(0,-1)
\pspolygon[fillstyle=solid,fillcolor=yellow!10!red!80!](0,1)(0,3)(0.433,2.25)(.866,2.5)(0.866,1.5)(.433,.75)

\pspolygon[fillstyle=solid,fillcolor=orange](.866,1.5)(.866,2.5)(1.732,3)
\pspolygon[fillstyle=solid,fillcolor=orange](0,0)(-.866,-1.5)(-.866,-.5)

\pspolygon[fillstyle=solid, fillcolor=black](0,0)(0,1)(0.433,0.75)


\psline(-4.33,6)(4.33,6)
\psline(-4.33,4.5)(4.33,4.5)
\psline(-4.33,3)(4.33,3)
\psline(-4.33,1.5)(4.33,1.5)
\psline(-4.33,0)(4.33,0)
\psline(-4.33,-1.5)(4.33,-1.5)
\psline(-4.33,-3)(4.33,-3)

\psline(-4.33,-3)(-4.33,6)
\psline(-3.464,-3)(-3.464,6)
\psline(-2.598,-3)(-2.598,6)
\psline(-1.732,-3)(-1.732,6)
\psline(-.866,-3)(-.866,6)
\psline(0,-3)(0,6)
\psline(.866,-3)(.866,6)
\psline(1.732,-3)(1.732,6)
\psline(2.598,-3)(2.598,6)
\psline(3.464,-3)(3.464,6)
\psline(4.33,-3)(4.33,6)

\psline(-4.33,5.5)(-3.464,6)
\psline(-4.33,4.5)(-1.732,6)
\psline(-4.33,3.5)(0,6)
\psline(-4.33,2.5)(1.732,6)
\psline(-4.33,1.5)(3.464,6)
\psline(-4.33,.5)(4.33,5.5)
\psline(-4.33,-.5)(4.33,4.5)
\psline(-4.33,-1.5)(4.33,3.5)
\psline(-4.33,-2.5)(4.33,2.5)
\psline(-3.464,-3)(4.33,1.5)
\psline(-1.732,-3)(4.33,.5)
\psline(0,-3)(4.33,-.5)
\psline(1.732,-3)(4.33,-1.5)
\psline(3.464,-3)(4.33,-2.5)

\psline(4.33,5.5)(3.464,6)
\psline(4.33,4.5)(1.732,6)
\psline(4.33,3.5)(0,6)
\psline(4.33,2.5)(-1.732,6)
\psline(4.33,1.5)(-3.464,6)
\psline(4.33,.5)(-4.33,5.5)
\psline(4.33,-.5)(-4.33,4.5)
\psline(4.33,-1.5)(-4.33,3.5)
\psline(4.33,-2.5)(-4.33,2.5)
\psline(3.464,-3)(-4.33,1.5)
\psline(1.732,-3)(-4.33,.5)
\psline(0,-3)(-4.33,-.5)
\psline(-1.732,-3)(-4.33,-1.5)
\psline(-3.464,-3)(-4.33,-2.5)

\psline(-4.33,-1.5)(-3.464,-3)
\psline(-4.33,1.5)(-1.732,-3)
\psline(-4.33,4.5)(0,-3)
\psline(-3.464,6)(1.732,-3)
\psline(-1.732,6)(3.464,-3)
\psline(0,6)(4.33,-1.5)
\psline(1.732,6)(4.33,1.5)
\psline(3.464,6)(4.33,4.5)

\psline(4.33,-1.5)(3.464,-3)
\psline(4.33,1.5)(1.732,-3)
\psline(4.33,4.5)(0,-3)
\psline(3.464,6)(-1.732,-3)
\psline(1.732,6)(-3.464,-3)
\psline(0,6)(-4.33,-1.5)
\psline(-1.732,6)(-4.33,1.5)
\psline(-3.464,6)(-4.33,4.5)

\rput(0,-3.5){{\footnotesize $r<1$}}

\end{pspicture}

\end{center}

\end{subfigure}
\begin{subfigure}{.66\textwidth}
\begin{center}
\pgfmathsetmacro{\spv}{0.5}
\pgfmathsetmacro{\sph}{1.5}
\begin{tikzpicture}


\node[label={[inner sep=.04cm]left:\scalebox{.45}{$\Ga_e$}},circle,draw=black, fill=black, inner sep=0pt,minimum size=3pt] (ae) at (0,0) {};
\node[label={[inner sep=.04cm]left:\scalebox{.45}{$\Ga_4$}},circle,draw=black, fill=ora, inner sep=0pt,minimum size=3pt] (a4) at (0,-1*\spv) {};
\node[label={[inner sep=.04cm]left:\scalebox{.45}{$\Ga_5$}},circle,draw=black, fill=fgr, inner sep=0pt,minimum size=3pt] (a5)  at (-.4,-2*\spv) {};
\node[label={[inner sep=.04cm]right:\scalebox{.45}{$\Ga_3$}},circle,draw=black, fill=rd, inner sep=0pt,minimum size=3pt] (a3)  at (0.4,-2*\spv) {};
\node[label={[inner sep=.04cm]left:\scalebox{.45}{$\Ga_1$}},circle,draw=black, fill=gr, inner sep=0pt,minimum size=3pt] (a1) at (0,-3*\spv) {};
\node[label={[inner sep=.04cm]left:\scalebox{.45}{$\Ga_2$}},circle,draw=black, fill=bl, inner sep=0pt,minimum size=3pt] (a2) at (0,-4*\spv) {};
\node[label={[inner sep=.04cm]left:\scalebox{.45}{$\Ga_0$}},circle,draw=black, fill=ye, inner sep=0pt,minimum size=3pt] (a0)  at (0,-5*\spv) {};

\draw (a0)  to  (a2);
\draw (a2)  to  (a1);
\draw (a1)  to  (a3);
\draw (a1)  to  (a5);
\draw (a3)  to  (a4);
\draw (a4)  to  (a5);
\draw (a4)  to  (ae);


\node[label={[inner sep=.04cm]left:\scalebox{.45}{$\Ga_e$}},circle,draw=black, fill=black, inner sep=0pt,minimum size=3pt] (ae) at (\sph,0) {};
\node[label={[inner sep=.04cm]left:\scalebox{.45}{$\Ga_4$}},circle,draw=black, fill=ora, inner sep=0pt,minimum size=3pt] (a4) at (\sph,-1*\spv) {};
\node[label={[inner sep=.04cm]left:\scalebox{.45}{$\Ga_3$}},circle,draw=black, fill=rd, inner sep=0pt,minimum size=3pt] (a3)  at (\sph,-2*\spv) {};
\node[label={[inner sep=.04cm]left:\scalebox{.45}{$\Ga_1$}},circle,draw=black, fill=gr, inner sep=0pt,minimum size=3pt] (a1) at (\sph,-3*\spv) {};
\node[label={[inner sep=.04cm]left:\scalebox{.45}{$\Ga_2$}},circle,draw=black, fill=bl, inner sep=0pt,minimum size=3pt] (a2) at (\sph,-4*\spv) {};
\node[label={[inner sep=.04cm]left:\scalebox{.45}{$\Ga_0$}},circle,draw=black, fill=ye, inner sep=0pt,minimum size=3pt] (a0)  at (\sph,-5*\spv) {};

\draw (a0)  to  (a2);
\draw (a2)  to  (a1);
\draw (a1)  to  (a3);
\draw (a3)  to  (a4);
\draw (a4)  to  (ae);


\node[label={[inner sep=.04cm]left:\scalebox{.45}{$\Ga_e$}},circle,draw=black, fill=black, inner sep=0pt,minimum size=3pt] (ae) at (2*\sph,0) {};
\node[label={[inner sep=.04cm]left:\scalebox{.45}{$\Ga_4$}},circle,draw=black, fill=ora, inner sep=0pt,minimum size=3pt] (a4) at (2*\sph,-1*\spv) {};
\node[label={[inner sep=.04cm]left:\scalebox{.45}{$\Ga_3$}},circle,draw=black, fill=rd, inner sep=0pt,minimum size=3pt] (a3)  at (2*\sph,-2*\spv) {};
\node[label={[inner sep=.04cm]left:\scalebox{.45}{$\Ga_6$}},circle,draw=black, fill=tur, inner sep=0pt,minimum size=3pt] (at)  at (2*\sph,-3*\spv) {};
\node[label={[inner sep=.04cm]right:\scalebox{.45}{$\Ga_1$}},circle,draw=black, fill=gr, inner sep=0pt,minimum size=3pt] (a1) at (2*\sph+.4,-4*\spv) {};
\node[label={[inner sep=.04cm]left:\scalebox{.45}{$\Ga_2$}},circle,draw=black, fill=bl, inner sep=0pt,minimum size=3pt] (a2) at (2*\sph-.4,-4*\spv) {};
\node[label={[inner sep=.04cm]left:\scalebox{.45}{$\Ga_0$}},circle,draw=black, fill=ye, inner sep=0pt,minimum size=3pt] (a0)  at (2*\sph,-5*\spv) {};

\draw (a0)  to  (a2);
\draw (a0)  to  (a1);
\draw (a1)  to  (at);
\draw (a2)  to  (at);
\draw (at)  to  (a3);
\draw (a3)  to  (a4);
\draw (a4)  to  (ae);


\node[label={[inner sep=.04cm]left:\scalebox{.45}{$\Ga_e$}},circle,draw=black, fill=black, inner sep=0pt,minimum size=3pt] (ae) at (3*\sph,0) {};
\node[label={[inner sep=.04cm]left:\scalebox{.45}{$\Ga_4$}},circle,draw=black, fill=ora, inner sep=0pt,minimum size=3pt] (a4) at (3*\sph,-1*\spv) {};
\node[label={[inner sep=.04cm]left:\scalebox{.45}{$\Ga_3$}},circle,draw=black, fill=rd, inner sep=0pt,minimum size=3pt] (a3)  at (3*\sph,-2*\spv) {};
\node[label={[inner sep=.04cm]left:\scalebox{.45}{$\Ga_2$}},circle,draw=black, fill=bl, inner sep=0pt,minimum size=3pt] (a2) at (3*\sph,-3*\spv) {};
\node[label={[inner sep=.04cm]left:\scalebox{.45}{$\Ga_1$}},circle,draw=black, fill=gr, inner sep=0pt,minimum size=3pt] (a1) at (3*\sph,-4*\spv) {};
\node[label={[inner sep=.04cm]left:\scalebox{.45}{$\Ga_0$}},circle,draw=black, fill=ye, inner sep=0pt,minimum size=3pt] (a0)  at (3*\sph,-5*\spv) {};

\draw (a0)  to  (a1);
\draw (a1)  to  (a2);
\draw (a2)  to  (a3);
\draw (a3)  to  (a4);
\draw (a4)  to  (ae);


\node[label={[inner sep=.04cm]left:\scalebox{.45}{$\Ga_e$}},circle,draw=black, fill=black, inner sep=0pt,minimum size=3pt] (ae) at (4*\sph,0) {};
\node[label={[inner sep=.04cm]left:\scalebox{.45}{$\Ga_4$}},circle,draw=black, fill=ora, inner sep=0pt,minimum size=3pt] (a4) at (4*\sph,-1*\spv) {};
\node[label={[inner sep=.04cm]left:\scalebox{.45}{$\Ga_3$}},circle,draw=black, fill=rd, inner sep=0pt,minimum size=3pt] (a3)  at (4*\sph,-2*\spv) {};
\node[label={[inner sep=.04cm]left:\scalebox{.45}{$\Ga_7$}},circle,draw=black, fill=pur, inner sep=0pt,minimum size=3pt] (a7)  at (4*\sph,-3*\spv) {};
\node[label={[inner sep=.04cm]left:\scalebox{.45}{$\Ga_2$}},circle,draw=black, fill=bl, inner sep=0pt,minimum size=3pt] (a2) at (4*\sph,-4*\spv) {};
\node[label={[inner sep=.04cm]left:\scalebox{.45}{$\Ga_1$}},circle,draw=black, fill=gr, inner sep=0pt,minimum size=3pt] (a1) at (4*\sph,-5*\spv) {};
\node[label={[inner sep=.04cm]left:\scalebox{.45}{$\Ga_0$}},circle,draw=black, fill=ye, inner sep=0pt,minimum size=3pt] (a0)  at (4*\sph,-6*\spv) {};

\draw (a0)  to  (a1);
\draw (a1)  to  (a2);
\draw (a2)  to  (a7);
\draw (a7)  to  (a3);
\draw (a3)  to  (a4);
\draw (a4)  to  (ae);


\node[label={[inner sep=.04cm]left:\scalebox{.45}{$\Ga_e$}},circle,draw=black, fill=black, inner sep=0pt,minimum size=3pt] (ae) at (5*\sph,0) {};
\node[label={[inner sep=.04cm]left:\scalebox{.45}{$\Ga_3$}},circle,draw=black, fill=rd, inner sep=0pt,minimum size=3pt] (a3) at (5*\sph,-1*\spv) {};
\node[label={[inner sep=.04cm]left:\scalebox{.45}{$\Ga_2$}},circle,draw=black, fill=bl, inner sep=0pt,minimum size=3pt] (a2) at (5*\sph,-2*\spv) {};
\node[label={[inner sep=.04cm]left:\scalebox{.45}{$\Ga_1$}},circle,draw=black, fill=gr, inner sep=0pt,minimum size=3pt] (a1) at (5*\sph,-3*\spv) {};
\node[label={[inner sep=.04cm]left:\scalebox{.45}{$\Ga_0$}},circle,draw=black, fill=ye, inner sep=0pt,minimum size=3pt] (a0) at (5*\sph,-4*\spv) {};

\draw (a0)  to  (a1);
\draw (a1)  to  (a2);
\draw (a2)  to  (a3);
\draw (a3)  to  (ae);


\node[label={[inner sep=.04cm]left:\scalebox{.45}{$\Ga_e$}},circle,draw=black, fill=black, inner sep=0pt,minimum size=3pt] (ae) at (6*\sph,0) {};
\node[label={[inner sep=.04cm]left:\scalebox{.45}{$\Ga_7$}},circle,draw=black, fill=pur, inner sep=0pt,minimum size=3pt] (a7) at (6*\sph,-1*\spv) {};
\node[label={[inner sep=.04cm]left:\scalebox{.45}{$\Ga_3$}},circle,draw=black, fill=rd, inner sep=0pt,minimum size=3pt] (a3)  at (6*\sph,-2*\spv) {};
\node[label={[inner sep=.04cm]right:\scalebox{.45}{$\Ga_4$}},circle,draw=black, fill=ora, inner sep=0pt,minimum size=3pt] (a4)  at (6*\sph+.4,-3*\spv) {};
\node[label={[inner sep=.04cm]left:\scalebox{.45}{$\Ga_2$}},circle,draw=black, fill=bl, inner sep=0pt,minimum size=3pt] (a2) at (6*\sph-.4,-3*\spv) {};
\node[label={[inner sep=.04cm]left:\scalebox{.45}{$\Ga_1$}},circle,draw=black, fill=gr, inner sep=0pt,minimum size=3pt] (a1) at (6*\sph,-4*\spv) {};
\node[label={[inner sep=.04cm]left:\scalebox{.45}{$\Ga_0$}},circle,draw=black, fill=ye, inner sep=0pt,minimum size=3pt] (a0)  at (6*\sph,-5*\spv) {};

\draw (a0)  to  (a1);
\draw (a1)  to  (a2);
\draw (a1)  to  (a4);
\draw (a2)  to  (a3);
\draw (a4)  to  (a3);
\draw (a3)  to  (a7);
\draw (a7)  to  (ae);

\end{tikzpicture}
\end{center}
\end{subfigure}
\caption{Partition of $\tG_2$ into Kazhdan-Lusztig cells, $r=a/b$}
\label{partition}
\end{figure}
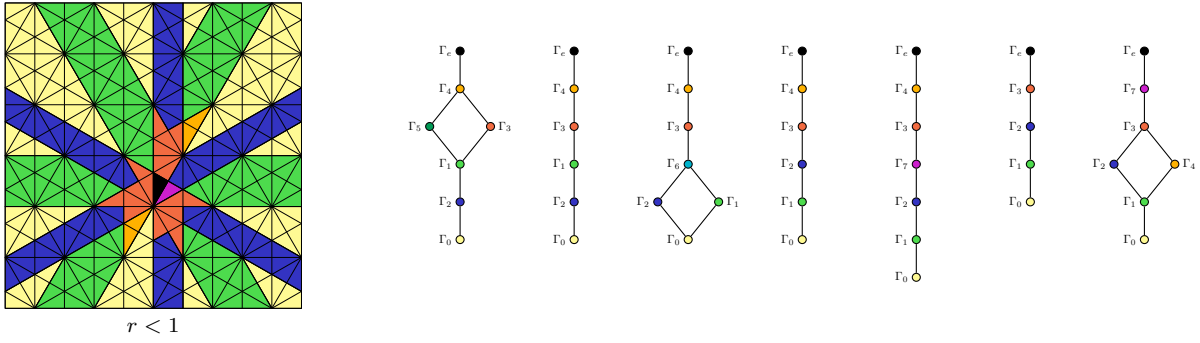

Let $\Ga$ be a two-sided cell for the parameter $r\in \nQ$. We say that $r$ is generic for $\Ga$ if there exists $\eta>0$ such that $\Ga$ is a cell for all parameters $r'\in \nQ$ such that $r-\eta<r'<r+\eta$. By considering the decomposition of $\tG_2$ into cells, it is easy to see that the only pairs $(\Ga,r)$ such that $r$ is non-generic for $\Ga$ are $(\Ga_1,2)$, $(\Ga_2,3/2)$ and $(\Ga_3,1)$.

\subsection{Cell factorisation for the lowest two-sided cell $\Gamma_0$}\label{sec:factor0}

Note that the yellow two-sided cell is constant for all choices of~$r$ (see Figure~\ref{partition}). This cell is called the \textit{lowest two-sided cell}, and is denoted $\Gamma_0$. By direct inspection we have the following representation of elements of $\Gamma_0$ (see Figure~\ref{lowest}):
\bem
\item Each right cell $\Gamma^k\subseteq \Gamma$ ($1\leq k\leq 12$) contains a unique element $\sw_{\Gamma^k}$ of minimal length. 
\item The longest element $\sw_0$ of $G_2$ is a suffix of each $\sw_{\Gamma^k}$, $1\leq k\leq 12$. Let $u_k=\sw_0\sw_{\Gamma^k}^{-1}$ for $1\leq k\leq 12$ (these elements are the inverses of the grey elements on the left). Let $\sB_{\Ga_0}=\{u_k\mid 1\leq k\leq 12\}$ (this ``box'' is shaded in light grey on the right of Figure~\ref{lowest}). 
\item We have
$$
\Gamma_0=\{u^{-1}\sw_0t_{\lambda}v\mid u,v\in\sB_{\Ga_0},\,\lambda\in P^+\}.
$$
\eem
Moreover, each $w\in\Gamma_0$ has a unique expression in the form $w=u^{-1}\sw_0t_{\lambda}v$ with $u,v\in \sB_{\Ga_0}$ and $\lambda\in P^+$, and this expression is reduced (that is, $\ell(w)=\ell(u^{-1})+\ell(\sw_0)+\ell(t_{\lambda})+\ell(v)$). This expression is called the \textit{cell factorisation} of $w\in\Gamma_0$. It should be understood in the following way: The element $u^{-1}$ indicates in which connected component (right cell) of $\Gamma_0$ the alcove $w$ lies. The element $\lambda$ indicates in which translate of the box $u^{-1}\sw_0\sB_{\Ga_0}$ the alcove $w$ lies. The element $v$ indicates location of $w$ in the box $u^{-1}\sw_0t_{\lambda}\sB_{\Ga_0}$. 

\medskip

We will often write $\sB_0$ in place of $\sB_{\Ga_0}$. Note that the translates of $\sB_{0}$ cover $W$. An analogue of the above cell factorisation applies to the lowest two-sided cell in arbitrary type, see \cite[Proposition 4.3]{Shil2} and \cite[Proposition~3.1]{Blas}.

\psset{unit=.6cm}

\begin{figure}[H]
\begin{subfigure}[b]{0.5\textwidth}
\begin{center}
\begin{pspicture}(-4.33,-3.3)(4.33,6.5)
\psset{linewidth=.05mm}

\pspolygon[fillstyle=solid, fillcolor=gray](0,0)(0,1)(0.433,0.75)

\pspolygon[fillstyle=solid, fillcolor=gray](0,1)(0.433,0.75)(.866,1.5)
\pspolygon[fillstyle=solid, fillcolor=gray](0,1)(-0.433,0.75)(-.866,1.5)

\pspolygon[fillstyle=solid, fillcolor=gray](1.732,1.5)(2.598,1.5)(1.732,1)
\pspolygon[fillstyle=solid, fillcolor=gray](-1.732,1.5)(-2.598,1.5)(-1.732,1)

\pspolygon[fillstyle=solid, fillcolor=gray](0,3)(0,2)(.433,2.25)

\pspolygon[fillstyle=solid, fillcolor=gray](.866,4.5)(.866,3.5)(.434,3.75)

\pspolygon[fillstyle=solid, fillcolor=gray](.866,.0)(.866,.5)(1.732,0)
\pspolygon[fillstyle=solid, fillcolor=gray](-.866,.0)(-.866,.5)(-1.732,0)

\pspolygon[fillstyle=solid, fillcolor=gray](2.165,-.75)(1.732,-1)(2.598,-1.5)
\pspolygon[fillstyle=solid, fillcolor=gray](-2.165,-.75)(-1.732,-1)(-2.598,-1.5)

\pspolygon[fillstyle=solid, fillcolor=gray](.866,-.5)(.866,-1.5)(.433,-.75)

\pspolygon[fillstyle=solid,fillcolor=red!5!yellow!42!](0.866,4.5)(0.866,6)(1.732,6)
\pspolygon[fillstyle=solid,fillcolor=red!5!yellow!42!](0.866,1.5)(3.46,6)(4.33,6)(4.33,3.5)
\pspolygon[fillstyle=solid,fillcolor=red!5!yellow!42!](2.598,1.5)(4.33,1.5)(4.33,2.5)
\pspolygon[fillstyle=solid,fillcolor=red!5!yellow!42!](1.732,0)(4.33,0)(4.33,-1.5)
\pspolygon[fillstyle=solid,fillcolor=red!5!yellow!42!](2.598,-1.5)(4.33,-2.5)(4.33,-3)(3.464,-3)
\pspolygon[fillstyle=solid,fillcolor=red!5!yellow!42!](0.866,-1.5)(0.866,-3)(1.732,-3)
\pspolygon[fillstyle=solid,fillcolor=red!5!yellow!42!](0,0)(0,-3)(-1.732,-3)
\pspolygon[fillstyle=solid,fillcolor=red!5!yellow!42!](-2.598,-1.5)(-3.464,-3)(-4.33,-3)(-4.33,-2.5)
\pspolygon[fillstyle=solid,fillcolor=red!5!yellow!42!](-1.732,0)(-4.33,0)(-4.33,-1.5)
\pspolygon[fillstyle=solid,fillcolor=red!5!yellow!42!](-2.598,1.5)(-4.33,1.5)(-4.33,2.5)
\pspolygon[fillstyle=solid,fillcolor=red!5!yellow!42!](-0.866,1.5)(-3.46,6)(-4.33,6)(-4.33,3.5)
\pspolygon[fillstyle=solid,fillcolor=red!5!yellow!42!](0,3)(0,6)(-1.732,6)


\psline(-4.33,6)(4.33,6)
\psline(-4.33,4.5)(4.33,4.5)
\psline(-4.33,3)(4.33,3)
\psline(-4.33,1.5)(4.33,1.5)
\psline(-4.33,0)(4.33,0)
\psline(-4.33,-1.5)(4.33,-1.5)
\psline(-4.33,-3)(4.33,-3)

\psline(-4.33,-3)(-4.33,6)
\psline(-3.464,-3)(-3.464,6)
\psline(-2.598,-3)(-2.598,6)
\psline(-1.732,-3)(-1.732,6)
\psline(-.866,-3)(-.866,6)
\psline(0,-3)(0,6)
\psline(.866,-3)(.866,6)
\psline(1.732,-3)(1.732,6)
\psline(2.598,-3)(2.598,6)
\psline(3.464,-3)(3.464,6)
\psline(4.33,-3)(4.33,6)

\psline(-4.33,5.5)(-3.464,6)
\psline(-4.33,4.5)(-1.732,6)
\psline(-4.33,3.5)(0,6)
\psline(-4.33,2.5)(1.732,6)
\psline(-4.33,1.5)(3.464,6)
\psline(-4.33,.5)(4.33,5.5)
\psline(-4.33,-.5)(4.33,4.5)
\psline(-4.33,-1.5)(4.33,3.5)
\psline(-4.33,-2.5)(4.33,2.5)
\psline(-3.464,-3)(4.33,1.5)
\psline(-1.732,-3)(4.33,.5)
\psline(0,-3)(4.33,-.5)
\psline(1.732,-3)(4.33,-1.5)
\psline(3.464,-3)(4.33,-2.5)

\psline(4.33,5.5)(3.464,6)
\psline(4.33,4.5)(1.732,6)
\psline(4.33,3.5)(0,6)
\psline(4.33,2.5)(-1.732,6)
\psline(4.33,1.5)(-3.464,6)
\psline(4.33,.5)(-4.33,5.5)
\psline(4.33,-.5)(-4.33,4.5)
\psline(4.33,-1.5)(-4.33,3.5)
\psline(4.33,-2.5)(-4.33,2.5)
\psline(3.464,-3)(-4.33,1.5)
\psline(1.732,-3)(-4.33,.5)
\psline(0,-3)(-4.33,-.5)
\psline(-1.732,-3)(-4.33,-1.5)
\psline(-3.464,-3)(-4.33,-2.5)

\psline(-4.33,-1.5)(-3.464,-3)
\psline(-4.33,1.5)(-1.732,-3)
\psline(-4.33,4.5)(0,-3)
\psline(-3.464,6)(1.732,-3)
\psline(-1.732,6)(3.464,-3)
\psline(0,6)(4.33,-1.5)
\psline(1.732,6)(4.33,1.5)
\psline(3.464,6)(4.33,4.5)

\psline(4.33,-1.5)(3.464,-3)
\psline(4.33,1.5)(1.732,-3)
\psline(4.33,4.5)(0,-3)
\psline(3.464,6)(-1.732,-3)
\psline(1.732,6)(-3.464,-3)
\psline(0,6)(-4.33,-1.5)
\psline(-1.732,6)(-4.33,1.5)
\psline(-3.464,6)(-4.33,4.5)
\end{pspicture}
\end{center}
\end{subfigure}
\begin{subfigure}[b]{0.5\textwidth}
\begin{center}
\begin{pspicture}(0,-1)(4.5,8)
\psset{linewidth=.05mm}

\pspolygon[fillstyle=solid,fillcolor=lightgray](0,0)(0,3)(.866,4.5)(.866,1.5)
\pspolygon[fillstyle=solid,fillcolor=gray](0,0)(0,1)(.433,.75)
\pspolygon[fillstyle=solid,fillcolor=gray](0.866,1.5)(0.866,2.5)(1.3,2.25)
\pspolygon[fillstyle=solid,fillcolor=gray](0,3)(0,4)(.433,3.75)
\pspolygon[fillstyle=solid,fillcolor=gray](0,6)(0,7)(.433,6.75)
\pspolygon[fillstyle=solid,fillcolor=gray](0.866,4.5)(0.866,5.5)(1.3,5.25)
\pspolygon[fillstyle=solid,fillcolor=gray](1.732,3)(1.732,4)(2.165,3.75)
\pspolygon[fillstyle=solid,fillcolor=gray](1.732,6)(1.732,7)(2.165,6.75)
\pspolygon[fillstyle=solid,fillcolor=gray](2.6,4.5)(2.6,5.5)(3.033,5.25)
\pspolygon[fillstyle=solid,fillcolor=gray](3.464,6)(3.464,7)(3.897,6.75)


\psline[linewidth=.3mm](0,0)(0,7.5)
\psline[linewidth=.3mm](0.866,1.5)(0.866,7.5)
\psline[linewidth=.3mm](1.732,3)(1.732,7.5)
\psline[linewidth=.3mm](2.6,4.5)(2.6,7.5)
\psline[linewidth=.3mm](3.464,6)(3.464,7.5)
\psline[linewidth=.3mm](0,0)(4.33,7.5)
\psline[linewidth=.3mm](0,3)(2.6,7.5)
\psline[linewidth=.3mm](0,6)(0.866,7.5)


\psline(0,0)(0,7.5)
\psline(0,0)(4.33,7.5)

\psline(0,1.5)(.866,1.5)
\psline(0,3)(1.732,3)
\psline(0,4.5)(2.598,4.5)
\psline(0,6)(3.464,6)
\psline(0,7.5)(4.33,7.5)

\psline(.866,1.5)(.866,7.5)
\psline(1.732,3)(1.732,7.5)
\psline(2.598,4.5)(2.598,7.5)
\psline(3.464,6)(3.464,7.5)

\psline(0,1)(.433,.75)
\psline(0,2)(.866,1.5)
\psline(0,3)(1.3,2.25)
\psline(0,4)(1.732,3)
\psline(0,5)(2.165,3.75)
\psline(0,6)(2.598,4.5)
\psline(0,7)(3.031,5.25)
\psline(.866,7.5)(3.464,6)
\psline(2.598,7.5)(3.897,6.75)

\psline(0,3)(.866,1.5)
\psline(0,6)(1.732,3)
\psline(2.598,4.5)(.866,7.5)
\psline(3.464,6)(2.598,7.5)

\psline(0,1)(.866,1.5)
\psline(0,2)(1.732,3)
\psline(0,3)(2.598,4.5)
\psline(0,4)(3.464,6)
\psline(0,5)(4.33,7.5)
\psline(0,6)(2.598,7.5)
\psline(0,7)(.866,7.5)

\psline(0,3)(2.598,7.5)
\psline(0,6)(.866,7.5)

\end{pspicture}
\end{center}
\end{subfigure}
\caption{The lowest two-sided cell $\Ga_0$}
\label{lowest}
\end{figure}
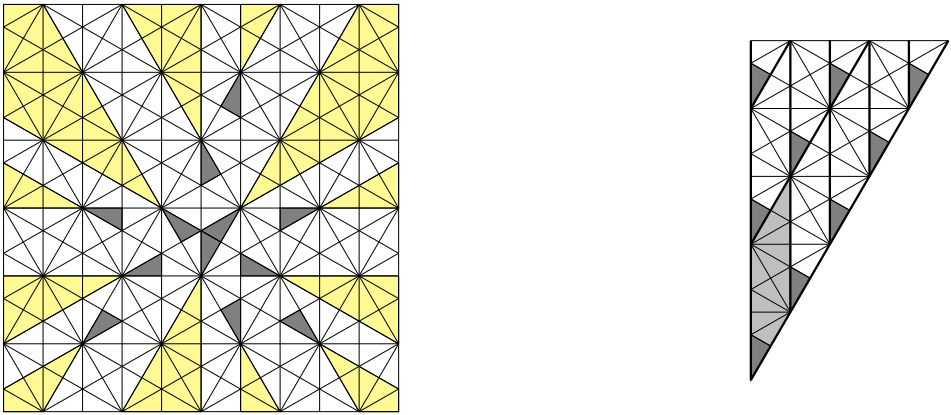
\medskip

We use the third property to define functions $\su,\sv: \Ga_0\lra \sB_{0}$ and $\tau: \Ga_0\ra \{t_{\lambda}\mid \lambda\in P^+\}$ by the equation
$w=\su(w)^{-1}\sw_0 \tau(w)\sv(w)$. We will usually write $\su_w,\sv_w$ and $\tau_w$ in place of $\su(w),\sv(w)$ and $\tau(w)$. Thus the cell factorisation of $w\in \Gamma_0$ is the expression
$$w=\su^{-1}_w\sw_0\tau_w\sv_w.$$

\subsection{Cell factorisation for the cells $\Gamma_1$ and $\Gamma_2$ with generic parameters}\label{sec:factor1}

Note that for each value of $r=a/b$ there are precisely three infinite two-sided cells (including the lowest two-sided cell~$\Gamma_0$). With reference to Figure~\ref{partition}, let $\Gamma_1$ be the green cell, and let $\Gamma_2$ be the blue cell. Note that the two-sided cells $\Gamma_1$ and $\Gamma_2$ are dependent on the choice of~$r$. 
It turns out that for most parameters $(a,b)$ the infinite two-sided cells $\Gamma_1$ and $\Gamma_2$ admit analogous cell factorisations to~$\Gamma_0$. Recall from above that:
 \smallskip

\begin{quotation}
\noindent {\bf Convention:} When speaking about the cell $\Ga_i$ in the ``non-generic case'', we will mean either the cell $\Ga_1$ in the case $r_1=a/b=2$ or the cell $\Ga_2$ in the case $r_2=a/b=3/2$. All other parameter values are generic for these cells.
\end{quotation}

\smallskip

With this convention, if $\Gamma\in\{\Gamma_1,\Gamma_2\}$ and $r$ is generic for $\Gamma$ then we have the following cell factorisation properties: let $\Ga^1,\ldots,\Ga^6$ be the right cells contained in $\Ga$. Then
\bem
\item Each right cell  $\Ga^k$ contains a unique element $\sw_{\Gamma^k}$ of minimal length.
\item There exists a unique element $\sw_\Ga\in\Ga$ of maximal length subject to the conditions that $\sw_{\Ga}$ lies in a finite parabolic subgroup of $W$ and $\sw_\Ga$ is a suffix of each $\sw_{\Ga^k}$ for all $1\leq k\leq 6$. The element $\sw_\Ga$ is called the \textit{generating element} of $\Ga$. We set $u_k=\sw_\Ga \sw_{\Ga^k}^{-1}$ for all $k$ and $\sB_{\Ga}=\{u_k\mid 1\leq k\leq 6\}$.
\item There exists $\st_{\Gamma}\in W$ such that 
$$
\Gamma=\{u^{-1}\sw_{\Gamma}\st_{\Gamma}^nv\mid u,v\in\sB_{\Gamma},n\in\mathbb{N}\},
$$
and moreover each $w\in\Gamma$ has a unique expression in the form $w=u^{-1}\sw_{\Gamma}\st_{\Gamma}^nv$ with $u,v\in\sB_{\Gamma}$ and $n\in\mathbb{N}$. 
\eem
\medskip

We will list the explicit cell factorisations for generic parameters below. Here, and elsewhere, we will employ the shorthand notation $s_{i_1}\cdots s_{i_k}\to i_1\cdots i_k$. For example $012$ is used to denote $s_0s_1s_2$. In particular, note that $1=s_1$ is not the identity; we will denote the identity of $W$ by~$e$.
\medskip

Explicitly, in each case the elements $\sw_{\Gamma}$ and $\st_{\Gamma}$, and the ``box'' $\sB_{\Gamma}$ are as follows. For $\Gamma_1$ there are 2 distinct generic regimes, given by $r>2$, and $r<2$ (see Figure~\ref{fig:gamma1}). We have
\begin{align*}
\sw_{{\Ga_1}}&=\begin{cases}
01&\mbox{ if $r>2$}\\
020&\mbox{ if $r<2$}
\end{cases}\qquad\st_{\Gamma_1}=\begin{cases}
210&\mbox{ if $r>2$}\\
120&\mbox{ if $r<2$}
\end{cases}\qquad\text{and}\qquad \sB_{{\Ga_1}}=\begin{cases}
\{e,2,20,21,212,2120\} &\mbox{ if $r>2$}\\
\{e,1,12,121,1212,12120\} &\mbox{ if $r<2$.}
\end{cases}
\end{align*}
Note that the translates of $\sB_{\Gamma_1}$ by $\st_{\Gamma_1}$ tessellate a ``strip'' in $W$.
\vspace*{-3mm}
\psset{unit=.4cm}
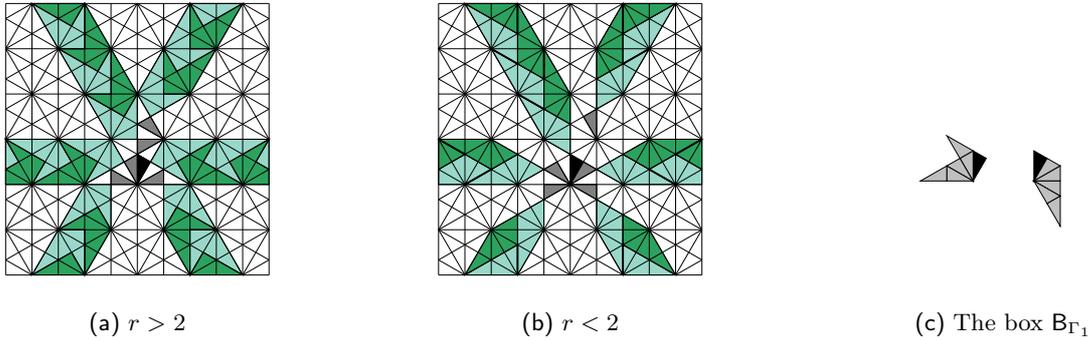
\begin{figure}[H]
\begin{subfigure}[b]{0.33\textwidth}
\begin{center}
\begin{pspicture}(-4.33,-3)(4.33,6)
\psset{linewidth=.05mm}

\pspolygon[fillstyle=solid, fillcolor=black](0,0)(0,1)(0.433,0.75)

\pspolygon[fillstyle=solid,fillcolor=green1](0.433,2.25)(0,3)(.866,4.5)(.866,3)(1.732,3)
\pspolygon[fillstyle=solid,fillcolor=green1](1.3,3.75)(.866,4.5)(1.732,6)(1.732,4.5)(2.598,4.5)
\pspolygon[fillstyle=solid,fillcolor=green1](2.165,5.25)(1.732,6)(3.464,6)

\pspolygon[fillstyle=solid,fillcolor=green2](0.866,3)(.866,4.5)(1.3,3.75)(2.598,4.5)(1.732,3)
\pspolygon[fillstyle=solid,fillcolor=green2](1.732,4.5)(1.732,6)(2.166,5.25)(3.464,6)(2.598,4.5)

\pspolygon[fillstyle=solid,fillcolor=green2](-0.433,2.25)(-0,3)(-.866,4.5)(-.866,3)(-1.732,3)
\pspolygon[fillstyle=solid,fillcolor=green2](-1.3,3.75)(-.866,4.5)(-1.732,6)(-1.732,4.5)(-2.598,4.5)
\pspolygon[fillstyle=solid,fillcolor=green2](-2.165,5.25)(-1.732,6)(-3.464,6)

\pspolygon[fillstyle=solid,fillcolor=green1](-0.866,3)(-.866,4.5)(-1.3,3.75)(-2.598,4.5)(-1.732,3)
\pspolygon[fillstyle=solid,fillcolor=green1](-1.732,4.5)(-1.732,6)(-2.166,5.25)(-3.464,6)(-2.598,4.5)
\pspolygon[fillstyle=solid,fillcolor=green1](0,1.5)(0,3)(-.433,2.25)(-1.732,3)(-.866,1.5)


\pspolygon[fillstyle=solid,fillcolor=green1](0.433,0.75)(0.866,1.5)(2.598,1.5)(1.3,.75)(1.732,0)
\pspolygon[fillstyle=solid,fillcolor=green1](2.166,0.75)(2.598,1.5)(4.33,1.5)(3.032,.75)(3.464,0)
\pspolygon[fillstyle=solid,fillcolor=green1](3.897,.75)(4.33,1.5)(4.33,.5)

\pspolygon[fillstyle=solid,fillcolor=green2](1.3,.75)(2.598,1.5)(2.166,.75)(3.464,0)(1.732,0)
\pspolygon[fillstyle=solid,fillcolor=green2](3.032,.75)(4.33,1.5)(3.897,.75)(4.33,.5)(4.33,0)(3.464,0)

\pspolygon[fillstyle=solid,fillcolor=green1](-0.433,0.75)(-0.866,1.5)(-2.598,1.5)(-1.3,.75)(-1.732,0)
\pspolygon[fillstyle=solid,fillcolor=green1](-2.166,0.75)(-2.598,1.5)(-4.33,1.5)(-3.032,.75)(-3.464,0)
\pspolygon[fillstyle=solid,fillcolor=green1](-3.897,.75)(-4.33,1.5)(-4.33,.5)

\pspolygon[fillstyle=solid,fillcolor=green2](-1.3,.75)(-2.598,1.5)(-2.166,.75)(-3.464,0)(-1.732,0)
\pspolygon[fillstyle=solid,fillcolor=green2](-3.032,.75)(-4.33,1.5)(-3.897,.75)(-4.33,.5)(-4.33,0)(-3.464,0)

\pspolygon[fillstyle=solid,fillcolor=green1](-.866,0)(-1.732,0)(-2.598,-1.5)(-1.3,-.75)(-.866,-1.5)
\pspolygon[fillstyle=solid,fillcolor=green2](-2.598,-1.5)(-1.3,-.75)(-.866,-1.5)(-1.732,-3)(-1.732,-1.5)
\pspolygon[fillstyle=solid,fillcolor=green1](-2.598,-1.5)(-1.732,-1.5)(-1.732,-3)(-2.165,-2.25)(-3.464,-3)
\pspolygon[fillstyle=solid,fillcolor=green2](-1.732,-3)(-2.165,-2.25)(-3.464,-3)

\pspolygon[fillstyle=solid,fillcolor=green1](.866,0)(1.732,0)(2.598,-1.5)(1.3,-.75)(.866,-1.5)
\pspolygon[fillstyle=solid,fillcolor=green2](2.598,-1.5)(1.3,-.75)(.866,-1.5)(1.732,-3)(1.732,-1.5)
\pspolygon[fillstyle=solid,fillcolor=green1](2.598,-1.5)(1.732,-1.5)(1.732,-3)(2.165,-2.25)(3.464,-3)
\pspolygon[fillstyle=solid,fillcolor=green2](1.732,-3)(2.165,-2.25)(3.464,-3)

\pspolygon[fillstyle=solid,fillcolor=gray](0,0)(.866,0)(.866,.5)
\pspolygon[fillstyle=solid,fillcolor=gray](0,0)(-.866,0)(-.866,.5)
\pspolygon[fillstyle=solid,fillcolor=gray](0,0)(0,1)(-.433,.74)
\pspolygon[fillstyle=solid,fillcolor=gray](.433,2.25)(0,2)(.866,1.5)
\pspolygon[fillstyle=solid,fillcolor=gray](0,1.5)(0,1)(.866,1.5)


\psline(-4.33,6)(4.33,6)
\psline(-4.33,4.5)(4.33,4.5)
\psline(-4.33,3)(4.33,3)
\psline(-4.33,1.5)(4.33,1.5)
\psline(-4.33,0)(4.33,0)
\psline(-4.33,-1.5)(4.33,-1.5)
\psline(-4.33,-3)(4.33,-3)

\psline(-4.33,-3)(-4.33,6)
\psline(-3.464,-3)(-3.464,6)
\psline(-2.598,-3)(-2.598,6)
\psline(-1.732,-3)(-1.732,6)
\psline(-.866,-3)(-.866,6)
\psline(0,-3)(0,6)
\psline(.866,-3)(.866,6)
\psline(1.732,-3)(1.732,6)
\psline(2.598,-3)(2.598,6)
\psline(3.464,-3)(3.464,6)
\psline(4.33,-3)(4.33,6)

\psline(-4.33,5.5)(-3.464,6)
\psline(-4.33,4.5)(-1.732,6)
\psline(-4.33,3.5)(0,6)
\psline(-4.33,2.5)(1.732,6)
\psline(-4.33,1.5)(3.464,6)
\psline(-4.33,.5)(4.33,5.5)
\psline(-4.33,-.5)(4.33,4.5)
\psline(-4.33,-1.5)(4.33,3.5)
\psline(-4.33,-2.5)(4.33,2.5)
\psline(-3.464,-3)(4.33,1.5)
\psline(-1.732,-3)(4.33,.5)
\psline(0,-3)(4.33,-.5)
\psline(1.732,-3)(4.33,-1.5)
\psline(3.464,-3)(4.33,-2.5)

\psline(4.33,5.5)(3.464,6)
\psline(4.33,4.5)(1.732,6)
\psline(4.33,3.5)(0,6)
\psline(4.33,2.5)(-1.732,6)
\psline(4.33,1.5)(-3.464,6)
\psline(4.33,.5)(-4.33,5.5)
\psline(4.33,-.5)(-4.33,4.5)
\psline(4.33,-1.5)(-4.33,3.5)
\psline(4.33,-2.5)(-4.33,2.5)
\psline(3.464,-3)(-4.33,1.5)
\psline(1.732,-3)(-4.33,.5)
\psline(0,-3)(-4.33,-.5)
\psline(-1.732,-3)(-4.33,-1.5)
\psline(-3.464,-3)(-4.33,-2.5)

\psline(-4.33,-1.5)(-3.464,-3)
\psline(-4.33,1.5)(-1.732,-3)
\psline(-4.33,4.5)(0,-3)
\psline(-3.464,6)(1.732,-3)
\psline(-1.732,6)(3.464,-3)
\psline(0,6)(4.33,-1.5)
\psline(1.732,6)(4.33,1.5)
\psline(3.464,6)(4.33,4.5)

\psline(4.33,-1.5)(3.464,-3)
\psline(4.33,1.5)(1.732,-3)
\psline(4.33,4.5)(0,-3)
\psline(3.464,6)(-1.732,-3)
\psline(1.732,6)(-3.464,-3)
\psline(0,6)(-4.33,-1.5)
\psline(-1.732,6)(-4.33,1.5)
\psline(-3.464,6)(-4.33,4.5)
\end{pspicture}
\end{center}
\vspace*{-1mm}
\caption{$r>2$}
\end{subfigure}
\begin{subfigure}[b]{0.33\textwidth}
\begin{center}
\begin{pspicture}(-4.33,-3)(4.33,6)
\psset{linewidth=.05mm}

\pspolygon[fillstyle=solid, fillcolor=black](0,0)(0,1)(0.433,0.75)

\pspolygon[fillstyle=solid,fillcolor=green1](0.866,2.5)(0.866,3.5)(2.598,4.5)(1.732,3)
\pspolygon[fillstyle=solid,fillcolor=green2](0.866,3.5)(1.732,4)(1.732,6)(0.866,4.5)
\pspolygon[fillstyle=solid,fillcolor=green1](1.732,4)(1.732,5)(3.464,6)(2.598,4.5)
\pspolygon[fillstyle=solid,fillcolor=green2](1.732,5)(1.732,6)(3.464,6)

\psline[linewidth=.25mm](1.732,6)(1.732,5)
\psline[linewidth=.25mm](1.732,4)(2.598,4.5)

\pspolygon[fillstyle=solid,fillcolor=green1](0,1)(0,2)(-1.732,3)(-.866,1.5)
\pspolygon[fillstyle=solid,fillcolor=green2](0,2)(-.866,2.5)(-.866,4.5)(0,3)
\pspolygon[fillstyle=solid,fillcolor=green1](-.866,2.5)(-.866,3.5)(-2.598,4.5)(-1.732,3)
\pspolygon[fillstyle=solid,fillcolor=green2](-.866,3.5)(-1.732,4)(-1.732,6)(-.866,4.5)
\pspolygon[fillstyle=solid,fillcolor=green1](-1.732,4)(-1.732,5)(-3.464,6)(-2.598,4.5)
\pspolygon[fillstyle=solid,fillcolor=green2](-1.732,5)(-1.732,6)(-3.464,6)

\psline[linewidth=.25mm](-1.732,6)(-1.732,5)
\psline[linewidth=.25mm](-.866,4.5)(-.866,3.5)
\psline[linewidth=.25mm](-2.598,4.5)(-1.732,4)
\psline[linewidth=.25mm](-1.732,3)(-.866,2.5)

\pspolygon[fillstyle=solid,fillcolor=green1](-.866,0.5)(-1.732,1)(-3.464,0)(-1.732,0)
\pspolygon[fillstyle=solid,fillcolor=green2](-1.732,1)(-2.598,1.5)(-4.33,1.5)(-2.598,.5)
\pspolygon[fillstyle=solid,fillcolor=green1](-3.464,0)(-2.598,.5)(-3.464,1)(-4.33,.5)(-4.33,0)
\pspolygon[fillstyle=solid,fillcolor=green2](-3.464,1)(-4.33,1.5)(-4.33,.5)

\psline[linewidth=.25mm](-3.464,1)(-4.33,1.5)
\psline[linewidth=.25mm](-2.598,.5)(-3.464,0)


\pspolygon[fillstyle=solid,fillcolor=green1](.866,0.5)(1.732,1)(3.464,0)(1.732,0)
\pspolygon[fillstyle=solid,fillcolor=green2](1.732,1)(2.598,1.5)(4.33,1.5)(2.598,.5)
\pspolygon[fillstyle=solid,fillcolor=green1](3.464,0)(2.598,.5)(3.464,1)(4.33,.5)(4.33,0)
\pspolygon[fillstyle=solid,fillcolor=green2](3.464,1)(4.33,1.5)(4.33,.5)

\psline[linewidth=.25mm](3.464,1)(4.33,1.5)
\psline[linewidth=.25mm](2.598,.5)(3.464,0)


\pspolygon[fillstyle=solid,fillcolor=green1](-.866,-.5)(-.866,-1.5)(-1.732,-3)(-1.732,-1)
\pspolygon[fillstyle=solid,fillcolor=green2](-1.732,-1)(-2.598,-1.5)(-3.464,-3)(-1.732,-2)
\pspolygon[fillstyle=solid,fillcolor=green1](-1.732,-2)(-3.464,-3)(-1.732,-3)

\psline[linewidth=.25mm](-1.732,-2)(-1.732,-3)


\pspolygon[fillstyle=solid,fillcolor=green1](.866,-.5)(.866,-1.5)(1.732,-3)(1.732,-1)
\pspolygon[fillstyle=solid,fillcolor=green2](1.732,-1)(2.598,-1.5)(3.464,-3)(1.732,-2)
\pspolygon[fillstyle=solid,fillcolor=green1](1.732,-2)(3.464,-3)(1.732,-3)

\psline[linewidth=.25mm](1.732,-2)(1.732,-3)

\pspolygon[fillstyle=solid,fillcolor=gray](0,0)(.433,.75)(.866,.5)
\pspolygon[fillstyle=solid,fillcolor=gray](0,0)(-.433,.75)(-.866,.5)
\pspolygon[fillstyle=solid,fillcolor=gray](.866,1.5)(.866,2.5)(.433,2.25)
\pspolygon[fillstyle=solid,fillcolor=gray](0,0)(.866,0)(.866,-.5)
\pspolygon[fillstyle=solid,fillcolor=gray](0,0)(-.866,0)(-.866,-.5)


\psline(-4.33,6)(4.33,6)
\psline(-4.33,4.5)(4.33,4.5)
\psline(-4.33,3)(4.33,3)
\psline(-4.33,1.5)(4.33,1.5)
\psline(-4.33,0)(4.33,0)
\psline(-4.33,-1.5)(4.33,-1.5)
\psline(-4.33,-3)(4.33,-3)

\psline(-4.33,-3)(-4.33,6)
\psline(-3.464,-3)(-3.464,6)
\psline(-2.598,-3)(-2.598,6)
\psline(-1.732,-3)(-1.732,6)
\psline(-.866,-3)(-.866,6)
\psline(0,-3)(0,6)
\psline(.866,-3)(.866,6)
\psline(1.732,-3)(1.732,6)
\psline(2.598,-3)(2.598,6)
\psline(3.464,-3)(3.464,6)
\psline(4.33,-3)(4.33,6)

\psline(-4.33,5.5)(-3.464,6)
\psline(-4.33,4.5)(-1.732,6)
\psline(-4.33,3.5)(0,6)
\psline(-4.33,2.5)(1.732,6)
\psline(-4.33,1.5)(3.464,6)
\psline(-4.33,.5)(4.33,5.5)
\psline(-4.33,-.5)(4.33,4.5)
\psline(-4.33,-1.5)(4.33,3.5)
\psline(-4.33,-2.5)(4.33,2.5)
\psline(-3.464,-3)(4.33,1.5)
\psline(-1.732,-3)(4.33,.5)
\psline(0,-3)(4.33,-.5)
\psline(1.732,-3)(4.33,-1.5)
\psline(3.464,-3)(4.33,-2.5)

\psline(4.33,5.5)(3.464,6)
\psline(4.33,4.5)(1.732,6)
\psline(4.33,3.5)(0,6)
\psline(4.33,2.5)(-1.732,6)
\psline(4.33,1.5)(-3.464,6)
\psline(4.33,.5)(-4.33,5.5)
\psline(4.33,-.5)(-4.33,4.5)
\psline(4.33,-1.5)(-4.33,3.5)
\psline(4.33,-2.5)(-4.33,2.5)
\psline(3.464,-3)(-4.33,1.5)
\psline(1.732,-3)(-4.33,.5)
\psline(0,-3)(-4.33,-.5)
\psline(-1.732,-3)(-4.33,-1.5)
\psline(-3.464,-3)(-4.33,-2.5)

\psline(-4.33,-1.5)(-3.464,-3)
\psline(-4.33,1.5)(-1.732,-3)
\psline(-4.33,4.5)(0,-3)
\psline(-3.464,6)(1.732,-3)
\psline(-1.732,6)(3.464,-3)
\psline(0,6)(4.33,-1.5)
\psline(1.732,6)(4.33,1.5)
\psline(3.464,6)(4.33,4.5)

\psline(4.33,-1.5)(3.464,-3)
\psline(4.33,1.5)(1.732,-3)
\psline(4.33,4.5)(0,-3)
\psline(3.464,6)(-1.732,-3)
\psline(1.732,6)(-3.464,-3)
\psline(0,6)(-4.33,-1.5)
\psline(-1.732,6)(-4.33,1.5)
\psline(-3.464,6)(-4.33,4.5)
\end{pspicture}
\end{center}
\vspace*{-1mm}
\caption{$r<2$}
\end{subfigure}
\begin{subfigure}[b]{0.33\textwidth}
\centering

\begin{pspicture}(-4.33,-4)(4.33,6.5)
\psset{linewidth=.1mm}

\pspolygon[fillstyle=solid,fillcolor=black](-1,0)(-1,1)(-.567,.75)
\pspolygon[fillstyle=solid,fillcolor=lightgray](-1,0)(-1,1)(-1.433,.75)
\pspolygon[fillstyle=solid,fillcolor=lightgray](-1,1)(-1.433,.75)(-1.866,1.5)
\pspolygon[fillstyle=solid,fillcolor=lightgray](-1,0)(-1.433,.75)(-1.866,.5)
\pspolygon[fillstyle=solid,fillcolor=lightgray](-1,0)(-1.866,.5)(-1.866,0)
\pspolygon[fillstyle=solid,fillcolor=lightgray](-2.732,0)(-1.866,.5)(-1.866,0)

\pspolygon[fillstyle=solid,fillcolor=black](1,0)(1,1)(1.433,.75)
\pspolygon[fillstyle=solid,fillcolor=lightgray](1,0)(1.433,.75)(1.866,.5)
\pspolygon[fillstyle=solid,fillcolor=lightgray](1,0)(1.866,.5)(1.866,0)
\pspolygon[fillstyle=solid,fillcolor=lightgray](1,0)(1.866,-.5)(1.866,0)
\pspolygon[fillstyle=solid,fillcolor=lightgray](1,0)(1.866,-.5)(1.433,-.75)
\pspolygon[fillstyle=solid,fillcolor=lightgray](1.866,-.5)(1.433,-.75)(1.866,-1.5)

\end{pspicture}

\vspace*{-1mm}
\caption{The box $\sB_{\Ga_1}$}
\label{translate-b1}
\end{subfigure}
\caption{The green cell $\Gamma_1$ in generic regimes ($r\neq 2$)}\label{fig:gamma1}
\end{figure}

For $\Gamma_2$ there are 2 distinct generic regimes, given by $r>3/2$, and $r<3/2$ (see Figure~\ref{fig:gamma2}). We have
\begin{align*}
\sw_{{\Ga_2}}&=\begin{cases}
12121&\mbox{ if $r>3/2$}\\
01&\mbox{ if $r<3/2$}
\end{cases}\quad\st_{\Gamma_2}=\begin{cases}
02121&\mbox{ if $r>3/2$}\\
21210&\mbox{ if $r<3/2$}
\end{cases}
\quad\text{and}\quad \sB_{{\Ga_2}}=\begin{cases}
\{e,0,02,021,0212,02120\} &\mbox{ if $r>3/2$}\\
\{e,2,21,212,2121,2120\} &\mbox{ if $r<3/2$.}
\end{cases}
\end{align*}
Note that the translates of $\sB_{\Gamma_2}$ by $\st_{\Gamma_2}$ tessellate a ``strip'' in $W$.
\vspace*{-3mm}
\psset{unit=.4cm}
\begin{figure}[H]
\begin{subfigure}[b]{0.33\textwidth}
\begin{center}
\begin{pspicture}(-4.33,-3)(4.33,6)
\psset{linewidth=.05mm}

\pspolygon[fillstyle=solid, fillcolor=black](0,0)(0,1)(0.433,0.75)

%

\pspolygon[fillstyle=solid,fillcolor=blue1](0,0)(0,-3)(.866,-1.5)
\pspolygon[fillstyle=solid,fillcolor=blue2](0,-3)(.866,-1.5)(.866,-3)

\pspolygon[fillstyle=solid,fillcolor=blue1](0,3)(0,6)(.866,4.5)
\pspolygon[fillstyle=solid,fillcolor=blue2](0,6)(.866,4.5)(.866,6)

\pspolygon[fillstyle=solid,fillcolor=blue1](.866,1.5)(2.598,1.5)(3.464,3)
\pspolygon[fillstyle=solid,fillcolor=blue2](2.598,1.5)(3.464,3)(4.33,3)(4.33,2.5)
\pspolygon[fillstyle=solid,fillcolor=blue1](3.464,3)(4.33,3.5)(4.33,3)

\pspolygon[fillstyle=solid,fillcolor=blue1](-.866,1.5)(-2.598,1.5)(-3.464,3)
\pspolygon[fillstyle=solid,fillcolor=blue2](-2.598,1.5)(-3.464,3)(-4.33,3)(-4.33,2.5)
\pspolygon[fillstyle=solid,fillcolor=blue1](-3.464,3)(-4.33,3.5)(-4.33,3)


\pspolygon[fillstyle=solid,fillcolor=blue1](1.732,0)(4.33,-1.5)(2.598,-1.5)
\pspolygon[fillstyle=solid,fillcolor=blue2](4.33,-2.5)(4.33,-1.5)(2.598,-1.5)

\pspolygon[fillstyle=solid,fillcolor=blue1](-1.732,0)(-4.33,-1.5)(-2.598,-1.5)
\pspolygon[fillstyle=solid,fillcolor=blue2](-4.33,-2.5)(-4.33,-1.5)(-2.598,-1.5)

\pspolygon[fillstyle=solid,fillcolor=gray](.433,.75)(0,1)(.866,1.5)
\pspolygon[fillstyle=solid,fillcolor=gray](.866,0)(.866,0.5)(1.732,0)
\pspolygon[fillstyle=solid,fillcolor=gray](-.866,0)(-.866,0.5)(-1.732,0)
\pspolygon[fillstyle=solid,fillcolor=gray](0,2)(0,3)(.433,2.25)
\pspolygon[fillstyle=solid,fillcolor=gray](0,1)(-.866,1.5)(-.433,.75)

\psline(-4.33,6)(4.33,6)
\psline(-4.33,4.5)(4.33,4.5)
\psline(-4.33,3)(4.33,3)
\psline(-4.33,1.5)(4.33,1.5)
\psline(-4.33,0)(4.33,0)
\psline(-4.33,-1.5)(4.33,-1.5)
\psline(-4.33,-3)(4.33,-3)

\psline(-4.33,-3)(-4.33,6)
\psline(-3.464,-3)(-3.464,6)
\psline(-2.598,-3)(-2.598,6)
\psline(-1.732,-3)(-1.732,6)
\psline(-.866,-3)(-.866,6)
\psline(0,-3)(0,6)
\psline(.866,-3)(.866,6)
\psline(1.732,-3)(1.732,6)
\psline(2.598,-3)(2.598,6)
\psline(3.464,-3)(3.464,6)
\psline(4.33,-3)(4.33,6)

\psline(-4.33,5.5)(-3.464,6)
\psline(-4.33,4.5)(-1.732,6)
\psline(-4.33,3.5)(0,6)
\psline(-4.33,2.5)(1.732,6)
\psline(-4.33,1.5)(3.464,6)
\psline(-4.33,.5)(4.33,5.5)
\psline(-4.33,-.5)(4.33,4.5)
\psline(-4.33,-1.5)(4.33,3.5)
\psline(-4.33,-2.5)(4.33,2.5)
\psline(-3.464,-3)(4.33,1.5)
\psline(-1.732,-3)(4.33,.5)
\psline(0,-3)(4.33,-.5)
\psline(1.732,-3)(4.33,-1.5)
\psline(3.464,-3)(4.33,-2.5)

\psline(4.33,5.5)(3.464,6)
\psline(4.33,4.5)(1.732,6)
\psline(4.33,3.5)(0,6)
\psline(4.33,2.5)(-1.732,6)
\psline(4.33,1.5)(-3.464,6)
\psline(4.33,.5)(-4.33,5.5)
\psline(4.33,-.5)(-4.33,4.5)
\psline(4.33,-1.5)(-4.33,3.5)
\psline(4.33,-2.5)(-4.33,2.5)
\psline(3.464,-3)(-4.33,1.5)
\psline(1.732,-3)(-4.33,.5)
\psline(0,-3)(-4.33,-.5)
\psline(-1.732,-3)(-4.33,-1.5)
\psline(-3.464,-3)(-4.33,-2.5)

\psline(-4.33,-1.5)(-3.464,-3)
\psline(-4.33,1.5)(-1.732,-3)
\psline(-4.33,4.5)(0,-3)
\psline(-3.464,6)(1.732,-3)
\psline(-1.732,6)(3.464,-3)
\psline(0,6)(4.33,-1.5)
\psline(1.732,6)(4.33,1.5)
\psline(3.464,6)(4.33,4.5)

\psline(4.33,-1.5)(3.464,-3)
\psline(4.33,1.5)(1.732,-3)
\psline(4.33,4.5)(0,-3)
\psline(3.464,6)(-1.732,-3)
\psline(1.732,6)(-3.464,-3)
\psline(0,6)(-4.33,-1.5)
\psline(-1.732,6)(-4.33,1.5)
\psline(-3.464,6)(-4.33,4.5)
\end{pspicture}
\end{center}
\vspace*{-1mm}
\caption{$a/b>3/2$}
\end{subfigure}
\begin{subfigure}[b]{0.33\textwidth}
\begin{center}
\begin{pspicture}(-4.33,-3)(4.33,6)
\psset{linewidth=.05mm}

\pspolygon[fillstyle=solid, fillcolor=black](0,0)(0,1)(0.433,0.75)

%

\pspolygon[fillstyle=solid,fillcolor=blue1](0.433,-.75)(0,-1)(0,-3)(.433,-2.25)(.866,-2.5)(.866,-1.5)
\pspolygon[fillstyle=solid,fillcolor=blue2](0,-3)(.433,-2.25)(.866,-2.5)(.866,-3)

\pspolygon[fillstyle=solid,fillcolor=blue1](0.433,2.25)(0,3)(0,4)(.433,3.75)(.866,4.5)(.866,2.5)
\pspolygon[fillstyle=solid,fillcolor=blue2](0.433,3.75)(0,4)(0,6)(.433,5.25)(.866,5.5)(.866,4.5)
\pspolygon[fillstyle=solid,fillcolor=blue1](0.433,5.25)(0,6)(.866,6)(.866,5.5)

\pspolygon[fillstyle=solid,fillcolor=blue1](.433,.75)(.866,1.5)(1.732,2)(1.732,1.5)(2.598,1.5)(.866,.5)
\pspolygon[fillstyle=solid,fillcolor=blue2](1.732,1.5)(1.732,2)(3.464,3)(3.031,2.25)(3.464,2)(2.598,1.5)
\pspolygon[fillstyle=solid,fillcolor=blue1](3.031,2.25)(3.464,3)(4.33,3.5)(4.33,2.5)(3.464,2)

\pspolygon[fillstyle=solid,fillcolor=blue1](-.433,.75)(-.866,1.5)(-1.732,2)(-1.732,1.5)(-2.598,1.5)(-.866,.5)
\pspolygon[fillstyle=solid,fillcolor=blue2](-1.732,1.5)(-1.732,2)(-3.464,3)(-3.031,2.25)(-3.464,2)(-2.598,1.5)
\pspolygon[fillstyle=solid,fillcolor=blue1](-3.031,2.25)(-3.464,3)(-4.33,3.5)(-4.33,2.5)(-3.464,2)

\pspolygon[fillstyle=solid,fillcolor=blue1](.866,0)(1.732,0)(2.598,-.5)(2.165,-.75)(2.598,-1.5)(.866,-.5)
\pspolygon[fillstyle=solid,fillcolor=blue2](2.165,-.75)(2.598,-.5)(4.33,-1.5)(3.464,-1.5)(3.464,-2)(2.598,-1.5)
\pspolygon[fillstyle=solid,fillcolor=blue1](3.464,-2)(3.464,-1.5)(4.33,-1.5)(4.33,-2.5)

\pspolygon[fillstyle=solid,fillcolor=blue1](-.866,0)(-1.732,0)(-2.598,-.5)(-2.165,-.75)(-2.598,-1.5)(-.866,-.5)
\pspolygon[fillstyle=solid,fillcolor=blue2](-2.165,-.75)(-2.598,-.5)(-4.33,-1.5)(-3.464,-1.5)(-3.464,-2)(-2.598,-1.5)
\pspolygon[fillstyle=solid,fillcolor=blue1](-3.464,-2)(-3.464,-1.5)(-4.33,-1.5)(-4.33,-2.5)

\pspolygon[fillstyle=solid,fillcolor=gray](0,0)(0,1)(-0.433,0.75)
\pspolygon[fillstyle=solid,fillcolor=gray](0,0)(-.866,0)(-.866,.5)
\pspolygon[fillstyle=solid,fillcolor=gray](0,0)(.866,0)(.866,.5)
\pspolygon[fillstyle=solid,fillcolor=gray](0,0)(.433,-.75)(.866,-.5)
\pspolygon[fillstyle=solid,fillcolor=gray](.866,1.5)(.433,2.25)(0,2)


\psline(-4.33,6)(4.33,6)
\psline(-4.33,4.5)(4.33,4.5)
\psline(-4.33,3)(4.33,3)
\psline(-4.33,1.5)(4.33,1.5)
\psline(-4.33,0)(4.33,0)
\psline(-4.33,-1.5)(4.33,-1.5)
\psline(-4.33,-3)(4.33,-3)

\psline(-4.33,-3)(-4.33,6)
\psline(-3.464,-3)(-3.464,6)
\psline(-2.598,-3)(-2.598,6)
\psline(-1.732,-3)(-1.732,6)
\psline(-.866,-3)(-.866,6)
\psline(0,-3)(0,6)
\psline(.866,-3)(.866,6)
\psline(1.732,-3)(1.732,6)
\psline(2.598,-3)(2.598,6)
\psline(3.464,-3)(3.464,6)
\psline(4.33,-3)(4.33,6)

\psline(-4.33,5.5)(-3.464,6)
\psline(-4.33,4.5)(-1.732,6)
\psline(-4.33,3.5)(0,6)
\psline(-4.33,2.5)(1.732,6)
\psline(-4.33,1.5)(3.464,6)
\psline(-4.33,.5)(4.33,5.5)
\psline(-4.33,-.5)(4.33,4.5)
\psline(-4.33,-1.5)(4.33,3.5)
\psline(-4.33,-2.5)(4.33,2.5)
\psline(-3.464,-3)(4.33,1.5)
\psline(-1.732,-3)(4.33,.5)
\psline(0,-3)(4.33,-.5)
\psline(1.732,-3)(4.33,-1.5)
\psline(3.464,-3)(4.33,-2.5)

\psline(4.33,5.5)(3.464,6)
\psline(4.33,4.5)(1.732,6)
\psline(4.33,3.5)(0,6)
\psline(4.33,2.5)(-1.732,6)
\psline(4.33,1.5)(-3.464,6)
\psline(4.33,.5)(-4.33,5.5)
\psline(4.33,-.5)(-4.33,4.5)
\psline(4.33,-1.5)(-4.33,3.5)
\psline(4.33,-2.5)(-4.33,2.5)
\psline(3.464,-3)(-4.33,1.5)
\psline(1.732,-3)(-4.33,.5)
\psline(0,-3)(-4.33,-.5)
\psline(-1.732,-3)(-4.33,-1.5)
\psline(-3.464,-3)(-4.33,-2.5)

\psline(-4.33,-1.5)(-3.464,-3)
\psline(-4.33,1.5)(-1.732,-3)
\psline(-4.33,4.5)(0,-3)
\psline(-3.464,6)(1.732,-3)
\psline(-1.732,6)(3.464,-3)
\psline(0,6)(4.33,-1.5)
\psline(1.732,6)(4.33,1.5)
\psline(3.464,6)(4.33,4.5)

\psline(4.33,-1.5)(3.464,-3)
\psline(4.33,1.5)(1.732,-3)
\psline(4.33,4.5)(0,-3)
\psline(3.464,6)(-1.732,-3)
\psline(1.732,6)(-3.464,-3)
\psline(0,6)(-4.33,-1.5)
\psline(-1.732,6)(-4.33,1.5)
\psline(-3.464,6)(-4.33,4.5)
\end{pspicture}
\end{center}
\vspace*{-1mm}
\caption{$a/b<3/2$}
\end{subfigure}
\begin{subfigure}[b]{0.33\textwidth}
\begin{center}

\begin{pspicture}(-4.33,-4)(4.33,6)
\psset{linewidth=.1mm}

\pspolygon[fillstyle=solid,fillcolor=black](-2,0)(-2,1)(-1.567,.75)
\pspolygon[fillstyle=solid,fillcolor=lightgray](-2,1)(-1.567,.75)(-1.134,1.5)
\pspolygon[fillstyle=solid,fillcolor=lightgray](-2,1)(-1.134,1.5)(-2,1.5)
\pspolygon[fillstyle=solid,fillcolor=lightgray](-1.134,1.5)(-2,1.5)(-2,2)
\pspolygon[fillstyle=solid,fillcolor=lightgray](-1.134,1.5)(-2,2)(-1.567,2.25)
\pspolygon[fillstyle=solid,fillcolor=lightgray](-2,2)(-1.567,2.25)(-2,3)

\pspolygon[fillstyle=solid,fillcolor=black](1,0)(1,1)(1.433,.75)
\pspolygon[fillstyle=solid,fillcolor=lightgray](1,0)(1,1)(.567,.75)
\pspolygon[fillstyle=solid,fillcolor=lightgray](1,0)(.567,.75)(.134,.5)
\pspolygon[fillstyle=solid,fillcolor=lightgray](1,0)(.134,.5)(.134,0)
\pspolygon[fillstyle=solid,fillcolor=lightgray](1,0)(.134,-.5)(.134,0)
\pspolygon[fillstyle=solid,fillcolor=lightgray](-.732,0)(.134,.5)(.134,0)

\end{pspicture}
\end{center}
\vspace*{-1mm}
\caption{The box $\sB_{\Ga_2}$}
\label{translate-b2}
\end{subfigure}
\caption{The blue cell $\Gamma_2$ in generic regimes $(r\neq 3/2)$}\label{fig:gamma2}
\end{figure}
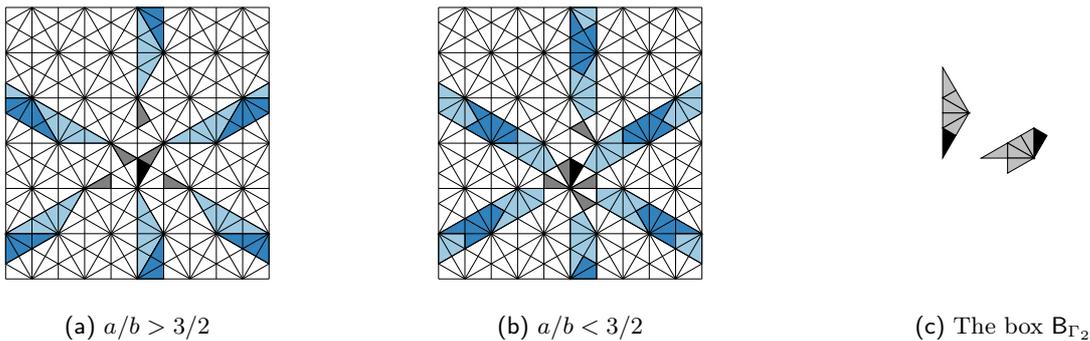
\medskip

We will typically write $\sw_i$, $\st_i$, and $\sB_i$ in place of $\sw_{\Gamma_i}$, $\st_{\Ga_i}$, and $\sB_{\Gamma_i}$ (for $i=1,2$).

\medskip

For $i\in\{1,2\}$ we use the third property of cell factorisation to define functions $\su,\sv: \Ga_i\lra \sB_{i}$ and $\tau: \Ga_i\ra \{\st_i^n\mid n\in\mathbb{N}\}$ by the equation
$w=\su(w)^{-1}\sw_{i} \tau(w)\sv(w)$. We will usually write $\su_w,\sv_w$ and $\tau_w$ in place of $\su(w),\sv(w)$ and $\tau(w)$. Thus the cell factorisation of $w\in \Gamma_i$ is the expression
$$w=\mathsf{u}^{-1}_w\sw_{i} \tau_w\mathsf{v}_w.$$

\begin{Rem}
\label{rem:cell-fact-finite} 
It is possible to have similar decompositions for most finite cells $\Ga$ when $r$ is generic for $\Ga$:
\bem
\item For $\Gamma_3$ there are 2 distinct generic regimes, given by $r>1$ and~$r<1$. When $r>1$, if we set $\sw_{\Ga_3}:=1$, $\st_{\Ga_3}:=21$ and $\sB_{\Ga_3}:=\{e,2,20\}$ then we have $
\Gamma_3=\{u^{-1}\sw_{\Ga_3}\st_{\Ga_3}^kv\mid u,v\in\sB_{\Ga_3},k\in\{0,1\}\}$.
\item For $\Gamma_4$ there are 2 distinct generic regimes, given by $r>1$ and $r<1$. When $r<1$ if we set $\sw_{\Ga_4}:=21212$ and $\sB_{\Ga_4}:=\{e,0\}$ then we have $
\Gamma_4=\{u^{-1}\sw_{\Ga_4}v\mid u,v\in\sB_{\Ga_4}\}$.
\item For $\Gamma_6$ there is only one regime given by $2>r>3/2$. If we set $\sw_{\Ga_6}:=10$ and $\sB_{\Ga_6}:=\{e,2,21,212,2120\}$ then we have  $
\Gamma_6=\{u^{-1}\sw_{\Ga_6}v\mid u,v\in\sB_{\Ga_3}\}$.
\item All other finite cells in generic parameters contain a unique element $\sw_\Ga$ and we set $\sB_{\Ga}=\{e\}$.
\eem
It is possible to have a similar description for the cells $\Ga_3$ when $r<1$ and $\Ga_4$ when $r>1$, however the notation becomes more technical due to the fact that the graph automorphism of the parabolic subgroup $W_{\{0,2\}}$ is involved in these cases.

\medskip

We will typically write $\sw_i$ and $\sB_i$ in place of $\sw_{\Gamma_i}$ and $\sB_{\Gamma_i}$ and $\st_3$ in place of $\st_{\Gamma_3}$. As above, when there is a cell factorisation for the finite cell $\Ga$, we obtain functions $\su,\sv$ on $\Ga$. For the two-sided cells $\Ga_3$ when $r>1$, we also have a function $\tau_3:\Ga_3\lra \{\st_3^k\mid k=0,1\}$.

\end{Rem}
\begin{Rem}
\label{rem:useful}
Let $w,w'\in \Ga_i$ where $\Ga_i$ is such that there is a cell factorisation. Then we have 
$$w\sim_{\cL} w'\eq \sv_{w}=\sv_{w'}\quand w\sim_{\cR} w'\eq \su_{w}=\su_{w'}.$$
Furthermore we note that $\tau(w^{-1})=\tau(w)$. Indeed if $w\in \Ga_i$ where $i=0,1,2,3$ then $w^{-1}=\sv^{-1}\tau_w^{-1}\sw_i\su=\sv^{-1}\sw_i\tau_w\su$.
\end{Rem}


\subsection{Cell factorisation for the cells $\Gamma_1$ and $\Gamma_2$ with non-generic parameters}\label{sec:equal}
Let $r_1=2$ and $r_2=3/2$.  The behaviour of the cell $\Ga_1$ when $r=r_1$ is similar to the behaviour of the cell $\Ga_2$ when $r=r_2$. The two-sided cell $\Ga_i$ is the union of $\Ga^+_{i}$, the two-sided cell in the generic case $a/b>r_i$ and $\Ga^-_{i}$, the two-sided cell in the generic case $a/b<r_i$ (in line with the \textit{semicontinuity conjecture} of Bonnaf\'e~\cite{Bon:09}).  More precisely we have
\begin{align*}
\Ga_1^+\backslash \Ga_1^-&=\{u^{-1}\sw_1^+v\mid u,v\in \sB_1^+\cap s_1\sB_1^-\},&\Ga_1^-\backslash \Ga_1^+&=\{\sw_1^-\},\\
\Ga_2^-\backslash \Ga_2^+&=\{u^{-1}\sw_1^+v\mid u,v\in \sB_2^-\cap s_0\sB_2^+\},& \Ga_2^+\backslash \Ga_2^-&=\{\sw_1^+\}.
\end{align*}
Furthermore, each right cell $\Upsilon\subset\Ga_i$ is either 
\bem
\item equal to a right cell in the case $a/b>r_i$, in which case we say $\Upsilon$ is of positive type;
\item equal to a right cell in the case $a/b<r_i$, in which case we say $\Upsilon$ is of negative type.
\eem

\begin{Def}
Let $w\in \Ga_i$. We say that $w$ is of type $(\eps_1,\eps_2)$ where $\eps_k=\pm$ if $w$ belongs to a  right cell of type $\eps_1$ and $w^{-1}$ belongs to a right cell of type $\eps_2$.
\end{Def}

It is immediate from the definition that if $w$ is of type $(\eps_1,\eps_2)$ then $w^{-1}$ will be of type $(\eps_2,\eps_1)$.
We represent the types of the elements of $\Ga_i$ in Figure \ref{equalp}: the dark blue, light blue, light red, dark red alcoves are respectively of type  $(-,-)$, $(-,+)$, $(+,-)$ and $(+,+)$. 

\medskip

\psset{unit=.5cm}
\begin{figure}[H]
\begin{subfigure}{.5\textwidth}
\begin{center}
\begin{pspicture}(-6.33,-3)(4.33,6.5)
\psset{linewidth=.05mm}

\pspolygon[fillstyle=solid, fillcolor=black](0,0)(0,1)(0.433,0.75)

\pspolygon[fillstyle=solid,fillcolor=pp](.433,.75)(.866,1.5)(2.598,1.5)(.866,.5)
\pspolygon[fillstyle=solid,fillcolor=pm](.866,.5)(1.3,.75)(1.732,0)
\pspolygon[fillstyle=solid,fillcolor=pp](1.3,.75)(1.732,1)(3.464,0)(1.732,0)
\pspolygon[fillstyle=solid,fillcolor=pm](1.732,1)(2.598,1.5)(2.165,.75)
\pspolygon[fillstyle=solid,fillcolor=pp](2.598,1.5)(2.165,.75)(2.598,.5)(4.33,1.5)
\pspolygon[fillstyle=solid,fillcolor=pm](2.598,.5)(3.031,.75)(3.464,0)
\pspolygon[fillstyle=solid,fillcolor=pp](3.464,0)(3.031,.75)(3.464,1)(4.33,.5)(4.33,0)
\pspolygon[fillstyle=solid,fillcolor=pm](3.464,1)(4.33,1.5)(3.897,.75)
\pspolygon[fillstyle=solid,fillcolor=pp](3.897,.75)(4.33,1.5)(4.33,.5)

\pspolygon[fillstyle=solid,fillcolor=pp](-.433,.75)(-.866,1.5)(-2.598,1.5)(-.866,.5)
\pspolygon[fillstyle=solid,fillcolor=pm](-.866,.5)(-1.3,.75)(-1.732,0)
\pspolygon[fillstyle=solid,fillcolor=pp](-1.3,.75)(-1.732,1)(-3.464,0)(-1.732,0)
\pspolygon[fillstyle=solid,fillcolor=pm](-1.732,1)(-2.598,1.5)(-2.165,.75)
\pspolygon[fillstyle=solid,fillcolor=pp](-2.598,1.5)(-2.165,.75)(-2.598,.5)(-4.33,1.5)
\pspolygon[fillstyle=solid,fillcolor=pm](-2.598,.5)(-3.031,.75)(-3.464,0)
\pspolygon[fillstyle=solid,fillcolor=pp](-3.464,0)(-3.031,.75)(-3.464,1)(-4.33,.5)(-4.33,0)
\pspolygon[fillstyle=solid,fillcolor=pm](-3.464,1)(-4.33,1.5)(-3.897,.75)
\pspolygon[fillstyle=solid,fillcolor=pp](-3.897,.75)(-4.33,1.5)(-4.33,.5)

\pspolygon[fillstyle=solid,fillcolor=pp](.866,0)(.866,-.5)(2.598,-1.5)(1.732,0)
\pspolygon[fillstyle=solid,fillcolor=pm](.866,-.5)(.866,-1.5)(1.3,-.75)
\pspolygon[fillstyle=solid,fillcolor=pp](1.3,-.75)(.866,-1.5)(1.732,-3)(1.732,-1)
\pspolygon[fillstyle=solid,fillcolor=pm](1.732,-1)(1.732,-1.5)(2.598,-1.5)
\pspolygon[fillstyle=solid,fillcolor=pp](1.732,-1.5)(1.732,-2)(3.464,-3)(2.598,-1.5)
\pspolygon[fillstyle=solid,fillcolor=pm](1.732,-2)(1.732,-3)(2.165,-2.25)
\pspolygon[fillstyle=solid,fillcolor=pp](2.165,-2.25)(1.732,-3)(2.598,-3)(2.598,-2.5)
\pspolygon[fillstyle=solid,fillcolor=pm](2.598,-2.5)(2.598,-3)(3.464,-3)

\pspolygon[fillstyle=solid,fillcolor=pp](-.866,0)(-.866,-.5)(-2.598,-1.5)(-1.732,0)
\pspolygon[fillstyle=solid,fillcolor=pm](-.866,-.5)(-.866,-1.5)(-1.3,-.75)
\pspolygon[fillstyle=solid,fillcolor=pp](-1.3,-.75)(-.866,-1.5)(-1.732,-3)(-1.732,-1)
\pspolygon[fillstyle=solid,fillcolor=pm](-1.732,-1)(-1.732,-1.5)(-2.598,-1.5)
\pspolygon[fillstyle=solid,fillcolor=pp](-1.732,-1.5)(-1.732,-2)(-3.464,-3)(-2.598,-1.5)
\pspolygon[fillstyle=solid,fillcolor=pm](-1.732,-2)(-1.732,-3)(-2.165,-2.25)
\pspolygon[fillstyle=solid,fillcolor=pp](-2.165,-2.25)(-1.732,-3)(-2.598,-3)(-2.598,-2.5)
\pspolygon[fillstyle=solid,fillcolor=pm](-2.598,-2.5)(-2.598,-3)(-3.464,-3)

\pspolygon[fillstyle=solid,fillcolor=pp](.433,2.25)(0,3)(.866,4.5)(.866,2.5)
\pspolygon[fillstyle=solid,fillcolor=pm](.866,2.5)(.866,3)(1.732,3)
\pspolygon[fillstyle=solid,fillcolor=pp](.866,3)(.866,3.5)(2.598,4.5)(1.732,3)
\pspolygon[fillstyle=solid,fillcolor=pm](.866,3.5)(.866,4.5)(1.3,3.75)
\pspolygon[fillstyle=solid,fillcolor=pp](1.3,3.75)(.866,4.5)(1.732,6)(1.732,4)
\pspolygon[fillstyle=solid,fillcolor=pm](1.732,4)(1.732,4.5)(2.598,4.5)
\pspolygon[fillstyle=solid,fillcolor=pp](1.732,4.5)(1.732,5)(3.464,6)(2.598,4.5)
\pspolygon[fillstyle=solid,fillcolor=pm](1.732,5)(1.732,6)(2.165,5.25)
\pspolygon[fillstyle=solid,fillcolor=pp](2.165,5.25)(1.732,6)(2.598,6)(2.598,5.5)
\pspolygon[fillstyle=solid,fillcolor=pm](2.598,5.5)(2.598,6)(3.464,6)

\pspolygon[fillstyle=solid,fillcolor=mm](0,1)(0,1.5)(-.866,1.5)
\pspolygon[fillstyle=solid,fillcolor=mp](0,1.5)(-.866,1.5)(-1.732,3)(0,2)
\pspolygon[fillstyle=solid,fillcolor=mm](0,2)(0,3)(-.433,2.25)
\pspolygon[fillstyle=solid,fillcolor=mp](-.433,2.25)(-.866,2.5)(-.866,4.5)(0,3)
\pspolygon[fillstyle=solid,fillcolor=mm](-.866,2.5)(-.866,3)(-1.732,3)
\pspolygon[fillstyle=solid,fillcolor=mp](-.866,3)(-1.732,3)(-2.598,4.5)(-.866,3.5)
\pspolygon[fillstyle=solid,fillcolor=mm](-.866,3.5)(-.866,4.5)(-1.3,3.75)
\pspolygon[fillstyle=solid,fillcolor=mm](-1.732,5)(-1.732,6)(-2.165,5.25)
\pspolygon[fillstyle=solid,fillcolor=mp](-2.165,5.25)(-2.598,5.5)(-2.598,6)(-1.732,6)

\pspolygon[fillstyle=solid,fillcolor=mp](-1.3,3.75)(-1.732,4)(-1.732,6)(-.866,4.5)

\pspolygon[fillstyle=solid,fillcolor=mm](-1.732,4)(-1.732,4.5)(-2.598,4.5)
\pspolygon[fillstyle=solid,fillcolor=mp](-1.732,4.5)(-2.598,4.5)(-3.464,6)(-1.732,5)

\pspolygon[fillstyle=solid,fillcolor=mm](-2.598,5.5)(-2.598,6)(-3.464,6)
%


\psline(-4.33,6)(4.33,6)
\psline(-4.33,4.5)(4.33,4.5)
\psline(-4.33,3)(4.33,3)
\psline(-4.33,1.5)(4.33,1.5)
\psline(-4.33,0)(4.33,0)
\psline(-4.33,-1.5)(4.33,-1.5)
\psline(-4.33,-3)(4.33,-3)

\psline(-4.33,-3)(-4.33,6)
\psline(-3.464,-3)(-3.464,6)
\psline(-2.598,-3)(-2.598,6)
\psline(-1.732,-3)(-1.732,6)
\psline(-.866,-3)(-.866,6)
\psline(0,-3)(0,6)
\psline(.866,-3)(.866,6)
\psline(1.732,-3)(1.732,6)
\psline(2.598,-3)(2.598,6)
\psline(3.464,-3)(3.464,6)
\psline(4.33,-3)(4.33,6)

\psline(-4.33,5.5)(-3.464,6)
\psline(-4.33,4.5)(-1.732,6)
\psline(-4.33,3.5)(0,6)
\psline(-4.33,2.5)(1.732,6)
\psline(-4.33,1.5)(3.464,6)
\psline(-4.33,.5)(4.33,5.5)
\psline(-4.33,-.5)(4.33,4.5)
\psline(-4.33,-1.5)(4.33,3.5)
\psline(-4.33,-2.5)(4.33,2.5)
\psline(-3.464,-3)(4.33,1.5)
\psline(-1.732,-3)(4.33,.5)
\psline(0,-3)(4.33,-.5)
\psline(1.732,-3)(4.33,-1.5)
\psline(3.464,-3)(4.33,-2.5)

\psline(4.33,5.5)(3.464,6)
\psline(4.33,4.5)(1.732,6)
\psline(4.33,3.5)(0,6)
\psline(4.33,2.5)(-1.732,6)
\psline(4.33,1.5)(-3.464,6)
\psline(4.33,.5)(-4.33,5.5)
\psline(4.33,-.5)(-4.33,4.5)
\psline(4.33,-1.5)(-4.33,3.5)
\psline(4.33,-2.5)(-4.33,2.5)
\psline(3.464,-3)(-4.33,1.5)
\psline(1.732,-3)(-4.33,.5)
\psline(0,-3)(-4.33,-.5)
\psline(-1.732,-3)(-4.33,-1.5)
\psline(-3.464,-3)(-4.33,-2.5)

\psline(-4.33,-1.5)(-3.464,-3)
\psline(-4.33,1.5)(-1.732,-3)
\psline(-4.33,4.5)(0,-3)
\psline(-3.464,6)(1.732,-3)
\psline(-1.732,6)(3.464,-3)
\psline(0,6)(4.33,-1.5)
\psline(1.732,6)(4.33,1.5)
\psline(3.464,6)(4.33,4.5)

\psline(4.33,-1.5)(3.464,-3)
\psline(4.33,1.5)(1.732,-3)
\psline(4.33,4.5)(0,-3)
\psline(3.464,6)(-1.732,-3)
\psline(1.732,6)(-3.464,-3)
\psline(0,6)(-4.33,-1.5)
\psline(-1.732,6)(-4.33,1.5)
\psline(-3.464,6)(-4.33,4.5)
\end{pspicture}
\end{center}
\end{subfigure}
\begin{subfigure}{.5\textwidth}
\begin{center}
\begin{pspicture}(-4.33,-3)(6.33,6.5)
\psset{linewidth=.05mm}

\pspolygon[fillstyle=solid, fillcolor=black](0,0)(0,1)(0.433,0.75)

\pspolygon[fillstyle=solid,fillcolor=mm](.433,.75)(.866,1.5)(1.732,1.5)(2.598,1.5)(.866,.5)
\pspolygon[fillstyle=solid,fillcolor=mp](.866,1.5)(1.732,1.5)(1.732,2)
\pspolygon[fillstyle=solid,fillcolor=mp](3.464,3)(4.33,3)(4.33,3.5)
\pspolygon[fillstyle=solid,fillcolor=mm](1.732,1.5)(1.732,2)(3.464,3)(2.598,1.5)
\pspolygon[fillstyle=solid,fillcolor=mp](2.598,1.5)(3.464,2)(3.032,2.25)
\pspolygon[fillstyle=solid,fillcolor=mm](3.031,2.25)(3.464,3)(4.33,3)(4.33,2.5)(3.464,2)

\pspolygon[fillstyle=solid,fillcolor=mm](-.433,.75)(-.866,1.5)(-1.732,1.5)(-2.598,1.5)(-.866,.5)
\pspolygon[fillstyle=solid,fillcolor=mp](-.866,1.5)(-1.732,1.5)(-1.732,2)
\pspolygon[fillstyle=solid,fillcolor=mp](-3.464,3)(-4.33,3)(-4.33,3.5)
\pspolygon[fillstyle=solid,fillcolor=mm](-1.732,1.5)(-1.732,2)(-3.464,3)(-2.598,1.5)
\pspolygon[fillstyle=solid,fillcolor=mp](-2.598,1.5)(-3.464,2)(-3.032,2.25)
\pspolygon[fillstyle=solid,fillcolor=mm](-3.031,2.25)(-3.464,3)(-4.33,3)(-4.33,2.5)(-3.464,2)

\pspolygon[fillstyle=solid,fillcolor=mm](.866,0)(1.732,0)(2.598,-1.5)(.866,-.5)
\pspolygon[fillstyle=solid,fillcolor=mp](1.732,0)(2.598,-.5)(2.165,-.75)
\pspolygon[fillstyle=solid,fillcolor=mp](2.598,-1.5)(3.464,-1.5)(3.464,-2)
\pspolygon[fillstyle=solid,fillcolor=mm](2.165,-.75)(2.598,-.5)(4.33,-1.5)(2.598,-1.5)
\pspolygon[fillstyle=solid,fillcolor=mm](3.464,-1.5)(3.464,-2)(4.33,-2.5)(4.33,-1.5)

\pspolygon[fillstyle=solid,fillcolor=mm](-.866,0)(-1.732,0)(-2.598,-1.5)(-.866,-.5)
\pspolygon[fillstyle=solid,fillcolor=mp](-1.732,0)(-2.598,-.5)(-2.165,-.75)
\pspolygon[fillstyle=solid,fillcolor=mp](-2.598,-1.5)(-3.464,-1.5)(-3.464,-2)
\pspolygon[fillstyle=solid,fillcolor=mm](-2.165,-.75)(-2.598,-.5)(-4.33,-1.5)(-2.598,-1.5)
\pspolygon[fillstyle=solid,fillcolor=mm](-3.464,-1.5)(-3.464,-2)(-4.33,-2.5)(-4.33,-1.5)

\pspolygon[fillstyle=solid,fillcolor=mm](.433,2.25)(0,3)(.866,4.5)(.866,2.5)
\pspolygon[fillstyle=solid,fillcolor=mp](0,3)(0,4)(.433,3.75)
\pspolygon[fillstyle=solid,fillcolor=mm](0,4)(0,6)(.866,4.5)(.433,3.75)
\pspolygon[fillstyle=solid,fillcolor=mp](.866,4.5)(.866,5.5)(.433,5.25)
\pspolygon[fillstyle=solid,fillcolor=mm](.433,5.25)(0,6)(.866,6)(.866,5.5)

\pspolygon[fillstyle=solid,fillcolor=pp](0,0)(0,-1)(.433,-.75)
\pspolygon[fillstyle=solid,fillcolor=pm](0,-1)(0,-3)(.866,-1.5)(.433,-.75)
\pspolygon[fillstyle=solid,fillcolor=pp](.866,-1.5)(.866,-2.5)(.433,-2.25)
\pspolygon[fillstyle=solid,fillcolor=pm](.433,-2.25)(0,-3)(.866,-3)(.866,-2.5)


\psline(-4.33,6)(4.33,6)
\psline(-4.33,4.5)(4.33,4.5)
\psline(-4.33,3)(4.33,3)
\psline(-4.33,1.5)(4.33,1.5)
\psline(-4.33,0)(4.33,0)
\psline(-4.33,-1.5)(4.33,-1.5)
\psline(-4.33,-3)(4.33,-3)

\psline(-4.33,-3)(-4.33,6)
\psline(-3.464,-3)(-3.464,6)
\psline(-2.598,-3)(-2.598,6)
\psline(-1.732,-3)(-1.732,6)
\psline(-.866,-3)(-.866,6)
\psline(0,-3)(0,6)
\psline(.866,-3)(.866,6)
\psline(1.732,-3)(1.732,6)
\psline(2.598,-3)(2.598,6)
\psline(3.464,-3)(3.464,6)
\psline(4.33,-3)(4.33,6)

\psline(-4.33,5.5)(-3.464,6)
\psline(-4.33,4.5)(-1.732,6)
\psline(-4.33,3.5)(0,6)
\psline(-4.33,2.5)(1.732,6)
\psline(-4.33,1.5)(3.464,6)
\psline(-4.33,.5)(4.33,5.5)
\psline(-4.33,-.5)(4.33,4.5)
\psline(-4.33,-1.5)(4.33,3.5)
\psline(-4.33,-2.5)(4.33,2.5)
\psline(-3.464,-3)(4.33,1.5)
\psline(-1.732,-3)(4.33,.5)
\psline(0,-3)(4.33,-.5)
\psline(1.732,-3)(4.33,-1.5)
\psline(3.464,-3)(4.33,-2.5)

\psline(4.33,5.5)(3.464,6)
\psline(4.33,4.5)(1.732,6)
\psline(4.33,3.5)(0,6)
\psline(4.33,2.5)(-1.732,6)
\psline(4.33,1.5)(-3.464,6)
\psline(4.33,.5)(-4.33,5.5)
\psline(4.33,-.5)(-4.33,4.5)
\psline(4.33,-1.5)(-4.33,3.5)
\psline(4.33,-2.5)(-4.33,2.5)
\psline(3.464,-3)(-4.33,1.5)
\psline(1.732,-3)(-4.33,.5)
\psline(0,-3)(-4.33,-.5)
\psline(-1.732,-3)(-4.33,-1.5)
\psline(-3.464,-3)(-4.33,-2.5)

\psline(-4.33,-1.5)(-3.464,-3)
\psline(-4.33,1.5)(-1.732,-3)
\psline(-4.33,4.5)(0,-3)
\psline(-3.464,6)(1.732,-3)
\psline(-1.732,6)(3.464,-3)
\psline(0,6)(4.33,-1.5)
\psline(1.732,6)(4.33,1.5)
\psline(3.464,6)(4.33,4.5)

\psline(4.33,-1.5)(3.464,-3)
\psline(4.33,1.5)(1.732,-3)
\psline(4.33,4.5)(0,-3)
\psline(3.464,6)(-1.732,-3)
\psline(1.732,6)(-3.464,-3)
\psline(0,6)(-4.33,-1.5)
\psline(-1.732,6)(-4.33,1.5)
\psline(-3.464,6)(-4.33,4.5)
\end{pspicture}
\end{center}
\end{subfigure}
\caption{$(\eps_1,\eps_2)$-type in $\Ga_i$.}
\label{equalp}
\end{figure}

We denote by $\sw_i^\eps$, $\sB_i^\eps$, and $\sP_i^\eps$ the data associated to $\Ga^\eps_i$ where $\eps=\pm$.  We define $\su_{\eps},\sv_\eps: \Ga_i^\eps\lra \sB^\eps_{i}$ and $\tau^\eps: \Ga^\eps_i\ra \{\st_{i,\eps}^n\mid n\in \nN\}$ by the equation
$w=\su_\eps(w)^{-1}\sw^\eps_i \tau_\eps(w)\sv_\eps(w)$. 
In the case where $i=1$, we extend the definition of $\su_\eps,\sv_\eps$ and $\tau_\eps$ by setting 
for all $u,v\in \sB_1^+\cap s_1\sB_1^-$
$$
\begin{array}{llllll}
\su_+(\sw_1^-):=s_2s_0 & \su_-(u^{-1}\sw_1^+v)=s_1u,\\
\sv_+(\sw_1^-):=s_2s_0& \su_-(u^{-1}\sw_1^+v)=s_1v,\\
\tau_+(\sw_1^-):=-1& \tau_-(u^{-1}\sw_1^+v)=-1.\\
\end{array}
$$
Similarly when $i=2$, we extend the definition of $\su_\eps,\sv_\eps$ and $\tau_\eps$ by setting 
for all $u,v\in \sB_2^-\cap s_0\sB_2^+$
$$
\begin{array}{llllll}
\su_-(\sw_2^+):=s_2s_1s_2s_1 & \su_+(u^{-1}\sw_2^-v)=s_0u,\\
\sv_-(\sw_2^+):=s_2s_1s_2s_1& \su_+(u^{-1}\sw_2^-v)=s_0v,\\
\tau_-(\sw_2^+):=-1& \tau_+(u^{-1}\sw_2^-v)=-1.\\
\end{array}
$$
These definitions are coherent since we have 
\bem
\item for all $w\in \Ga_i$ and $\eps=\pm$ we have $w=\su^{-1}_\eps(w)\sw_i^\eps\tau_\eps(w)\sv_\eps(w)$;
\item for all $w,w'\in \Ga_i$, $w\sim_{\cL} w'$ if and only if $\sv_\eps(w)=\sv_\eps(w')$;
\item for all $w,w'\in \Ga_i$, $w\sim_{\cR} w'$ if and only if $\su_\eps(w)=\su_\eps(w')$.
\eem
The relation between those two expressions when $i=1$ are as follows
\bem
\item if $w$ is of type $(+,+)$ then $\su_-(w)=s_1\su_+(w)$, $\sv_-(w)=s_1\sv_+(w)$ and $\tau_-(w)=\tau_+(w)-1$;
\item if $w$ is of type $(-,-)$ then $\su_+(w)=s_0s_2=\sv_+(w)$ and $\tau_+(w)=\tau_-(w)-1$;
\item if $w$ is of type $(+,-)$ then $\su_-(w)=s_1\su_+(w)$, $\sv_-(w)=s_0s_2$ and $\tau_-(w)=\tau_+(w)$;
\item if $w$ is of type $(-,+)$ then $\su_-(w)=s_0s_2$, $\sv_-(w)=s_1\sv_+(w)$ and $\tau_-(w)=\tau_+(w)$.
\eem
There are similar formulas for $i=2$.


\section{Cell representations in type $G_2$}\label{sec:balanced}

In this section we prove that each finite cell admits a finite dimensional representation satisfying $\B{1}$--$\B{4}$ and $\B{4}'$. Moreover, we show that each infinite cell admits a finite dimensional representation satisfying $\B{1}$.   
\medskip
  
We will use the following notation. We write $E_{i,j}$ for the square matrix with $1$ in the $(i,j)$ place, and zeros elsewhere (the dimension of the matrix will be clear from context). For $i,j\in\mathbb{Z}$ we write $\mu_{i,j}=\sq^{ia-jb}+\sq^{-ia+jb}$.


\subsection{Finite cells}

Let $\Ga$ be a finite two-sided cell and let $\Up$ be a right cell lying in $\Ga$. By Table~\ref{partition}, $\Gamma$ intersects a dihedral parabolic subgroup~$W_I$, and we set 
$$
\tilde{\ba}_{\Gamma}=\ba_I(z)\quad\text{for any $z\in\Gamma\cap W_I$}
$$
(here $\ba_I$ is Lusztig's $\ba$-function on~$W_I$). It is easily verified, using Table~\ref{dihedral-cell}, that this is well defined. 

\medskip

We write $\rho\sim \Up$ to indicate that $\rho$ is the cell module over $\sR$ associated to $\Up$ equipped with the natural Kazhdan-Lusztig basis as in Section~\ref{sec:1.2}. From the data in Figure~\ref{partition}, we see that $\Upsilon_{\geq_\cLR}$ and $\Upsilon_{>_\cLR}$ are also finite subsets of~$W$.

\begin{Th}\label{thm:finitecellbalanced} Let $\Gamma$ be a finite two-sided cell. If $(\Gamma,r)\neq (\Gamma_3,1)$ let $\Up$ be any right cell contained in $\Ga$ and let $\rho\sim \Up$. If $(\Gamma,r)=(\Gamma_3,1)$ let $\rho$ be the direct sum of the cell representations for each of the right cells contained in $\Gamma$. Then $\rho$ satisfies $\B{1}$--$\B{4}$ and $\B{4}'$ with $\ba_{\rho}=\tba_{\Gamma}$. Thus $\rho$ is $\Ga$-balanced over $\sR$.
\end{Th}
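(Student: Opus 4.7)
The plan is to reduce each axiom to a finite verification. The axiom $\B{1}$ is immediate: as already noted following Definition~\ref{def:balanced}, the cell module $\cH_\Upsilon$ associated to any right cell $\Upsilon$ is automatically a cell representation, since $C_w$ can only act nontrivially on $\bar{C}_y$ (for $y\in\Upsilon$) if there exists $z\in \Upsilon$ with $h_{y,w,z}\neq 0$, which forces $z\leq_{\cLR} w$ and hence $w\in \Upsilon_{\geq_{\cLR}}\subseteq \Gamma_{\geq_{\cLR}}$. A direct sum of such modules clearly still satisfies $\B{1}$.

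The key observation is that, by inspection of Figure~\ref{partition}, for every finite two-sided cell $\Gamma$ the whole set $\Gamma_{\geq_{\cLR}}$ is finite. Combined with $\B{1}$, this means that only finitely many elements $w\in W$ act nontrivially on $\rho$, and the module itself is finite dimensional with a basis indexed by the right cells considered. The matrix entries of $\rho(C_w)$ in the Kazhdan-Lusztig basis are the structure constants $h_{y,w,z}$ with $y,z$ ranging over the relevant right cells. My plan is therefore to compute these structure constants explicitly, regime by regime, using \textsf{gap3}/\textsf{CHEVIE}, and to read off the degrees from the output. Once this finite list of polynomials is in hand, verifying $\B{2}$ and $\B{3}$ amounts to checking that
$$
\max\{\deg h_{y,w,z}\mid y,z\in\text{basis indices}\}\leq \tba_{\Gamma}
$$
for all $w\in\Gamma_{\geq_{\cLR}}$, with equality precisely when $w\in\Gamma$. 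The target value $\tba_\Gamma$ is computed once and for all in the dihedral parabolic $W_I$ using the formulas recalled in Table~\ref{dihedral-cell}, where Lusztig's conjectures are known to hold.

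For $\B{4}$ and $\B{4}'$ I plan to extract, from the same computation, the leading coefficients $\tilde\gamma_{x,y,z^{-1}}\in\nZ$ of $h_{x,y,z}$ in degree $\tba_\Gamma$. Then $\B{4}$ reduces to the linear independence over $\nZ$ of the leading matrices $\fc_{\rho,w}$ for $w\in\Gamma$, which can be read off by inspection of the finitely many nonzero entries; and $\B{4}'$ reduces to checking that for each $z\in\Gamma$ there is at least one pair $(x,y)\in\Gamma^2$ making $\tilde\gamma_{x,y,z^{-1}}$ nonzero. Both checks are finite and case by case.

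The main obstacle, and the reason for the case distinction in the statement, is the non-generic point $(\Gamma_3,r=1)$. The analogue of the generic argument using a single right-cell representation $\rho\sim\Upsilon$ fails there: at $r=1$ several right cells that are separate for generic parameters have merged into $\Gamma_3$, and a single $\cH_\Upsilon$ does not supply enough independent leading matrices to satisfy $\B{4}$. The remedy, built into the statement, is to form the direct sum of the cell modules over all right cells inside $\Gamma_3$; with this enlarged basis the linear independence can once again be verified by explicit computation. Away from this point, a single right-cell module suffices for each finite $\Gamma$, and the bound $\ba_\rho=\tba_\Gamma$ matches $\tba_{\Gamma}$ as dictated by the dihedral parabolic.
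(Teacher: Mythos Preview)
Your proposal is correct and follows essentially the same approach as the paper: both exploit the finiteness of $\Gamma_{\geq_{\cLR}}$ for finite cells to reduce $\B{2}$--$\B{4}$ and $\B{4}'$ to explicit case-by-case computation (carried out with \textsf{gap3}/\textsf{CHEVIE}), while $\B{1}$ is automatic for cell modules. The paper goes further only in that it records the results of these computations---for instance, writing down explicit generator matrices for the cell modules and identifying the leading matrices as elementary matrices $E_{\su_w,\sv_w}$ indexed by cell-factorisation data where available, which makes $\B{4}$ and $\B{4}'$ transparent rather than a black-box check; your plan would discover the same structure but you have not yet stated it.

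One small point: your explanation of why a single right cell fails at $(\Gamma_3,r=1)$ is slightly speculative. The paper's reason is simply that the three right cells inside $\Gamma_3$ at $r=1$ have different cardinalities (one of size $7$, two of size $8$) and give non-isomorphic cell modules, so no single one can play the role that works generically; it does not single out $\B{4}$ specifically as the failing axiom. This does not affect the validity of your argument, since the remedy (take the direct sum) is the one prescribed by the statement anyway.
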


\begin{proof}
We have already noted in Section~\ref{sec:def-balanced} that $\rho$ satisfies $\B{1}$. To check $\B{2}$, note that the set $\Upsilon_{\geq_\cLR}$ is finite, and hence it is clear that there exists $M\geq 0$ such that $\deg([\rho(C_{w})]_{i,j})\leq M$ for all $w\in \Upsilon_{\geq_{\cLR}}$. Since $\rho$ satisfies $\B{1}$ we have $\rho(C_w)=0$ if $w\notin \Upsilon_{\geq_{\cLR}}$, so $\B{2}$ holds.
\medskip

We now verify $\B{3}$, $\B{4}$ and $\B{4}'$. Since $\rho(C_w)=0$ if $w\notin \Upsilon_{\geq_{\cLR}}$ it is sufficient to look at the matrices $\rho(C_{w})$ where $w$ lies in the finite set $\Upsilon_{\geq_{\cLR}}$.

\medskip

We start by treating the~$1$-dimensional cells. There are 4 such two-sided cells, $\Ga_e=\{e\}$ (in all parameter regimes) and the cells $\Gamma_5=\{s_0s_2s_0\}$ (for $r>2$), $\Gamma_7=\{s_1s_2s_1s_2s_1\}$ (for $3/2>r>1$) and $\Gamma_7=\{s_1\}$ (for $r<1$). The associated cell modules are $\rho_I$ where $I=\emptyset,\{s_1\}$ or $\{s_0,s_2\}$ (see Example~\ref{exa:balanced-one-dim}). We now verify $\B{3}$ for each of these cells. Then $\B{4}$ and $\B{4}'$ are obvious since there is only one leading matrix, and it is just a nonzero element of $\sR$. 
\bem
\item We have $\rho_{\emptyset}\sim \Ga_e$ and since $\max\deg(\rho_\emptyset(\Ga_e))=0=\tba_{\Gamma_e}$ the result is clear. 
\item When $r>2$, we have $\rho_{I}\sim \Ga_5$ where $I=\{0,2\}$.  We have ${\Ga_5}_{\leq_{\cLR}}=\Ga_5\cup \Ga_4\cup \Ga_e$ and by direct calculation $\max\deg(\rho(\Ga_5))=3b$ and $\max \deg(\rho(\Ga_4))=2b$. This shows that $\ba_\rho=3b=\tba_{\Gamma_5}$ and hence $\B{3}$. 
\item When $3/2>r>1$, we have $\rho_{I}\sim \Ga_7$ where $I=\{1\}$. We have ${\Ga_7}_{\leq_{\cLR}}=\Ga_7\cup \Ga_3\cup \Ga_4\cup \Ga_e$ and 
\begin{center}
$\max\deg(\rho(\Ga_7))=3a-2b,\quad
\max \deg(\rho(\Ga_3))=2a-b,\quad\text{and}\quad 
 \max \deg(\rho(\Ga_4))=-b.$
 \end{center}
 This shows that  $\ba_\rho=3a-2b=\tba_{\Ga_7}$ and hence $\B{3}$ holds.
 \item When $r<1$, we have $\rho_{I}\sim \Ga_7$ where $I=\{1\}$. We have ${\Ga_7}_{\leq_{\cLR}}=\Ga_7\cup \Ga_e$ and $
\max\deg(\rho(\Ga_7))=a$. 
 This shows that  $\ba_\rho=a=\tba_{\Ga_7}$ and hence $\B{3}$ holds. 
 \eem

We now consider the remaining finite cells. Consider $\Ga_6$, which occurs in the regime $2>r>3/2$ only. Let $\rho\sim \Up$ where $\Up$ is a right cell included in $\Ga_6$. Thus $\rho$ is a $5$-dimensional representation with basis indexed by the elements of $\Upsilon$. To be concrete we will take $\Upsilon=\{s_1s_0,s_1s_0s_2,s_1s_0s_2s_1,s_1s_0s_2s_1s_2,s_1s_0s_2s_1s_2s_0\}$, however it turns out that the representations for the right cells are pairwise isomorphic. Then the matrices of $T_{s_1}$, $T_{s_2}$, and $T_{s_0}$ are, respectively,
$$
\begin{psmallmatrix}
\sq^a&\mu_{1,1}&0&1&-\mu_{2,3}\\
0&-\sq^{-a}&0&0&0\\
0&1&\sq^a&\mu_{1,1}&0\\
0&0&0&-\sq^{-a}&0\\
0&0&0&0&-\sq^{-a}
\end{psmallmatrix},\,
\begin{psmallmatrix}
-\sq^{-b}&0&0&0&0\\
1&\sq^b&0&0&0\\
0&0&-\sq^{-b}&0&0\\
0&0&1&\sq^b&1\\
0&0&0&0&-\sq^{-b}
\end{psmallmatrix},\,
\begin{psmallmatrix}
\sq^b&1&-\mu_{1,2}&0&0\\
0&-\sq^{-b}&0&0&0\\
0&0&-\sq^{-b}&0&0\\
0&0&0&-\sq^{-b}&0\\
0&0&0&1&\sq^b
\end{psmallmatrix}.
$$
We have ${\Ga_6}_{\leq_{\cLR}}=\Ga_6\cup \Ga_3\cup \Ga_4\cup \Ga_e$ and we check by direct computation that  
\begin{align*}
\max\deg(\rho(\Ga_6))=a+b,\quad
\max \deg(\rho(\Ga_3))=a,\quad\text{and}\quad
\max \deg(\rho(\Ga_4))=b.
\end{align*}
This shows that $\ba_\rho=a+b=\tba_{\Ga_6}$ and $\B{3}$ holds. To verify $\B{4}$ requires further computation. Recall that any $w\in \Ga_6$ can be written in a unique way in the form $u^{-1}s_1s_0v$ where $u,v\in \sB_6$, see Remark~\ref{rem:cell-fact-finite}. Again by direct computation we see that
$$
\fc_{\rho,w}=E_{s_1s_0u,s_1s_0v}\quad\text{if $w=u^{-1}s_1s_0v$ with $u,v\in\sB_6$}
$$
(recall that the rows and columns of the matrices for $\rho(T_w)$ are indexed by the elements of $\Upsilon=\{s_1s_0v\mid v\in \sB_6\}$). Thus $\B{4}$ holds. To verify $\B{4}'$ we note that if $w=u^{-1}s_1s_0v\in\Gamma_6$ with $u,v\in\sB_6$ then writing $x=u^{-1}s_1s_0\in\Gamma_6$ and $y=s_1s_0v\in\Gamma_6$ we have
$$
\fc_{\rho,x}\fc_{\rho_y}=E_{s_1s_0u,s_1s_0}E_{s_1s_0,s_1s_0v}=E_{s_1s_0u,s_1s_0v}=\fc_{\rho,w}.
$$

\medskip

Consider $\Gamma_4$, which occurs in for all $r\neq 1$. The matrices for $\rho_{\Up}(T_j)$ are easily computed, and we find
\begin{align*}
\rho_{\Up}(T_{s_0})&=\begin{psmallmatrix}\sq^b&1\\
0&-\sq^{-b}\end{psmallmatrix}&
\rho_{\Up}(T_{s_1})&=
\begin{psmallmatrix}
-\sq^{-a}&0\\
0&-\sq^{-a}
\end{psmallmatrix}&
\rho_{\Up}(T_{s_2})&=
\begin{psmallmatrix}
-\sq^{-b}&0\\
1&\sq^b
\end{psmallmatrix}
\end{align*} 
If $r>1$ then $\Gamma_{4\leq_{\cLR}}=\Gamma_4\cup\Gamma_e$, and by direct computation we see that $\max\deg(\rho(\Gamma_4))=b$, hence \B{3} holds. We compute 
$$
\fc_{\rho,s_0}=E_{1,1},\quad \fc_{\rho,s_0s_2}=E_{1,2},\quad \fc_{\rho,s_2}=E_{2,2},\quad \fc_{\rho,s_2s_0}=E_{2,1},
$$
from which \B{4} and $\B{4}'$ follow. If $r<1$ then $\Gamma_{4\leq_{\cLR}}=\Gamma_4\cup\Gamma_7\cup\Gamma_3\cup\Gamma_e$. By direct calculation we have $\max\deg(\rho(\Gamma_4))=3b-2a$, $\max\deg(\rho(\Gamma_7))=-a$, and $\max\deg(\rho(\Gamma_3))=2b-a$, and hence \B{3} holds. We have
$$
\fc_{\rho,s_0s_2s_1s_2s_1s_2}=E_{1,2},\quad \fc_{\rho,s_0s_2s_1s_2s_1s_2s_0}=E_{1,1},\quad \fc_{\rho,s_2s_1s_2s_1s_2}=E_{2,2},\quad \fc_{\rho,s_2s_1s_2s_1s_2s_0}=E_{2,1},
$$
and hence \B{4} and $\B{4}'$ hold.
\medskip

We are left with the red cells $\Ga_3$. When $r>1$ all the representations afforded by the right cells are isomorphic and the matrices of $T_{s_1},T_{s_2}$ and $T_{s_0}$ are given by 
$$
\begin{psmallmatrix}
\sq^{a}&\mu_{1,1}&0&0&1&0\\
0&-\sq^{-a}&0&0&0&0\\
0&0&-\sq^{-a}&0&0&0\\
0&1&0&\sq^a&\mu_{1,1}&0\\
0&0&0&0&-\sq^{-a}&0\\
0&0&0&0&0&-\sq^{-a}
\end{psmallmatrix},
\begin{psmallmatrix}
-\sq^{-b}&0&0&0&0&0\\
1&\sq^{b}&1&0&0&0\\
0&0&-\sq^{-b}&0&0&0\\
0&0&0&-\sq^{-b}&0&0\\
0&0&0&1&\sq^{b}&1\\
0&0&0&0&0&-\sq^{-b}
\end{psmallmatrix},
\begin{psmallmatrix}
-\sq^{-b}&0&0&0&0&0\\
0&-\sq^{-b}&0&0&0&0\\
0&1&\sq^b&0&0&0\\
0&0&0&-\sq^{-b}&0&0\\
0&0&0&0&-\sq^{-b}&0\\
0&0&0&0&1&\sq^b
\end{psmallmatrix}.
$$
A direct check shows that $\deg([\rho(T_w)]_{i,j})$ is bounded by $\tba_{\Ga_3}=a$ and that $\B{3}$ holds. Moreover,
$$
\{\fc_{\rho,w}\mid w\in \Ga_3\}=\{E_{1+i,1+j}+E_{4+i,4+j}, E_{1+i,4+j}+E_{4+i,1+j}\mid 0\leq i,j\leq 2\},
$$
from which $\B{4}$ and $\B{4}'$ follow. The case $r<1$ can be treated similarly. 
\medskip

The case $(\Gamma,r)=(\Gamma_3,1)$ is slightly different since the right cells contained in $\Gamma$ do not give rise to isomorphic cell representations (there are two right cells with $8$ elements, and one with $7$). However in this case it turns out, by calculation, that the direct sum of these representations is bounded by $\tba_{\Ga_3}=1$ and $\B{3}$, $\B{4}$ and $\B{4}'$ hold. Explicit matrices for all finite cells can be found on the authors' webpage, and are provided below. 
 \end{proof}

\begin{Rem}\label{rem:6dim}
When $\Ga=\Ga_3$ and $r>1$, it is possible to use the cell factorisation described in Remark~\ref{rem:cell-fact-finite} to construct a 3 dimensional balanced representation over a quotient of an $\sR$-polynomial ring (this is a slight generalisation of our definition of balanced representations). The construction is based on the induction process introduced by Geck in~\cite{geck}. Recall that $\sB_3=\{e,s_2,s_2s_0\}$ and $\st_3=s_2s_1$. For all $x\in \sB_3\cup \{\st_3\}$, there exist $\sh_x\in \cH$ such that $C_{s_1}\sh_x\equiv C_{s_1x}\mod \cH_{\Ga_3}$.   Then $\cH_{\Ga_3}=\sg \sh^\flat_{u}C_{s_1}\sh^k_{\st_3}\sh_v\mid u,v\in \sB_3, k\in \{0,1\}\sd_{\sR}$ where $\flat$ denotes the anti-involution defined \cite[\S 3.4]{bible}. This allows us to define a 3 dimensional representation $\rho$ over $\sR[\eps]/ (\eps^2-1)$ with basis $\{\se_{s_1v}\mid v\in \sB_3\}$ by setting 
$$\se_{s_1v}\cdot T_w=\sum_{v\in \sB_3,k\in \{0,1\}} \la^{k,v'}_{v,w} \eps^k\se_{s_1v'}\text{ whenever }C_{s_1v}\cdot T_w\equiv \sum_{v'\in \sB_3,k\in \{0,1\}} \la^{k,v'}_{v,w} C_{s_1}\sh^k_{\st_3}\sh_{v'}\mod \cH_{\Ga_3}.$$
We obtain the following matrices for $T_{s_1}$, $T_{s_2}$ and $T_{s_0}$: 
$$\begin{psmallmatrix}
\sq^a&\eps+\mu_{1,1}&0\\
0&-\sq^{-a}&0\\
0&0&-\sq^{-a}
\end{psmallmatrix},
\begin{psmallmatrix}
-\sq^{-b}&0&0\\
1&\sq^b&1\\
0&0&-\sq^{-b}
\end{psmallmatrix}
\quand 
\begin{psmallmatrix}
-\sq^{-b}&0&0\\
0&-\sq^{-b}&0\\
0&1&\sq^b
\end{psmallmatrix}.
$$
Then it can be checked that $\rho$ is $\Ga_3$-balanced. More precisely we have $\fc_{\rho,w}=\eps^k E_{s_1u,s_1v}$ if $w=u^{-1}s_1\st^k_3v$.
\end{Rem}

 It is useful for later results to understand the decomposition of cell modules of finite cells into irreducible components. We summarise this in the following proposition.

\begin{Prop}\label{prop:decompositions}
Let $\Gamma$ be a finite two-sided cell and let $\Up$ be a right cell in $\Gamma$. Let $\rho_{\Up}\sim\Up$.
\begin{enumerate}
\item If $\Gamma\neq \Gamma_3$ then the representations $\rho_{\Up}$ are irreducible and pairwise isomorphic. 
\item If $\Gamma=\Gamma_3$ and $r\neq 1$ then the representations $\rho_{\Up}$ are pairwise isomorphic and decompose into a direct sum $\rho_{\Up}=\rho_3^+\oplus\rho_3^-$ where $\rho_3^{\pm}$ are irreducible $3$ dimensional representations with $\rho_3^+\not\cong\rho_3^-$.  
\item Suppose that $\Gamma=\Gamma_3$ and $r=1$. Let $\Up_1$, $\Up_2$, and $\Up_3$ be the right cells containing $s_1$, $s_0$, and $s_2$ (respectively). Then 
$$
\rho_{\Up_1}\cong \rho_3^+\oplus\rho_3^-\oplus\rho_3'\quad\text{and}\quad \rho_{\Up_2}\cong\rho_{\Up_3}\cong\rho_3^+\oplus\rho_3^-\oplus\rho_3''
$$
where $\rho_3^{+}$, $\rho_3^-$, $\rho_3'$, and $\rho_3''$ are pairwise non-isomorphic irreducible representations of dimension $3,3,1$, and $2$. 
\end{enumerate}
\end{Prop}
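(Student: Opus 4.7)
The plan is to exploit the explicit matrices for the cell representations $\rho_{\Upsilon}$ that were produced in the course of the proof of Theorem~\ref{thm:finitecellbalanced}. All three parts reduce to routine (but nontrivial) linear-algebra verifications, once the right invariant subspaces have been identified.

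\textbf{Part (1).} First I would establish the pairwise isomorphism of the representations $\rho_{\Upsilon}$ for the various right cells $\Upsilon\subseteq\Gamma$. Recall from Remark~\ref{rem:cell-fact-finite} that each $w\in\Gamma$ admits a cell factorisation $w=\su_w^{-1}\sw_{\Gamma}\tau_w\sv_w$ (with $\tau_w$ trivial unless $\Gamma=\Gamma_3$). The right cells are indexed by the possible values of $\su_w$, and the natural bijection between two right cells $\Upsilon,\Upsilon'\subseteq\Gamma$ obtained by varying $\su$ while keeping $(\tau,\sv)$ fixed lifts to an isomorphism $\rho_{\Upsilon}\cong\rho_{\Upsilon'}$; this can either be read off directly from the matrices (the tables for $T_{s_i}$ in adjacent right cells within the same two-sided cell coincide up to a relabelling of the basis) or, more conceptually, deduced from the fact that $C_{\su^{-1}\sw_\Ga}$ acts as an intertwiner. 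Irreducibility is then verified by specialising the parameter $\sq$ to a generic complex value and comparing with the known classification of irreducible representations of the generic affine Hecke algebra of type $\tilde G_2$ in the appropriate parameter regime; alternatively, one checks directly from the explicit matrices of Theorem~\ref{thm:finitecellbalanced} that the $\sR$-algebra generated by $\{\rho_{\Upsilon}(T_{s}):s\in S\}$ contains all the matrix units $E_{i,j}$, hence exhausts the full endomorphism algebra.

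\textbf{Part (2).} For $\Gamma=\Gamma_3$ with $r\neq 1$, inspection of the $6\times 6$ matrices in the proof of Theorem~\ref{thm:finitecellbalanced} reveals a block structure: the six basis vectors split into two sets of three (indexed by the two right cells $\Upsilon^+,\Upsilon^-$ of $\Gamma_3$ intersecting the parabolic subgroup $W_{\{1,2\}}$ and $W_{\{0,1\}}$ respectively), and the action of each $T_s$ preserves a flag related to an involution $\iota$ of the index set. I would identify explicitly the $\pm 1$ eigenspaces of $\iota$ (each of dimension~$3$) and show that both are $\cH$-invariant; this exhibits the decomposition $\rho_{\Upsilon}=\rho_3^+\oplus\rho_3^-$. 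Irreducibility of $\rho_3^{\pm}$ is checked as in part~(1), and non-isomorphism $\rho_3^+\not\cong\rho_3^-$ follows by comparing the traces $\Tr(\rho_3^\pm(T_{s_1}))$ (or equivalently the characters on a suitable single element), which differ.

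\textbf{Part (3).} This is the only place where the equal-parameter case $r=1$ genuinely differs, because the three right cells of $\Gamma_3$ have sizes $7,8,8$ rather than being mutually isomorphic. Starting from the explicit cell matrices at $a=b$, I would first observe that the $\pm$ decomposition of part~(2) degenerates: the involution $\iota$ still commutes with the action, so one still obtains irreducible subrepresentations $\rho_3^+$ and $\rho_3^-$ (now verified to still be $3$-dimensional, pairwise non-isomorphic, and common to all three $\rho_{\Upsilon_i}$). The orthogonal complements then carry the remaining $\rho_3'$ (of dimension $7-6=1$, inside $\rho_{\Upsilon_1}$) and $\rho_3''$ (of dimension $8-6=2$, inside $\rho_{\Upsilon_2}\cong\rho_{\Upsilon_3}$). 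Irreducibility of $\rho_3'$ is trivial; for $\rho_3''$ I would compute the restricted $2\times 2$ matrices and check that they do not commute, so no further decomposition is possible. The pairwise non-isomorphism of $\rho_3^\pm,\rho_3',\rho_3''$ is clear on dimensional grounds except for $\rho_3^+$ versus $\rho_3^-$, which was already handled in~(2).

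The main obstacle is really only in part~(3): at the non-generic value $r=1$ one has to verify by hand that no additional merging of irreducible constituents takes place (e.g.\ that $\rho_3''$ does not accidentally contain a copy of $\rho_3'$). This is a finite check on the $2\times 2$ blocks and poses no conceptual difficulty, but it is the one step that cannot be deduced from the generic analysis of part~(2).
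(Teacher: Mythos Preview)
Your overall strategy---exhibit explicit invariant subspaces and check irreducibility by hand---is exactly what the paper does, and for parts~(1) and~(2) your outline is serviceable. Two points of comparison and one genuine gap:

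\medskip

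\textbf{Part~(2): description is off, but the idea is right.} The six basis vectors of $\rho_{\Upsilon}$ are the six elements of a \emph{single} right cell $\Upsilon$; they are not ``indexed by two right cells intersecting $W_{\{1,2\}}$ and $W_{\{0,1\}}$''. What actually happens (for $r>1$) is that the cell factorisation $w=\su_w^{-1}\sw_3\st_3^k\sv_w$ with $k\in\{0,1\}$ splits the basis into two blocks of three according to the value of $k$, and the involution you want is the one swapping these blocks. The paper packages this more cleanly via Remark~\ref{rem:6dim}: the $6$-dimensional representation is obtained from a $3$-dimensional one over $\sR[\eps]/(\eps^2-1)$, and specialising $\eps=\pm 1$ gives $\rho_3^{\pm}$ directly. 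For $r<1$ the paper simply writes down the $3\times 3$ matrices. Either route works.

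\medskip

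\textbf{Part~(3): the involution alone does not give the decomposition.} At $r=1$ the right cell $\Upsilon_1$ has seven elements and $\Upsilon_2,\Upsilon_3$ have eight, so the six-element involution from part~(2) does not even have a natural domain. If you instead use the obvious ``reversal'' involution on the basis (fixing the middle element of $\Upsilon_1$, or swapping $e_j\leftrightarrow e_{9-j}$ for $\Upsilon_2$), its $\pm 1$-eigenspaces have dimensions $4+3$ (respectively $4+4$), \emph{not} $3+3+1$ (respectively $3+3+2$). Inspecting the paper's explicit submodules for $\Upsilon_1$,
\[
\langle e_1+2e_4+e_7,\,e_2+e_5,\,e_3+e_6\rangle,\quad
\langle e_1-e_7,\,e_2-e_5,\,e_3-e_6\rangle,\quad
\langle e_1-e_4+e_7\rangle,
\]
you see that the first contains the vector $e_1+2e_4+e_7$ with a coefficient~$2$: this is not an eigenvector of any permutation involution, and the $3+1$ refinement of the $4$-dimensional ``$+$'' eigenspace requires a genuinely new calculation that your sketch does not supply. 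The paper handles this by citing \cite[(3.13.1)]{Lus4} and writing down the invariant subspaces explicitly; you will need to do the same, or else produce the $3$-dimensional subrepresentations by some other mechanism (for instance, by checking that the matrices for $\rho_3^{\pm}$ from part~(2) still define representations at $a=b$ and then locating them as submodules).
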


\begin{proof}
Statement (1) follows by direct calculation, and we omit the details. 
\medskip

Suppose that $\Gamma=\Gamma_3$ and $r\neq 1$. Again we verify that each right cell gives rise to an isomorphic representation by direct calculation. Let us discuss the decomposition into irreducible components. If $r>1$ then the cell $\Gamma_3$ admits a cell factorisation, and it follows from Remark~\ref{rem:6dim} that $\rho$ decomposes as $\rho_3^+\oplus\rho_3^-$, where the matrices for $\rho_3^{\eps}(T_j)$ are as in Remark~\ref{rem:6dim} (with $\eps$ now considered to be $\pm1$, and so these representations are over~$\sR$). If $r<1$ then we compute directly that $\rho_{\Up}\cong \rho_3^+\oplus \rho_3^-$ with matrices 
\begin{align*}
\rho_3^{\eps}(T_0)=\begin{psmallmatrix}
-\sq^{-b}&0&0\\
1&\sq^b&0\\
0&0&-\sq^{-b}
\end{psmallmatrix},\quad
\rho_3^{\eps}(T_1)=\begin{psmallmatrix}
-\sq^{-a}&0&0\\
0&-\sq^{-a}&0\\
1&0&\sq^a
\end{psmallmatrix}\quad\text{and}\quad
\rho_3^{\eps}(T_2)=
\begin{psmallmatrix}
\sq^b&1&\eps+\mu_{1,1}\\
0&-\sq^{-b}&0\\
0&0&-\sq^{-b}
\end{psmallmatrix}
\end{align*}
In each case it is easy to see that $\rho_3^{\eps}$ is irreducible, and that $\rho_3^+\not\cong\rho_3^-$. Hence~(2). 
\medskip

Finally, consider $\Gamma=\Gamma_3$ with $r=1$. In this case the result follows from \cite[(3.13.1)]{Lus4}. Indeed, if $\rho_{\Up_1}$ is constructed using the basis of residues 
$(\mathbf{e}_1,\mathbf{e}_2,\mathbf{e}_3,\mathbf{e}_4,\mathbf{e}_5,\mathbf{e}_6,\mathbf{e}_7)=(C_{1},C_{12},C_{120},C_{121},C_{1212},C_{12120},C_{12121})$ 
then the submodules giving the claimed decomposition are $\langle \mathbf{e}_1+2\mathbf{e}_4+\mathbf{e}_7,\mathbf{e}_2+\mathbf{e}_5,\mathbf{e}_3+\mathbf{e}_6\rangle$, $\langle \mathbf{e}_1-\mathbf{e}_7,\mathbf{e}_2-\mathbf{e}_5,\mathbf{e}_3-\mathbf{e}_6\rangle$, and $\langle \mathbf{e}_1-\mathbf{e}_4+\mathbf{e}_7\rangle$. If $\rho_{\Up_2}$ is constructed using the basis of residues 
$$(\mathbf{e}_1,\mathbf{e}_2,\mathbf{e}_3,\mathbf{e}_4,\mathbf{e}_5,\mathbf{e}_6,\mathbf{e}_7,\mathbf{e}_8)=(C_{0},C_{02},C_{021},C_{0212},C_{02120},C_{02121},C_{021212},C_{0212120})
$$ 
then the submodules are $\langle \textbf{e}_3+\textbf{e}_6,\textbf{e}_2+2\textbf{e}_4+\textbf{e}_7,\textbf{e}_1+2\textbf{e}_5+\textbf{e}_8\rangle$, $\langle \textbf{e}_3-\textbf{e}_6,\textbf{e}_2-\textbf{e}_7,\textbf{e}_1-\textbf{e}_8\rangle$, and $\langle\textbf{e}_2-\textbf{e}_4+\textbf{e}_7,\textbf{e}_1-\textbf{e}_5+\textbf{e}_8\rangle$. The same submodule structure works for $\rho_{\Up_3}$ using the basis of residues $(C_{20},C_2,C_{21},C_{212}, C_{0212}, C_{2121}, C_{21212}, C_{212120})$. 
\end{proof}

\begin{Rem}\label{rem:satisfyB}
We note the following for later use. In the case $\Gamma=\Gamma_3$ the representations $\rho_3^+$, $\rho_3^-$, $\rho_3'$, and $\rho_3''$, equipped with the bases from the above proposition, satisfy \B{1} and \B{2} (in the respective parameter regimes). It is clear that \B{1} holds (because if $\pi$ is semisimple and $\pi(C_w)=0$ then $\pi'(C_w)=0$ for all submodules). To see that \B{2} holds we note that the change of basis matrix that converts the cell representation into block form is independent of~$\sq$.
\end{Rem}


\subsection{The principal series representation $\pi_0$}\label{sec:pi0const}

We now associate a representation $\pi_0$ to the lowest two-sided cell $\Gamma_0$. It is convenient to set this section up in arbitrary type, and so $\cH$ is an affine Hecke algebra of rank~$n$. Recall that $\sR[Q]$ denotes the subalgebra of $\cH$ spanned by the elements $\{X^{\lambda}\mid \lambda\in Q\}$. We use this large commutative subalgebra to construct finite dimensional representations of $\cH$ as follows. Let $\zeta_1,\ldots,\zeta_n$ be commuting indeterminants, and let $M_0$ be the $1$-dimensional right $\sR[Q]$-module over the ring $\sR[\zeta_1,\ldots,\zeta_n,\zeta_1^{-1},\ldots,\zeta_n^{-1}]$, with generator $\xi_0$ and $\sR[Q]$-action given by linearly extending 
$$
\xi_0\cdot X^{\mu}=\xi_0\,\zeta^{\mu}\quad\text{where $\zeta^{\mu}=\zeta_1^{k_1}\cdots \zeta_n^{k_n}$ if $\mu= k_1\alpha_1^{\vee}+\cdots+k_n\alpha_n^{\vee}\in Q$}.
$$ 
Now let $(\pi_0,\mathcal{M}_0)$ be the induced right $\cH$-module. That is,
$$\mathcal{M}_0=\mathrm{Ind}_{\sR[Q]}^{\cH}(M_0)=M_0\otimes_{\sR[Q]}\cH.$$ 
Since $\{X^{\mu}T_{u^{-1}}^{-1}\mid \mu\in Q,u\in W_0\}$ is a basis of $\cH$, and since $\xi_0\otimes X^{\mu}=(\xi_0\otimes 1)\zeta^{\mu}$, we see that $\{\xi_0\otimes X_u\mid u\in W_0\}$ is a basis of $\mathcal{M}_0$. Thus $\mathcal{M}_0$ is a $|W_0|$-dimensional right $\cH$-module, called the \textit{principal series representation} with \textit{central character}~$\zeta=(\zeta_1,\ldots,\zeta_n)$. 
\medskip

Since $\Gamma_0$ is the lowest two-sided cell, the representation $\pi_0$ trivially satisfies $\B{1}$ with respect to $\Gamma=\Gamma_0$

\subsection{The induced representations $\pi_1$ and $\pi_2$}\label{sec:piiconst}

For each $i\in\{1,2\}$ let $\cH_i$ be the subalgebra of $\cH$ generated by $T_i,X_1,X_2$ (where $X_j=X^{\alpha_j^{\vee}}$). Let $\zeta$ be an indeterminant, and for each $i\in\{1,2\}$ let $M_i$ be the $1$-dimensional right $\cH_i$-module over the ring $\sR[\zeta,\zeta^{-1}]$ with generator $\xi_i$ and $\cH_i$-action given by 
\begin{align*}
\xi_1\cdot T_1&=\xi_1(-\sq^{-a})& \xi_1\cdot X_1&=\xi_1\,\sq^{-2a}& \xi_1\cdot X_2&=\xi_1\,(-\sq^a\zeta)\\
\xi_2\cdot T_2&=\xi_2(-\sq^{-b})&\xi_2\cdot X_1&=\xi_2\,(-\sq^{3b}\zeta)& \xi_2\cdot X_2&=\xi_2\,\sq^{-2b}
\end{align*}
One checks directly using the formulae in Example~\ref{presentationG2} that these are representations. 
\medskip

For $i\in\{1,2\}$, let $(\pi_i,\mathcal{M}_i)$ be the induced right $\cH$-module. Thus $\mathcal{M}_i=M_i\otimes_{\cH_i}\cH$. For $i\in\{1,2\}$ let $W_i=\langle s_i\rangle$ and let $W_0^i$ denote the set of minimal length coset representatives for cosets in $W_{i}\backslash W_0$. Note that the module $\mathcal{M}_i$ has basis $\{\xi_i\otimes X_v\mid v\in W_0^i\}$ for $i=1,2$.

\begin{Th}\label{thm:piiB1}
Let $i\in\{1,2\}$. The representation $\pi_i$ satisfies $\B{1}$ with respect to $\Gamma=\Gamma_i$. 
\end{Th}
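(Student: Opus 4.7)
The plan is to verify, for each $i\in\{1,2\}$, that $\pi_i(C_w)=0$ for every $w$ lying strictly below $\Gamma_i$ in the two-sided order. Inspection of Figure~\ref{partition} shows that the set of such $w$'s depends on the parameter regime $r=a/b$: for $\pi_1$ we must treat $\Gamma_0$ in all regimes and also $\Gamma_2$ when $r\ge 2$, while for $\pi_2$ we must treat $\Gamma_0$ in all regimes and also $\Gamma_1$ when $r\le 3/2$. In each of these situations I would use the cell factorisation $w=\su_w^{-1}\sw_\Gamma\tau_w\sv_w$ from Sections~\ref{sec:factor0}--\ref{sec:equal} to reduce the question to a vanishing statement for $\pi_i$ evaluated on the ``spine'' $\{C_{\sw_\Gamma\tau}\mid\tau\}$ of the lower cell $\Gamma$. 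Specifically, one expects (and will establish in Section~\ref{sec:5}, following the lowest-cell precedent of \cite{Xie:17}) an identity of the form $C_w\equiv \sh_{\su_w}\cdot C_{\sw_\Gamma\tau_w}\cdot\sh_{\sv_w}\pmod{\cH_{<\Gamma}}$ for suitable $\sh_{\su_w},\sh_{\sv_w}\in\cH$, with the residual $\cH_{<\Gamma}$-term handled inductively.

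The core computation is then to verify $\pi_i(C_{\sw_\Gamma\tau})=0$. For $\Gamma=\Gamma_0$, with $\sw_\Gamma=\sw_0$ and $\tau=t_\lambda$ ($\lambda\in P^+$), two facts conspire: first, since $\sR[Q]\subseteq\cH_i$, each $X^\mu$ acts in $\pi_i$ by the scalar $\chi_i(X^\mu)$; second, the well-known identity $T_iC_{\sw_0}=-\sq_i^{-1}C_{\sw_0}$ matches the specialisation $\chi_i(T_i)=-\sq_i^{-1}$. Computing the action of $C_{\sw_0t_\lambda}$ on the cyclic vector $\xi_i\otimes 1$ via the alcove walk description of Proposition~\ref{prop:basischange} then reduces the statement to a polynomial identity in $\chi_i(X_1)$, $\chi_i(X_2)$, $\sq_1$ and $\sq_2$; this identity holds precisely because the character values in Section~\ref{sec:piiconst} have been calibrated for this purpose. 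The cases $\Gamma=\Gamma_2$ (below $\pi_1$) and $\Gamma=\Gamma_1$ (below $\pi_2$) follow the same template, substituting the generating element $\sw_\Gamma$ and translation-like element $\st_\Gamma$ from Section~\ref{sec:factor1}.

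The main obstacle will be the non-generic parameter values $r=2$ and $r=3/2$, where the cell in question merges with adjacent cells and the cell factorisation acquires the ``exceptional'' components of Section~\ref{sec:equal}; the vanishing mechanism must be checked to persist on the boundary elements $\sw_\Gamma^{\pm}$. A related difficulty is the mixed case $\pi_2(C_w)=0$ for $w\in\Gamma_1$ in the regimes $r\le 3/2$, which is genuinely different from the lowest-cell computation because the generating element $\sw_{\Gamma_1}$ is no longer a finite longest element, so that neither the eigenvector identity $T_iC_{\sw_\Gamma}=-\sq_i^{-1}C_{\sw_\Gamma}$ nor the product factorisation is automatic; both must instead be established from the alcove walk description of $\pi_2$ developed in Section~\ref{sec:5}.
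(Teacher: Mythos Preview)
Your proposal has a genuine gap in the first step. Axiom $\B{1}$ requires $\pi_i(C_w)=0$ for every $w\notin(\Gamma_i)_{\geq_{\cLR}}$, i.e.\ for $w$ in any two-sided cell that is \emph{not above or equal to} $\Gamma_i$; this is strictly more than ``$w$ lying strictly below $\Gamma_i$''. The distinction matters precisely in the regimes where the Hasse diagram is not a chain. In the range $3/2<r<2$ the cells $\Gamma_1$ and $\Gamma_2$ are incomparable, so one must also show $\pi_1(C_w)=0$ for $w\in\Gamma_2$ and $\pi_2(C_w)=0$ for $w\in\Gamma_1$; your case list (``$\Gamma_2$ when $r\ge 2$'' and ``$\Gamma_1$ when $r\le 3/2$'') omits exactly this regime for both representations. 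There is a second omission: for $r<1$ the finite cell $\Gamma_4$ is incomparable with $\Gamma_2$, so $\pi_2(C_w)=0$ for $w\in\Gamma_4$ must be verified as well.

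Even setting aside the gap, your route is considerably more elaborate than the paper's. The paper does not reduce to the full ``spine'' $\{C_{\sw_\Gamma\tau}\}$ nor invoke alcove-walk machinery from Section~\ref{sec:5}; it simply observes that it suffices to check $\pi_i(C_{\sw_j})=0$ for the single generating element $\sw_j$ of each offending cell (using the factorisation $C_w\equiv \sh_{\su_w}^\flat C_{\sw_j}\sh_{\tau_w}\sh_{\sv_w}\pmod{\cH_{<\Gamma_j}}$ from \cite{GM:12} and downward induction), and then performs a direct finite computation. In almost every case $\sw_j$ is the longest element of a dihedral parabolic $W_J$, so $C_{\sw_j}=\sq^{-L(\sw_j)}\sum_{w\in W_J}\sq^{L(w)}T_w$ and the verification is immediate from the matrices of $\pi_i(T_0),\pi_i(T_1),\pi_i(T_2)$. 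Your identification of obstacles is also off: for $r\le 3/2$ one has $\sw_{\Gamma_1}=s_0s_2s_0$, which \emph{is} the longest element of $W_{\{0,2\}}$, so the eigenvector argument does apply there. The genuinely exceptional case is $\sw_{\Gamma_2}=s_1s_2s_1s_2s_1$ (needed for $\pi_1$ when $r>3/2$), which is not a longest parabolic element; the paper handles it by writing out $C_{12121}$ explicitly in the $T$-basis and computing. The same explicit element (with $s_1\leftrightarrow s_2$) dispatches $\pi_2(C_{21212})=0$ for the $\Gamma_4$ case when $r<1$.
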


\begin{proof}
We need to show that $\pi_i(C_w)=0$ for all $w\in\Gamma$ with $\Gamma\not\leq_{\cLR}\Gamma_i$. The set of such $\Gamma$ is determined by the Hasse diagrams in Figure~\ref{partition}. It suffices to show that $\pi_i(C_{\sw_j})=0$ whenever $\Gamma_j\not\leq_{\cLR}\Gamma_i$ (here $j\in\{0,1,2\}$), plus in the regime $r<1$ we need to show that $\pi_2(C_w)=0$ for all $w$ in the finite cell $\Gamma_4$. For example, in the parameter regime $2>r>3/2$ we need to check that $\pi_1(C_{\sw_0})=\pi_1(C_{\sw_2})=0$ and $\pi_2(C_{\sw_0})=\pi_2(C_{\sw_1})=0$. 
\medskip

In the cases that $\sw_i$ is the longest element of some dihedral parabolic subgroup $W_J$ we have the formula
$$
C_{\sw_i}=\sq^{-L(\sw_i)}\sum_{w\in W_J}\sq^{L(w)}T_w.
$$
The only case required when $\sw_i$ is not the longest element of a dihedral parabolic subgroup is $\sw_2$ in the parameter regime $2>r>3/2$. In this case
\begin{align*}
C_{\sw_2}=& (\sq^{-3a-2b}-\sq^{-3a}+\sq^{-3a+2b})T_e
+(\sq^{-3a-b}-\sq^{-3a+b})T_{s_2}\\
&+(-\sq^{-2a-2b}+\sq^{-2a}+\sq^{-2a+2b})T_{s_1}+
(\sq^{-2a-b}+\sq^{-2a+b})(T_{s_2s_1}+T_{s_1s_2})\\
&+\sq^{-3a}T_{s_2s_1s_2}+
(\sq^{-a-b}+\sq^{-a+b})T_{s_1s_2s_1}+
+\sq^{-a}(T_{s_2s_1s_2s_1}+T_{s_1s_2s_1s_2})
+T_{s_1s_2s_1s_2s_1}.
\end{align*}
For the case $r<1$, to show that $\pi_2(C_w)=0$ for $w\in\Gamma_4$ it is sufficient to show that $\pi_2(C_{s_2s_1s_2s_1s_2})=0$. The formula for $C_{s_2s_1s_2s_1s_2}$ in the $T_w$ basis is as in the $C_{\sw_2}$ formula above with the roles of $s_1$ and $s_2$ interchanged. The result now follows by direct computation. 
\end{proof}


\section{The lowest two-sided cell $\Gamma_0$}\label{sec:4}

In this section we show that the principal series representation $\pi_0$, equipped with certain natural bases, satisfies $\B{2}$--$\B{4}$ and $\B{4}'$ for the cell $\Ga_0$, with bound $\ba_{\pi_0}=L(\sw_0)$. It is convenient to work more generally than $\tilde{G}_2$. However since ultimately we are interested in $\tilde{G}_2$, and in this case $Q=P$, we will sometimes assume this setting (however we note that by slight modifications, in particular to the definition of $\sB_0$, the analysis below applies to all extended affine Weyl groups). 
\medskip

We first show that the degree of the matrix coefficients of $\pi_0(T_w)$ are bounded by $L(\sw_0)$ for all $w\in W$ (verifying $\B{2}$), and then we determine explicitly the set of $w\in W$ for which this bound is attained: it turns out to be precisely the lowest two-sided cell $\Ga_0$ (hence $\B{3}$). Finally we will compute the leading matrices $\fc_{\pi_0,w}$ in terms of Schur functions, verifying $\B{4}$ and $\B{4}'$.

\subsection{Path formula for the principal series representation $\pi_0$}\label{sec:4.1}

Let $\sB$ be any fundamental domain for the action of the group $Q$ of translations on the set of alcoves (for example, both $W_0$ and $\sB_{0}$ are fundamental domains). Thus any $w\in W$ can be written uniquely as $w=t_{\mu} u$ for some $u\in \sB$, and we set $\mathrm{wt}_{\sB}(w)=\mu$ and $\theta_{\sB}(w)=u$. If $p$ is a positively folded alcove walk we write 
$$\mathrm{wt}_{\sB}(p)=\mathrm{wt}_{\sB}(\mathrm{end}(p))\quad\text{and}\quad \theta_{\sB}(p)=\theta_{\sB}(\mathrm{end}(p)).
$$ 
The following theorem generalises the formula presented in \cite[Theorem~5.16]{Par:11}.

\begin{Th}\label{thm:pi0}
Let $\sB$ be a fundamental domain for $Q$. The set $\{\xi_0\otimes X_{u}\mid u\in\sB\}$ is a basis for $\mathcal{M}_0$, and with respect to this basis the matrix entries of $\pi_0(T_w)$, $w\in W$, are given by 
$$
[\pi_0(T_w)]_{u,v}=\sum_{\{p\in \mathcal{P}_{\sB}(\vec{w},u)\mid\theta_{\sB}(p)=v\}}\mathcal{Q}(p)\zeta^{\mathrm{wt}_{\sB}(p)}
$$
where $\vec{w}$ is any reduced expression for $w$.
\end{Th}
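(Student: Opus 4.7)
The plan is to reduce the claim to Proposition~\ref{prop:basischange} via two observations: that the Bernstein--Lusztig basis behaves well with respect to translations by $Q$, and that the generator $\xi_0$ absorbs $X^\mu$ as the scalar $\zeta^\mu$.

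First I would establish the basis claim. For any $v \in W$ with decomposition $v = t_{\mathrm{wt}(v)}\theta(v)$ (as in Section~\ref{sec:2.2}, with $\theta(v) \in W_0$), we have the identity $X_v = X^{\mathrm{wt}(v)} X_{\theta(v)}$; more generally, if we fix any fundamental domain $\sB$ for $Q$ and write $v = t_{\mathrm{wt}_\sB(v)} \theta_\sB(v)$ with $\theta_\sB(v) \in \sB$, then $\theta(\theta_\sB(v)) = \theta(v)$ so
$$
X_v = X^{\mathrm{wt}_\sB(v)} X_{\theta_\sB(v)}.
$$
Now the map $\sB \to W_0$, $u \mapsto \theta(u)$ is a bijection (since both are fundamental domains for the $Q$-action on $W$), and for each $u \in \sB$
$$
\xi_0 \otimes X_u = \xi_0 \otimes X^{\mathrm{wt}(u)} X_{\theta(u)} = \zeta^{\mathrm{wt}(u)}\bigl(\xi_0 \otimes X_{\theta(u)}\bigr).
$$
Thus the transition matrix from the known basis $\{\xi_0 \otimes X_{u'} \mid u'\in W_0\}$ to $\{\xi_0 \otimes X_u \mid u \in \sB\}$ is diagonal with invertible entries $\zeta^{\mathrm{wt}(u)}$, proving the basis claim.

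Next, for the matrix entries, let $u \in \sB$ and let $\vec{w}$ be a reduced expression for $w$. By Proposition~\ref{prop:basischange} applied in $\cH$,
$$
X_u T_w = \sum_{p \in \mathcal{P}(\vec{w},u)} \mathcal{Q}(p)\, X_{\mathrm{end}(p)}.
$$
Tensoring with $\xi_0$ and using the identity $X_{\mathrm{end}(p)} = X^{\mathrm{wt}_\sB(p)} X_{\theta_\sB(p)}$ together with $\xi_0 \cdot X^{\mathrm{wt}_\sB(p)} = \zeta^{\mathrm{wt}_\sB(p)} \xi_0$ yields
$$
(\xi_0 \otimes X_u)\cdot T_w = \sum_{p \in \mathcal{P}(\vec{w},u)} \mathcal{Q}(p)\, \zeta^{\mathrm{wt}_\sB(p)}\,(\xi_0 \otimes X_{\theta_\sB(p)}).
$$
Grouping terms according to the value $v = \theta_\sB(p) \in \sB$ and reading off the coefficient of the basis vector $\xi_0 \otimes X_v$ gives the claimed formula.

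There is no serious obstacle: the equal-parameter version is already known (\cite[Theorem~5.16]{Par:11}), and the combinatorial content sits entirely in Proposition~\ref{prop:basischange}. The only mild subtlety is the verification that $X_v$ factors as $X^{\mathrm{wt}_\sB(v)}X_{\theta_\sB(v)}$ for an arbitrary fundamental domain~$\sB$; this follows because $\theta$ depends only on the $W_0$-component of~$v$, so writing $\theta_\sB(v) = t_{\nu}\theta(v)$ with $\nu \in Q$ and $v = t_{\mathrm{wt}_\sB(v)+\nu}\theta(v)$, both sides are equal to $X^{\mathrm{wt}_\sB(v)+\nu}X_{\theta(v)}$.
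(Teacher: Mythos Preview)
Your proof is correct and follows essentially the same route as the paper: establish the basis claim by writing each $u\in\sB$ as a translate of an element of $W_0$ and using that the resulting change-of-basis matrix is diagonal with invertible entries, then apply Proposition~\ref{prop:basischange} and absorb the translation part of $X_{\mathrm{end}(p)}$ into $\xi_0$ as the scalar $\zeta^{\mathrm{wt}_\sB(p)}$. Your explicit justification of the factorisation $X_v = X^{\mathrm{wt}_\sB(v)}X_{\theta_\sB(v)}$ for an arbitrary fundamental domain is a detail the paper leaves implicit, but otherwise the arguments coincide.
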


\begin{proof} 
Since $W_0$ is a fundamental domain for $Q$, each $u\in\sB$ can be written as $b=t_{\mu_u}u'$ for some $\mu_u\in Q$ and some $u'\in W_0$. Then
$
\xi_0\otimes X_u=\xi_0\otimes X^{\mu_u}X_{u'}=(\xi_0\otimes X_{u'})\zeta^{\mu_u}.
$
The first claim follows since $\{\xi_0\otimes X_{u'}\mid u'\in W_0\}$ is clearly a basis of $\mathcal{M}_0$. 
\medskip

Let $\vec{w}$ be any reduced expression for $w$. Using Proposition~\ref{prop:basischange} we have
\begin{align*}
(\xi_0\otimes X_u)\cdot T_w=\sum_{p\in\mathcal{P}(\vec{w},u)}(\xi_0\otimes X^{\mathrm{wt}_{\sB}(p)}X_{\theta_{\sB}(p)})\mathcal{Q}(p)=\sum_{v\in \sB}\bigg(\sum_{\{p\in \mathcal{P}(\vec{w},u)\mid\theta_{\sB}(p)=v\}}(\xi_0\otimes X_{v})\mathcal{Q}(p)\zeta^{\mathrm{wt}_{\sB}(p)}\bigg),
\end{align*}
hence the result.
\end{proof}

\subsection{Leading matrices for $\pi_0$}\label{sec:4.2}

We begin with some definitions in preparation for the following lemma. Let $u,w\in W$, let $\vec{w}$ be a reduced expression, and let $p\in\mathcal{P}(\vec{w},u)$. The \textit{partial foldings} of $p$ are the positively folded alcove walks $p_0,p_1,\ldots,p_{f(p)}$, where $p_j$ is the positively folded alcove walk of type $\vec{w}$ starting at $u$ that agrees with $p$ up to (and including) the $j$th fold of $p$, and is straight thereafter. Thus $p_0$ is the straight path of type $\vec{w}$ starting at $u$, and $p_{f(p)}=p$. The \textit{pivots} of $p$ are the alcoves $u_0,\ldots,u_{f(p)}$ in which the folds occur, with $u_0=u$. More formally, if the folds of $p$ occur at positions $k_1<\ldots<k_{f(p)}$ in the reduced expression $\vec{w}=r_1\cdots r_{\ell}$ (with $r_j\in S$) then the pivots of $p$ are the alcoves $u_0=u$, $u_1=ur_1\cdots r_{k_1-1}$, and $u_{j+1}=u_jr_{k_j+1}\cdots r_{k_{j+1}-1}$ for $j=1,\ldots,f(p)-1$.
\medskip

The following lemma applies in arbitrary affine type, with the minor assumption $L(s_0)\leq L(s_n)$ required for type $\tilde{C}_n$ in part 4 of the lemma (where $\tilde{C}_1=\tilde{A}_1$). If $L(s_0)>L(s_n)$ then one may, of course, apply the diagram automorphism of $\tilde{C}_n$ an then apply the lemma below. 

\begin{Lem}\label{lem:tech1}
Let $u,w\in W$ and let $\mathrm{wt}(u)=\mu$. Let $v\in W_0$ be such that $uw\in t_{\mu}vC_0$. Let $p\in\mathcal{P}(\vec{w},u)$, and suppose that the folds of $p$ occur on the hyperplanes $H_{\beta_1,k_1},\ldots, H_{\beta_{f(p)},k_{f(p)}}$, where $\beta_1,\ldots,\beta_{f(p)}\in\Phi^+$. Let $v_0=v$, and let $v_{j+1}=s_{\beta_{j+1}}v_j$ for $j=0,1,\ldots,f(p)-1$. Then, with the above assumption in type $\tilde{C}_n$, we have
\begin{enumerate}
\item $\ell(v_{j+1})<\ell(v_j)$ for $j=0,1,\ldots,f(p)-1$.
\item $f(p)\leq \ell(v)-\ell(v_{f(p)})$ with equality if and only if $\ell(v_{j+1})=\ell(v_j)-1$ for all $j=0,1,\ldots,f(p)-1$.
\item If $f(p)=\ell(\sw_0)$ then $v=\sw_0$, $v_{f(p)}=e$, and $\beta_1$ and $\beta_{f(p)}$ are simple roots. 
\item We have $\deg(\cQ(p))\leq L(\sw_0)$ with equality if and only if $f(p)=\ell(\sw_0)$ (and, in the case $\tilde{C}_n$ with $L(s_0)<L(s_n)$, no folds occur on hyperplanes $H_{\beta,k}$ with $s_{\beta,k}$ conjugate to $s_0$). 
\end{enumerate}
\end{Lem}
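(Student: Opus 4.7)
The plan is to prove the four claims in order, with (1) providing the geometric core and (2)--(4) following by combinatorial consequences.

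For (1), I would first observe that $p_{j+1}$ is obtained from $p_j$ by reflecting the tail of the walk after position $k_{j+1}$ through the hyperplane $H_{\beta_{j+1},k_{j+1}}$, so $\mathrm{end}(p_{j+1})=s_{\beta_{j+1},k_{j+1}}\cdot\mathrm{end}(p_j)$. Positivity of the $(j+1)$-th fold places the pivot $u_{j+1}$ of $p$ on the positive side of $H_{\beta_{j+1},k_{j+1}}$, while in $p_j$ (which crosses rather than folds at this step) the next alcove lies on the negative side. The key inductive claim is that $\mathrm{end}(p_j)$ lies in a Weyl chamber of the form $t_{\mu_j}v_jC_0$ with $\mu_0=\mu$ and $\mu_{j+1}=s_{\beta_{j+1},k_{j+1}}(\mu_j)$. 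Combining the position of the pivot and of $\mathrm{end}(p_j)$ relative to the fold hyperplane with the geometry of this chamber yields $v_j^{-1}\beta_{j+1}\in\Phi^-$, equivalently $\ell(s_{\beta_{j+1}}v_j)<\ell(v_j)$. Part (2) is then an immediate telescoping: $\ell(v_{f(p)})\leq\ell(v_0)-f(p)$, with equality iff every drop is by exactly one.

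For (3), the hypothesis $f(p)=\ell(\sw_0)$ combined with (2) and the bounds $0\leq\ell(v_{f(p)})$ and $\ell(v_0)\leq\ell(\sw_0)$ forces $v=\sw_0$, $v_{f(p)}=e$, and the equality case of (2). In that equality case each $v_{j+1}$ equals $v_js_{i_j}$ for some simple reflection $s_{i_j}$, and the identity $s_{\beta_{j+1}}=v_js_{i_j}v_j^{-1}=s_{v_j\alpha_{i_j}}$ gives $\beta_{j+1}=\pm v_j\alpha_{i_j}$, with the sign chosen so that $\beta_{j+1}\in\Phi^+$. For $j=0$, $v_0=\sw_0$ sends positive roots to negative, so $\beta_1=-\sw_0\alpha_{i_0}$ is simple (as $-\sw_0$ realises the opposition involution on the simple system). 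For $j=f(p)-1$, from $v_{f(p)}=e$ we obtain $v_{f(p)-1}=s_{\beta_{f(p)}}=s_{i_{f(p)-1}}$, whence $\beta_{f(p)}$ is simple.

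For (4), the plan is to establish the pointwise inequality $L(r_{k_{j+1}})\leq L(v_j)-L(v_{j+1})$ and telescope, where $r_{k_{j+1}}\in S$ denotes the generator of $\vec{w}$ at position $k_{j+1}$. First observe that the affine reflection $s_{\beta_{j+1},k_{j+1}}=u_{j+1}r_{k_{j+1}}u_{j+1}^{-1}$ is a $W$-conjugate of $r_{k_{j+1}}$ and therefore shares its weight in the conjugacy-class sense (this identification is valid in all affine types except $\tilde{C}_n$ with $L(s_0)<L(s_n)$, where the additional caveat on fold hyperplanes enters the equality statement). For the pointwise bound, fix a reduced expression $v_j=s_{j_1}\cdots s_{j_m}$; the strong exchange condition applied to $s_{\beta_{j+1}}$ (which is a left descent of $v_j$ by (1)) yields a possibly non-reduced expression $s_{j_1}\cdots\widehat{s}_{j_k}\cdots s_{j_m}$ for $v_{j+1}$ with $s_{\beta_{j+1}}$ conjugate to $s_{j_k}$. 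Since the sum of letter-weights in any expression of $v_{j+1}$ bounds $L(v_{j+1})$ from above, we get $L(r_{k_{j+1}})=L(s_{j_k})\leq L(v_j)-L(v_{j+1})$. Telescoping gives $\deg(\mathcal{Q}(p))=\sum_j L(r_{k_{j+1}})\leq L(v_0)-L(v_{f(p)})\leq L(\sw_0)$, with equality iff every pointwise bound is tight, which via (3) is exactly $f(p)=\ell(\sw_0)$.

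The main obstacle will be the geometric analysis in (1): establishing the inductive Weyl-chamber description of $\mathrm{end}(p_j)$ from the fixed apex data and extracting the crucial sign condition $v_j^{-1}\beta_{j+1}\in\Phi^-$ from the positivity of the fold. Once (1) is in place, the remaining parts (2)--(4) are essentially formal consequences, modulo the separate bookkeeping needed in type $\tilde{C}_n$.
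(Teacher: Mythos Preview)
Your treatment of (2) and (4) is essentially the paper's, and your argument for (3) reaches the right conclusion, though the intermediate claim ``each $v_{j+1}=v_js_{i_j}$ for some simple $s_{i_j}$'' is false in general: $\ell(v_{j+1})=\ell(v_j)-1$ only says that $v_{j+1}$ is obtained from \emph{some} reduced word for $v_j$ by deleting \emph{some} letter, not the last one. You only use this at $j=0$ (where $v_0=\sw_0$, so every length-$(\ell(\sw_0)-1)$ element is $\sw_0 s$) and at $j=f(p)-1$ (where $v_{f(p)}=e$), so the conclusion survives, but the stated reason is wrong.

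The real problem is in (1). Your inductive description $\mathrm{end}(p_j)\in t_{\mu_j}v_jC_0$ is correct, but the step ``combining the position of the pivot and of $\mathrm{end}(p_j)$ relative to the fold hyperplane \ldots\ yields $v_j^{-1}\beta_{j+1}\in\Phi^-$'' does not go through. First, you have not argued that $\mathrm{end}(p_j)$ itself (as opposed to the single alcove immediately after the pivot) is on the negative side of $H_{\beta_{j+1},k_{j+1}}$; this requires the observation that the straight tail of $p_j$ after the $j$th fold has reduced type and therefore crosses that hyperplane exactly once. Second, and more seriously, even granting this, an \emph{affine} hyperplane $H_{\beta_{j+1},k_{j+1}}$ can cut through the Weyl chamber $t_{\mu_j}v_jC_0$: if $v_j^{-1}\beta_{j+1}\in\Phi^+$ and $k_{j+1}>\langle\mu_j,\beta_{j+1}\rangle$, then alcoves of $t_{\mu_j}v_jC_0$ near the apex lie on the negative side of $H_{\beta_{j+1},k_{j+1}}$ while the chamber still opens toward the positive side. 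So knowing one alcove of the chamber is on the negative side gives no information about $v_j^{-1}\beta_{j+1}$.

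The paper closes this gap with an ``infinite continuation'' trick: extend $p_0$ to an infinite reduced walk $p_0^\infty$ heading strictly into the interior of $vC_0$ (e.g.\ by appending copies of $t_\rho$ with $\rho$ strictly dominant), and let $p_j^\infty$ be the corresponding extensions of the partial foldings. Because each $p_j^\infty$ eventually lies deep inside $v_jC_0$ and crosses $H_{\beta_{j+1},k_{j+1}}$ exactly once (from positive to negative), the alcoves sufficiently far along $p_j^\infty$ are on the negative side of the \emph{linear} hyperplane $H_{\beta_{j+1},0}$ as well --- the affine level $k_{j+1}$ becomes irrelevant at infinity. This converts the affine separation into the linear one $v_j^{-1}\beta_{j+1}\in\Phi^-$, which is exactly what is needed. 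Your chamber bookkeeping is a correct start, but without some device like this you cannot pass from the affine to the linear statement.
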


\begin{proof}
1) We may assume that $\mu=0$ (if not, translate the entire proof by $t_{-\mu}$, and then translate back at the end). Thus $u\in W_0$. Let $p\in\mathcal{P}(\vec{w},u)$, and let $f=f(p)$. Let $p_0,\ldots,p_{f}$ be the partial foldings of $p$. Let $p_0^{\infty}$ be an ``infinite continuation'' of $p_0$ such that each finite segment of $p_0^{\infty}$ is reduced, and $p_0^{\infty}$ moves into the ``interior'' of the Weyl chamber $vC_0$ (that is, away from all walls). More formally, $p_0^{\infty}$ can be constructed by first extending $p_0$ to $y=t_{\mathrm{wt}(uw)}v$ (the longest element of $uwW_0\cap vC_0$) and then appending infinitely many copies of a fixed reduced expression for $t_{\rho}$, where $\rho=\omega_1+\cdots+\omega_n$ (or any other choice of strictly dominant coweight). Verifying that any finite segment of the resulting infinite path $p_0^{\infty}$ is reduced is a straightforward exercise in computing separating hyperplanes.

\medskip

Let $p_1^{\infty},\ldots,p_{f}^{\infty}$ be the infinite extensions of $p_1,\ldots,p_{f}$ induced from $p_0^{\infty}$. In other words, $p_1^{\infty},\ldots,p_{f}^{\infty}$ are generated by successively performing the folds of $p$ to $p_0^{\infty}$. The hyperplane $H_{\beta_{j+1},k_{j+1}}$ separates the pivot $u_j$ from all alcoves of $p_{j}^{\infty}$ occurring after $u_j$, and $u_j$ is on the positive side of this hyperplane. Thus the linear hyperplane $H_{\beta_{j+1},0}$ separates the identity alcove $e$ from all alcoves sufficiently far along $p_{j}^{\infty}$ (this is because the former is on the positive side of this hyperplane, and the latter are on the negative side). It is clear that all alcoves sufficiently far along $p_j^{\infty}$ lie in $v_jC_0$ (here it is important that $\rho$ is strictly dominant). Thus $H_{\beta_{j+1},0}$ separates the Weyl chamber $C_0$ from the Weyl chamber $v_{j}C_0$. By the strong exchange condition $s_{\beta_{j+1}}v_j$ is obtained from a reduced expression of $v_j$ by deleting a generator, and thus $\ell(s_{\beta_{j+1}}v_j)<\ell(v_j)$. Therefore $\ell(v_{j+1})<\ell(v_j)$ for all $j=0,1,\ldots,f-1$.
\medskip

2) By the above we have $\ell(v_{j+1})-\ell(v_j)+1\leq 0$ for all $j=0,\ldots,f-1$, and hence
$$
0\geq \sum_{j=0}^{f-1}\left(\ell(v_{j+1})-\ell(v_j)+1\right)=\ell(v_f)-\ell(v)+f(p),
$$ 
with equality if and only if $\ell(v_{j+1})=\ell(v_j)-1$ for all $j=0,\ldots,f-1$.
\medskip

3) If $f(p)=\ell(\sw_0)$ then by 2) we have $v=\sw_0$ and $v_{f}=e$. Applying the equality $\ell(v_{j+1})=\ell(v_j)-1$ in the cases $j=0$ and $j=f-1$ gives $\ell(s_{\beta_1}\sw_0)=\ell(\sw_0)-1$ and $\ell(e)=\ell(s_{\beta_{f}})-1$, which forces $\beta_1$ and $\beta_{f}$ to be simple roots.  
\medskip

4) The conditions $\ell(v_{j+1})<\ell(v_j)$ and $v_{j+1}=s_{\beta_{j+1}}v_j$ imply, by the strong exchange condition, that $v_{j+1}$ is obtained from a reduced expression of $v_j$ by deleting a single generator. Moreover, by the proof of the strong exchange condition this deleted generator is conjugate to $s_{\beta_{j+1}}$, and thus $L(v_{j+1})\leq L(v_j)-L(s_{\beta_{j+1}})$. It follows that 
\begin{align*}
\deg(\cQ(p))=\sum_{j=0}^{f-1}L(s_{\beta_j,k_j})\leq \sum_{j=0}^{f-1}L(s_{\beta_j})\leq \sum_{j=0}^{f(p)-1}\left(L(v_j)-L(v_{j+1})\right)=L(v)-L(\theta(p))
\end{align*}
(in fact, in all types other than $\tilde{C}_n$ the first inequality is an equality since $s_{\beta_j}$ and $s_{\beta_j,k_j}$ are conjugate, while in the case $\tilde{C}_n$ if $s_{\beta_j,k_j}$ is not conjugate to $s_{\beta_j}$ then necessarily $s_{\beta_j,k_j}$ is conjugate to $s_0$ and $s_{\beta_j}$ is conjugate to $s_n$, and hence $L(s_{\beta_j,k_j})=L(s_0)\leq L(s_n)=L(s_{\beta_j})$ by assumption). Hence $\deg(\cQ(p))\leq L(\sw_0)$, and the condition for equality is clear.
\end{proof}

\begin{Cor}
The representation $\pi_0$, equipped with any basis of the form $\{\xi_0\otimes X_u\mid u\in\sB\}$ with $\sB$ a fundamental domain for the action of $Q$ on $W$, satisfies $\B{2}$ with $\ba_{\pi_0}=L(\sw_0)$. 
\end{Cor}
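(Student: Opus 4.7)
The plan is a short deduction: combine the alcove walk formula of Theorem~\ref{thm:pi0} with the degree bound from Lemma~\ref{lem:tech1}(4).

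First, by Remark~\ref{rem:bounds} the verification of $\B{2}$ can be carried out on the standard basis rather than the Kazhdan--Lusztig basis; that is, it suffices to show $\deg\bigl([\pi_0(T_w)]_{u,v}\bigr)\leq L(\sw_0)$ for all $w\in W$ and all $u,v\in\sB$, where the degree is taken with respect to $\sq$ in the $\sR$-polynomial ring $\mathsf{S}=\sR[\zeta_1^{\pm 1},\ldots,\zeta_n^{\pm 1}]$ with coefficient ring $\nZ[\zeta_1^{\pm 1},\ldots,\zeta_n^{\pm 1}]$.

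Next, fix a reduced expression $\vec{w}$ for $w$. By Theorem~\ref{thm:pi0},
$$
[\pi_0(T_w)]_{u,v}=\sum_{\{p\in\mathcal{P}(\vec{w},u)\mid \theta_{\sB}(p)=v\}}\mathcal{Q}(p)\,\zeta^{\mathrm{wt}_{\sB}(p)}.
$$
Because each $\zeta^{\mathrm{wt}_{\sB}(p)}$ is a Laurent monomial in the $\zeta_i$ (and hence lives in the coefficient ring, contributing nothing to the $\sq$-degree), the $\sq$-degree of each individual summand equals $\deg(\mathcal{Q}(p))$. Lemma~\ref{lem:tech1}(4) then gives $\deg(\mathcal{Q}(p))\leq L(\sw_0)$ for every positively folded alcove walk $p$, with the bound being independent of $u$, $v$, and $\sB$. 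Taking the maximum over the summands yields $\deg\bigl([\pi_0(T_w)]_{u,v}\bigr)\leq L(\sw_0)$, which is precisely $\B{2}$ with $\ba_{\pi_0}=L(\sw_0)$.

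There is no real obstacle here since all of the combinatorial work has been packaged into Lemma~\ref{lem:tech1}(4); the only minor subtlety is noting that potential cancellations among summands of the same leading $\sq$-degree and same $\zeta$-monomial can only decrease, never increase, the overall degree, so the bound survives summation. The argument is uniform in the choice of fundamental domain $\sB$, giving the stated generality.
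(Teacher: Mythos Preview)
Your proof is correct and follows the same approach as the paper, which simply states that the corollary follows immediately from Theorem~\ref{thm:pi0} and Lemma~\ref{lem:tech1}. You have merely unpacked this one-line deduction in more detail, including the helpful remark that one may work with $T_w$ rather than $C_w$ (cf.\ Remark~\ref{rem:bounds}).
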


\begin{proof}
This follows immediately from Theorem~\ref{thm:pi0} and Lemma~\ref{lem:tech1}. 
\end{proof}

\begin{Rem}\label{rem:tech1}
Note that part 3) of Lemma~\ref{lem:tech1} says that if $p\in\mathcal{P}(\vec{w},u)$ with $f(p)=\ell(\sw_0)$ then the first and last folds of $p$ occur on simple root directions. Here we mean `simple direction' when $p$ is drawn, as usual, in `folded form'. One can also draw $p$ in `unfolded form' by drawing the unfolded path $p_0$ and marking the positions on this path where the folds of $p$ occur. We may then ask if $f(p)=\ell(\sw_0)$ forces the first and last folds in the unfolded form to also be on simple root directions. Indeed this is the case. The first fold is on the same hyperplane in both the folded and unfolded forms. We note that in the notation of Lemma~\ref{lem:tech1} the last fold in unfolded form occurs on a hyperplane whose linear root is $s_{\beta_{f(p)-1}}\cdots s_{\beta_1}\beta_{f(p)}=s_{\beta_{f(p)}}v_{f(p)}\beta_{f(p)}=s_{\beta_{f(p)}}\beta_{f(p)}=-\beta_{f(p)}$, which is a negative simple root.
\end{Rem}

\begin{Cor}\label{cor:tech1}
Let $p$ be a positively folded alcove walk of reduced type $\vec{w}$ starting at $u\in W$. If $f(p)=\ell(\sw_0)$ then the straight path from $u$ to $uw$ of type $\vec{w}$ crosses at least one hyperplane of each direction. 
\end{Cor}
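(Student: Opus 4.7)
The plan is to trace the fold hyperplanes of $p$ back to the hyperplanes crossed by the straight path $p_0$ from $u$ to $uw$ at the same ordinal positions, and then to apply a standard fact about reduced expressions of $\sw_0$.

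First, from Lemma~\ref{lem:tech1}(2) and the hypothesis $f(p)=\ell(\sw_0)$, one has $v=\sw_0$, $v_{f(p)}=e$ and $\ell(v_{j+1})=\ell(v_j)-1$ for every $j$. The identity $v_{f(p)}=s_{\beta_{f(p)}}\cdots s_{\beta_1}\sw_0=e$, together with $\sw_0=\sw_0^{-1}$, therefore produces a reduced expression
$$
\sw_0=s_{\beta_1}s_{\beta_2}\cdots s_{\beta_{f(p)}}.
$$
The standard reduced-expression recipe for the inversion set then says that the roots
$$
\gamma_j:=s_{\beta_1}s_{\beta_2}\cdots s_{\beta_{j-1}}\beta_j,\qquad 1\leq j\leq f(p),
$$
are exactly the positive roots of $\Phi$, each appearing once.

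Second, I would identify each $\gamma_j$ (up to sign) with the linear direction of the hyperplane crossed by $p_0$ at the position $k_j$ of the $j$th fold of $p$. Let $p_0,p_1,\ldots,p_{f(p)}=p$ be the partial foldings of $p$. Between fold positions the paths $p_{j-1}$ and $p_{j-2}$ coincide, and across the fold $p_{j-1}$ is obtained from $p_{j-2}$ by folding on $H_{\beta_{j-1},k_{j-1}}$: consequently, for every position $m\geq k_{j-1}$, the alcove of $p_{j-1}$ at position $m$ is obtained from that of $p_{j-2}$ by applying $s_{\beta_{j-1},k_{j-1}}$. Iterating, for positions $m\geq k_{j-1}$ the alcoves of $p_{j-1}$ are the images of those of $p_0$ under the affine reflection $s_{\beta_{j-1},k_{j-1}}\cdots s_{\beta_1,k_1}$. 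In particular the hyperplane crossed by $p_{j-1}$ at position $k_j$, namely $H_{\beta_j,k_j}$, is the image under this reflection of the hyperplane crossed by $p_0$ at position $k_j$; taking linear parts, the latter has direction $\pm\gamma_j$.

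Combining the two steps, as $j$ ranges over $1,\ldots,\ell(\sw_0)$ the crossings of $p_0$ at positions $k_1<\cdots<k_{f(p)}$ realise every positive root exactly once (the sign of the crossing is immaterial), so \emph{a fortiori} $p_0$ crosses at least one hyperplane of each direction. The main obstacle is the careful inductive bookkeeping for the second step — verifying that the reflection relating $p_{j-1}$ to $p_0$ beyond the $(j-1)$th fold is indeed the composition of the earlier fold reflections in the correct order — but once this is set up the corollary reduces to the reduced-expression fact recalled above.
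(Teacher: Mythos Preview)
Your Step~2 is correct and matches the geometry behind Remark~\ref{rem:tech1}: the hyperplane crossed by $p_0$ at position $k_j$ has linear direction $\pm\gamma_j$ with $\gamma_j=s_{\beta_1}\cdots s_{\beta_{j-1}}\beta_j$, and the inductive bookkeeping you describe is exactly right.

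The gap is in Step~1. You call $\sw_0=s_{\beta_1}\cdots s_{\beta_{f(p)}}$ a ``reduced expression'' and then invoke the standard inversion-set recipe, but that recipe requires the factors to be \emph{simple} reflections. Lemma~\ref{lem:tech1}(3) only guarantees that $\beta_1$ and $\beta_{f(p)}$ are simple; the intermediate $\beta_j$ need not be. For products of non-simple reflections, even with the length increasing by one at each step, the roots $\gamma_j$ need not be distinct. For instance, in type $B_2$ take $u_0=e$, $u_1=s_1$, $u_2=s_2s_1$, $u_3=s_2s_1s_2$, $u_4=\sw_0$; then $\ell(u_j)=j$ and each $u_ju_{j-1}^{-1}$ is a reflection, but one computes $\gamma_1=\alpha_1$, $\gamma_2=\alpha_2$, $\gamma_3=\alpha_1+\alpha_2$, $\gamma_4=\alpha_1$, so $\gamma_1=\gamma_4$ and the root $\alpha_1+2\alpha_2$ is missed entirely. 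Thus the ``standard reduced-expression recipe'' does not apply, and your argument does not establish that the directions $\{\pm\gamma_j\}$ exhaust~$\Phi^+$.

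The paper's proof takes a different route: rather than tracking individual fold hyperplanes, it uses the infinite extensions $p_j^{\infty}$ from the proof of Lemma~\ref{lem:tech1} and the associated chambers $v_jC_0$. The point is that the cardinality $|N(v_j)|=\ell(v_j)$ drops strictly at each fold, so $|N(v_0)|=\ell(\sw_0)$ forces $v_0=\sw_0$ and hence $p_0^{\infty}$ (and with it $p_0$) must traverse every one of the $\ell(\sw_0)$ hyperplane directions. This avoids any appeal to a factorisation of $\sw_0$ into the $s_{\beta_j}$.
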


\begin{proof}
In the notation of the lemma, we see that the set of hyperplanes on which the infinite extensions $p_j^{\infty}$ make negative crossings has strictly decreasing cardinality as $j$ increases. It follows that if $f(p)=\ell(\sw_0)$ then $p_0$ crosses at least one hyperplane of each of the $\ell(\sw_0)$ directions. 
\end{proof}

The main result of this section is the following. Recall that 
$
\Gamma_0=\{u^{-1}\sw_0t_{\lambda}v\mid u,v\in\sB_0,\,\lambda\in P^+\},
$
and for $w\in\Gamma_0$ we define $\su_w,\sv_w\in\sB_0$ and $\tau_w\in P^+$ by $w=\su_w^{-1}\sw_0\tau_w\sv_w$.

\begin{Th}\label{thm:mainlowest}
The representation $\pi_0$, equipped with the basis $\{\xi_0\otimes X_u\mid u\in\sB_0\}$, satisfies $\B{3}$, $\B{4}$ and $\B{4}'$. Moreover,
$$
\fc_{\pi_0,w}=\ss_{\tau_w}(\zeta)E_{\su_w,\sv_w}\quad\text{for all $w\in\Gamma_0$}.
$$
\end{Th}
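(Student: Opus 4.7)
The plan is to combine the alcove-walk formula for $\pi_0$ with the degree estimate of Lemma~\ref{lem:tech1} to express the leading matrices as generating functions of maximally folded walks, and then recognise those generating functions as Schur polynomials via Theorem~\ref{thm:Schur}. Applying Theorem~\ref{thm:pi0} with $\sB=\sB_0$, and noting that the change of basis $C_w=T_w+\sum_{x<w}P_{x,w}T_x$ with $P_{x,w}\in\sq^{-1}\nZ[\sq^{-1}]$ introduces only strictly lower degree corrections, gives
\[
[\pi_0(C_w)]_{u,v}=\sum_{p\in\mathcal{P}(\vec{w},u),\,\theta_{\sB_0}(p)=v}\mathcal{Q}(p)\zeta^{\mathrm{wt}_{\sB_0}(p)}+(\text{strictly lower order}).
\]
By Lemma~\ref{lem:tech1}(4) each $\mathcal{Q}(p)$ has degree at most $L(\sw_0)$, with leading coefficient $1$ exactly when $f(p)=\ell(\sw_0)$. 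Hence $\pi_0$ satisfies $\B{2}$ with $\ba_{\pi_0}=L(\sw_0)$, and
\[
[\fc_{\pi_0,w}]_{u,v}=\sum_{\{p\in\mathcal{P}(\vec{w},u)\,:\,\theta_{\sB_0}(p)=v,\,f(p)=\ell(\sw_0)\}}\zeta^{\mathrm{wt}_{\sB_0}(p)}.
\]

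Now fix $w\in\Gamma_0$ with cell factorisation $w=\su_w^{-1}\sw_0\tau_w\sv_w$ and use the reduced expression $\vec{w}=\vec{\su_w^{-1}}\cdot\vec{\sw_0}\cdot\vec{\tau_w}\cdot\vec{\sv_w}$. The central claim is that, for $u,v\in\sB_0$, the set of maximally folded walks of type $\vec{w}$ starting at $u$ with $\theta_{\sB_0}(p)=v$ is empty unless $(u,v)=(\su_w,\sv_w)$, in which case restriction to the middle block gives a weight-preserving bijection
\[
\{p\in\mathcal{P}(\vec{w},\su_w)\,:\,\theta_{\sB_0}(p)=\sv_w,\,f(p)=\ell(\sw_0)\}\longleftrightarrow\mathbb{P}(\vec{\sw_0}\cdot\vec{\tau_w},e).
\]
Granting this, Theorem~\ref{thm:Schur} (a purely combinatorial identity, valid for any weight function) yields $\sum_{p'\in\mathbb{P}(\vec{\sw_0}\vec{\tau_w},e)}\zeta^{\mathrm{wt}(p')}=\ss_{\tau_w}(\zeta)$, and therefore $\fc_{\pi_0,w}=\ss_{\tau_w}(\zeta)E_{\su_w,\sv_w}$. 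A parallel analysis shows that for $w\notin\Gamma_0$ no maximally folded path can exist (such a path would produce a cell factorisation of $w$ of the required form, forcing $w\in\Gamma_0$), so $\fc_{\pi_0,w}=0$ in that case; this is $\B{3}$.

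Axiom $\B{4}$ is then formal: $w\mapsto(\su_w,\sv_w,\tau_w)$ is injective on $\Gamma_0$ by uniqueness of cell factorisations and $\{\ss_\lambda\mid\lambda\in P^+\}$ is $\nZ$-linearly independent, so the family $\{\ss_{\tau_w}(\zeta)E_{\su_w,\sv_w}\mid w\in\Gamma_0\}$ is $\nZ$-free. For $\B{4}'$, given $z\in\Gamma_0$ set $x=\su_z^{-1}\sw_0\sv_z\in\Gamma_0$ (so $\tau_x=0$, $\su_x=\su_z$, $\sv_x=\sv_z$) and $y=\sv_z^{-1}\sw_0\tau_z\sv_z\in\Gamma_0$; then using $\ss_0(\zeta)=1$ and $E_{\su_z,\sv_z}E_{\sv_z,\sv_z}=E_{\su_z,\sv_z}$,
\[
\fc_{\pi_0,x}\fc_{\pi_0,y}=\ss_{\tau_z}(\zeta)E_{\su_z,\sv_z}=\fc_{\pi_0,z},
\]
so $\tilde\gamma_{x,y,z^{-1}}=1\ne 0$. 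The principal technical obstacle is the central bijection above: one must show, using Lemma~\ref{lem:tech1}(1)--(3) and the explicit geometry of $\sB_0$ relative to the linear root hyperplanes, that folds cannot occur in the outer blocks $\vec{\su_w^{-1}}$ and $\vec{\sv_w}$ and that $u=\su_w$ is forced. The existence direction is straightforward (a straight outer extension of any $p'\in\mathbb{P}(\vec{\sw_0}\vec{\tau_w},e)$ is positively folded of the required shape), but the uniqueness direction requires a careful orientation analysis of each alcove of $\sB_0$ with respect to the linear hyperplanes through the origin.
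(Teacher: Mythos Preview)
Your outline is correct and matches the paper's approach almost exactly: reduce to maximally folded walks via Theorem~\ref{thm:pi0} and Lemma~\ref{lem:tech1}, identify the leading entry with the Schur sum of Theorem~\ref{thm:Schur}, and read off $\B{3}$, $\B{4}$, $\B{4}'$ from the formula $\fc_{\pi_0,w}=\ss_{\tau_w}(\zeta)E_{\su_w,\sv_w}$. Your choice of $(x,y)$ for $\B{4}'$ differs from the paper's (which takes $x=\su_w^{-1}\sw_0$ and $y=\sw_0\tau_w\sv_w$), but either pair works.

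The only point where your write-up falls short is the ``principal technical obstacle'' you flag at the end. You propose a case-by-case orientation analysis of the alcoves of $\sB_0$; this is unnecessary, and the paper disposes of both directions cleanly using results you already have. For ``no folds in the outer blocks'': Lemma~\ref{lem:tech1}(3) says that when $f(p)=\ell(\sw_0)$ the \emph{first} fold occurs on a simple-root-direction hyperplane, and Remark~\ref{rem:tech1} gives the analogous statement for the \emph{last} fold in unfolded form. Since the walls interior to $\sB_0$ are never in a simple root direction (this is where $Q=P$ for $G_2$ is used: every linear simple hyperplane passes through the vertex of $\sB_0$), the $\vec{\su}_w^{-1}$ and $\vec{\sv}_w$ segments cannot contain the first or last fold, hence contain no folds at all. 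For ``$u=\su_w$ is forced'' and ``$w\notin\Gamma_0$ forces $\fc_{\pi_0,w}=0$'': apply Corollary~\ref{cor:tech1}. If some $p\in\mathcal{P}(\vec{w},u)$ has $f(p)=\ell(\sw_0)$ then the straight path from $u$ to $uw$ crosses every hyperplane direction; since $\sB_0$ contains no simple-direction walls this forces $uw$ into the antidominant cone, so one may choose $\vec{w}$ passing through $e$ and $\sw_0$, giving a reduced expression $\vec{w}=\vec{u}^{-1}\cdot\vec{\sw}_0\cdot\vec{t}_\lambda\cdot\vec{v}$ with $v\in\sB_0$, $\lambda\in P^+$. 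Uniqueness of cell factorisation then gives $u=\su_w$ (and simultaneously $w\in\Gamma_0$). No alcove-by-alcove check is required.
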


\begin{proof}
Suppose that $w\in W$ is such that $[\pi_0(T_w)]_{u,v}$ has degree $L(\sw_0)$ for some $u,v\in \sB_0$. Thus by Theorem~\ref{thm:pi0} we see that for every reduced expression $\vec{w}$ there exists a path $p\in\mathcal{P}(\vec{w},u)$ such that $\deg(\cQ(p))=L(\sw_0)$ and $f(p)=\ell(\sw_0)$. By Corollary~\ref{cor:tech1} the straight path from $u$ to $uw$ of type $\vec{w}$ crosses every hyperplane direction. It follows that $uw$ lies in the anti-dominant sector based at $0$. To see this, recall that there are no simple directions available in $\sB_0$ (as $Q=P$), and thus if all hyperplane directions are crossed then the hyperplanes $H_{\alpha_i}$ are crossed for each $1\leq i\leq n$. Thus we may choose a reduced expression for $\vec{w}$ such that the straight path from $u$ to $uw$ of type $\vec{w}$ passes through the alcoves $1$ and $\sw_0$. It follows that $w$ admits a reduced expression of the form $\vec{w}=\vec{u}^{-1}\cdot\vec{\sw}_0\cdot \vec{t}_{\lambda}\cdot \vec{v}$ for some $\lambda\in P^+$ and $v\in\sB_0$, and hence $w\in \Gamma_0$. 
\medskip

We now consider the converse. Let $w\in\Gamma_0$ and write $\vec{w}=\vec{\su}_w\cdot \vec{\sw}_0\cdot \vec{\tau}_w\cdot \vec{\sv}_w$. If there exists $p\in\mathcal{P}(\vec{w},\su_w)$ with $f(p)=\ell(\sw_0)$ then $p$ has no folds in the initial $\vec{\su}_w^{-1}$ part (since the first fold must be on a simple direction by Lemma~\ref{lem:tech1}). Thus in the notation of~(\ref{eq:PP}) we have $\mathbb{P}(\vec{w},\su_w)=\mathbb{P}(\vec{\sw_0}\cdot\vec{\tau}_w\cdot\vec{\sv}_w,e)$. Moreover there are no folds in the final $\vec{\sv}_w$ part (from Lemma~\ref{lem:tech1} and Remark~\ref{rem:tech1}) and thus
$$
\{p\in\mathbb{P}(\vec{\sw}_0\cdot \vec{\tau}_w\cdot\vec{\sv}_w,e)\mid\theta_{\sB_0}(p)=\sv_w\}=\mathbb{P}(\vec{\sw}_0\cdot\vec{\tau}_w\cdot\vec{\sv}_w,e).
$$
Finally, there is a bijection from $\mathbb{P}(\vec{\sw_0}\cdot\vec{\tau}_w\cdot\vec{\sv}_w,e)$ to $\mathbb{P}(\vec{\sw_0}\cdot\vec{\tau}_w,e)$ by simply removing the final $\vec{\sv}_w$ part, and it follows from  Theorem~\ref{thm:pi0}, Theorem~\ref{thm:Schur}, and the above observations, that
\begin{align*}
[\fc_{\pi_0,w}]_{\su_w,\sv_w}=\sum_{p\in\mathbb{P}(\vec{\sw}_0\cdot \vec{\tau}_w\cdot\vec{\sv}_w,e)}\zeta^{\mathrm{wt}_{\sB_0}(p)}=\sum_{p\in\mathbb{P}(\vec{\sw}_0\cdot \vec{\tau}_w,e)}\zeta^{\mathrm{wt}(p)}=\ss_{\tau_w}(\zeta).
\end{align*}
From this formula it follows, in particular, that $\mathbb{P}(\vec{w},\su_w)\neq\emptyset$, and hence $\B{3}$ holds. Moreover, if $\su_w\neq u$ then we get by the first paragraph of the proof that $f(p)<L(\sw_0)$ for all $p\in\mathcal{P}(\vec{w},u)$ and hence $[\fc_{\pi_0,w}]_{u,v}=0$. If $u=\su_w$ and $v\neq \sv_w$ then by an observation above we have $\{p\in\mathcal{P}(\vec{\sw}_0\cdot\vec{\tau}_w\cdot\vec{\sv}_w,e)\mid\theta_{\sB_0}(p)=v\}=\emptyset$, and so again $[\fc_{\pi_0,w}]_{u,v}=0$. This proves that $\fc_{\pi_0,w}=\ss_{\tau_w}(\zeta)E_{\su_w,\sv_w}$ for all $w\in\Gamma_0$. 
\medskip

We also see that $\B{4}$ holds, because the set of matrices $\{\ss_{\lambda}(\zeta)E_{u,v}\mid\lambda\in P^+,u,v\in \sB_0\}$ is free over $\nZ$ (using linear independence of the Schur characters).

\medskip

Finally, to check $\B{4}'$, let $w\in\Gamma_0$, and let $x=\su_w^{-1}\sw_0\in\Gamma_0$ and $y=\sw_0\tau_w\sv_w\in\Gamma_0$. Then
$$
\fc_{\pi_0,x}\fc_{\pi_0,y}=\ss_0(\zeta)\ss_{\tau_w}(\zeta)E_{\su_w,e}E_{e,\sv_w}=\ss_{\tau_w}(\zeta)E_{\su_w,\sv_w}=\fc_{\pi_0,w},
$$
completing the proof.
\end{proof}

We note that the above theorem recovers a result of Xie~\cite[Corollary~5.4]{Xie:17}.


\section{The infinite cells $\Gamma_1$ and $\Gamma_2$}\label{sec:5}

In this section we carry out an analogue of the work of Section~\ref{sec:4} for the other infinite cells $\Gamma_i$ with $i=1,2$. We begin by introducing and developing a combinatorial model of ``$\alpha_i$-folded alcove walks''. We then show that this model encodes the matrix coefficients of $\pi_i(T_w)$, and we use  this model to prove that our representations are balanced for the cells $\Gamma_1$ and $\Gamma_2$, compute the bounds for the degree of matrix coefficients in each parameter regime, and compute the leading matrices in terms of Schur functions of type $A_1$. This section is necessarily more involved that the previous section, since we need to pay careful attention to the non-generic parameter regimes.

\subsection{$\alpha_i$-folded alcove walks}\label{sec:5.1}

The following definitions apply to any affine Coxeter group. Let $\alpha_i$ be a fixed simple root, and let $$\mathcal{U}_i=\{x\in V\mid 0\leq \langle x,\alpha_i\rangle\leq 1\}$$ be the region between the hyperplanes $H_{\alpha_i,0}$ and $H_{\alpha_i,1}$. Let $w\in W$ and write $\vec{w}=s_{i_1}\cdots s_{i_{\ell}}$. An \textit{$\alpha_i$-folded alcove walk of type $\vec{w}$ starting at $v\in \mathcal{U}_i$} is a sequence $p=(v_0,v_1,\ldots,v_{\ell})$ with $v_0,\ldots,v_{\ell}\in\mathcal{U}_i$ such that
\begin{enumerate}
\item $v_0=v$, and $v_k\in \{v_{k-1},v_{k-1}s_{i_k}\}$ for each $k=1,\ldots,\ell$, and
\item if $v_{k-1}=v_k$ then either:
\begin{enumerate}
\item $v_{k-1}s_{i_k}\notin \mathcal{U}_i$, or
\item $v_{k-1}$ is on the positive side of the hyperplane separating $v_{k-1}$ and $v_{k-1}s_{i_k}$. 
\end{enumerate}
\end{enumerate}

We note that condition 2)(a) can only occur if $v_{k-1}$ and $v_{k-1}s_{i_k}$ are separated by either $H_{\alpha_i,0}$ or $H_{\alpha_i,1}$. The \textit{end} of $p=(v_0,\ldots,v_{\ell})$ is $\mathrm{end}(p)=v_{\ell}$. 
\medskip

Less formally, $\alpha_i$-folded alcove walks are made up of the following symbols, where $x\in \mathcal{U}_i$ and $s\in S$:
\medskip

\begin{figure}[H]
\begin{subfigure}{.6\textwidth}
\begin{center}
\begin{pspicture}(-5,-1.5)(6,1)
\psset{unit=.5cm}
\psline(-6,-1)(-6,1)
\psline{->}(-6.5,0)(-5.5,0)
\rput(-6.8,1){{ $-$}}
\rput(-7,.2){{ $x$}}
\rput(-5,.2){{ $xs$}}
\rput(-5.4,1){{ $+$}}
\rput(-6,-1.5){\footnotesize{(positive $s$-crossing)}}
\psline(-0,-1)(-0,1)
\psline(.5,0)(-0,0)
\psline{<-}(.5,-.15)(-0,-.15)
\rput(-.8,1){{ $-$}}
\rput(-1,.2){{ $xs$}}
\rput(1,.2){{ $x$}}
\rput(.6,1){{ $+$}}
\rput(0,-1.5){\footnotesize{($s$-fold)}}

\psline(6,-1)(6,1)
\psline{->}(6.5,0)(5.5,0)
\rput(6.5,1){{ $+$}}
\rput(7,.2){{ $x$}}
\rput(5,.2){{ $xs$}}
\rput(5.2,1){{ $-$}}
\rput(6,-1.5){\footnotesize{(negative $s$-crossing)}}
\end{pspicture}
\caption{When the alcoves $x$ and $xs$ both belong to $\mathcal{U}_i$}
\end{center}
\end{subfigure}
\begin{subfigure}{0.4\textwidth}
\begin{center}
\begin{pspicture}(-3,-1.5)(3,1)
\psset{unit=.5cm}
\psline(3,-1)(3,1)
\psline(2.5,0)(3,0)
\psline{<-}(2.5,-.15)(3,-.15)
\rput(3.5,1){{ $+$}}
\rput(4,.2){{ $xs$}}
\rput(2,.2){{ $x$}}
\rput(2.2,1){{ $-$}}
\rput(3,-1.5){\footnotesize{($s$-bounce)}}
\psline(-3,-1)(-3,1)
\psline(-2.5,0)(-3,0)
\psline{<-}(-2.5,-.15)(-3,-.15)
\rput(-3.8,1){{ $-$}}
\rput(-4,.2){{ $xs$}}
\rput(-2,.2){{ $x$}}
\rput(-2.4,1){{ $+$}}
\rput(-3,-1.5){\footnotesize{($s$-bounce)}}
\end{pspicture}
\caption{When $xs$ lies outside of $\cU_{i}$}
\end{center}
\end{subfigure}
\end{figure}

We refer to the two symbols in (b) as ``$s$-bounces'' rather than folds, since they play a different role in the theory. Note that bounces only occur on the hyperplanes $H_{\alpha_i,0}$ and $H_{\alpha_i,1}$. Moreover, note that there are no folds on the walls $H_{\alpha_i,0}$ and $H_{\alpha_i,1}$ -- the only interactions with these walls are bounces. We note that in all cases except for $\tilde{A}_1$ and $\tilde{C}_n$ every $s$-bounce necessarily has $\sq_s=\sq_{s_i}$ (although it is not necessarily true that $s=s_i$). In type $\tilde{A}_1$ and $\tilde{C}_n$ this property holds under the assumption that either $L(s_0)=L(s_n)$, or by modifying the definition of $\cU_i$. In any case, here we are interested in $\tilde{G}_2$, and in this case we have $\sq_{s}=\sq_{s_i}$ for all $s$-bounces. Thus we will typically simply say \textit{bounces}.

\medskip

Let $p$ be an $\alpha_i$-folded alcove walk. Let
$$
f_s(p)=\#(\text{$s$-folds in $p$})\quad\text{and}\quad b(p)=\#(\text{bounces in $p$}).
$$
Define a modified $\sq$-weight for $p$ by 
$$
\mathcal{Q}_i(p)=(-\sq_{s_i}^{-1})^{b(p)}\prod_{s\in S}(\sq_s-\sq_s^{-1})^{f_s(p)}.
$$
Finally, for each $1\leq i\leq n$ define 
$$\theta^i(p)=\psi_i(\theta(p))\quad\text{and}\quad \mathrm{wt}^i(p)=\langle\mathrm{wt}(p),\omega_i\rangle,$$ 
where $\psi_i:W_0\to W^i_0$ is the natural projection map taking $u\in W_0$ to the minimal length representative of $W_{i}u$, and $\omega_1,\ldots,\omega_n$ are the fundamental coweights of $\Phi$. Thus if $\mathrm{wt}(p)=m_1\alpha_1^{\vee}+\cdots+m_n\alpha_n^{\vee}$ then $\mathrm{wt}^i(p)=m_i$. We refer to $\theta^i(p)$ as the \textit{final direction} of $p$, and $\mathrm{wt}^i(p)$ as the \textit{weight} of $p$ (with respect to $\alpha_i$).
\medskip

We now specialise to the case $\tilde{G}_2$. Let
\begin{align*}
\sigma_1=s_{\alpha_1,1}t_{\alpha_1^{\vee}+\alpha_2^{\vee}}=t_{\alpha_1^{\vee}+\alpha_2^{\vee}}s_{1}\quad\text{and}\quad 
\sigma_2=s_{\alpha_2,1}t_{\alpha_1^{\vee}+2\alpha_2^{\vee}}=t_{\alpha_1^{\vee}+2\alpha_2^{\vee}}s_2.
\end{align*}
Observe that for each $i\in\{1,2\}$ the ``glide reflection'' $\sigma_i$ preserves $\cU_i$, and that $W_0^i$ is a fundamental domain for the action of $\langle \sigma_i\rangle$ on $\cU_i$. Let $\sB$ be any other fundamental domain for this action. For $w\in \cU_i$ we define $\mathrm{wt}^i_{\sB}(w)\in\mathbb{Z}$ and $\theta^i_{\sB}(w)\in \sB$ by the equation
$$
w=\sigma_i^{\mathrm{wt}^i_{\sB}(w)}\theta^i_{\sB}(w),
$$
and for $\alpha_i$-folded alcove walks $p$ we define 
$$
\mathrm{wt}_{\sB}^i(p)=\mathrm{wt}_{\sB}^i(\mathrm{end}(p))\quad\text{and}\quad \theta^i_{\sB}(p)=\theta_{\sB}^i(\mathrm{end}(p)).
$$ 
It is easy to see that in the case $\sB=W_0^i$ these definitions agree with those for $\mathrm{wt}^i(p)$ and $\theta^i(p)$ made above.
\medskip

\begin{Exa}
Let $i=1$. Let $\vec{w}=121021210212102120212102120$ (a reduced expression). Figure~\ref{fig:paths} illustrates an $\alpha_1$-folded alcove walk of type $\vec{w}$, with two choices of fundamental domain $\sB$ (the gray shaded regions). The tessellation of $\cU_1$ by $\sB$ is shown. The alcove walk has $2$ folds and $3$ bounces, and $\cQ_1(p)=-\sq^{-3a}(\sq^a-\sq^{-a})(\sq^b-\sq^{-b})$. The weight of $p$ is $4$ with respect to the first fundamental domain, and $2$ with respect to the second fundamental domain.
\psset{linewidth=.1mm,unit=0.9cm}
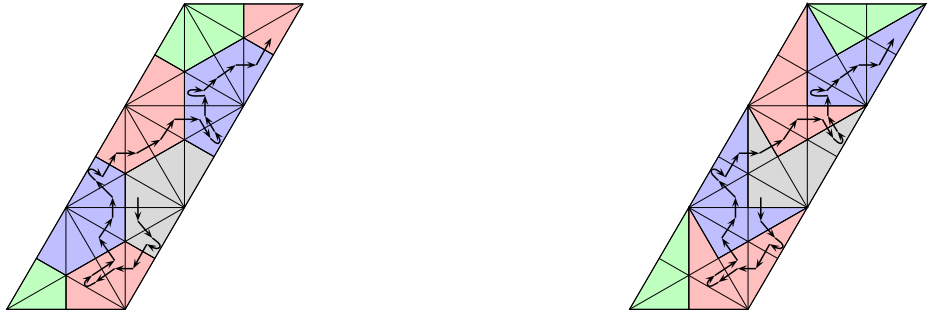
\begin{figure}[H]
\begin{subfigure}{.45\textwidth}
\begin{center}
\begin{pspicture}(-4,-1.5)(2,3.25)
\pspolygon[fillcolor=gray!30!,fillstyle=solid](-0.433,-0.75)(-.866,-.5)(-0.866,0.5)(0,1)(.433,0.75)
\pspolygon[fillcolor=red!25!,fillstyle=solid](-1.3,0.75)(-0.866,0.5)(0,1)(0,2)(-.433,2.25)
\pspolygon[fillcolor=blue!25!,fillstyle=solid](0,1)(0,2)(.866,2.5)(1.3,2.25)(.433,0.75)
\pspolygon[fillcolor=green!25!,fillstyle=solid](-.433,2.25)(0,2)(.866,2.5)(.866,3)(0,3)
\pspolygon[fillcolor=red!25!,fillstyle=solid](.866,3)(.866,2.5)(1.3,2.25)(1.732,3)
\pspolygon[fillcolor=blue!25!,fillstyle=solid](-1.3,0.75)(-.866,0.5)(-.866,-0.5)(-1.732,-1)(-2.165,-0.75)
\pspolygon[fillcolor=red!25!,fillstyle=solid](-.866,-0.5)(-1.732,-1)(-1.732,-1.5)(-.866,-1.5)(-.433,-0.75)
\pspolygon[fillcolor=green!25!,fillstyle=solid](-1.732,-1.5)(-1.732,-1)(-2.165,-0.75)(-2.598,-1.5)

\psline(-2.598,-1.5)(0,3)(1.732,3)
\psline(-2.598,-1.5)(-.866,-1.5)(1.732,3)
\psline(-.866,1.5)(.866,1.5)
\psline(-1.732,0)(0,0)
\psline(-2.598,-1.5)(0,0)
\psline(-1.732,0)(.866,1.5)
\psline(-.866,1.5)(1.732,3)

\psline(-.866,-1.5)(-2.165,-.75)
\psline(-0.433,-.75)(-1.732,0)
\psline(0,0)(-1.3,.75)
\psline(.433,.75)(-.866,1.5)
\psline(.866,1.5)(-.433,2.25)
\psline(1.3,2.25)(0,3)

\psline(-1.732,0)(-1.732,-1.5)
\psline(-.866,1.5)(-.866,-1.5)
\psline(0,0)(0,3)
\psline(.866,1.5)(.866,3)

\psline(-.866,-1.5)(-1.732,0)
\psline(0,0)(-.866,1.5)
\psline(.866,1.5)(0,3)

\psline[linewidth=0.2mm]{->}(-.68,0.15)(-0.68,-0.2)
\psline[linewidth=0.2mm]{->}(-0.68,-0.2)(-0.47,-0.45)
\pscurve[linewidth=0.2mm]{->}(-0.47,-0.45)(-.36,-.58)(-0.55,-0.54)
\psline[linewidth=0.2mm]{->}(-0.55,-0.54)(-0.75,-0.9)
\psline[linewidth=0.2mm]{->}(-0.75,-0.9)(-1.02,-0.9)
\psline[linewidth=0.2mm]{->}(-1.02,-0.9)(-1.3,-1.1)
\pscurve[linewidth=0.2mm]{->}(-1.3,-1.1)(-1.45,-1.15)(-1.35,-1)
\psline[linewidth=0.2mm]{->}(-1.35,-1)(-1.02,-0.78)
\psline[linewidth=0.2mm]{->}(-1.02,-0.78)(-1.25,-0.45)
\psline[linewidth=0.2mm]{->}(-1.25,-0.45)(-1.05,-0.15)
\psline[linewidth=0.2mm]{->}(-1.05,-0.15)(-1.05,0.15)
\psline[linewidth=0.2mm]{->}(-1.05,0.15)(-1.3,0.4)
\pscurve[linewidth=0.2mm]{->}(-1.3,0.4)(-1.4,0.55)(-1.2,0.45)
\psline[linewidth=0.2mm]{->}(-1.2,0.45)(-1,0.8)
\psline[linewidth=0.2mm]{->}(-1,0.8)(-0.7,0.8)
\psline[linewidth=0.2mm]{->}(-0.7,0.8)(-0.35,1)
\psline[linewidth=0.2mm]{->}(-0.35,1)(-0.15,1.3)
\psline[linewidth=0.2mm]{->}(-0.15,1.3)(0.22,1.3)
\psline[linewidth=0.2mm]{->}(0.22,1.3)(0.38,1.02)
\pscurve[linewidth=0.2mm]{->}(0.38,1.02)(0.5,0.95)(0.44,1.12)
\psline[linewidth=0.2mm]{->}(0.44,1.12)(0.3,1.35)
\psline[linewidth=0.2mm]{->}(0.3,1.35)(0.3,1.65)
\pscurve[linewidth=0.2mm]{->}(0.3,1.65)(0.05,1.69)(0.28,1.72)
\psline[linewidth=0.2mm]{->}(0.28,1.72)(0.48,1.91)
\psline[linewidth=0.2mm]{->}(0.48,1.91)(0.75,2.1)
\psline[linewidth=0.2mm]{->}(0.75,2.1)(1.05,2.1)
\psline[linewidth=0.2mm]{->}(1.05,2.1)(1.27,2.5)


\end{pspicture}
\end{center}
\caption{$\sB=W_0^1$, $\mathrm{wt}^1(p)=4$, $\theta^1(p)=21212$}
\end{subfigure}
\begin{subfigure}{.5\textwidth}
\begin{center}
\begin{pspicture}(-4,-1.5)(2,3.25)
\pspolygon[fillcolor=blue!25!,fillstyle=solid](-0.866,1.5)(-1.732,0)(-1.3,-0.75)(0,0)(-0.866,0)
\pspolygon[fillcolor=gray!30!,fillstyle=solid](-0.866,1.5)(-0.866,0)(0,0)(0.866,1.5)(-0.433,0.75)
\pspolygon[fillcolor=red!25!,fillstyle=solid](0,3)(-0.866,1.5)(-0.433,0.75)(0.866,1.5)(0,1.5)
\pspolygon[fillcolor=blue!25!,fillstyle=solid](0,3)(0,1.5)(0.866,1.5)(1.732,3)(0.433,2.25)
\pspolygon[fillcolor=red!25!,fillstyle=solid](-1.732,0)(-1.732,-1.5)(-.866,-1.5)(0,0)(-1.3,-0.75)
\pspolygon[fillcolor=green!25!,fillstyle=solid](0,3)(1.732,3)(0.433,2.25)
\pspolygon[fillcolor=green!25!,fillstyle=solid](-1.732,0)(-1.732,-1.5)(-2.598,-1.5)

\psline(-2.598,-1.5)(0,3)(1.732,3)
\psline(-2.598,-1.5)(-.866,-1.5)(1.732,3)
\psline(-.866,1.5)(.866,1.5)
\psline(-1.732,0)(0,0)
\psline(-2.598,-1.5)(0,0)
\psline(-1.732,0)(.866,1.5)
\psline(-.866,1.5)(1.732,3)

\psline(-.866,-1.5)(-2.165,-.75)
\psline(-0.433,-.75)(-1.732,0)
\psline(0,0)(-1.3,.75)
\psline(.433,.75)(-.866,1.5)
\psline(.866,1.5)(-.433,2.25)
\psline(1.3,2.25)(0,3)

\psline(-1.732,0)(-1.732,-1.5)
\psline(-.866,1.5)(-.866,-1.5)
\psline(0,0)(0,3)
\psline(.866,1.5)(.866,3)

\psline(-.866,-1.5)(-1.732,0)
\psline(0,0)(-.866,1.5)
\psline(.866,1.5)(0,3)

\psline[linewidth=0.2mm]{->}(-.68,0.15)(-0.68,-0.2)
\psline[linewidth=0.2mm]{->}(-0.68,-0.2)(-0.47,-0.45)
\pscurve[linewidth=0.2mm]{->}(-0.47,-0.45)(-.36,-.58)(-0.55,-0.54)
\psline[linewidth=0.2mm]{->}(-0.55,-0.54)(-0.75,-0.9)
\psline[linewidth=0.2mm]{->}(-0.75,-0.9)(-1.02,-0.9)
\psline[linewidth=0.2mm]{->}(-1.02,-0.9)(-1.3,-1.1)
\pscurve[linewidth=0.2mm]{->}(-1.3,-1.1)(-1.45,-1.15)(-1.35,-1)
\psline[linewidth=0.2mm]{->}(-1.35,-1)(-1.02,-0.78)
\psline[linewidth=0.2mm]{->}(-1.02,-0.78)(-1.25,-0.45)
\psline[linewidth=0.2mm]{->}(-1.25,-0.45)(-1.05,-0.15)
\psline[linewidth=0.2mm]{->}(-1.05,-0.15)(-1.05,0.15)
\psline[linewidth=0.2mm]{->}(-1.05,0.15)(-1.3,0.4)
\pscurve[linewidth=0.2mm]{->}(-1.3,0.4)(-1.4,0.55)(-1.2,0.45)
\psline[linewidth=0.2mm]{->}(-1.2,0.45)(-1,0.8)
\psline[linewidth=0.2mm]{->}(-1,0.8)(-0.7,0.8)
\psline[linewidth=0.2mm]{->}(-0.7,0.8)(-0.35,1)
\psline[linewidth=0.2mm]{->}(-0.35,1)(-0.15,1.3)
\psline[linewidth=0.2mm]{->}(-0.15,1.3)(0.22,1.3)
\psline[linewidth=0.2mm]{->}(0.22,1.3)(0.38,1.02)
\pscurve[linewidth=0.2mm]{->}(0.38,1.02)(0.5,0.95)(0.44,1.12)
\psline[linewidth=0.2mm]{->}(0.44,1.12)(0.3,1.35)
\psline[linewidth=0.2mm]{->}(0.3,1.35)(0.3,1.65)
\pscurve[linewidth=0.2mm]{->}(0.3,1.65)(0.05,1.69)(0.28,1.72)
\psline[linewidth=0.2mm]{->}(0.28,1.72)(0.48,1.91)
\psline[linewidth=0.2mm]{->}(0.48,1.91)(0.75,2.1)
\psline[linewidth=0.2mm]{->}(0.75,2.1)(1.05,2.1)
\psline[linewidth=0.2mm]{->}(1.05,2.1)(1.27,2.5)


\end{pspicture}
\end{center}
\caption{$\sB=\{e,0,2,21,212,2121,2120\}$, $\mathrm{wt}^1_{\sB}(p)=2$, $\theta_{\sB}^1(p)=s_0$}
\end{subfigure}
\caption{An $\alpha_1$-folded alcove walk $p$, with two choices of fundamental domain $\sB$}
\label{fig:paths}
\end{figure}
\end{Exa}

We now prove an analogue of Theorem~\ref{thm:pi0}, giving a combinatorial formula for the matrix entries of $\pi_i(T_w)$ in terms of $\alpha_i$-folded alcove walks. We first consider the fundamental domain $W_0^i$, and then deduce the general case in Corollary~\ref{cor:funddom} below.

\begin{Th}\label{thm:pii}
Let $i\in\{1,2\}$ and let $w\in W$. With respect to the basis $\{\xi_i\otimes X_u\mid u\in W_0^i\}$ of $\mathcal{M}_i$, the matrix entries of $\pi_i(T_w)$ are given by
$$
[\pi_i(T_w)]_{u,v}=\sum_{\{p\in\mathcal{P}_{i}(\vec{w},u)\mid \theta^i(p)=v\}}\mathcal{Q}_i(p)\zeta^{\mathrm{wt}^i(p)}
$$
where $\vec{w}$ is any reduced expression for~$w$. 
\end{Th}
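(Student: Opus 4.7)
The plan is to proceed by induction on $\ell(w)$, closely paralleling the strategy of Proposition~\ref{prop:basischange}. The base case $w = e$ is immediate: $\cP_i(\epsilon, u) = \{u\}$ consists of the trivial (zero-step) walk with $\cQ_i = 1$, $\mathrm{wt}^i = 0$, and $\theta^i = u$. For the inductive step, fix a reduced expression $\vec{ws} = \vec{w} \cdot s$ with $\ell(ws) = \ell(w) + 1$. Applying the inductive hypothesis to $T_w$, the task reduces to decomposing $(\xi_i \otimes X_v) T_s$ for $v = \mathrm{end}(p) \in \cU_i$ into contributions indexed by the possible extensions of the $\alpha_i$-folded walk $p$ by a single step at $v$. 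The key bookkeeping identity is
\[
\xi_i \otimes X_{\sigma_i^k v'} \;=\; \zeta^k (\xi_i \otimes X_{v'}) \qquad \text{for all } v' \in W_0^i \text{ and } k \in \mathbb{Z},
\]
which follows from the factorisation $X_{\sigma_i v'} = X_{\sigma_i} X_{v'}$ (a direct computation using $\ell(s_i v') = \ell(v') + 1$ and the formulas $X_{\sigma_1} = X_1 X_2 T_1^{-1}$, $X_{\sigma_2} = X_1 X_2^2 T_2^{-1}$ extracted from Example~\ref{presentationG2}) combined with the relation $\xi_i \cdot X_{\sigma_i} = \zeta \xi_i$, itself a short computation from the defining values of $M_i$ given in Section~\ref{sec:piiconst}.

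The case analysis for $(\xi_i \otimes X_v) T_s$ splits according to the geometry at $v$. In \emph{Case 1}, both $v$ and $vs$ lie in $\cU_i$, and the standard Bernstein--Lusztig formulas underlying Proposition~\ref{prop:basischange} give $X_v T_s = X_{vs}$ if the $s$-step is a positive crossing, and $X_v T_s = X_{vs} + (\sq_s - \sq_s^{-1}) X_v$ if it is a negative crossing. After tensoring with $\xi_i$ and applying the bookkeeping identity, these contributions match exactly the interior extensions of $p$ (a crossing, or a crossing together with an $s$-fold, respectively), with the correct $\cQ_i$- and $\zeta$-weights. In \emph{Case 2}, $v \in \cU_i$ but $vs \notin \cU_i$, so $v$ and $vs$ are separated by one of the two walls $H_{\alpha_i, 0}$ or $H_{\alpha_i, 1}$ of $\cU_i$. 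Combinatorially, the only extension of $p$ at $v$ is a single $s$-bounce contributing a factor of $-\sq_{s_i}^{-1}$ while leaving $\mathrm{end}(p)$ fixed, so the algebraic task is to show
\[
(\xi_i \otimes X_v) T_s \;=\; -\sq_{s_i}^{-1}(\xi_i \otimes X_v).
\]

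This Case 2 identity is the heart of the argument. After reducing to $v \in W_0^i$ via the bookkeeping identity, the first subcase (separation by $H_{\alpha_i, 0}$) gives $vs = s_i v$, so $s = v^{-1} s_i v$ is conjugate to $s_i$ (forcing $\sq_s = \sq_i$) and the crossing is negative: $X_v T_s = X_{s_i v} + (\sq_i - \sq_i^{-1}) X_v$. Using $X_{s_i v} = T_i^{-1} X_v$ (which holds since $\ell(s_i v) = \ell(v) + 1$) together with $\xi_i \cdot T_i^{-1} = -\sq_i \xi_i$ (derived from $\xi_i \cdot T_i = -\sq_i^{-1} \xi_i$ via $T_i^{-1} = T_i - (\sq_i - \sq_i^{-1})$), one obtains $(\xi_i \otimes X_v) T_s = (-\sq_i + \sq_i - \sq_i^{-1})(\xi_i \otimes X_v) = -\sq_i^{-1}(\xi_i \otimes X_v)$, as required. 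The second subcase (separation by $H_{\alpha_i, 1}$) is the delicate one: here the crossing is positive and $vs = s_{\alpha_i, 1} v = t_{\alpha_i^\vee} s_i v$, so $X_v T_s = X_{vs} = X^{\alpha_i^\vee} X_{s_i v} = X_i T_i^{-1} X_v$; a short direct calculation using $\xi_i \cdot X_i = \sq^{-2L(s_i)} \xi_i$ (the eigenvalue prescribed in Section~\ref{sec:piiconst}) together with $\xi_i \cdot T_i^{-1} = -\sq_i \xi_i$ yields $\xi_i \cdot X_i T_i^{-1} = -\sq_i^{-1} \xi_i$, again producing the desired $-\sq_{s_i}^{-1}$ factor. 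This second subcase is the main obstacle: it is precisely where the non-trivial translation part $t_{\alpha_i^\vee}$ of $s_{\alpha_i, 1}$ must be absorbed into a single scalar, and the specific values of $\xi_i \cdot X_1$ and $\xi_i \cdot X_2$ chosen in the definition of $M_i$ are tailored exactly to make this collapse. Once both subcases are verified, combining the crossing, fold, and bounce contributions across all $p \in \cP_i(\vec{w}, u)$ assembles into $\sum_{p' \in \cP_i(\vec{ws}, u)} \cQ_i(p') \zeta^{\mathrm{wt}^i(p')}(\xi_i \otimes X_{\theta^i(p')})$, completing the induction.
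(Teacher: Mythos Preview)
Your proof is correct and follows essentially the same approach as the paper's: induction on $\ell(w)$ with the four-way case split (positive/negative crossing inside $\cU_i$, bounce on $H_{\alpha_i,0}$, bounce on $H_{\alpha_i,1}$), and your two bounce computations are exactly the paper's cases (3) and (4). The only organizational difference is that the paper first proves the unreduced identity $(\xi_i\otimes X_u)\cdot T_w=\sum_{p\in\cP_i(\vec w,u)}\cQ_i(p)(\xi_i\otimes X_{\mathrm{end}(p)})$ with $\mathrm{end}(p)\in\cU_i$ and invokes your bookkeeping identity only once at the end, whereas you thread it through the induction.
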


\begin{proof}
We will prove the case $i=1$. The case $i=2$ is completely analogous. We first prove the following formula by induction on $\ell(w)$:
\begin{align}\label{eq:projectpath}
(\xi_1\otimes X_u)\cdot T_w=\sum_{p\in\mathcal{P}_1(\vec{w},u)}(\xi_1\otimes X_{\mathrm{end}(p)})\mathcal{Q}_1(p).
\end{align}
Suppose that $\ell(ws)=\ell(w)+1$. Then by the induction hypothesis
\begin{align*}
(\xi_1\otimes X_u)\cdot T_{ws}&=\sum_{p\in\mathcal{P}_1(\vec{w},u)}(\xi_1\otimes X_{\mathrm{end}(p)})T_s\mathcal{Q}_1(p).
\end{align*}
Let $p\in\mathcal{P}_1(\vec{w},u)$. Consider the following cases:
\begin{enumerate}
\item If $\mathrm{end}(p)\,{^-}\hspace{-0.1cm}\mid^+\, \mathrm{end}(p)s$ with $\mathrm{end}(p)s\in\mathcal{U}_1$ then 
$$
(\xi_1\otimes X_{\mathrm{end}(p)})T_s\mathcal{Q}_1(p)=(\xi_1\otimes X_{\mathrm{end}(p\cdot \epsilon^+_s)})\mathcal{Q}_1(p\cdot \epsilon^+_s),
$$ 
where $p\cdot\epsilon_s^+$ denotes the path obtained from $p$ by appending a positive $s$-crossing. 
\item If $\mathrm{end}(p)\,{^+}\hspace{-0.1cm}\mid^-\, \mathrm{end}(p)s$ with $\mathrm{end}(p)s\in\mathcal{U}_1$ then using $T_s=T_s^{-1}-(\sq_s-\sq_s^{-1})$ gives
$$
(\xi_1\otimes X_{\mathrm{end}(p)})T_s\mathcal{Q}_1(p)=(\xi_1\otimes X_{\mathrm{end}(p\cdot \epsilon^-_s)})\mathcal{Q}_1(p\cdot\epsilon_s^-)+(\xi_1\otimes X_{\mathrm{end}(p\cdot f_s)})\mathcal{Q}_1(p\cdot f_s),
$$ 
where $p\cdot f_s$ denotes the path obtained from $p$ by appending an $s$-fold.
\item If $\mathrm{end}(p)\,{^-}\hspace{-0.1cm}\mid^+\, \mathrm{end}(p)s$ with $\mathrm{end}(p)s\notin\mathcal{U}_1$ then necessarily $\mathrm{end}(p)\cap \mathrm{end}(p)s$ is a face of $H_{\alpha_1,1}$ (since the crossing is positive). Then $\mathrm{end}(p)s=s_{\alpha_1,1}\mathrm{end}(p\cdot b_s)$ where $p\cdot b_s$ denotes the path obtained from $p$ by appending an $s$-bounce, and since $s_{\alpha_1,1}=t_{\alpha_1^{\vee}}s_{1}$ and
$
X_{s_{\alpha_1,1}\mathrm{end}(p\cdot b_s)}=X^{\alpha_1^{\vee}}T_{s_1}^{-1}X_{\mathrm{end}(p\cdot b_s)},
$
we have
\begin{align*}
(\xi_1\otimes X_{\mathrm{end}(p)})T_s\mathcal{Q}_1(p)&=(\xi_1\otimes X_{\mathrm{end}(p)s})\mathcal{Q}_1(p)\\
&=(\xi_1\cdot X^{\alpha_1^{\vee}}T_{s_1}^{-1}\otimes X_{\mathrm{end}(p\cdot b_s)})\mathcal{Q}_1(p)\\
&=(\xi_1\otimes X_{\mathrm{end}(p\cdot b_s)})\mathcal{Q}_1(p)(-\sq_1)(\sq_1^{-2})\\
&=(\xi_1\otimes X_{\mathrm{end}(p\cdot b_s)})\mathcal{Q}_1(p\cdot b_s).
\end{align*} 
\item If $\mathrm{end}(p)\,{^+}\hspace{-0.1cm}\mid^-\, \mathrm{end}(p)s$ with $\mathrm{end}(p)s\notin\mathcal{U}_1$ then necessarily $\mathrm{end}(p)\cap \mathrm{end}(p)s$ is a face of $H_{\alpha_1,0}$ (since the crossing is negative). Using the formula $T_s=T_s^{-1}+(\sq_1-\sq_1^{-1})$, and the fact that $\mathrm{end}(p)s=s_1\mathrm{end}(p)=s_1\mathrm{end}(p\cdot b_s)$, we have
\begin{align*}
(\xi_1\otimes X_{\mathrm{end}(p)})T_s\mathcal{Q}_1(p)&=(\xi_1\otimes X_{\mathrm{end}(p)s})\mathcal{Q}_1(p)+(\xi_1\otimes X_{\mathrm{end}(p)})(\sq_1-\sq_1^{-1})\mathcal{Q}_1(p)\\
&=(\xi_1\otimes X_{s_1\mathrm{end}(p\cdot b_s)})\mathcal{Q}_1(p)+(\xi_1\otimes X_{\mathrm{end}(p\cdot b_s)})(\sq_1-\sq_1^{-1})\mathcal{Q}_1(p)\\
&=(\xi_1\cdot T_{s_1}^{-1}\otimes X_{\mathrm{end}(p\cdot b_s)})\mathcal{Q}_1(p)+(\xi_1\otimes X_{\mathrm{end}(p\cdot b_s)})(\sq_1-\sq_1^{-1})\mathcal{Q}_1(p)\\
&=(\xi_1\otimes X_{\mathrm{end}(p\cdot b_s)})(-\sq_1)\mathcal{Q}_1(p)+(\xi_1\otimes X_{\mathrm{end}(p\cdot b_s)})(\sq_1-\sq_1^{-1})\mathcal{Q}_1(p)\\
&=(\xi_1\otimes X_{\mathrm{end}(p\cdot b_s)})\mathcal{Q}_1(p\cdot b_s).
\end{align*}
\end{enumerate}
Equation~(\ref{eq:projectpath}) follows. 

\medskip

Let $p\in\mathcal{P}_1(\vec{w},u)$ and write $\mathrm{end}(p)=t_{\mu}v$ with $\mu\in Q$ and $v\in W_0$. Then $\mu\in H_{\alpha_1,0}\cup H_{\alpha_1,1}$ (since $\mathrm{end}(p)\in\cU_1$). If $\mu\in H_{\alpha_1,0}$ then $\mu=k\alpha_1^{\vee}+2k\alpha_2^{\vee}$ for some $k\in\mathbb{Z}$ and $v\in W^1_0$. Thus 
$$
\xi_1\otimes X_{\mathrm{end}(p)}=\xi_1\cdot X^{\mu}\otimes X_v=(\xi_1\otimes X_v)\zeta^{2k}=(\xi_1\otimes X_{\theta^1(p)})\zeta^{\mathrm{wt}^1(p)}.
$$
If $\mu\in H_{\alpha_1,1}$ then $\mu=k\alpha_1^{\vee}+(2k-1)\alpha_2^{\vee}$ for some $k\in\mathbb{Z}$, and $v\notin W^1_0$. Thus $\theta^1(p)=s_1v$, and hence 
\begin{align*}
\xi_1\otimes X_{\mathrm{end}(p)}&=\xi_1\cdot X^{\mu}\otimes X_v\\
&=(\xi_1\otimes X_{s_1\theta^1(p)})\sq_1^{-1}(-\zeta^{2k-1})\\
&=(\xi_1\cdot T_{s_1}^{-1}\otimes X_{\theta^1(p)})\sq_1^{-1}(-\zeta^{2k-1})\\
&=(\xi_1\otimes X_{\theta^1(p)})\zeta^{2k-1}= (\xi_1\otimes X_{\theta^1(p)})\zeta^{\mathrm{wt}^1(p)},
\end{align*}
and the theorem follows.
\end{proof}

It is convenient to have a version of Theorem~\ref{thm:pii} for other choices of fundamental domain. It is not hard to see that for each $p\in \mathcal{P}_i(\vec{w},u)$ the path $\sigma_i( p)$ obtained by applying $\sigma_i$ to each part of $p$ is again a valid $\alpha_i$-folded alcove walk starting at $\sigma_iu$ (the main point here is that the reflection part of $\sigma_i$ is in the simple root direction $\alpha_i$, and thus sends $\Phi^+\backslash\{\alpha_i\}$ to itself). Moreover, $\mathcal{Q}_i(p)$ and $\theta^i(p)$ are preserved under the application of $\sigma_i$, and a direct calculation shows that $\mathrm{wt}^i(\sigma_i^{k}(p))=k+\mathrm{wt}^i(p)$. 

\begin{Cor}\label{cor:funddom}
Let $w\in W$, $i\in\{1,2\}$, and let $\sB$ be a fundamental domain for the action of $\sigma_i$ on $\cU_i$. Then the matrix entries of $\pi_i(T_w)$ with respect to the basis $\{\xi_i\otimes X_u\mid u\in \sB\}$ are
$$
[\pi_i(T_w)]_{u,v}=\sum_{\{p\in\mathcal{P}_i(\vec{w},u)\mid \theta^i_{\mathsf{B}}(p)=v\}}\mathcal{Q}_i(p)\zeta^{\mathrm{wt}_{\sB}^i(p)},
$$
where $\vec{w}$ is any choice of reduced expression for~$w$.
\end{Cor}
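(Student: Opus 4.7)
The approach is to bootstrap from Theorem~\ref{thm:pii}, which treats the special case $\sB = W_0^i$. Two ingredients from its proof are load-bearing. First, the intermediate identity~(\ref{eq:projectpath}),
$$(\xi_i\otimes X_u)\cdot T_w=\sum_{p\in\mathcal{P}_i(\vec{w},u)}\mathcal{Q}_i(p)\,(\xi_i\otimes X_{\mathrm{end}(p)}),$$
was proved by induction on $\ell(w)$ whose inductive step only inspects the last step of the path; it makes no use of $u\in W_0^i$ beyond the ambient condition $u\in\mathcal{U}_i$. Hence~(\ref{eq:projectpath}) holds verbatim for any starting alcove $u\in\mathcal{U}_i$, in particular for $u\in\sB$. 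Second, the final paragraph of the same proof establishes the identity $\xi_i\otimes X_u=(\xi_i\otimes X_{\theta^i(u)})\,\zeta^{\mathrm{wt}^i(u)}$ for every $u\in\mathcal{U}_i$.

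The plan is then to promote this identity to an arbitrary fundamental domain $\sB$, that is, to prove
$$\xi_i\otimes X_u=(\xi_i\otimes X_{\theta^i_{\sB}(u)})\,\zeta^{\mathrm{wt}^i_{\sB}(u)}\qquad\text{for all }u\in\mathcal{U}_i.$$
Setting $v:=\theta^i_{\sB}(u)\in\sB$, we observe that $u$ and $v$ lie in the same $\langle\sigma_i\rangle$-orbit, so $\theta^i(u)=\theta^i(v)\in W_0^i$. Comparing the three defining equations $u=\sigma_i^{\mathrm{wt}^i(u)}\theta^i(u)$, $v=\sigma_i^{\mathrm{wt}^i(v)}\theta^i(v)$, and $u=\sigma_i^{\mathrm{wt}^i_{\sB}(u)}v$ yields the additive relation $\mathrm{wt}^i(u)=\mathrm{wt}^i_{\sB}(u)+\mathrm{wt}^i(v)$. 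Substituting into the $W_0^i$-identity recalled above and recognising $\xi_i\otimes X_v$ on the right then produces the promoted identity. As a byproduct, the bijection $\sB\to W_0^i$, $v\mapsto\theta^i(v)$, combined with this identity shows that $\{\xi_i\otimes X_v\mid v\in\sB\}$ is indeed a basis of $\mathcal{M}_i$, since its change-of-basis matrix with respect to the $W_0^i$-basis is diagonal with invertible entries $\zeta^{\mathrm{wt}^i(v)}$.

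Finally, applying the promoted identity to $\mathrm{end}(p)$ in the extension of~(\ref{eq:projectpath}) to starting alcoves $u\in\sB$, and regrouping the resulting sum according to $v=\theta^i_{\sB}(p)$, yields
$$(\xi_i\otimes X_u)\cdot T_w=\sum_{v\in\sB}\left(\sum_{\{p\in\mathcal{P}_i(\vec{w},u)\mid\theta^i_{\sB}(p)=v\}}\mathcal{Q}_i(p)\,\zeta^{\mathrm{wt}^i_{\sB}(p)}\right)(\xi_i\otimes X_v),$$
which is exactly the matrix-coefficient formula asserted in the corollary. The only mildly delicate step is the bookkeeping relation $\mathrm{wt}^i(u)=\mathrm{wt}^i_{\sB}(u)+\mathrm{wt}^i(v)$ used to promote the expansion identity; every other ingredient is a direct reuse of material already developed in the proof of Theorem~\ref{thm:pii}.
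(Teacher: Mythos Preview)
Your proof is correct, and it takes a somewhat different route from the paper's own argument. The paper treats Theorem~\ref{thm:pii} as a black box: it writes $u=\sigma_i^{k(u)}u'$ with $u'\in W_0^i$, establishes $\xi_i\otimes X_u=(\xi_i\otimes X_{u'})\,\zeta^{k(u)}$, and then performs a change of basis, obtaining a sum over paths starting at $u'\in W_0^i$; to convert this into a sum over paths starting at $u\in\sB$ it invokes the $\sigma_i$-equivariance of the path model (the bijection $\sigma_i^{k(u)}:\mathcal{P}_i(\vec{w},u')\to\mathcal{P}_i(\vec{w},u)$, which preserves $\mathcal{Q}_i$ and $\theta^i$ and shifts $\mathrm{wt}^i$ by $k(u)$). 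You bypass this entirely by noticing that the inductive proof of~(\ref{eq:projectpath}) never uses the hypothesis $u\in W_0^i$, so the path expansion already holds for paths starting at $u\in\sB$; the only remaining work is to rewrite $\xi_i\otimes X_{\mathrm{end}(p)}$ in the $\sB$-basis, which you handle via the additive relation $\mathrm{wt}^i(u)=\mathrm{wt}^i_{\sB}(u)+\mathrm{wt}^i(v)$. Your argument is a little more economical, since it avoids the $\sigma_i$-action on paths altogether; the paper's version, on the other hand, makes the $\sigma_i$-equivariance explicit, which is a structural fact used elsewhere. Both are short and the underlying content is the same endpoint identity, just deployed at different stages.
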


\begin{proof}
We will prove the result for $i=1$, with the case $i=2$ being similar. For each $u\in \sB$ define $k(u)\in\mathbb{Z}$ and $u'\in W_0^1$ by the formula $u=\sigma_1^{k(u)}u'$. A direct calculation, using the formulae $\sigma_1^{2k}=t_{k\alpha_1^{\vee}+2k\alpha_2^{\vee}}$ and $\sigma_1^{2k-1}=t_{k\alpha_1^{\vee}+(2k-1)\alpha_2^{\vee}}s_1$ 
shows that 
\begin{align*}
\xi_1\otimes X_u&=\xi_1\otimes X_{\sigma_1^{k(u)}u'}=(\xi_1\otimes X_{u'})\,\zeta^{k(u)}.
\end{align*}
It follows from Theorem~\ref{thm:pii} (by applying change of basis) that
$$
[\pi_1(T_w)]_{u,v}=\sum_{\{p\in\mathcal{P}_1(\vec{w},u')\mid\theta^1(p)=v'\}}\mathcal{Q}_i(p)\zeta^{\mathrm{wt}^1(p)+k(u)-k(v)}.
$$
By definition we have $\theta^1(p)=v'$ if and only if $\theta^1_{\sB}(p)=v$. Recall that $\sigma_1^{k(u)}(\mathcal{P}_1(\vec{w},u'))=\mathcal{P}_1(\vec{w},u)$ and that for each $p\in\mathcal{P}_1(\vec{w},u')$ the value of $\mathcal{Q}_1(p)$ is preserved under this transformation. Thus
$$
[\pi_1(T_w)]_{u,v}=\sum_{\{p\in\mathcal{P}_1(\vec{w},u)\mid\theta^1_{\sB}(p)=v\}}\mathcal{Q}_i(p)\zeta^{\mathrm{wt}^1(p)-k(v)},
$$
and the result follows since $\mathrm{wt}_{\sB}^1(p)=\mathrm{wt}^1(p)-k(v)$ if $\theta_{\sB}^1(p)=v$.
\end{proof}

\subsection{Folding tables and admissible sequences}\label{sec:5.2}

Our next task is to show that the representations $\pi_1$ and $\pi_2$ satisfy~\B{2}. By our combinatorial formula for the matrix coefficients of $\pi_i(T_w)$ in terms $\al_i$-folded alcove walks it is equivalent to show that $\deg(\cQ_i(p))$ is bounded by some numbers $\ba_{\pi_i}$ for all $\alpha_i$-folded alcove walks~$p$. In this subsection we explain our approach to bounding the degree of $\alpha_i$-folded  alcove walks.
\medskip

Note that every $w\in W$ admits a reduced expression of the form
\begin{align}\label{eq:form}
\vec{w}=\vec{v}\cdot \vec{t}_{\omega_1}^{\,m}\cdot\vec{t}_{\omega_2}^{\,n}\cdot \vec{\sbb}\quad\text{with $v\in W_0$, $m,n\in\mathbb{N}$, and $\sbb\in\sB_0$},
\end{align}
and each walk $p\in\mathcal{P}_i(\vec{w},u)$ with $u\in W_0^i$ and $\vec{w}$ as above can naturally be decomposed as $p=p_0\cdot p^0$ where 
$$
p_0\in\mathcal{P}_i(\vec{v},u)\quad\text{and}\quad p^0\in\mathcal{P}_i(\vec{w}_1,\mathrm{end}(p_0))\quad\text{where}\quad \vec{w}_1=\vec{t}_{\omega_1}^{\,m}\cdot\vec{t}_{\omega_2}^{\,n}\cdot\vec{\sbb}.
$$
 Since $\cQ_i(p)=\cQ_i(p_0)\cQ_i(p^0)$ it is sufficient to bound the degrees of $\cQ_i(p_0)$ and $\cQ_i(p^0)$. The former is straight forward (since $v$ is in the dihedral group $G_2$). Thus the main effort is involved in bounding the degree of $\cQ_i(p^0)$. For this purpose we will fix reduced expressions for $\vec{t}_{\omega_1}$ and $\vec{t}_{\omega_2}$, and construct \textit{folding tables} that record the possible degrees of $\mathcal{Q}_i(p^0)$.
 
\medskip

We now explain the construction of our folding tables, via an analogue of the \textit{admissible sets} of Lenart and Postnikov~\cite{LP,LP2}. Let $v\in W^i_0$ and $x\in W$ with reduced expression $\vec{x}=s_{i_1}\ldots s_{i_n}$ . We denote by $p(\vec{x},v)\in\cP_i(\vec{x},v)$ the unique $\alpha_i$-folded alcove walk of type $\vec{x}$ starting at $v$ with no folds. Of course $p(\vec{x},v)$ may still have bounces, because $\alpha_i$-folded alcove walks are required to say in the strip $\cU_i$. Nonetheless, we refer to $p(\vec{x},v)$ as the \textit{straight walk} of type $\vec{x}$ starting at~$v$. Let
\begin{align*}
\mathcal{I}^-(\vec{x},v)&=\{k\in\{1,\ldots,n\}\mid \text{$p(\vec{x},v)$ makes a negative crossing at the $k$th step}\}\\
\mathcal{I}^+(\vec{x},v)&=\{k\in\{1,\ldots,n\}\mid \text{$p(\vec{x},v)$ makes a positive crossing at the $k$th step}\}\\
\mathcal{I}^{\,*}(\vec{x},v)&=\{k\in\{1,\ldots,n\}\mid \text{$p(\vec{x},v)$ bounces at the $k$th step}\}.
\end{align*}
Note that $\mathcal{I}^-\cup\mathcal{I}^+\cup\mathcal{I}^*=\{1,\ldots,n\}$. We define a function
$$
\varphi_{\vec{x}}^v:\mathcal{I}^-(\vec{x},v)\to W^i_0\times\mathbb{Z}
$$
as follows. For $k\in \mathcal{I}^-(\vec{x},v)$ let $p_k$ be the $\alpha_i$-folded alcove walk obtained from the straight walk $p_0=p(\vec{x},v)$ by folding at the $k$th step (note that after performing this fold one may need to include bounces at places where the folded walk $p_k$ attempts to exit the strip $\cU_i$; also note that this notation differs from the partial foldings defined earlier). Let 
$$
\varphi_{\vec{x}}^v(k)=\text{the unique $(u,n)\in W_0^i\times\mathbb{Z}$ such that $p(\vec{x},\sigma_i^nu)$ and $p_k$ agree after the $k$th step}.
$$
Equivalently, $(u,n)$ is the unique pair such that $\mathrm{end}(p(\vec{x},\sigma_i^nu))=\mathrm{end}(p_k)$, and thus $\sigma_i^nu$ is simply the end of the straight alcove walk $p(\mathrm{rev}(\vec{x}),\mathrm{end}(p_k))$, where $\mathrm{rev}(\vec{x})$ is the expression $\vec{x}$ read backwards.  
\medskip

\begin{Def}[Folding table]
Fix the enumeration $y_1,\ldots,y_6$ of $W_0^i$ with $\ell(y_j)=j-1$ for $j=1,\ldots,6$. For each $(j,k)$ with $1\leq j\leq 6$ and $1\leq k\leq \ell(x)$ define $f_{j,k}(\vec{x})\in\{-,*,1,2,3,4,5,6\}$ by
$$
f_{j,k}(\vec{x})=\begin{cases}
-&\text{if $k\in\mathcal{I}^+(\vec{x},y_j)$}\\
*&\text{if $k\in\mathcal{I}^{\,*}(\vec{x},y_j)$}\\
j'&\text{if $k\in\mathcal{I}^-(\vec{x},y_j)$ and $\varphi_{\vec{x}}^{y_j}=(y_{j'},n)$ for some $n\in\mathbb{Z}$.}
\end{cases}
$$
The \textit{$\alpha_i$-folding table} of $\vec{x}$ is the $6\times \ell(x)$ array $\mathbb{F}(\vec{x})$ with $(j,k)^{th}$ entry equal to $f_{j,k}(\vec{x})$.
\end{Def}

 \begin{Rem}\label{prefix}
If $\vec{y}$ is a prefix of $\vec{y}$ then $\mathbb{F}(\vec{y})$ is the subarray of $\mathbb{F}(\vec{x})$ consisting of the first $\ell(y)$ columns. Also note that of course any other enumeration of $W_0^i$ can be used in the definition.
\end{Rem}

\begin{Exa}\label{ex:foldingtables}
The $\alpha_i$-folding tables of 
\begin{align*}
\vec{t}_{\omega_1}&=0212012121,\quad \vec{t}_{\omega_2}=021212,\quad\text{and each element $\sbb$ in $\sB_0$}
\end{align*}
are shown in Tables~\ref{folding-table} and~\ref{folding-table2a} (resulting from a direct calculation).

\begin{table}[H]
\renewcommand{\arraystretch}{1.2}  
\begin{subfigure}{.6\linewidth}
\centering
$\begin{array}{|c||c|c|c|c|c|c|c|c|c|c||c|}\hline
&0&2&1&2&0&1&2&1&2&1&0\\\hline\hline
1&-&-&-&-&-&-&-&-&-&*&- \\\hline
2&-&-&-&-&-&*&-&-&-&-&-\\\hline
3&-&1&*&-&1&-&-&*&1&2&- \\\hline
4&2&-&*&2&-&1&2&*&-&-&2 \\\hline
5&3&2&1&3&2&*&3&1&2&4&3 \\\hline
6&1&4&2&1&4&3&1&2&4&*&1 \\\hline
\end{array}$
\caption{$\al_1$-folding table of $\vec{t}_{\omega_1}$ and $\vec{\sbb}_0$}
\end{subfigure}
\begin{subfigure}{.4\linewidth}
   \centering
   $\begin{array}{|c||c|c|c|c|c|c|}
   \hline
   & 0 &2 &1 &2 &1 &2 \\\hline
   \hline
1& - &- &- &- &-&- \\\hline
2& - &- &- &- &*&1 \\\hline
3& -&1&*&-&-&-\\\hline
4& 2&- &*&2  &1&3 \\\hline
5& 3&2 &1 &3&* &- \\\hline
6& 1&4 &2 &1&3 &5 \\\hline
\end{array}$
\caption{$\al_1$-folding table of $\vec{t}_{\omega_2}$}
\end{subfigure}
\caption{$\al_1$-folding tables for $\sB_0\cup\{\vec{t}_{\omega_1},\vec{t}_{\omega_2}\}$.}
\label{folding-table}
\end{table}

\begin{table}[H]
\renewcommand{\arraystretch}{1.2}  
\begin{subfigure}{.6\linewidth}
\centering
$\begin{array}{|c||c|c|c|c|c|c|c|c|c|c||c|}\hline
&0&2&1&2&0&1&2&1&2&1&0\\\hline\hline
1&-&-&-&-&-&-&-&-&-&-&- \\\hline
2&-&*&-&-&*&-&-&-&*&1&- \\\hline
3&*&-&-&*&-&1&*&-&-&-&* \\\hline
4&*&1&2&*&1&-&*&2&1&3&* \\\hline
5&1&*&3&1&*&2&1&3&*&-&1 \\\hline
6&2&3&1&2&3&4&2&1&3&5&2 \\\hline
\end{array}$
\caption{$\al_2$-folding table of $\vec{t}_{\omega_1}$ and $\vec{\sbb}_0$}
\end{subfigure}
\begin{subfigure}{.4\linewidth}
   \centering
   $\begin{array}{|c||c|c|c|c|c|c|}
   \hline
   & 0 &2 &1 &2 &1 &2 \\\hline
   \hline
1& - &- &- &- &-&* \\\hline
2& - &* &- &- &-&- \\\hline
3& *&-&-&*&1&2\\\hline
4& *&1 &2&*  &-&- \\\hline
5& 1&* &3 &1&2 &4 \\\hline
6& 2&3 &1 &2&4 &* \\\hline
\end{array}$
\caption{$\al_2$-folding table of $\vec{t}_{\omega_2}$}
\end{subfigure}
\caption{$\al_2$-folding tables of $\sB_0\cup\{\vec{t}_{\omega_1},\vec{t}_{\omega_2}\}$.}
\label{folding-table2a}
\end{table}

For efficiency of presenting the tables, we note that 10 of the 12 elements of $\sB_0$ are prefixes of $\vec{t}_{\omega_1}$, and one of the remaining elements of $\sB_0$ is a prefix of $\vec{t}_{\omega_2}$. Thus the folding tables of these $11$ elements of $\sB_0$ are `contained' in the folding tables $\mathbb{F}(\vec{t}_{\omega_1})$ and $\mathbb{F}(\vec{t}_{\omega_2})$ (see Remark~\ref{prefix}). The final element of $\sB_0$ (namely the longest element $\sB_0$) is $\vec{\mathsf{b}}_0=0212012120$ and thus agrees with $\vec{t}_{\omega_1}$ except in the last step. Thus in the tables we record the folding tables of $\vec{t}_{\omega_1}$ and $\vec{\mathsf{b}}_0$ simultaneously, with the table for $\vec{t}_{\omega_1}$ obtained by deleting the last column, and the table for $\vec{\mathsf{b}}_0$ obtained by deleting the penultimate column.
\end{Exa}

The connection between the $\alpha_i$-folding tables and the degree $\cQ_i(p)$ of an $\alpha_i$-folded alcove walk is understood through the notion of $(\vec{x},v)$-admissible sequences defined below.

\begin{Def} Let $x\in W$ with reduced expression $\vec{x}=s_{i_1}\ldots s_{i_{\ell}}$ and let $v\in W^i_0$.
We say that a sequence $(k_1,\ldots,k_r)$ with $1\leq k_1<k_2<\ldots<k_r\leq \ell$ is \textit{$(\vec{x},v)$-admissible} if, for all  $0\leq j\leq r-1$,
$$
k_{j+1}\in \mathcal{I}^-(\vec{x},\si_i^{n_j}v_j)\quwhere (v_0,n_0)=(v,0)\text{ and }(v_{j},n_{j})=\varphi_{\vec{x}}^{v_{j-1}}(k_{j})\text{ for $j>0$}.
$$
\end{Def}

\begin{Prop} Let $x\in W$ with reduced expression $\vec{x}=s_{i_1}\cdots s_{i_n}$ and let $v\in W_0^i$.
There is a bijection between the set of all $(\vec{x},v)$-admissible sequences and the set $\mathcal{P}_i(\vec{x},v)$. 
\end{Prop}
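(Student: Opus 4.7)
The plan is to construct explicit maps in both directions and verify they are mutual inverses.

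For the forward map, given $p \in \mathcal{P}_i(\vec{x},v)$, I would record the positions $1 \leq k_1 < k_2 < \cdots < k_r \leq \ell(x)$ at which the \emph{folds} of $p$ occur (explicitly excluding bounces, which are forced by the requirement that the walk stay in $\cU_i$). The key observation is that between consecutive folds, the walk is entirely determined by its state after the last fold: inside $\cU_i$ there is no choice (no fold is performed), and whenever the walk attempts to leave $\cU_i$ a bounce is forced. Thus the tail of $p$ after the $j$th fold is the straight walk starting at the alcove reached just after that fold. Using that every such hyperplane reflection is an element of $W$ and that $\sigma_i$ generates a fundamental-domain action on $\cU_i$, one can write this tail as $p(\vec{x}[k_j+1\mathinner{.\,.}\ell],\sigma_i^{n_j}v_j)$ for a uniquely determined $(v_j,n_j) \in W_0^i \times \mathbb{Z}$, which by definition of $\varphi_{\vec{x}}^{v_{j-1}}$ is exactly $\varphi_{\vec{x}}^{v_{j-1}}(k_j)$. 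Since the $j$th fold is an actual fold and not a bounce, it must occur at a negative crossing of this straight walk, i.e.\ $k_{j+1} \in \mathcal{I}^-(\vec{x},\sigma_i^{n_j}v_j)$. Hence the sequence $(k_1,\ldots,k_r)$ is $(\vec{x},v)$-admissible.

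For the backward map, given an $(\vec{x},v)$-admissible sequence $(k_1,\ldots,k_r)$, I would build $p$ inductively by ``grafting'' straight walks. Start with $p^{(0)} := p(\vec{x},v)$. Given $p^{(j)}$, define $p^{(j+1)}$ to agree with $p^{(j)}$ for the first $k_{j+1}-1$ steps, then insert a positive fold at step $k_{j+1}$ (legal precisely because $k_{j+1} \in \mathcal{I}^-(\vec{x},\sigma_i^{n_j}v_j)$ by admissibility), and follow the straight walk $p(\vec{x}[k_{j+1}+1\mathinner{.\,.}\ell],\sigma_i^{n_{j+1}}v_{j+1})$ thereafter (with bounces inserted automatically where the straight walk hits $\partial \cU_i$). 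Set $p := p^{(r)}$. By construction $p \in \mathcal{P}_i(\vec{x},v)$, its folds occur exactly at positions $k_1,\ldots,k_r$, and its bounces are precisely the forced ones between consecutive folds and after $k_r$.

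Bijectivity then follows: applying the forward map to the walk produced by the backward map returns the same fold-position sequence, since by construction the folds of $p^{(r)}$ lie at $\{k_1,\ldots,k_r\}$; conversely, applying the backward map to the fold sequence of an arbitrary $p \in \mathcal{P}_i(\vec{x},v)$ returns $p$, because a positively folded $\alpha_i$-walk in $\cU_i$ is determined by the list of its folds (between folds, each subsequent alcove is forced). The main subtlety I expect, and the step that most deserves care, is verifying that after a fold the tail of the walk really does coincide with a straight walk of the form $p(\cdot,\sigma_i^n u)$ — this requires unpacking how the affine reflection performing the fold acts on the remainder of the straight walk, and tracking how subsequent forced bounces against $H_{\alpha_i,0}$ and $H_{\alpha_i,1}$ interact with the $\sigma_i$-shift. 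Once this lemma is in place, both maps are well-defined and manifestly inverse to each other.
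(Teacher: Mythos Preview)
Your proposal is correct and follows essentially the same approach as the paper: both directions record (resp.\ reconstruct from) the fold positions, and the backward map is built by splicing together segments of straight walks $p(\vec{x},\sigma_i^{n_j}v_j)$ between consecutive fold indices. The paper expresses your inductive grafting $p^{(0)},p^{(1)},\ldots,p^{(r)}$ as a single concatenation formula $p_J = p(\vec{x},v_0)[0,k_1-1]\cdot p(\vec{x},\sigma_i^{n_1}v_1)[k_1,k_2-1]\cdots p(\vec{x},\sigma_i^{n_r}v_r)[k_r,\ell]$ and dispatches the subtlety you flag (that the tail after a fold coincides with a shifted straight walk) with a brief ``induction shows''; your identification of this as the point deserving care is accurate, and the $\sigma_i$-equivariance of $\alpha_i$-folded walks already established in the paper is what makes it go through.
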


\begin{proof}
It is clear that if $p\in \cP(\vec{x},v)$ with $v\in W_0^{i}$, and if the folds of $p$ occur at indices $k_1<k_2<\ldots<k_r$,  then $J=(k_1,\ldots,k_r)$ is an $(\vec{x},v)$-admissible sequence. 
\medskip

Consider the converse. If $p=(w_t)_{r=0}^{\ell}$ is an $\alpha_i$-folded alcove walk and $j\leq k$ we write $p[j,k]=(w_r)_{t=j}^k$ (this is the segment of $p$ between the $j^{th}$ and $k^{th}$ steps). Let $J=(k_1,\ldots,k_r)$ be an $(\vec{x},v)$-admissible sequence. Define $(v_0,n_0)=(v,0)$ and let $(v_j,n_j)=\varphi_{\vec{x}}^{v_{j-1}}(k_j)$. Induction shows that the concatenation of paths
$$
p_J=p(\vec{x},v_0)[0,k_1-1]\cdot p(\vec{x},\sigma_i^{n_1}v_1)[k_1,k_2-1]\cdot\cdots\cdot p(\vec{x},\sigma_i^{n_{r}}v_r)[k_r,\ell] 
$$
is an $\alpha_i$-folded alcove walk, and that $J$ is the set of indices where the walk $p_J$ folds. 
\end{proof}
 
 The above proposition encodes how one uses folding tables to compute $\cQ_i(p)$ for all $p\in\cP_i(\vec{w},u)$ with $u\in W_0^i$. Let us explain this in an example. In fact we are mainly interested in $\deg(\cQ_i(p))$, and so we consider this below. Let $\vec{w}=\vec{t}_{\omega_1}^m\cdot \vec{t}_{\omega_2}^n$ where $m,n\in\mathbb{N}$, and let $u\in W_0^i$. Let $\mathcal{T}$ be the table obtained by concatenating the $\alpha_i$-folding tables of $\vec{t}_{\omega_1}$ and $\vec{t}_{\omega_2}$ with $m$ copies of the $\vec{t}_{\omega_1}$ table followed by $n$ copies of the $\vec{t}_{\omega_2}$ table. The elements of $\mathcal{P}_i(\vec{w},u)$ correspond to the excursions through $\mathcal{T}$ with the properties described below. We begin the excursion by entering the table $\mathcal{T}$ at the first cell on row $\ell(u)+1$, and at each step we move to a cell strictly to the right of the current cell according to the following rules. Suppose we are currently at the $N^{th}$ cell of row $r$, and this cell contains the symbol $x\in\{-,*,1,2,3,4,5,6\}$. 
 \begin{enumerate}
\item If $x=-$ then we move to the $(N+1)^{st}$ cell of row $r$. These steps correspond to positive crossings, and have no contribution to $\deg(\cQ_i(p))$. 
 \item If $x=*$ then we move to the $(N+1)^{st}$ cell of row $r$, and we have a contribution of $-L(s_i)$ to $\deg(\cQ_i(p))$. These steps correspond to bounces on either $H_{\alpha_i,0}$ or $H_{\alpha_i,1}$.
 \item If $x=j\in\{1,2,3,4,5,6\}$ then we have two options. 
 \begin{enumerate}
 \item We can move to the $(N+1)^{st}$ cell of row $r$. These steps correspond to negative crossings, with no contribution to $\deg(\cQ_i(p))$.
 \item We can move to the $(N+1)^{st}$ cell of row $j$. These steps correspond to folds, and give a contribution of $L(s_k)$ to $\deg(\cQ_i(p))$, where $k\in\{0,1,2\}$ is the entry in the $N^{th}$ cell of the ``$0$-row'' (the header) of~$\mathcal{T}$. 
  \end{enumerate}
  \end{enumerate}
 In the case that $N$ is the last cell of the table, moving to the $(N+1)^{st}$ cell should be interpreted as exiting the table and completing the excursion. We note that the above process can be regarded as $m$ passes through the $\alpha_i$-folding table of $\vec{t}_{\omega_1}$, followed by $n$ passes through the $\alpha_i$-folding table of $\vec{t}_{\omega_2}$, rather than concatenating the $m+n$ tables into one table.

\begin{Rem}\label{rem:endtable}
In the above explanation, concatenating the folding tables relied on the constituent pieces $\vec{t}_{\omega_1}$ and $\vec{t}_{\omega_2}$ being translations. If $\vec{w}=\vec{w}_1\cdot\vec{w}_2$ is a reduced expression with $w_1$ and $w_2$ not necessarily translations, then one needs to make a correction when combining the individual tables for $\vec{w}_1$ and $\vec{w}_2$ into the table for $\vec{w}_1\cdot \vec{w}_2$. Specifically, one adds an extra column at the end of the $\vec{w}_1$ table with $j^{th}$ entry $\theta^i(y_jw_1)$. This records the ``exit orientation'' of the path, and when concatenating the tables for $\vec{w}_1$ and $\vec{w}_2$, the rows of the $\vec{w}_2$ table are permuted so that they match with the exit column of $\vec{w}_1$. Alternatively, to interpret this process as one pass through $\vec{w}_1$ followed by one pass through $\vec{w}_2$ one should simply take the exit column entry of $\vec{w}_2$ to indicate the row on which to enter the $\vec{w}_1$ table. 
\end{Rem}

\subsection{Bounding the degree of matrix coefficients}

We are now able to establish bounds on the degree of $\cQ_i(p)$ for all $\alpha_i$-folded alcove walks, from which \B{2} will follow.

\begin{Th}\label{thm:mainbounds}
Let $p$ be an $\al_i$-folded alcove walk of reduced type. Then $\deg(\cQ_i(p))\leq \ba_{\pi_i}$ where  
$$\ba_{\pi_1}=\begin{cases}
a+b &\mbox{ if $a\geq 2b$,}\\
3b &\mbox{ if $a\leq 2b$}\\
\end{cases}
\quand 
\ba_{\pi_2}=
\begin{cases}
3a-2b &\mbox{ if $2a\geq 3b$,}\\
 a+b&\mbox{ if $2a\leq 3b$.}\\
\end{cases}
$$
Moreover, if $p\in\cP_i(\vec{w},u)$ with $u\in W_0^i$ is such that $\deg(\cQ_i(p))=\ba_{\pi_i}$ then $uw\in\cU_i$. 
 \end{Th}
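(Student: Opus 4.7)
\medskip
\textbf{Proof plan.} The strategy is to exploit the combinatorial encoding via folding tables (Subsection~5.2) to reduce the degree bound to a finite, checkable statement about cycles in the folding tables of $\vec{t}_{\omega_1}$ and $\vec{t}_{\omega_2}$. First I would use the fact that every $w\in W$ admits a reduced expression of the form
$$\vec{w}=\vec{v}\cdot\vec{t}_{\omega_1}^{\,m}\cdot\vec{t}_{\omega_2}^{\,n}\cdot\vec{\sbb}\qquad\text{with }v\in W_0,\ m,n\in\mathbb{N},\ \sbb\in\sB_0,$$
and split $p=p_0\cdot p^0$ accordingly. Since $v$ lies in the finite dihedral group $G_2$, the number of folds and bounces in $p_0$ is uniformly bounded, so $\deg\mathcal{Q}_i(p_0)$ is bounded by an explicit constant $C_v$ independent of $m,n$. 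The content of the theorem is therefore to bound $\deg\mathcal{Q}_i(p^0)$, and for this I would reason entirely in terms of the $\alpha_i$-folding tables of Example~\ref{ex:foldingtables}.

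\medskip
Next, by Proposition~5.2 (the bijection with admissible sequences), each such $p^0$ corresponds to an excursion through the concatenated folding table $\mathbb{F}(\vec{t}_{\omega_1})^{\,m}\cdot\mathbb{F}(\vec{t}_{\omega_2})^{\,n}\cdot\mathbb{F}(\vec{\sbb})$ as described after Example~\ref{ex:foldingtables}. The central combinatorial claim is a per-pass bound: for each starting row $r\in\{1,\dots,6\}$, the maximum degree contribution of a single pass through $\mathbb{F}(\vec{t}_{\omega_1})$ (resp.\ $\mathbb{F}(\vec{t}_{\omega_2})$) ending at row $r'$ is at most $\ba_{\pi_i}$ in the appropriate regime, with equality forcing $r'=r$ (i.e.\ the pass is a ``closed loop'' in the table). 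I would verify this directly from Tables~\ref{folding-table} and~\ref{folding-table2a} by enumerating, for each row $r$, the possible admissible sequences and summing the $\pm L(s_k)$ contributions of folds and bounces. For instance, for $\al_1$ with $a\geq 2b$, a closed loop starting and ending on row~$1$ contributes $0$, whereas non-closed loops are penalised by bounces on $H_{\alpha_1,0}\cup H_{\alpha_1,1}$ (each worth $-a$) and cannot regain this deficit in later passes; the maximum closed contribution $a+b$ comes from a specific fold configuration, and similarly $3b$ when $a\leq 2b$. The analysis for $i=2$ uses Table~\ref{folding-table2a} symmetrically.

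\medskip
Once the per-pass bound is established with the equality characterisation, I would conclude by a telescoping/induction on $m+n$: since every pass contributes at most $\ba_{\pi_i}$ and any strictly positive contribution can only come from a closed loop (returning to the same row), chaining non-closed passes never improves over a single closed pass, so the total degree over $m+n$ passes is still at most $\ba_{\pi_i}$. Adding the bounded boundary contributions from $\vec{v}$ and $\vec{\sbb}$ (handled by direct inspection of $\mathbb{F}(\vec{\sbb})$ for each of the twelve elements $\sbb\in\sB_0$) and verifying that these boundary pieces cannot create additional gain beyond $\ba_{\pi_i}$ yields the desired bound. For the ``moreover'' statement, equality $\deg\mathcal{Q}_i(p)=\ba_{\pi_i}$ forces every intermediate pass to be a closed loop on the row $r=\ell(u)+1$, which means in particular that the straight walk $p(\vec{w},u)$ never bounces; equivalently, the straight walk from $u$ of type $\vec{w}$ stays inside $\mathcal{U}_i$, so that $uw\in\mathcal{U}_i$ as claimed.

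\medskip
The main obstacle is the sheer case analysis of the folding tables: there are four parameter regimes to handle ($a\gtreqless 2b$ for $i=1$, $2a\gtreqless 3b$ for $i=2$), and within each regime one must enumerate the optimal admissible sequences on each of the six rows of $\mathbb{F}(\vec{t}_{\omega_1})$ and $\mathbb{F}(\vec{t}_{\omega_2})$, together with the twelve possible boundary tails $\mathbb{F}(\vec{\sbb})$. A secondary subtlety is the join between successive tables (cf.\ Remark~\ref{rem:endtable} for non-translation pieces $\vec{v}$ and $\vec{\sbb}$): one must track the ``exit orientation'' column carefully so that an optimal configuration in one pass does not accidentally become sub-optimal once concatenated with the next pass. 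Both are bookkeeping difficulties rather than conceptual ones, and organising the analysis around closed loops versus non-closed loops in each table keeps the case work manageable.
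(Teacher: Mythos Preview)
Your central per-pass claim is false, and with it the telescoping argument collapses. You assert that a single pass through $\mathbb{F}(\vec{t}_{\omega_j})$ contributes at most $\ba_{\pi_i}$ with equality only if it is a \emph{closed loop} ($r'=r$), and then that ``any strictly positive contribution can only come from a closed loop''. But inspect the tables in Example~\ref{ex:foldingtables}: for every row $j$, each numerical entry is \emph{strictly less than $j$}. Hence every fold moves to a strictly lower row, so a pass containing at least one fold can never return to its starting row --- there are no closed loops with folds at all. Conversely, a non-closed pass certainly can have strictly positive contribution (e.g.\ in the $\alpha_1$-table of $\vec{t}_{\omega_2}$, start on row~$6$, fold at position~$1$ to row~$1$: contribution $+b$, exit row~$1$). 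Once the ``closed loop'' dichotomy is gone, your induction on $m+n$ proves nothing: a per-pass bound of $\ba_{\pi_i}$ gives only $(m+n)\ba_{\pi_i}$ overall.

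The mechanism the paper actually uses is precisely the strictly-decreasing-row property you missed. Since the row index can drop at most five times in total (across \emph{all} passes), any pass with no fold contributes $\leq 0$ and may be discarded, so one only needs to analyse $\vec{t}_{\omega_1}^{\,m}\cdot\vec{t}_{\omega_2}^{\,n}\cdot\vec{\sbb}$ with $m+n\leq 5$ (or $6$). This is what makes the problem finite, and the paper then tabulates the resulting bounds on $\deg\cQ_i(p^0)$ as a function of the entry row $u_0$ (Table~6), combining them with the easy bounds on $\deg\cQ_i(p_0)$ (Table~5). Your ``moreover'' argument is also off: equality $\deg\cQ_i(p)=\ba_{\pi_i}$ does not force the straight walk $p(\vec{w},u)$ to avoid bounces (the maximal $p$ typically has folds and is not the straight walk). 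The paper instead reads off from Tables~5 and~6 which entry rows $u_0$ can achieve equality, and for each such $u_0$ determines the possible $(u,v)$ pairs and checks directly that $uw\in\cU_i$.
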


\begin{proof}
Using the action of $\sigma_i$ on $\alpha_i$-folded paths we may assume that $p$ starts at $u\in W_0^i$. We note that if $\vec{w}$ and $\vec{w}'$ are two reduced expressions for the same element $w$ and if $\deg(\cQ_i(p))\leq \ba_{\pi_i}$ for all $p\in\mathcal{P}_i(\vec{w},u)$, then Theorem~\ref{thm:pii} implies that $\deg(\cQ_i(p))\leq \ba_{\pi_i}$ for all $p\in \mathcal{P}_i(\vec{w}',u)$. Thus we are free to choose any reduced expression for $w$. We choose a reduced expression for $\vec{w}$ as in (\ref{eq:form}). 
Let $\vec{w}_1=\vec{t}_{\omega_1}^{\,m}\cdot\vec{t}_{\omega_2}^{\,n}\cdot \vec{\sbb}$, and decompose $p\in\mathcal{P}_i(\vec{w},u)$ as $p=p_0\cdot p^0$ where $p_0\in\mathcal{P}_i(\vec{v},u)$ and $p^0\in\mathcal{P}_i(\vec{w}_1,u_0)$, where $u_0=\mathrm{end}(p_0)\in W_0^i$.  The bounds for $\cQ_i(p_0)$  in Table~\ref{tab:bounds1} are elementary (the left hand columns represent the elements of $W_0^i$ in the natural order of increasing length). 
\begin{table}[H]
\renewcommand{\arraystretch}{1.2}  
\begin{subfigure}{.5\linewidth}
\centering
$\begin{array}{|c||c|c|c}\hline
 u_0=\mathrm{end}(p_0)& a\geq b& a<b\\\hline
1&a & 3b-2a \\\hline
2&a & 2b-a \\\hline
3&a & 2b-a \\\hline
4&a & b \\\hline
5&b & b \\\hline
6&0 & 0 \\\hline
\end{array}$\caption{$i=1$}
\end{subfigure}
\begin{subfigure}{.5\linewidth}
   \centering
$\begin{array}{|c||c|c|c}\hline
u_0=\mathrm{end}(p_0) & b\geq a& b<a\\\hline
1&b & 3a-2b \\\hline
2&b & 2a-b \\\hline
3&b & 2a-b \\\hline
4&b & a \\\hline
5&a & a \\\hline
6&0 & 0 \\\hline
\end{array}$\caption{$i=2$}
\end{subfigure}
\caption{Bounds $\deg(\cQ_i(p_0))$ where $p_0\in\mathcal{P}_i(\vec{v},u)$ with $u\in W_0^i$ and $v\in W_0$.}
\label{tab:bounds1}
\end{table}
\medskip

One can now use the folding tables from Example~\ref{ex:foldingtables} to produce bounds for $\deg(\cQ_i(p^0))$. The following observations make this possible. Firstly, all folding tables for $\vec{t}_{\omega_1}$, $\vec{t}_{\omega_2}$, and $\vec{\mathsf{b}}$ with $\mathsf{b}\in\sB_0$ have the property that for $1\leq j\leq 6$, all entries in the $j^{th}$ row are either $-$, $*$, or are strictly smaller than~$j$. This means that with each fold we move to a strictly lower row. Secondly, if one makes a full pass of a table without making any folds (that is, without changing row) then the contribution to $\deg(\cQ_i(p))$ is at most $0$ and since the entry and exit rows are the same this pass can be ignored for the purpose of bounding $\deg(\cQ_i(p))$. Thus we may assume that at least one row change is made on each pass through a table, and therefore, by the above observation, we need only consider $\vec{w}_1=\vec{t}_{\omega_1}^m\cdot \vec{t}_{\omega_2}^n$ with $m+n\leq 6$ and $\vec{w}_1=\vec{t}_{\omega_1}^m\cdot \vec{t}_{\omega_2}^n\cdot \vec{\mathsf{b}}$ with $m+n\leq 5$. This reduces the work to a finite problem. As a third observation, we note that every row in the $\alpha_1$-folding table of $\vec{t}_{\omega_1}$, and every row in the $\alpha_2$-folding table of $\vec{t}_{\omega_2}$, contains a $*$, and thus these tables tend to have a negative influence on $\deg(\cQ_1(p))$ and $\deg(\cQ_2(p))$, respectively. 
\medskip

With the above observations in mind we find the bounds on $\deg(\cQ_i(p))$ for $p\in\cP_i(\vec{w}_1,u_0)$ with $u_0\in W_0^i$ and $\vec{w}_1=\vec{t}_{\omega_1}^{\,m}\cdot\vec{t}_{\omega_2}^{\,n}\cdot \vec{\mathsf{b}}$ listed in Table~\ref{tab:bounds2} below. We have checked these both by hand, and also implemented the process in $\textsf{MAGMA}$~\cite{MAGMA}. Moreover we see that if these bounds are attained then if $i=1$ then $m=0$, and if $i=2$ then $n=0$ (intuitively this is due to the third observation above).

\begin{table}[H]
\renewcommand{\arraystretch}{1.2}  
\begin{subfigure}{.5\linewidth}
\centering
$\begin{array}{|c||c|c|c}\hline
u_0& a\geq 2b& a\leq 2b\\\hline
1&0 & 0 \\\hline
2&0 & \max\{0,-a+b\} \\\hline
3&b & b \\\hline
4&b & 2b \\\hline
5&a & 2b \\\hline
6&a+b & 3b \\\hline
\end{array}$\caption{$i=1$}
\end{subfigure}
\begin{subfigure}{.5\linewidth}
   \centering
$\begin{array}{|c||c|c|c}\hline
u_0 & 2a\geq 3b& 2a\leq 3b\\\hline
1&0 & 0 \\\hline
2&\max\{0,a-3b\} & 0 \\\hline
3&\max\{0,a-2b\} & 0 \\\hline
4&2a-3b & \max\{0,a-b\} \\\hline
5&2a-2b & b \\\hline
6&3a-2b & a+b \\\hline
\end{array}$\caption{$i=2$}
\end{subfigure}
\caption{Bounds $\deg(\cQ_i(p^0))$ where $p_0\in\mathcal{P}_i(\vec{w}_1,u_0)$ with $u_0\in W_0^i$ and $\vec{w}_1=\vec{t}_{\omega_1}^{\,m}\cdot\vec{t}_{\omega_2}^{\,n}\cdot \vec{\mathsf{b}}$.}
\label{tab:bounds2}
\end{table}

\medskip

The bounds $\ba_{\pi_1}$ and $\ba_{\pi_2}$ follow by combining the bounds in Tables~\ref{tab:bounds1} and~\ref{tab:bounds2}.

\medskip

We now analyse paths such that $\deg(\cQ_i(p))=\ba_{\pi_i}$. We claim that in this case $uw\in\cU_i$. We have already shown that $\vec{w}=\vec{v}\cdot \vec{t}_{\omega_{j}}^n\cdot\vec{\mathsf{b}}$ for some $v\in W_0$, $n\in\mathbb{N}$, and $\mathsf{b}\in\sB_0$, where $\{j\}=\{1,2\}\backslash\{i\}$. In combining the bounds in Tables~\ref{tab:bounds1} and~\ref{tab:bounds2} we see that if $\deg(\cQ_i(p))= \ba_{\pi_i}$ then either:
\begin{enumerate}
\item $i=1$, $a\geq 2b$, and $u_0\in\{3,4,5,6\}$, or $a< 2b$ and $u_0\in\{4,5,6\}$, or
\item $i=2$ and $u_0\in\{5,6\}$. 
\end{enumerate}
Consider the case $i=1$ and $a\geq 2b$. If $u_0=6$ (that is $u_6=s_2s_1s_2s_1s_2$) then $\deg(\cQ_1(p_0))=0$, and it follows that the walk $p_0$ is straight with no bounces, and thus $uv=s_2s_1s_2s_1s_2$ (with $u$ and $\vec{v}$ as in Table~\ref{tab:bounds1}). Therefore $uw=s_2s_1s_2s_1s_2t_{\omega_2}^n\sbb$ for some $\sbb\in\sB_0$, and all such elements are obviously in~$\cU_i$. 
\medskip

Suppose now that $u_0=5$. In this case we see that for the bound in Table~\ref{tab:bounds1} to be attained we see, by direct observation, that $(u,\vec{v})=(e,s_2s_1s_2s_1s_2)$, $(s_2,s_1s_2s_1s_2)$, $(s_2s_1,s_2s_1s_2)$, $(s_2s_1s_2,s_1s_2)$, or $(s_2s_1s_2s_1,s_2)$ with the last step of $\vec{v}$ a fold. Thus $uw=s_2s_1s_2s_1s_2t_{\omega_2}^n\sbb$ for some $\sbb\in\sB_0$, and so again $uw\in\cU_i$. 

\medskip

Suppose now that $u_0=4$. Since the bound $\deg(\cQ_1(p_0))=a$ in Table~\ref{tab:bounds1} is attained we see that $(u,\vec{v})=(e,s_2s_1s_2s_1)$, $(s_2,s_1s_2s_1)$, $(s_2s_1,s_2s_1)$, or $(s_2s_1s_2,s_1)$ with the last term of $\vec{v}$ being a fold. Thus $uw=s_2s_1s_2s_1t_{\omega_2}^n\mathsf{b}$ for some $\mathsf{b}\in\sB_0$. However an easy check using the folding table shows that if $n\geq1$ then the maximum bound in $\deg(\cQ_1(p^0))$ is not attained. Moreover, again by the folding tables, we see that $\mathsf{b}$ is such that $uw=s_2s_1s_2s_1\mathsf{b}\in\cU_1$. 
\medskip

The remaining cases are similar.
\end{proof}

\begin{Cor}\label{B2generic}
Let $i\in\{1,2\}$. For generic parameters the representation $\pi_i$, equipped with any basis of the form $\{\xi_i\otimes X_u\mid u\in\sB\}$ with $\sB$ a fundamental domain for the action of $\sigma_i$ on $\cU_i$, satisfies \B{2} with $\ba_{\pi_i}$ as in Theorem~\ref{thm:mainbounds}.
\end{Cor}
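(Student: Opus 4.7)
The plan is to derive this as an immediate consequence of the two preceding results: the combinatorial path formula for matrix coefficients (Corollary~\ref{cor:funddom}) and the uniform degree bound for $\alpha_i$-folded alcove walks of reduced type (Theorem~\ref{thm:mainbounds}). No new combinatorial argument is needed.

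First I would fix $w\in W$, choose any reduced expression $\vec{w}$, and apply Corollary~\ref{cor:funddom} to rewrite each matrix coefficient of $\pi_i(T_w)$ with respect to the basis $\{\xi_i\otimes X_u\mid u\in\sB\}$ as
\[
[\pi_i(T_w)]_{u,v}=\sum_{\{p\in\cP_i(\vec{w},u)\mid\theta^i_{\sB}(p)=v\}}\cQ_i(p)\,\zeta^{\mathrm{wt}^i_{\sB}(p)}.
\]
Since the central character $\zeta$ is an indeterminate algebraically independent from $\sq$, the factor $\zeta^{\mathrm{wt}^i_{\sB}(p)}$ does not affect the $\sq$-degree. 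Hence the $\sq$-degree of $[\pi_i(T_w)]_{u,v}$ is bounded by the maximum of $\deg(\cQ_i(p))$ over the contributing paths, and Theorem~\ref{thm:mainbounds} tells us each such $\cQ_i(p)$ has $\sq$-degree at most $\ba_{\pi_i}$. This gives the required uniform bound $\deg([\pi_i(T_w)]_{u,v})\leq\ba_{\pi_i}$ for every $w\in W$ and every pair $u,v\in\sB$.

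Finally I would pass from the standard basis to the Kazhdan–Lusztig basis. As observed in Remark~\ref{rem:bounds}, axiom \B{2} is equivalent for $T_w$ and $C_w$ since $C_w=T_w+\sum_{v<w}P_{v,w}T_v$ with $P_{v,w}\in\sq^{-1}\nZ[\sq^{-1}]$, so this change of basis cannot increase $\sq$-degrees. Therefore $\deg([\pi_i(C_w)]_{u,v})\leq\ba_{\pi_i}$ for all $w\in W$ and all $u,v\in\sB$, which is precisely \B{2} with bound $\ba_{\pi_i}$.

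There is no real obstacle here; the only mild point to emphasize is the role of the genericity hypothesis. The bound $\ba_{\pi_i}$ supplied by Theorem~\ref{thm:mainbounds} is valid for \emph{all} parameter choices, but it is only for generic $r$ that this value will later coincide with the conjectural value $\tba_{\Gamma_i}$ of Lusztig's $\ba$-function on $\Gamma_i$, which is what will be required to verify~\B{3}. For the statement of this corollary, however, genericity is used only to single out the values of $\ba_{\pi_i}$ listed in Theorem~\ref{thm:mainbounds} as the correct bounds to record for subsequent use.
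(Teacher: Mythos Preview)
Your proof is correct and follows exactly the paper's approach: the paper's proof reads simply ``This is immediate from Corollary~\ref{cor:funddom} and Theorem~\ref{thm:mainbounds},'' and you have unpacked precisely those two ingredients. One minor remark: your final paragraph's speculation about the role of genericity is slightly off (at $r=r_i$ the two formulae for $\ba_{\pi_i}$ in Theorem~\ref{thm:mainbounds} actually agree, so the bound still matches the $\ba$-function value there); the genericity hypothesis in the statement is really just because this corollary sits in the section treating generic parameters, with the non-generic case handled separately in Theorem~\ref{equal}.
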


\begin{proof}
This is immediate from Corollary~\ref{cor:funddom} and Theorem~\ref{thm:mainbounds}.
\end{proof}

\subsection{Leading matrices for generic parameters}\label{sec:5.3}

In this subsection we assume generic parameters. Thus, by our convention, if $i=1$ then $a\neq 2b$ and if $i=2$ then $2a\neq 3b$. If $p\in\cP_i(\vec{w},u)$ with $\deg(\cQ_i(p))=\ba_{\pi_i}$ then $p$ is called a \textit{maximal path}. In this section we determine all maximal paths, and show that $\pi_i(T_w)$ has a matrix coefficient of maximal degree if and only if $w\in\Gamma_i$, for $i=1,2$. Finally, we compute the leading matrices $\fc_{\pi_i,w}$ in terms of Schur functions of type $A_1$ and deduce that $\B{3}$, $\B{4}$, and $\B{4}'$ hold.

\medskip

To tighten the connection between $\pi_i$ and $\Gamma_i$ it is convenient to work with the following fundamental domains in Corollary~\ref{cor:funddom}. Of course, using the action of $\sigma_i$ on $\cU_i$, the choice of fundamental domain does not change the bounds on $\deg(\cQ_i(p))$. We define
 \begin{align}\label{eq:g}
 g_1=\begin{cases}
s_2s_1s_2& \mbox{ if $a/b>2$}\\
s_2s_1s_2s_1& \mbox{ if $a/b<2$}\\
\end{cases}\quand 
g_2=\begin{cases}
e& \mbox{ if $a/b>3/2$}\\
s_1s_2s_1s_2& \mbox{ if $a/b<3/2$,}\\
\end{cases}
\end{align}
and set $\sB_i'=g_i\sB_i$, where $\sB_i=\sB_{\Gamma_i}$ is as in Section~\ref{sec:factor1}. Then $\sB_i'$ is a fundamental domain for the action of $\si_i$ on $\cU_i$, represented as the green region in Figure~\ref{fundamental-domain}. The blue and red regions are translates of $\sB_i'$ by $\sigma_i$, and the ``base alcove'' $g_i$ of $\sB_i'$ is heavily shaded. We fix an indexing of $\sB_i'$ in Figure~\ref{fundamental-domain} in two cases for later use. Generally we write $\sbb_u=g_i u$ for $u\in\sB_i$, and so $\sB_i'=\{\sbb_u\mid u \in\sB_i\}$. 

\vspace*{-5mm}

\psset{linewidth=.1mm,unit=.6cm}
\begin{figure}[H]
\begin{subfigure}{.24\textwidth}
\begin{center}
\begin{pspicture}(-4,-2)(2,4)

\pspolygon[fillcolor=green!15!,fillstyle=solid](-.866,0)(-.866,1.5)(-.433,.75)(.866,1.5)(0,0)
\pspolygon[fillcolor=red!15!,fillstyle=solid](-.433,.75)(-.866,1.5)(0,3)(0,1.5)(.866,1.5)
\pspolygon[fillcolor=blue!15!,fillstyle=solid](-.866,0)(-.866,1.5)(-1.732,0)(-1.3,-.75)(0,0)

\pspolygon[fillcolor=green!45!,fillstyle=solid](-.866,0)(0,0)(-0.866,0.5)

\psline(-2.598,-1.5)(0,3)(1.732,3)
\psline(-2.598,-1.5)(-.866,-1.5)(1.732,3)
\psline(-.866,1.5)(.866,1.5)
\psline(-1.732,0)(0,0)
\psline(-2.598,-1.5)(0,0)
\psline(-1.732,0)(.866,1.5)
\psline(-.866,1.5)(1.732,3)
\psline(-.866,-1.5)(-2.165,-.75)
\psline(-0.433,-.75)(-1.732,0)
\psline(0,0)(-1.3,.75)
\psline(.433,.75)(-.866,1.5)
\psline(.866,1.5)(-.433,2.25)
\psline(1.3,2.25)(0,3)
\psline(-1.732,0)(-1.732,-1.5)
\psline(-.866,1.5)(-.866,-1.5)
\psline(0,0)(0,3)
\psline(.866,1.5)(.866,3)
\psline(-.866,-1.5)(-1.732,0)
\psline(0,0)(-.866,1.5)
\psline(.866,1.5)(0,3)

\rput(-0.7,0.22){\tiny{$1$}}
\rput(-0.5,0.5){\tiny{$2$}}
\rput(-0.18,0.6){\tiny{$3$}}
\rput(0.12,0.6){\tiny{$4$}}
\rput(0.35,1){\tiny{$5$}}
\rput(-0.7,0.8){\tiny{$6$}}


\end{pspicture}
\end{center}
\caption{$\sB_1'$ when $a>2b$}
\end{subfigure}
\begin{subfigure}{.24\textwidth}
\begin{center}
\begin{pspicture}(-4,-2)(2,4)

\pspolygon[fillcolor=green!15!,fillstyle=solid](-.866,-.5)(-.866,.5)(.866,1.5)(0,0)
\pspolygon[fillcolor=red!15!,fillstyle=solid](-.866,.5)(-.866,1.5)(-.866,1.5)(0,3)(0,1)
\pspolygon[fillcolor=blue!15!,fillstyle=solid](-1.732,-1)(-1.732,0)(-.866,1.5)(-.866,-.5)

\pspolygon[fillcolor=green!45!,fillstyle=solid](-.866,-.5)(0,0)(-.866,0)

\psline(-2.598,-1.5)(0,3)(1.732,3)
\psline(-2.598,-1.5)(-.866,-1.5)(1.732,3)
\psline(-.866,1.5)(.866,1.5)
\psline(-1.732,0)(0,0)
\psline(-2.598,-1.5)(0,0)
\psline(-1.732,0)(.866,1.5)
\psline(-.866,1.5)(1.732,3)

\psline(-.866,-1.5)(-2.165,-.75)
\psline(-0.433,-.75)(-1.732,0)
\psline(0,0)(-1.3,.75)
\psline(.433,.75)(-.866,1.5)
\psline(.866,1.5)(-.433,2.25)
\psline(1.3,2.25)(0,3)

\psline(-1.732,0)(-1.732,-1.5)
\psline(-.866,1.5)(-.866,-1.5)
\psline(0,0)(0,3)
\psline(.866,1.5)(.866,3)

\psline(-.866,-1.5)(-1.732,0)
\psline(0,0)(-.866,1.5)
\psline(.866,1.5)(0,3)


\end{pspicture}
\end{center}
\caption{$\sB_1'$ when $a<2b$}
\end{subfigure}
\begin{subfigure}{.24\textwidth}
\begin{center}
\begin{pspicture}(-1.5,-3)(0.5,3)

\pspolygon[fillcolor=blue!15!,fillstyle=solid](0,0)(-.866,-1.5)(0,-3)
\pspolygon[fillcolor=green!15!,fillstyle=solid](0,0)(-.866,1.5)(-.866,-1.5)
\pspolygon[fillcolor=red!15!,fillstyle=solid](0,0)(-.866,1.5)(-0,3)

\pspolygon[fillcolor=green!45!,fillstyle=solid](-.866,-1.5)(-0.433,-.75)(-0.866,-.5)

\psline(0,0)(-.866,0)
\psline(0,0)(0,1.5)
\psline(0,1.5)(-.866,1.5)
\psline(-.866,0)(-.866,1.5)
\psline(0,0)(-.866,.5)
\psline(0,0)(-.866,1.5)
\psline(0,1)(-.866,1.5)
\psline(0,1)(-.866,.5)
\psline(0,1.5)(0,3)
\psline(-.866,3)(-.866,1.5)
\psline(0,3)(-.866,3)
\psline(-.866,1.5)(0,2)
\psline(-.866,1.5)(0,3)
\psline(0,2)(-.866,2.5)
\psline(0,3)(-.866,2.5)
\psline(0,-1.5)(0,0)
\psline(-.866,0)(-.866,-1.5)
\psline(0,-1.5)(-.866,-1.5)
\psline(-.866,-1.5)(0,-1)
\psline(-.866,-1.5)(0,0)
\psline(0,-1)(-.866,-.5)
\psline(0,0)(-.866,-.5)
\psline(0,-1.5)(0,-3)
\psline(-.866,-1.5)(-.866,-3)
\psline(0,-3)(-.866,-3)
\psline(-.866,-1.5)(0,-3)
\psline(-.866,-1.5)(0,-2)
\psline(0,-2)(-.866,-2.5)
\psline(0,-3)(-.866,-2.5)

\rput(-0.7,-0.8){\tiny{$1$}}
\rput(-0.5,-0.5){\tiny{$2$}}
\rput(-0.7,-0.2){\tiny{$3$}}
\rput(-0.7,0.2){\tiny{$4$}}
\rput(-0.5,0.5){\tiny{$5$}}
\rput(-0.7,0.8){\tiny{$6$}}


\end{pspicture}
\end{center}
\caption{$\sB_2'$ when $3a>2b$}
\end{subfigure}
\begin{subfigure}{.24\textwidth}
\begin{center}
\begin{pspicture}(-1,-3)(1,3)
\pspolygon[fillcolor=red!15!,fillstyle=solid](0,1)(.433,.75)(.866,1.5)(.866,2.5)(.433,2.25)(0,3)
\pspolygon[fillcolor=green!15!,fillstyle=solid](0,1)(.433,.75)(.866,1.5)(.866,-.5)(.433,-.75)(0,0)
\pspolygon[fillcolor=blue!15!,fillstyle=solid](0,-2)(.433,-2.25)(.866,-1.5)(.866,-.5)(.433,-.75)(0,0)

\pspolygon[fillcolor=green!45!,fillstyle=solid](.433,-.75)(.866,-.5)(0,0)

\psline(0,0)(.866,0)
\psline(0,0)(0,1.5)
\psline(0,1.5)(.866,1.5)
\psline(.866,0)(.866,1.5)

\psline(0,0)(.866,.5)
\psline(0,0)(.866,1.5)
\psline(0,1)(.866,1.5)

\psline(0,1)(.866,.5)


\psline(0,1.5)(0,3)
\psline(.866,3)(.866,1.5)
\psline(0,3)(.866,3)

\psline(.866,1.5)(0,2)
\psline(.866,1.5)(0,3)

\psline(0,2)(.866,2.5)
\psline(0,3)(.866,2.5)


\psline(0,-1.5)(0,0)
\psline(.866,0)(.866,-1.5)
\psline(0,-1.5)(.866,-1.5)
\psline(.866,-1.5)(0,-1)
\psline(.866,-1.5)(0,0)

\psline(0,-1)(.866,-.5)
\psline(0,0)(.866,-.5)

\psline(0,-1.5)(0,-3)
\psline(.866,-1.5)(.866,-3)
\psline(0,-3)(.866,-3)
\psline(.866,-1.5)(0,-3)
\psline(.866,-1.5)(0,-2)
\psline(0,-2)(.866,-2.5)
\psline(0,-3)(.866,-2.5)


\end{pspicture}
\end{center}
\caption{$\sB_2'$ when $3a<2b$}
\end{subfigure}
\caption{The set $\sB_i'$ and translates by $\si_i$.}
\label{fundamental-domain}
\end{figure}

\vspace*{-.3mm}

\begin{Lem}\label{lem:inGammai}
Let $w\in W$ and $u\in \sB_i$ with $i\in\{1,2\}$. Let $\vec{w}$ be any reduced expression for~$w$. If $\mathcal{P}_i(\vec{w},\sbb_u)$ contains a maximal path then $w=u^{-1}\sw_i\st_i^N v$ for some $u,v\in\sB_i$ and $N\in\mathbb{N}$, and hence $w\in\Gamma_i$. 
\end{Lem}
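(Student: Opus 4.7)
The plan is to exploit the detailed information extracted from the proof of Theorem \ref{thm:mainbounds} about the structure of any reduced expression $\vec{w}$ that admits a maximal path. That proof already establishes that we may take $\vec{w} = \vec{v} \cdot \vec{t}_{\omega_j}^{\,n} \cdot \vec{\mathsf{b}}$ with $\{j\} = \{1,2\}\setminus\{i\}$, $v\in W_0$, $n\in\mathbb{N}$ and $\mathsf{b}\in\sB_0$, and provides an explicit case-by-case list of the allowed data $(v, u_0, n, \mathsf{b})$, where $u_0 = \mathrm{end}(p_0) \in W_0^i$, when the bound $\ba_{\pi_i}$ is attained. The same proof also gives $\sbb_u w \in \cU_i$.

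First I would reorganise that case analysis according to the starting alcove $\sbb_u = g_i u$ with $u\in\sB_i$, using the indexing of $\sB_i'$ fixed in Figure \ref{fundamental-domain}. This produces, for each $u\in\sB_i$, a short list of candidate reduced expressions $\vec{w}$ for which $\cP_i(\vec{w}, \sbb_u)$ can contain a maximal path. The rigidity of maximal paths (every pass through the folding table of $\vec{t}_{\omega_i}$ costs at least one factor of $\sq_i^{-1}$ via a bounce) forces only the ``good'' translation $\vec{t}_{\omega_j}$ to appear, which is precisely the observation at the end of the proof of Theorem \ref{thm:mainbounds} that $m=0$ when $i=1$ and $n=0$ when $i=2$.

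Second, for each of the four generic parameter regimes ($r>2$ and $r<2$ for $i=1$, and $r>3/2$ and $r<3/2$ for $i=2$) I would compare the allowed reduced expressions against the explicit reduced expressions for $\sw_i$, $\st_i$ and the elements of $\sB_i$ recorded in Section \ref{sec:factor1}. The central identification is that the translation $\vec{t}_{\omega_j}^{\,n}$ appearing in $\vec{w}$ agrees, up to short boundary segments absorbed into the prefix $\vec{v}$ and the suffix $\vec{\mathsf{b}}$, with a reduced power $\vec{\st}_i^{\,N}$ of the translation $\st_i$ associated to the cell $\Gamma_i$. Inspection of Section \ref{sec:factor1} shows that the translation part of $\st_i$ always lies in the $\mathbb{Z}\omega_j$-direction, so this matching makes sense and can be verified by elementary braid and commutation rewriting. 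The prefix $\vec{v}$ then matches (a reduced expression for) $u^{-1}\sw_i$ and the suffix $\vec{\mathsf{b}}$ matches some element $v\in\sB_i$.

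Putting these steps together one obtains $w = u^{-1}\sw_i \st_i^{\,N} v$ with $v \in \sB_i$ and $N\in\mathbb{N}$, and then $w\in\Gamma_i$ is immediate from the cell factorisation description in Section \ref{sec:factor1}. The main obstacle is bookkeeping: the translation-matching in the second step must be carried out explicitly in each regime, and one must verify that the short initial and terminal segments absorbed into $\vec{v}$ and $\vec{\mathsf{b}}$ exactly account for the difference between $g_i\sw_i$ and a reduced prefix of $g_i u u^{-1}\sw_i \st_i^{\,N}$. Once this is done in one regime the others follow by entirely analogous arguments.
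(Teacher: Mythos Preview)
Your approach is workable but takes a genuinely different route from the paper's proof.

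The paper does not extract the full case list from Theorem~\ref{thm:mainbounds} and then perform explicit reduced-word rewriting. Instead it uses only the single conclusion $\sbb_u w\in\cU_i$ (the last statement of Theorem~\ref{thm:mainbounds}) and argues geometrically: there is a distinguished ``gateway'' alcove $\sbb_u u^{-1}\sw_i$ (drawn yellow in Figure~\ref{fig:finiteregions}) with the property that if no minimal-length path from $\sbb_u$ to $\sbb_u w$ passes through it, then $\sbb_u w$ is confined to a bounded region of $\cU_i$. A finite check on that bounded region shows no maximal path occurs there. Hence $u^{-1}\sw_i$ is a reduced prefix of $w$, and then $\sbb_u w\in\cU_i$ immediately forces the remaining suffix to be of the form $\st_i^N v$ with $v\in\sB_i$, because $\{\sw_i\st_i^N v\}$ exhausts the half-strip of $\cU_i$ beyond the gateway.

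Compared with your plan, the paper's argument replaces an infinite family of reduced-expression identifications (your matching of $\vec t_{\omega_j}^{\,n}$ against $\vec\st_i^{\,N}$ for all $n$) by a single geometric observation plus a finite check. Your approach would still succeed, but note two points of imprecision: first, $\st_i$ is \emph{not} a translation (e.g.\ $\st_1=s_2s_1s_0$ when $r>2$), so the matching you describe is of group elements rather than of translation words, and the relation between $N$ and $n$ need not be an equality; second, the proof of Theorem~\ref{thm:mainbounds} only sketches the case analysis (``the remaining cases are similar''), so you would first have to complete that analysis before carrying out the rewriting. The gateway argument sidesteps both issues by reducing everything to the position of $\sbb_u w$ in the strip.
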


\begin{proof}
Let $p$ be a maximal path. Thus $\sbb_uw\in\cU_i$ by Theorem~\ref{thm:mainbounds}. Note that the second sentence in the proof of Theorem~\ref{thm:mainbounds} we may choose any reduced expression $\vec{w}$ for $w$. We first claim that there is a minimal length (straight) path from $\sbb_u$ to $\sbb_uw$ passing through the element $\sbb_uu^{-1}\sw_i$ (geometrically this element is the element ``opposite'' the base alcove of $\sB_i'$, and is shaded yellow in Figure~\ref{fig:finiteregions}). If no minimal length path passes through $\sbb_u u^{-1}\sw_i$ then $\sbb_uw$ lies in either the red, green, or blue region in Figure~\ref{fig:finiteregions}.

\psset{linewidth=.1mm,unit=.6cm}
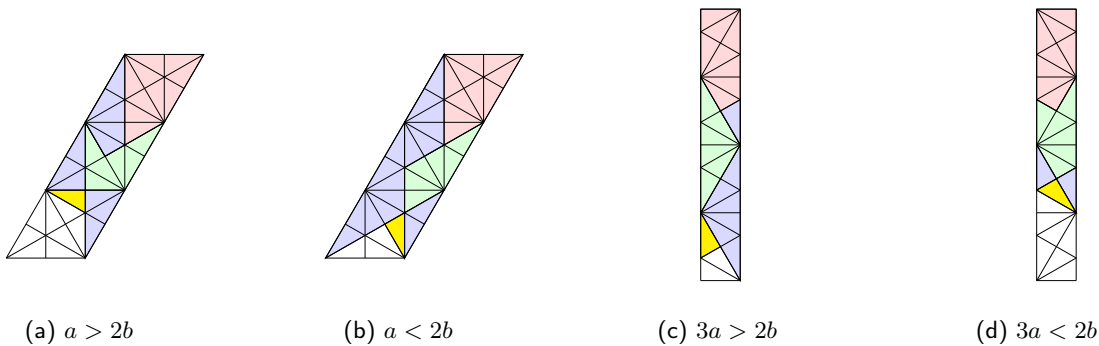
\begin{figure}[H]
\begin{subfigure}{.24\textwidth}
\begin{center}
\begin{pspicture}(-4,-2)(2,4)
\pspolygon[fillcolor=green!15!,fillstyle=solid](-.866,0)(-.866,1.5)(-.433,.75)(.866,1.5)(0,0)
\pspolygon[fillcolor=red!15!,fillstyle=solid](-.433,.75)(-.866,1.5)(0,3)(1.732,3)(.866,1.5)
\pspolygon[fillcolor=blue!15!,fillstyle=solid](-.866,0)(-.866,1.5)(-1.732,0)
\pspolygon[fillcolor=blue!15!,fillstyle=solid](-.866,-1.5)(-.866,0)(0,0)
\pspolygon[fillcolor=blue!15!,fillstyle=solid](-.866,1.5)(-.433,.75)(0,1)(0,3)
\pspolygon[fillcolor=yellow,fillstyle=solid](-.866,0)(-1.732,0)(-.866,-0.5)

\psline(-2.598,-1.5)(0,3)(1.732,3)
\psline(-2.598,-1.5)(-.866,-1.5)(1.732,3)
\psline(-.866,1.5)(.866,1.5)
\psline(-1.732,0)(0,0)
\psline(-2.598,-1.5)(0,0)
\psline(-1.732,0)(.866,1.5)
\psline(-.866,1.5)(1.732,3)

\psline(-.866,-1.5)(-2.165,-.75)
\psline(-0.433,-.75)(-1.732,0)
\psline(0,0)(-1.3,.75)
\psline(.433,.75)(-.866,1.5)
\psline(.866,1.5)(-.433,2.25)
\psline(1.3,2.25)(0,3)

\psline(-1.732,0)(-1.732,-1.5)
\psline(-.866,1.5)(-.866,-1.5)
\psline(0,0)(0,3)
\psline(.866,1.5)(.866,3)

\psline(-.866,-1.5)(-1.732,0)
\psline(0,0)(-.866,1.5)
\psline(.866,1.5)(0,3)


\end{pspicture}\end{center}
\caption{$a>2b$}
\end{subfigure}
\begin{subfigure}{.24\textwidth}
\begin{center}
\begin{pspicture}(-4,-2)(2,4)

\pspolygon[fillcolor=green!15!,fillstyle=solid](-.866,-.5)(-.866,.5)(.866,1.5)(0,0)
\pspolygon[fillcolor=red!15!,fillstyle=solid](-.433,.75)(-.866,1.5)(0,3)(1.732,3)(.866,1.5)
\pspolygon[fillcolor=blue!15!,fillstyle=solid](-2.598,-1.5)(-1.732,0)(-.866,1.5)(-.866,-.5)
\pspolygon[fillcolor=blue!15!,fillstyle=solid](-.866,-.5)(-.866,-1.5)(0,0)
\pspolygon[fillcolor=blue!15!,fillstyle=solid](-.866,1.5)(-.866,.5)(0,1)(0,3)
\pspolygon[fillcolor=yellow,fillstyle=solid](-.866,-.5)(-.866,-1.5)(-1.3,-0.75)


\psline(-2.598,-1.5)(0,3)(1.732,3)
\psline(-2.598,-1.5)(-.866,-1.5)(1.732,3)
\psline(-.866,1.5)(.866,1.5)
\psline(-1.732,0)(0,0)
\psline(-2.598,-1.5)(0,0)
\psline(-1.732,0)(.866,1.5)
\psline(-.866,1.5)(1.732,3)

\psline(-.866,-1.5)(-2.165,-.75)
\psline(-0.433,-.75)(-1.732,0)
\psline(0,0)(-1.3,.75)
\psline(.433,.75)(-.866,1.5)
\psline(.866,1.5)(-.433,2.25)
\psline(1.3,2.25)(0,3)

\psline(-1.732,0)(-1.732,-1.5)
\psline(-.866,1.5)(-.866,-1.5)
\psline(0,0)(0,3)
\psline(.866,1.5)(.866,3)

\psline(-.866,-1.5)(-1.732,0)
\psline(0,0)(-.866,1.5)
\psline(.866,1.5)(0,3)


\end{pspicture}
\end{center}
\caption{$a<2b$}
\end{subfigure}
\begin{subfigure}{.24\textwidth}
\begin{center}
\begin{pspicture}(-1.5,-3)(0.5,3)

\pspolygon[fillcolor=blue!15!,fillstyle=solid](0,0)(-.866,-1.5)(0,-3)
\pspolygon[fillcolor=green!15!,fillstyle=solid](0,0)(-.866,1.5)(-.866,-1.5)
\pspolygon[fillcolor=red!15!,fillstyle=solid](0,0)(-.866,1.5)(-.866,3)(0,3)
\pspolygon[fillcolor=blue!15!,fillstyle=solid](0,0)(0,1)(-0.433,0.75)
\pspolygon[fillcolor=yellow,fillstyle=solid](-.866,-1.5)(-.866,-2.5)(-.433,-2.25)

\psline(0,0)(-.866,0)
\psline(0,0)(0,1.5)
\psline(0,1.5)(-.866,1.5)
\psline(-.866,0)(-.866,1.5)
\psline(0,0)(-.866,.5)
\psline(0,0)(-.866,1.5)
\psline(0,1)(-.866,1.5)
\psline(0,1)(-.866,.5)
\psline(0,1.5)(0,3)
\psline(-.866,3)(-.866,1.5)
\psline(0,3)(-.866,3)
\psline(-.866,1.5)(0,2)
\psline(-.866,1.5)(0,3)
\psline(0,2)(-.866,2.5)
\psline(0,3)(-.866,2.5)
\psline(0,-1.5)(0,0)
\psline(-.866,0)(-.866,-1.5)
\psline(0,-1.5)(-.866,-1.5)
\psline(-.866,-1.5)(0,-1)
\psline(-.866,-1.5)(0,0)
\psline(0,-1)(-.866,-.5)
\psline(0,0)(-.866,-.5)
\psline(0,-1.5)(0,-3)
\psline(-.866,-1.5)(-.866,-3)
\psline(0,-3)(-.866,-3)
\psline(-.866,-1.5)(0,-3)
\psline(-.866,-1.5)(0,-2)
\psline(0,-2)(-.866,-2.5)
\psline(0,-3)(-.866,-2.5)


\end{pspicture}
\end{center}
\caption{$3a>2b$}
\end{subfigure}
\begin{subfigure}{.24\textwidth}
\begin{center}
\begin{pspicture}(-1,-3)(1,3)
\pspolygon[fillcolor=red!15!,fillstyle=solid](0,1)(.433,.75)(.866,1.5)(.866,2.5)(0.866,3)(0,3)
\pspolygon[fillcolor=green!15!,fillstyle=solid](0,1)(.433,.75)(.866,1.5)(.866,-.5)(.433,-.75)(0,0)
\pspolygon[fillcolor=blue!15!,fillstyle=solid](0,0)(0,-1)(.433,-0.75)
\pspolygon[fillcolor=blue!15!,fillstyle=solid](.433,-0.75)(.866,-.5)(.866,-1.5)
\pspolygon[fillcolor=yellow,fillstyle=solid](.433,-.75)(0,-1)(0.866,-1.5)

\psline(0,0)(.866,0)
\psline(0,0)(0,1.5)
\psline(0,1.5)(.866,1.5)
\psline(.866,0)(.866,1.5)

\psline(0,0)(.866,.5)
\psline(0,0)(.866,1.5)
\psline(0,1)(.866,1.5)
\psline(0,1)(.866,.5)
\psline(0,1.5)(0,3)
\psline(.866,3)(.866,1.5)
\psline(0,3)(.866,3)
\psline(.866,1.5)(0,2)
\psline(.866,1.5)(0,3)
\psline(0,2)(.866,2.5)
\psline(0,3)(.866,2.5)
\psline(0,-1.5)(0,0)
\psline(.866,0)(.866,-1.5)
\psline(0,-1.5)(.866,-1.5)
\psline(.866,-1.5)(0,-1)
\psline(.866,-1.5)(0,0)

\psline(0,-1)(.866,-.5)
\psline(0,0)(.866,-.5)

\psline(0,-1.5)(0,-3)
\psline(.866,-1.5)(.866,-3)
\psline(0,-3)(.866,-3)
\psline(.866,-1.5)(0,-3)
\psline(.866,-1.5)(0,-2)
\psline(0,-2)(.866,-2.5)
\psline(0,-3)(.866,-2.5)


\end{pspicture}
\end{center}
\caption{$3a<2b$}
\end{subfigure}
\caption{Configuration for Lemma~\ref{lem:inGammai}}
\label{fig:finiteregions}
\end{figure}

It is clear that if $\sbb_uw$ lies in the red region then $\deg(\cQ_i(p))=0$ since there are no negative crossings in the straight path from $\sbb_u$ to $\sbb_{u}w$. Thus $\sbb_uw$ lies in either the green region (that is, $\sB_i'$) or in the blue region. Hence there are finitely many possibilities for $w$, and quick check shows that for these $w$ there is no path attaining the degree bound $\ba_{\pi_i}$.

\medskip

Thus $w$ admits a reduced expression with $\vec{u}^{-1}\cdot\vec{\sw}_1$ as a prefix. Since $\sbb_u w$ lies in $\cU_i$ it follows that $w$ admits a reduced expression of the form $\vec{w}=\vec{u}^{-1}\cdot \vec{\sw}_i\cdot\vec{\st}_i^N\cdot\vec{v}$ for some $u,v\in\sB_i$ and $N\in\mathbb{N}$, and thus $w\in\Gamma_i$ by the cell factorisation of Section~\ref{sec:factor1}.
\end{proof}

The following Theorem, along with Theorem~\ref{thm:mainbounds} and Lemma~\ref{lem:inGammai}, verifies that $\pi_i$ satisfies $\B{3}$ for generic parameters. Recall that if $w\in \Ga_i$ with generic parameters then $w=\su_w^{-1}\sw_i\st_i^{\tau_w}\sv_w$ with $\su_w,\sv_w\in \sB_i$ and $\tau_w\in\mathbb{N}$ (we sometimes write $\tau_w$ in place of $\st_i^{\tau_w}$ by identifying $\mathbb{N}$ with $\{\st_i^k\mid k\in\mathbb{N}\}$).

\begin{Th}\label{thm:maxpath1}
Let $w\in\Gamma_i$ with reduced expression $\vec{w}=\vec{\su}_w^{-1}\cdot \vec{\sw}_i\cdot\vec{\st}_i^{\,\tau_w}\cdot\vec{\sv}_w$. For generic parameters we have:
\begin{enumerate}
\item There exist precisely $\tau_w+1$ maximal paths in $\cP_i(\vec{w},\sbb_{\su_w})$. 
\item For each $0\leq n\leq \tau_w$ there is a unique maximal path $p\in\mathcal{P}_i(\vec{w},\sbb_{\su_w})$ such that $\mathrm{wt}_{\sB_i'}^i(p)=\tau_w-2n.$
\item For all maximal paths we have $\theta_{\sB_i'}^i(p)=\sv_w$.
\end{enumerate}
\end{Th}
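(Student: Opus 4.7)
The plan is to exploit the reduced expression $\vec{w} = \vec{\su}_w^{-1} \cdot \vec{\sw}_i \cdot \vec{\st}_i^{\,\tau_w} \cdot \vec{\sv}_w$ to split any path $p \in \mathcal{P}_i(\vec{w}, \sbb_{\su_w})$ as a concatenation $p = p_1 \cdot p_2 \cdot p_3 \cdot p_4$ along the four factors. Since $\cQ_i$ is multiplicative along concatenation, $\deg(\cQ_i(p)) = \sum_k \deg(\cQ_i(p_k))$ and maximality of $p$ forces each $p_k$ to attain its individual maximum. First I would use the geometry of $\sB_i'$ (Figure~\ref{fundamental-domain}) to argue that the straight walk of type $\vec{\su}_w^{-1}$ starting at $\sbb_{\su_w} = g_i\su_w$ reaches $g_i$ via positive crossings only and avoids the walls $H_{\alpha_i,0}$ and $H_{\alpha_i,1}$; hence the maximal contribution of $p_1$ is $0$ and is attained exactly when $p_1$ is straight. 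The analogous argument applied to the reversed walk shows that $p_4$ is also straight, and direct identification of its endpoint in the tessellation of $\cU_i$ by $\sB_i'$ yields $\theta^i_{\sB_i'}(p) = \sv_w$, proving part~(3).

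The combinatorial content of the proof lies in $p_2$ and $p_3$. For $p_2$, one inspects the folding table of $\vec{\sw}_i$ — a short word supported in a dihedral parabolic subgroup — and extracts the unique maximal pattern together with its prescribed exit row in $\sB_i'$; this amounts to a finite check in each of the four regimes, analogous to the one underpinning Table~\ref{tab:bounds1}. For $p_3$, each of the $\tau_w$ copies of $\vec{\st}_i$ corresponds to a translation by one period of the glide reflection $\sigma_i$, so the same folding table is traversed $\tau_w$ times in succession. Using the tables of $\vec{t}_{\omega_1}$ and $\vec{t}_{\omega_2}$ from Example~\ref{ex:foldingtables} I would establish the key claim: a single pass through $\vec{\st}_i$ admits exactly two patterns that achieve the maximal contribution, namely a straight pass, which advances $\mathrm{wt}^i_{\sB_i'}$ by $+1$, and a ``fold-back'' pattern, in which a single fold is precisely compensated by the accompanying bounces on the walls of $\cU_i$ and which decreases $\mathrm{wt}^i_{\sB_i'}$ by $1$. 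Gluing these choices across the $\tau_w$ copies then produces a bijection between maximal $p_3$ and the number $n \in \{0,1,\ldots,\tau_w\}$ of fold-back passes, yielding exactly $\tau_w+1$ maximal paths with weights $\tau_w-2n$ and establishing~(1) and~(2).

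The hard part will be the explicit table bookkeeping required to establish the ``exactly two maximal patterns per pass'' claim, separately in each of the four regimes ($r>2$ and $r<2$ for $i=1$; $r>3/2$ and $r<3/2$ for $i=2$). One must verify not only that both patterns are maximal in isolation but also that they splice consistently — the exit row after one copy of $\vec{\st}_i$ must agree with the entry row of the next, and the bounces induced inside a fold-back pass must remain within $\cU_i$ without producing an extra unaccounted factor of $-L(s_i)$. The genericity hypothesis enters essentially here, in excluding a sporadic additional maximal pattern that appears precisely at the boundary values $a/b = 2$ (for $i=1$) and $a/b = 3/2$ (for $i=2$); those cases are exactly the ones postponed to the separate analysis foreshadowed in Section~\ref{sec:equal}.
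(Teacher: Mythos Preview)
Your decomposition $p=p_1\cdot p_2\cdot p_3\cdot p_4$ with \emph{separately maximised} contributions is the wrong structural picture, and the combinatorics you describe for $p_2$ and $p_3$ is not what actually happens. Concretely, for $i=1$, $r>2$ (where $\sw_1=s_0s_1$, $\st_1=s_2s_1s_0$ and $\ba_{\pi_1}=a+b$): a maximal path either has \emph{both} of its folds inside $\vec{\sw}_1$ and is then forced to be straight through all of $\vec{\st}_1^{\,N}\cdot\vec{v}$, or has \emph{no} folds in $\vec{\sw}_1$ and then carries its two folds in a single one of the $N$ passes through $\vec{\st}_1$ (at the $s_1s_0$ positions), after which it is again forced to be straight. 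For $i=2$, $r>3/2$, the three folds may even straddle two consecutive $\vec{\st}_2$-cycles. So there is no ``unique maximal pattern'' for $p_2$, and the $\vec{\st}_i$-passes do not admit two independent maximal patterns each: your dichotomy would produce $2^{\tau_w}$ maximal paths rather than $\tau_w+1$, and the claimed bijection with $n\in\{0,\ldots,\tau_w\}$ is not there.

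What is really going on is a two-state automaton on the rows of the folding table of $\vec{\st}_i$ \emph{with respect to the fundamental domain $\sB_i'$} (not the $W_0^i$-tables of Example~\ref{ex:foldingtables}; the paper builds new tables for $\vec{\sw}_i$, $\vec{\st}_i$ and the $v\in\sB_i$ relative to $\sB_i'$). After the straight $\vec{u}^{-1}$-segment one enters on the row of $g_i$; passing through $\vec{\sw}_i$ without folding lands on a ``waiting'' row on which each full $\vec{\st}_i$-pass contributes degree $0$ and returns to the same row. A single fold event --- occurring either in $\vec{\sw}_i$ itself or in one chosen $\vec{\st}_i$-pass (or across two consecutive passes for $i=2$) --- contributes exactly $\ba_{\pi_i}$ and moves the walk to an ``absorbing'' row on which the $\vec{\st}_i$- and $\vec{v}$-tables consist entirely of positive crossings, so the remainder of the path is forced to be straight. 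The $\tau_w+1$ count, the weights $\tau_w-2n$, and the fact that $\theta^i_{\sB_i'}(p)=\sv_w$ all drop out of this single-transition picture. Your intuition that $p_1$ is straight is correct and matches the paper; the argument for $p_4$ is a consequence of the absorbing-row phenomenon rather than a reversal symmetry.
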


\begin{proof}
Write $u=\su_w$, $v=\sv_w$, and $N=\tau_w$. Let $p$ be maximal. We claim that there are no folds in the initial $\vec{u}^{-1}$ segment. This is easily checked directly in each case. For example, consider $i=1$ and $a>2b$, and suppose that $u=s_2s_1s_2s_0$. Then $\sbb_u=s_0$ is the ``top right'' element of $\sB_1'$. Suppose that the path $p$ of type $\vec{u}^{-1}\cdot \vec{\sw}_1\cdot \vec{\st}_1^N\cdot\vec{v}$ folds in the initial $\vec{u}^{-1}$ part. If this fold occurs on the $4^{th}$ step, then the remainder of the path consists of positive crossings only, and hence has degree $b$. If the fold occurs on the $3^{rd}$ step, then the $4^{th}$, $5^{th}$, and $6^{th}$ steps (the last two coming from $\vec{\sw}_1=s_0s_1$) are forced to be, respectively, a positive crossing, a positive crossing, and a bounce. After this the path consists of positive crossings (and perhaps bounces) and so the degree is bounded by $a-b$. The remaining cases are similar. 
\medskip

Writing $p=p_0\cdot p^0$, with $p_0$ corresponding to the initial $\vec{u}^{-1}$ segment, the previous paragraph shows that $\deg(\cQ_i(p_0))=0$, and that $p^0$ starts at $\mathrm{end}(p_0)=\sbb_uu^{-1}=g_i$ (the ``base'' alcove of $\sB_i'$). Note that $p^0$ has type $\vec{w}_1=\vec{\sw}_i\cdot \vec{\st}_i^N\cdot\vec{v}$. 

\medskip

Consider the case $i=1$ and $a>2b$. We construct the $\alpha_1$-folding tables of the elements $\sw_1$, $\st_1$, and $v\in\sB_1'$ in Table~\ref{folding-table2} below. We construct these tables with respect to the fundamental domain $\sB_i'$ (rather than $W_0^i$), and thus we modify the definition of $\varphi_{\vec{x}}^v(k)$ given in the previous section (and in the notation of that section) to be
$$
\varphi_{\vec{x}}^v(k)=\text{the unique $(u,n)\in \sB_i'\times\mathbb{Z}$ such that $p(\vec{x},\sigma_i^nu)$ and $p_k$ agree after the $k$th step}.
$$
Note that the elements of $\sB_1'$ are the prefixes of $s_2s_1s_2s_0$, along with the element $v'=s_2s_0$, and so it suffices to provide the tables for these two elements of $\sB_1'$. These are given in Table~\ref{folding-table2} below. See Remark~\ref{rem:endtable} for the meaning of the final ``exit columns'' in these tables, and note that we use the indexing of $\sB_1'$ as shown in Figure~\ref{fundamental-domain}. 
\medskip

\begin{table}[H]
\renewcommand{\arraystretch}{1.2}  
\begin{subfigure}{.2\linewidth}
\centering
$\begin{array}{|c||c|c||c|}\hline
&0&1& \\\hline\hline
1&3&6&2 \\\hline
2&-&-&1 \\\hline
3&-&2&6 \\\hline
4&-&*&5 \\\hline
5&4&*&4 \\\hline
6&2&-&3 \\\hline
\end{array}$
\caption{$\sw_1=s_0s_1$}
\end{subfigure}
\begin{subfigure}{.25\linewidth}
   \centering
   $\begin{array}{|c||c|c|c||c|}
   \hline
   & 2 &1 &0 &\\\hline
   \hline
1&-&-&-&1 \\\hline
2&1&5&4&2 \\\hline
3& -&*&-&5\\\hline
4& 3&1&-&6 \\\hline
5& -&-&1&3 \\\hline
6& 5&*&3&4 \\\hline
\end{array}$
\caption{$\st_1=s_2s_1s_0$}
\end{subfigure}
\begin{subfigure}{.26\linewidth}
   \centering
   $\begin{array}{|c||c|c|c|c||c|}
   \hline
   & 2 &1 &2&0 &\\\hline
   \hline
1&-&-&-&-&5 \\\hline
2&1&5&6&1&4 \\\hline
3& -&1&*&-&1\\\hline
4& 3&1&5&3&3 \\\hline
5& -&-&-&-&6\\\hline
6& 5&*&-&5&2 \\\hline
\end{array}$
\caption{$v=s_2s_1s_2s_0$}
\end{subfigure}
\begin{subfigure}{.22\linewidth}
   \centering
   $\begin{array}{|c||c|c||c|}
   \hline
   & 2 &0  &\\\hline
   \hline
1&-&-&6 \\\hline
2&1&4&3 \\\hline
3&-&-&5\\\hline
4&3&-&1 \\\hline
5&-&1&2 \\\hline
6& 5&3&4 \\\hline
\end{array}$
\caption{$v'=s_2s_0$}
\end{subfigure}
\caption{$\al_1$-folding tables for $\vec{w}_1=\vec{\sw}_1\cdot\vec{\st}_1^N\cdot\vec{v}$ with respect to $\sB_1'$, in regime $a>2b$.}
\label{folding-table2}
\end{table}

Note that a path $p^0$ of type $\vec{\sw}_1\cdot\vec{\st}_1^N\cdot\vec{v}$ starting at $g_1$ enters the $\sw_1$ table on row~$1$, and it is then elementary to check that such a path is maximal if and only if it either folds at both places of the $\sw_1$-part, or at both places of the $s_1s_0$ part of $\st_1$ in on of the passes of $\st_1$. That is, 
$$
\mathrm{end}(p^0)=\begin{cases}
\hat{s}_0\hat{s}_1\st_1^{N}v&\text{if both folds in $\vec{\sw}_1$ occur}\\
\sw_1\st_1^{n-1}s_2\hat{s}_1\hat{s}_0\st_1^{N-n}v&\text{if the two folds occur in the $n^{th}$ pass of $\vec{\st}_1$},
\end{cases}
$$
where as usual $\hat{s}_j$ indicates that the term is omitted. In the first case we have $\mathrm{wt}_{\sB_1'}^1(p)=N$ and $\theta_{\sB_1'}^1(p)=v$. In the second case the equality $\sw_1\st_1^{n-1}s_2\st_1^n=e$ for all $n$ shows that $\mathrm{wt}_{\sB_1'}^1(p)=N-2n$ and again $\theta_{\sB_1'}^1(p)=v$. This establishes the theorem in this case. 
\medskip

Now consider the case $i=2$ with $3a>2b$. The $\alpha_2$-folding tables of $\sw_2=s_1s_2s_1s_2s_1$ and $\st_2=s_0s_2s_1s_2s_1$ and $v'\in\sB_2'$ with respect to $\sB_2'$ are given in Table~\ref{folding-table3} (note that every element of $\sB_2'$ is a prefix of $v'=s_0s_2s_1s_2s_0$). 

\begin{table}[H]
\renewcommand{\arraystretch}{1.2}  
\begin{subfigure}{.33\linewidth}
\centering
$\begin{array}{|c||c|c|c|c|c||c|}\hline
&1&2&1&2&1& \\\hline\hline
1&5&3&6&2&4&6 \\\hline
2&6&*&-&-&-&4 \\\hline
3&-&-&-&*&6&5 \\\hline
4&3&6&2&*&-&2 \\\hline
5&-&*&3&6&2&3 \\\hline
6&-&-&-&-&-&1 \\\hline
\end{array}$
\caption{$\sw_2=s_1s_2s_1s_2s_1$}
\end{subfigure}
\begin{subfigure}{.33\linewidth}
   \centering
   $\begin{array}{|c||c|c|c|c|c||c|}
   \hline
   & 0 &2 &1 &2&1&\\\hline
   \hline
1&-&-&-&-&-&1 \\\hline
2&1&*&4&1&5&3 \\\hline
3&*&1&5&*&-&2\\\hline
4& *&-&-&*&1&5 \\\hline
5&-&*&-&-&-&4 \\\hline
6& 5&4&1&5&3&6 \\\hline
\end{array}$
\caption{$\st_2=s_0s_2s_1s_2s_1$}
\end{subfigure}
\begin{subfigure}{.33\linewidth}
   \centering
   $\begin{array}{|c||c|c|c|c|c||c|}
   \hline
   & 0 &2 &1 &2&0&\\\hline
   \hline
1&-&-&-&-&-&6 \\\hline
2&1&*&4&1&*&4 \\\hline
3&*&1&5&*&1&5\\\hline
4& *&-&-&*&-&2 \\\hline
5&-&*&-&-&*&3 \\\hline
6& 5&4&1&5&4&1 \\\hline
\end{array}$
\caption{$v'=s_0s_2s_1s_2s_0$}
\end{subfigure}
\caption{$\al_2$-folding tables for $\vec{\sw_2}$, $\vec{\st}_2$, and $v'\in \sB_2'$ with respect to $\sB_2'$, in regime $3a>2b$.}
\label{folding-table3}
\end{table}

Using these tables it is easy to check that a path $p^0$ of type $\vec{\sw}_2\cdot\vec{\st}_2^N\cdot\vec{v}$ starting at $g_2$ is maximal if and only if one of the following occur (recall we enter the $\vec{\sw}_2$ table on row $1$): 
\bem
\item There are three folds in the $\vec{\sw}_2$ part, at positions $1,3,5$ (and hence no further folds).
\item There is one fold in the $\vec{\sw}_2$ part at position $5$, followed by $2$ folds in the subsequent $\vec{\st}_2$ at positions $3$ and~$5$. 
\item There are three folds distributed over two consecutive $\vec{\st}_2$ cycles, at position $5$ in the pass cycle, and then positions $3$ and $5$ in the next pass. 
\eem
The theorem follows in this case in a similar way to the previous example. The two remaining cases are similar. 
\end{proof}

\begin{Cor}\label{cor:B3B4generic}
Let $w\in \Gamma_i$ with generic parameters. Then
$$
\fc_{\pi_i,w}=\ss_{\tau_w}(\zeta)E_{\su_w,\sv_w},
$$
where $\ss_k(\zeta)$ is the Schur function of type $A_1$. Thus $\pi_i$ satisfies $\B{4}$ and $\B{4}'$. 
\end{Cor}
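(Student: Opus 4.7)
The plan is to read off the leading matrices directly from the combinatorial data supplied by Corollary~\ref{cor:funddom}, Lemma~\ref{lem:inGammai} and Theorem~\ref{thm:maxpath1}. Fix $w\in\Gamma_i$ with cell factorisation $w=\su_w^{-1}\sw_i\st_i^{\tau_w}\sv_w$ and choose the reduced expression $\vec{w}=\vec{\su}_w^{-1}\cdot\vec{\sw}_i\cdot\vec{\st}_i^{\,\tau_w}\cdot\vec{\sv}_w$. By Corollary~\ref{cor:funddom} (applied with the fundamental domain $\sB_i'$),
\[
[\pi_i(T_w)]_{\sbb_u,\sbb_v}=\sum_{\{p\in\mathcal{P}_i(\vec{w},\sbb_u)\mid\theta^i_{\sB_i'}(p)=\sbb_v\}}\mathcal{Q}_i(p)\,\zeta^{\mathrm{wt}^i_{\sB_i'}(p)},
\]
so the $(\sbb_u,\sbb_v)$-entry of $\fc_{\pi_i,w}$ is the contribution of the maximal paths ($\deg\mathcal{Q}_i(p)=\ba_{\pi_i}$) to this sum.

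First I would show that the only nonzero entry of $\fc_{\pi_i,w}$ lies in position $(\su_w,\sv_w)$. If $u\neq\su_w$ then by Lemma~\ref{lem:inGammai} no path in $\mathcal{P}_i(\vec{w},\sbb_u)$ is maximal, forcing $[\fc_{\pi_i,w}]_{\sbb_u,\sbb_v}=0$. When $u=\su_w$, Theorem~\ref{thm:maxpath1}(3) guarantees that every maximal path ends with $\theta^i_{\sB_i'}(p)=\sv_w$, so the $(\sbb_{\su_w},\sbb_v)$-entry vanishes for $v\neq\sv_w$.

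Next I would compute the $(\su_w,\sv_w)$-entry explicitly. Theorem~\ref{thm:maxpath1}(1)--(2) gives exactly one maximal path $p_n$ for each $n=0,1,\ldots,\tau_w$, with $\mathrm{wt}^i_{\sB_i'}(p_n)=\tau_w-2n$. For each of these maximal paths the leading coefficient of $\mathcal{Q}_i(p_n)$ in $\sq$ equals $+1$: this follows by tracking signs in the construction of $\mathcal{Q}_i$, since in each case described in the proof of Theorem~\ref{thm:maxpath1} the maximal paths consist of three bounces/folds whose signs cancel (more precisely, the bound $\ba_{\pi_i}$ is attained by a fixed combination of folds and bounces whose leading $\sq$-coefficient is the same across all $\tau_w+1$ maximal paths, and a direct inspection of the folding tables in Tables~\ref{folding-table2}--\ref{folding-table3} shows that this common coefficient is $+1$). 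Therefore
\[
[\fc_{\pi_i,w}]_{\sbb_{\su_w},\sbb_{\sv_w}}=\sum_{n=0}^{\tau_w}\zeta^{\tau_w-2n}=\ss_{\tau_w}(\zeta),
\]
the type $A_1$ Schur function, which proves the claimed formula $\fc_{\pi_i,w}=\ss_{\tau_w}(\zeta)E_{\su_w,\sv_w}$. The main obstacle is precisely this sign verification; once the folding tables have been drawn the calculation is mechanical, but one must be careful with the conventions for bounces (each contributing $-\sq_{s_i}^{-1}$) and for folds (each contributing $\sq_s-\sq_s^{-1}$), and in particular with the non-generic regime boundaries which we exclude by hypothesis.

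Finally, the remaining axioms follow immediately. For $\B{4}$, the matrices $\{\ss_k(\zeta)E_{u,v}\mid k\in\mathbb{N},\,u,v\in\sB_i\}$ are $\mathbb{Z}$-linearly independent because the Schur polynomials $\ss_k(\zeta)$ ($k\geq 0$) are a $\mathbb{Z}$-basis of $\mathbb{Z}[\zeta+\zeta^{-1}]$ and distinct elementary matrices $E_{u,v}$ are linearly independent. For $\B{4}'$, given $w=\su_w^{-1}\sw_i\st_i^{\tau_w}\sv_w\in\Gamma_i$ set $x=\su_w^{-1}\sw_i$ and $y=\sw_i\st_i^{\tau_w}\sv_w$; then $x,y\in\Gamma_i$ with $\su_x=\su_w$, $\sv_x=e=\su_y$, $\sv_y=\sv_w$, $\tau_x=0$ and $\tau_y=\tau_w$, so
\[
\fc_{\pi_i,x}\fc_{\pi_i,y}=\ss_0(\zeta)\ss_{\tau_w}(\zeta)E_{\su_w,e}E_{e,\sv_w}=\ss_{\tau_w}(\zeta)E_{\su_w,\sv_w}=\fc_{\pi_i,w},
\]
which exhibits a nonzero coefficient $\tilde{\gamma}_{x,y,w^{-1}}$ and completes the proof.
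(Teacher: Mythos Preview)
Your proposal is correct and follows essentially the same route as the paper: use Corollary~\ref{cor:funddom} to express $[\fc_{\pi_i,w}]_{u,v}$ as a sum over maximal paths, invoke Lemma~\ref{lem:inGammai} and Theorem~\ref{thm:maxpath1} to pin down the unique nonzero entry and its weights, and then verify $\B{4}$ and $\B{4}'$ exactly as in Theorem~\ref{thm:mainlowest}. The only point where the paper is crisper is the sign verification: rather than your ``three bounces/folds'' (which is not the count in every regime) and appeal to direct inspection, the paper observes uniformly that each maximal path has either no bounces or exactly two, so $\mathcal{Q}_i(p)$ has positive leading coefficient and $\sq^{-\ba_{\pi_i}}\mathcal{Q}_i(p)$ specialises to $+1$.
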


\begin{proof}
Let $\mathbb{P}_i(\vec{w},uu)=\{p\in\mathcal{P}_i(\vec{w},uu)\mid \deg(\cQ_i(p))=\ba_{\pi_i}\}$ be the set of maximal paths. By Corollary~\ref{cor:funddom} and the definition of $\fc_{\pi_i,w}$ we have
\begin{align*}
[\fc_{\pi_i,w}]_{u,v}&=\mathrm{sp}_{|_{\sq^{-1}=0}}\left(\sq^{-\ba_{\pi_i}}[\pi_i(T_w)]_{u,v}\right)=\sum_{\{p\in\mathbb{P}_i(\vec{w},u)\mid \theta_{\sB_i'}^i(p)=v\}}\zeta^{\mathrm{wt}_{\sB_i'}^i(p)}
\end{align*}
(note that there are either no bounces, or precisely two bounces in maximal paths $p$, and thus $\cQ_i(p)$ is positive, and so $\sq^{-\ba_{\pi_i}}\cQ_i(p)$ specialises to $+1$).  Theorem~\ref{thm:maxpath1} gives $\{\mathbb{P}_i(\vec{w},u)\mid \theta_{\sB_i'}^i(p)=v\}=\emptyset$ unless $u=\su_w$ and $v=\sv_w$, and thus $[\fc_{\pi_i,w}]_{u,v}=0$ unless $u=\su_w$ and $v=\sv_w$. Moreover Theorem~\ref{thm:maxpath1} gives
\begin{align*}
[\fc_{\pi_i,w}]_{\su_w,\sv_w}&=\sum_{n=0}^{\tau_w}\zeta^{2\tau_w-n}=\ss_{\tau_w}(\zeta).
\end{align*}
The verification of $\B{4}$ and $\B{4}'$ follows from $\fc_{\pi_i,w}=\ss_{\tau_w}(\zeta)E_{\su_w,\sv_w}$ in an analogous way to Theorem~\ref{thm:mainlowest}. 
\end{proof}

\subsection{Leading matrices for non-generic parameters}\label{sec:5.4}

In this final subsection we compute the leading matrix coefficients for $\pi_i$ with non-generic parameters $r=r_i$. This if $i=1$ and $r=2$, and if $i=2$ and $r=3/2$. In fact most of the work has been done in the previous sections, and all that remains is to piece together the paths from the generic regimes on either side of the generic parameter. 

\medskip

Recall the notation of Section~\ref{sec:equal}.  For $\eps\in\{\pm\}$ we define $g_i^{\eps}$ as in~(\ref{eq:g}) with $g_i^-$ corresponding to $r<r_i$ and $g_i^+$ corresponding to $r>r_i$. We set $\sB_i'^{\eps}=g_i^{\eps}\sB_i^{\eps}$ and write $\sB_i'^\eps=\{\sbb^\eps_u\mid u\in W_0^i\}$. When working in the case where $i=1$ all the matrices will be written in the basis $\{\xi_1\otimes X_{\sbb^+_u}\mid u\in W_0^1\}$, and for $i=2$ we use the $\eps=-$ basis.

\begin{Th}
\label{equal}
Let $w\in \Ga_1$ and $r=2$ and let $u^{-1}\sw_1^+\st_{1,+}^N v$ be the positive cell factorisation of $w$. We have 
$$\fc_{\pi_1,w}=\begin{cases}
\left(\ss_{N}(\zeta)+\ss_{N-1}(\zeta) \right)E_{u,v}& \mbox{ if $w$ is of type $(+,+)$;}\\
\left(\ss_{N}(\zeta)+\ss_{N+1}(\zeta)\right)E_{u,v}& \mbox{ if $w$ is of type $(-,-)$;}\\
(1+\zeta^{-1})\ss_N(\zeta)E_{u,v} & \mbox{ if $w$ is of type $(+,-)$;}\\
(1+\zeta)\ss_N(\zeta) E_{u,v}& \mbox{ if $w$ is of type $(-,+)$}\\
\end{cases}$$
where by definition we set  $\ss_{-1}(\zeta):=0$. If $i=2$ the corresponding result applies with all signs reversed. Hence in the case $r=r_i$ the representation $\pi_i$ satisfies $\B{1}$--$\B{4}$ and $\B{4}'$ for the cell $\Gamma_i$. 
 \end{Th}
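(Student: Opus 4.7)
The plan is to reduce the non-generic case to the two generic cases that surround it, by showing that every maximal $\alpha_i$-folded alcove walk at $r=r_i$ is either maximal in the generic regime $r>r_i$ or in the generic regime $r<r_i$, and then to combine the two contributions by carefully tracking the base change between $\sB_i'^+$ and $\sB_i'^-$.

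First, $\B{1}$ is already provided by Theorem~\ref{thm:piiB1}, which is parameter independent. For $\B{2}$, I would note that the proof of Theorem~\ref{thm:mainbounds} via the folding tables in Tables~\ref{tab:bounds1} and~\ref{tab:bounds2} never uses the generic assumption, and that the two formulas in Theorem~\ref{thm:mainbounds} agree at $r=r_i$ (namely $a+b=3b$ when $a=2b$, and $3a-2b=a+b$ when $2a=3b$), so $\ba_{\pi_i}$ is unambiguously defined at $r=r_i$ and still bounds the degrees of all matrix coefficients. Next, Lemma~\ref{lem:inGammai} (whose proof is also parameter independent) shows that any $w\in W$ for which $\cP_i(\vec{w},\sbb_u^+)$ contains a maximal path belongs to $\Gamma_i=\Gamma_i^+\cup\Gamma_i^-$, so attention may be restricted to $w\in\Gamma_i$.

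The heart of the argument is a refined version of Theorem~\ref{thm:maxpath1} at $r=r_i$. Exactly as in that proof, a path $p\in\cP_i(\vec{w},\sbb_{\su_+(w)}^+)$ decomposes as $p=p_0\cdot p^0$ with $p_0$ over the initial $\vec{u}^{-1}$-segment, and maximality forces $p_0$ to have no folds; one is then reduced to analysing $p^0$ through the concatenated folding table. Inspecting the bounds, the inequalities that are strict for generic $r$ become equalities at $r=r_i$ on a further family of path patterns, and these extra patterns are precisely the maximal paths that come from the cell factorisation of $w$ via the other generic regime. Concretely, every maximal path at $r=r_i$ is either a $(+)$-maximal path (corresponding to a pass of the folding-table pattern from the proof of Theorem~\ref{thm:maxpath1} for $r>r_i$, associated to the factorisation $w=(\su_+(w))^{-1}\sw_i^+\st_{i,+}^{\tau_+(w)}\sv_+(w)$) or a $(-)$-maximal path (analogously for $r<r_i$).

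Summing the two contributions, the $(+)$-maximal paths contribute $\ss_{\tau_+(w)}(\zeta)E_{\su_+(w),\sv_+(w)}$ in the $\sB_i'^+$ basis by the same computation as in Corollary~\ref{cor:B3B4generic}. The $(-)$-maximal paths similarly contribute $\ss_{\tau_-(w)}(\zeta)E_{\su_-(w),\sv_-(w)}$ in the $\sB_i'^-$ basis; converting this to the $\sB_i'^+$ basis using the explicit relation $\xi_i\otimes X_{\sbb_{u^-}^-}=(\xi_i\otimes X_{\sbb_{u^+}^+})\zeta^{k}$ (where $k\in\{-1,0,1\}$ is dictated by the glide $\sigma_i$ and by whether $u^-=s_1u^+$ holds or not) produces the appropriate $\zeta$-shift. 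Invoking the explicit type relations of Section~\ref{sec:equal}: for type $(+,+)$ one has $\tau_-=\tau_+-1$, $u^-=s_1u^+$, $v^-=s_1v^+$, and no $\zeta$-shift remains after pairing the two $s_1$-factors, giving $(\ss_N(\zeta)+\ss_{N-1}(\zeta))E_{u,v}$; for type $(-,-)$ one gets $(\ss_N(\zeta)+\ss_{N+1}(\zeta))E_{u,v}$ symmetrically; for the mixed types $(+,-)$ and $(-,+)$ only one of $u,v$ is translated through $s_1$, producing the asymmetric factors $(1+\zeta^{-1})$ and $(1+\zeta)$ respectively. The case $i=2$ is entirely analogous with the roles of $\pm$ swapped and $s_1$ replaced by $s_0$.

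With the leading matrices in hand, $\B{3}$ is immediate since all four formulas are nonzero (the coefficients $\ss_N+\ss_{N-1}$, $\ss_N+\ss_{N+1}$, $(1+\zeta^{\pm1})\ss_N$ are nonzero polynomials in $\zeta$), and the map $w\mapsto(\su_+(w),\sv_+(w))$ is injective on $\Gamma_i$. Axiom $\B{4}$ follows because the matrices $\fc_{\pi_i,w}$ are supported on distinct $(u,v)$-slots as $(u,v)$ varies, and within a fixed slot the polynomials $\ss_N+\ss_{N-1}$, $\ss_N+\ss_{N+1}$, $(1+\zeta^{-1})\ss_N$, $(1+\zeta)\ss_N$ (for varying $N$) are linearly independent over $\mathbb{Z}$. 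Finally $\B{4}'$ is checked as in the proofs of Theorems~\ref{thm:mainlowest} and Corollary~\ref{cor:B3B4generic}, by factoring $w=x\cdot y$ with $x,y\in\Gamma_i$ and verifying $\fc_{\pi_i,x}\fc_{\pi_i,y}=\fc_{\pi_i,w}$ using $E_{u,e}E_{e,v}=E_{u,v}$ together with the identity $\ss_0(\zeta)=1$ applied to the trivial $\tau$-factor. The main technical obstacle is the bookkeeping in the preceding paragraph: correctly accounting for the $\zeta$-shifts and sign conventions in the change-of-basis between $\sB_i'^+$ and $\sB_i'^-$ for each of the four types, since the relations in Section~\ref{sec:equal} are asymmetric in $u$ and $v$ in the mixed-type cases.
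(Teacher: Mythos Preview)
Your overall strategy matches the paper's exactly: both recognise that at $r=r_i$ the maximal $\alpha_i$-folded alcove walks are precisely the union of the maximal paths from the two adjacent generic regimes, and both assemble $\fc_{\pi_i,w}$ by adding the $(+)$-contribution and the $(-)$-contribution, tracked through the change of fundamental domain. The paper carries this out case by case, writing down for each type the two explicit families of maximal-path endpoints and computing their $\wt_{\sB_i'^+}^i$ and $\theta_{\sB_i'^+}^i$ directly; your description in terms of a basis change from $\sB_i'^-$ to $\sB_i'^+$ is the same computation phrased more abstractly.

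Two points need tightening. First, the claim that $w\mapsto(\su_+(w),\sv_+(w))$ is injective on $\Gamma_i$ is false (the fibres are indexed by $N$); presumably you mean that the slot $(u,v)$ determines the type, which is what is needed for $\B{4}$. Second, your verification of $\B{4}'$ does not work as written. With the factorisation $x=u^{-1}\sw_i^+$ and $y=\sw_i^+\st_{i,+}^Nv$ you do \emph{not} get $\fc_{\pi_i,x}\fc_{\pi_i,y}=\fc_{\pi_i,w}$ when $u=u_0$ (the $(-,*)$ cases): for instance if $w$ is of type $(-,-)$ then $x$ is of type $(-,+)$ with $\fc_{\pi_i,x}=(1+\zeta)E_{u_0,e}$, and the product $\fc_{\pi_i,x}\fc_{\pi_i,y}$ equals $\fc_{\pi_i,w}$ plus a second term. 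The paper circumvents this by choosing instead the elements $d_u$ defined by $d_{u_0}=\sw_i^-$ and $d_u=u^{-1}\sw_i^+u$ for $u\neq u_0$; these satisfy $\fc_{\pi_i,d_u}=E_{u,u}$ exactly (since $\ss_0+\ss_{-1}=1$), whence $\fc_{\pi_i,d_{\su_w}}\fc_{\pi_i,w}=\fc_{\pi_i,w}$ on the nose. Your approach is salvageable (the coefficient of $\fc_{\pi_i,w}$ in the expansion of $\fc_{\pi_i,x}\fc_{\pi_i,y}$ is still nonzero), but the paper's choice of the idempotent-like elements $d_u$ is cleaner and avoids the extra decomposition step.
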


\begin{proof}
The case $i=2$ is completely analogous to the case $i=1$, and so we only present the $i=1$ case. First assume that $w$ is of type $(+,+)$. Then there exists $(u,v)\in \sB^+_1\cap s_1\sB_1^-$ such that 
$$w=u^{-1}\sw_1^+\st_{1,+}^Nv=u^{-1}s_1\sw_1^-\st_{1,-}^{N-1}s_1v.$$
According to Theorem \ref{thm:maxpath1}, we see that there will be two families of maximal paths  starting at $\sbb^+_{u}$, one with endpoints of the form  $\sbb_e^{\pn}\st_{1,\pn}^{N-2r} v$ for all $0\leq r\leq N$ and one with endpoints of the form $\sbb_e^{\mn} \st_{1,\mn}^{N-1-2r} s_1v$ for all $0\leq r\leq N-1$. We have 
$$\begin{array}{lllllll}
\wt_{\sB'^{+}_1}^1(\sbb_e^+\st_{1,-}^{N-2r} v)=N-2r,& & \theta^1_{\sB'^{+}_1}(\sbb_e^+\st_{1,+}^{N-2r} v)=v,\\[.2cm]
\wt_{\sB'^+_1}^1 (\sbb_e^{-} \st_{1,-}^{N-1-2r} s_1v)=N-2r-1&\text{ and }& \theta^1_{\sB'^{+}_1}(\sbb_e^{-} \st_{1,-}^{N-1-2r} s_1v)=v.\\
\end{array}$$
It follows that $\fc_{\pi_1,w}=\left(\ss_{N}(\zeta)+\ss_{N-1}(\zeta) \right)E_{u,v}$ in this case.  

\medskip

Assume that $w$ of type $(-,-)$. Then we have
$$w=s_0s_2\sw_1^+\st_{1,+}^Ns_2s_0=\sw_1^-\st_{1,-}^{N+1}.$$
We see that there will be two families of maximal paths  starting at $\sbb^+_{s_2s_0}=\sbb_e^-\st_{1,-}$, one with endpoints of the form  $\sbb_e^{+}\st_{1,+}^{N-2r} s_2s_0$ for all $0\leq r\leq N$ and one with endpoints of the form $\sbb_e^{-} \st_{1,\mn}^{N+2-2r}$ for all $0\leq r\leq N+1$. We have 
$$\begin{array}{lllllll}
\wt_{\sB'^{+}_1}^1(\sbb_e^+\st_{1,+}^{N-2r} s_2s_0)=N-2r,& & \theta^1_{\sB'^{+}_1}(\sbb_e^+\st_{1,+}^{N-2r} s_2s_0)=s_2s_0,\\[.2cm]
\wt_{\sB'^{+}_1}^1(\sbb_e^-\st_{1,-}^{N+2-2r})=N+1-2r&\text{ and }& \theta^1_{\sB'^{+}_1}(\sbb_e^-  \st_{1,-}^{N+2-2r})=s_2s_0\\
\end{array}$$
since $\sbb_e^-\st_{1,-}^{N+2-2r}=\sbb_{s_2s_0}^+\st_{1,-}^{N+1-2r}$. It follows that $\fc_{\pi_1,w}=\left(\ss_{N}(\zeta)+\ss_{N+1}(\zeta)\right)E_{u,v}$ in this case.

\medskip

Assume that $w$ is of type $(+,-)$.  Then there exists $u\in \sB^+_1\cap s_1\sB_1^-$ such that 
$$w=u^{-1}\sw_1^+\st_{1,+}^Ns_2s_0=u^{-1}s_1\sw_1^-\st_{1,-}^{N}.$$
We see that there will be two families of maximal paths  starting at $\sbb^+_{u}$, one with endpoints of the form  $\sbb_e^{+}\st_{1,+}^{N-2r} s_2s_0$ and one with endpoints of the form $\sbb_e^{-} \st_{1,-}^{N-2r}$ for all $0\leq r\leq N$. We have 
$$\begin{array}{lllllll}
\wt_{\sB'^{+}_1}^1(\sbb_e^+\st_{1,+}^{N-2r} s_2s_0)=N-2r,& & \theta^1_{\sB'^{+}_1}(\sbb_e^+\st_{1,+}^{N-2r} s_2s_0)=s_2s_0,\\[.2cm]
\wt_{\sB'^{+}_1}^1(\sbb_e^-  \st_{1,-}^{N-2r})=N-2r-1&\text{ and }&\theta^1_{\sB'^{+}_1}(\sbb_e^- \st_{1,-}^{N-2r})=s_2s_0.\\
\end{array}$$
It follows that $\fc_{\pi_1,w}=(1+\zeta^{-1})\ss_N(\zeta)$ in this case.  

\medskip

Assume that $w$ is of type $(-,+)$.   Then there exists $v\in \sB^+_1\cap s_1\sB_1^-$ such that 
$$w=s_0s_2\sw_1^+\st_{1,+}^Nv=\sw_1^-\st_{1,-}^{N}s_1v.$$
We see that there will be two families of maximal paths  starting at $\sbb^+_{s_2s_0}=\sbb^{-}_{e}\st_{1,-}$, one with endpoints of the form  $\sbb_e^+\st_{1,\pn}^{N-2r} v$ for all $0\leq r\leq N$ and one with endpoints of the form $\sbb^-_{e}\st_{1,-}^{N+1-2r}s_1v$ for all $0\leq r\leq N$. But we have 
$$\begin{array}{lllllll}
\wt_{\sB'^{+}_1}^1(\sbb_e^+\st_{1,+}^{N-2r}u)=N-2r,& & \theta^1_{\sB'^{+}_1}(\sbb_e^+\st_{1,+}^{N-2r}u)=u,\\[.2cm]
\wt_{\sB'^{+}_1}^1(\sbb^-_{e}\st_{1,-}^{N+1-2r}s_1v)=N+1-2r&\text{ and }& \theta^1_{\sB'^{+}_1}(\sbb^-_{e}\st_{1,-}^{N+1-2r}s_1v)=v.\\
\end{array}$$
It follows that $\fc_{\pi_1,w}=(1+\zeta)\ss_N(\zeta)$ in this case.  
\medskip

We have already seen that $\B{1}$ and $\B{2}$ hold, and $\B{3}$ follows from the above. Axioms $\B{4}$ and $\B{4}'$ also follow easily. For example, consider the case $i=1$. Let $\sB=\sB_{\Ga_1^+}=\{e,2,20,21,212,2120\}$ and $u_0=20$. If
$
\sum_{w\in \Gamma_1}a_w\fc_{\pi_1,w}=0
$
then considering the matrix entries (and writing $\sw=\sw_1^+$ and $\st=\st_{1,+}$ to ease notation) gives the equations
\begin{align*}
\sum_{k\geq 0}\sum_{u,v\in\sB\backslash\{u_0\}}a_{u^{-1}\sw\st^k v}(\ss_k(\zeta)+\ss_{k-1}(\zeta))&=0&
\sum_{k\geq -1}a_{u_0^{-1}\sw\st^{k}u_0}(\ss_{k}(\zeta)+\ss_{k+1}(\zeta))&=0\\
\sum_{k\geq 0}\sum_{v\in \sB\backslash\{u_0\}}a_{u_0^{-1}\sw\st^k v}(1+\zeta)\ss_k(\zeta)&=0&
\sum_{k\geq 0}\sum_{u\in \sB\backslash\{u_0\}}a_{u^{-1}\sw\st^k u_0}(1+\zeta^{-1})\ss_k(\zeta)&=0
\end{align*}
and $\B{4}$ follows. To verify $\B{4}'$ we define elements $d_u\in\Gamma_1$ by $d_{u_0}=\sw_1^-$ and $d_{u}=u^{-1}\sw u$ for $u\in\sB\backslash\{u_0\}$ (these elements turn out to be the Duflo infvolutions; see Theorem~\ref{th:set-D}). Note that $\fc_{\pi_i,d_u}=E_{u,u}$ (if $u\neq u_0$ then $d_u$ is of type $(+,+)$, and $d_{u_0}$ is of type $(-,-)$). Then for $w\in\Gamma_1$ we have
\begin{align*}
\fc_{\pi_1,d_{\su_w}}\fc_{\pi_1,w}=E_{\su_w,\su_w}\fc_{\pi_1,w}=\fc_{\pi_1,w},
\end{align*}
and hence $\B{4}'$. 
\end{proof}

\begin{Rem}
Note that the formulae in Theorem~\ref{equal} show how the two leading matrices from the generic regimes on either side of the parameter $r=r_1$ combine to give the leading matrix at $r=r_1$. This suggests an approach to understanding the semicontinuity conjecture of Bonnaf\'e~\cite{Bon:09}. 
\end{Rem}

We define the following sets which are the sets of non-zero leading matrix coefficients of the elements $w$ of $(\eps,\eps')$-type
\begin{align*}
\cB_{\eps,\eps}=\{\ss_N(\zeta)+\ss_{N-1}(\zeta)\mid N\geq 0\},\quad\text{and}\quad
\cB_{\eps,-\eps}=\{(1+\zeta^\eps)\ss_N(\zeta)\mid N\geq 0\}.
\end{align*} 
We will write $\ss_N^{\eps,\eps'}\in \cB_{\eps,\eps'}$ to denote the element corresponding to $N$ in $\cB_{\eps,\eps'}$.

\medskip

The following proposition will be useful at a later stage. 

\begin{Prop}
\label{prop:Weyl-like}
Let $\eps_1,\eps_2,\eps_3\in\{-,+\}$ and $k,\ell\in \nN$. 
\begin{enumerate}
\item We have $\ss_k^{(\eps_1,\eps_2)}\cdot \ss_\ell^{(\eps_2,\eps_3)}=\sum_{m\in\mathbb{N}}\mu_{k,\ell}^m(\eps_1,\eps_2,\eps_3)\ss_m^{(\eps_1,\eps_3)}$ for some integers $\mu_{k,\ell}^m(\eps_1,\eps_2,\eps_3)$. 
\item We have $\mu_{k,\ell}^m(\eps_1,\eps_2,\eps_3)=\mu_{m,k}^{\ell}(\eps_3,\eps_1,\eps_2)$ for all $k,\ell,m\in\mathbb{N}$.
\item We have $\mu_{k,\ell}^0(\eps_1,\eps_2,\eps_1)\neq 0$ if and only if $\eps_1=\eps_2$ and $k=\ell$.
\end{enumerate}
\end{Prop}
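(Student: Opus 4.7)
\noindent
The plan is to establish all three claims by direct calculation with the explicit polynomial formulas. First, I would note that each basis element can be written uniformly as an interval of consecutive powers of $\zeta$: setting $P_{a,b} := \sum_{j=a}^{b} \zeta^j$, one has
\[
\ss_N^{+,+} = \ss_N^{-,-} = P_{-N, N}, \qquad \ss_N^{+,-} = P_{-N, N+1}, \qquad \ss_N^{-,+} = P_{-N-1, N}.
\]
The crucial observation is that $(\zeta - 1)P_{a,b} = \zeta^{b+1} - \zeta^a$, so in every case $(\zeta-1)\ss_N^{(\eps,\eps')}$ is a binomial of the form $\zeta^{N+p(\eps,\eps')} - \zeta^{-N+q(\eps,\eps')}$ for integers $p,q$ depending only on the sign pair. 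Multiplying the two factors yields
\[
(\zeta-1)^2\, \ss_k^{(\eps_1,\eps_2)} \ss_\ell^{(\eps_2,\eps_3)} \;=\; \zeta^{k+\ell+p_1+p_2} - \zeta^{k-\ell+p_1+q_2} - \zeta^{-k+\ell+q_1+p_2} + \zeta^{-k-\ell+q_1+q_2},
\]
a signed four-term Laurent polynomial.

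\smallskip
\noindent
For (1), on the other side the sum $(\zeta-1)^2 \sum_{m=a}^{b} \ss_m^{(\eps_1,\eps_3)}$ telescopes to the four-term polynomial
$\zeta^{b+p_3+1} - \zeta^{a+p_3} - \zeta^{-a+q_3+1} + \zeta^{-b+q_3}$. Matching the two four-term expressions (the exponents must coincide as multisets with signs) uniquely determines integers $a,b$ as linear functions of $k,\ell$ depending on the sign pattern, and forces the $\mu_{k,\ell}^m(\eps_1,\eps_2,\eps_3)$ to be $0$ or $1$, with value $1$ precisely when $m \in [a,b]$. In particular the structure constants are integers, proving (1). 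This reduces to seven cases of sign patterns (noting $u_k v_\ell = v_k u_\ell$ reduces the count), each a short telescoping computation.

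\smallskip
\noindent
For (2), observe from the explicit four-term expression above that the exponents appearing in $(\zeta-1)^2 \ss_k^{(\eps_1,\eps_2)} \ss_\ell^{(\eps_2,\eps_3)}$ are, up to sign, invariant (as multisets) under the simultaneous cyclic shift $(k,\ell,m) \mapsto (m,k,\ell)$ combined with $(\eps_1,\eps_2,\eps_3) \mapsto (\eps_3,\eps_1,\eps_2)$, once one replaces the free variable $m$ on one side by $k$ (respectively $\ell$) on the other. Since $\mu_{k,\ell}^m$ is characterized by this matching, the identity $\mu_{k,\ell}^m(\eps_1,\eps_2,\eps_3) = \mu_{m,k}^\ell(\eps_3,\eps_1,\eps_2)$ follows. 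Concretely, one verifies in each of the three sign-orbits under cyclic rotation that the triangle-type inequalities characterizing the admissible range $[a,b]$ are preserved under the cyclic substitution.

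\smallskip
\noindent
For (3), specialize the characterization from (1) to $m = 0$ and $\eps_3 = \eps_1$. In each of the four cases $(\eps_1,\eps_2,\eps_1) \in \{(\pm,\pm,\pm),(\pm,\mp,\pm)\}$ the explicit range $[a(k,\ell),b(k,\ell)]$ yields the stated criterion by a direct check. The main obstacle in the whole proof is the bookkeeping in (1): the seven distinct sign-pattern computations each require their own telescoping with slightly different shifts $p,q$, and some care is needed when the naive lower index $a$ falls below $0$ to produce the correct range. Once this organizational step is complete, (2) is a symmetry of the four-term identity and (3) is a specialization.
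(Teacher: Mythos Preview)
Your approach is correct and takes a genuinely different route from the paper's proof. Let me contrast them.

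The paper argues via known Schur-function machinery: it recognises $\ss_k^{(+,+)}=\ss_{2k}(\zeta^{1/2})$, reduces the $(+,+,+)$ case to the standard $A_1$ Clebsch--Gordan rule, and handles the mixed cases by expressing the $\mu$'s as sums of ordinary $A_1$ Littlewood--Richardson coefficients $c_{k,\ell}^m$. Part (2) then follows from the cyclic symmetry $c_{k,\ell}^m=c_{m,k}^\ell$ coming from self-adjointness of Schur functions under the Hall inner product, and (3) from $c_{k,\ell}^0=\delta_{k,\ell}$.

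Your argument is more elementary and more uniform: writing every $\ss_N^{(\eps,\eps')}$ as a contiguous geometric sum $P_{a,b}$ and passing to $(\zeta-1)^2\cdot(\text{product})$ reduces everything to matching two explicit four-term Laurent polynomials. This avoids any appeal to Schur functions or the Hall pairing and yields the structure constants directly as indicator functions of intervals. The cost is that (2) becomes a case check of triangle-type inequalities rather than a one-line inner-product identity; your outline for (2) is correct in spirit but would need the promised case verification to be complete. The paper's three cases $(+,+,+)$, $(+,+,-)$, $(+,-,+)$ already suffice by commutativity together with the symmetry $\zeta\mapsto\zeta^{-1}$ (which swaps $(+,-)$ and $(-,+)$), so your seven cases can be cut down.

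One caution on (3): if you carry out your ``direct check'' you will find $\mu_{k,k}^0(+,-,+)=1$ (indeed $\ss_0^{(+,-)}\ss_0^{(-,+)}=(1+\zeta)(1+\zeta^{-1})=\ss_0^{(+,+)}+\ss_1^{(+,+)}$), which is exactly what the paper's own formula $\mu_{k,\ell}^0(+,-,+)=c_{k,\ell}^0$ gives. Both methods therefore establish $\mu_{k,\ell}^0(\eps_1,\eps_2,\eps_1)\neq 0\Leftrightarrow k=\ell$, which is what is actually used downstream; the additional clause ``$\eps_1=\eps_2$'' in the statement does not follow from either argument.
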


\begin{proof}
By obvious symmetry and commutativity it is sufficient to check the cases $(\eps_1,\eps_2,\eps_3)=(+,+,+)$, $(+,+,-)$, and $(+,-,+)$. We first recall that the Schur functions $\ss_{\lambda}=\ss_{\lambda}(\zeta)$ form an orthonormal basis with respect to the Hall inner product $\langle\cdot,\cdot\rangle$, and in type $A_1$ they are self adjoint with respect to this inner product. Therefore if $\ss_k\ss_{\ell}=\sum_mc_{k,\ell}^m\ss_m$ we have
$
c_{k,\ell}^m=\langle \ss_k\ss_{\ell},\ss_m\rangle=\langle \ss_m\ss_k,\ss_{\ell}\rangle=\langle \ss_{\ell}\ss_m,\ss_k\rangle,
$
and thus $c_{k,\ell}^m=c_{m,k}^{\ell}=c_{\ell,m}^k$. Furthermore, if $\ell\leq k$ we have $\ss_k\ss_{\ell}=\sum_{j=0}^{\ell}\ss_{k-\ell+2j}$, and thus $c_{k,\ell}^0=\delta_{k,\ell}$ (if $k<\ell$ then interchange the roles of $k$ and $\ell$).

\medskip

Consider the case $(\eps_1,\eps_2,\eps_3)=(+,+,+)$. Using the formula for Schur functions of type $A_1$ we compute $\ss_k^{(+,+)}=\ss_k(\zeta)+\ss_{k-1}(\zeta)=\ss_{2k}(\zeta^{1/2})$, where we introduce a new formal indeterminant $\zeta^{1/2}$ with $(\zeta^{1/2})^2=\zeta$. It follows that $\ss_k^{(+,+)}\ss_{\ell}^{(+,+)}=\ss_k(\zeta^{1/2})\ss_{\ell}(\zeta^{1/2})$ can be expressed as a linear combination of $\ss_m(\zeta^{1/2})$, and that the coefficients in this expansion are $\mu_{k,\ell}^m(+,+,+)=c_{2k,2\ell}^{2m}$. Thus (1) and (2) hold, and the ``if'' part of (3).
\medskip

Consider the case $(\eps_1,\eps_2,\eps_3)=(+,+,-)$. Then
\begin{align*}
\ss_k^{(+,+)}\ss_{\ell}^{(+,-)}&=(1+\zeta)(\ss_k+\ss_{k-1})\ss_{\ell}=\sum_m(c_{k,\ell}^m+c_{k-1,\ell}^m)(1+\zeta)\ss_m,
\end{align*}
and so $\mu_{k,\ell}^m(+,+,-)=c_{k,\ell}^m+c_{k-1,\ell}^m$. Similarly,
$
\mu_{m,k}^{\ell}(-,+,+)=c_{m,k}^{\ell}+c_{m,k-1}^{\ell},
$ and thus $\mu_{m,k}^{\ell}(-,+,+)=\mu_{k,\ell}^m(+,+,-)$.  
\medskip

Consider the case $(\eps_1,\eps_2,\eps_3)=(+,-,+)$. Then 
$$
\ss_k^{(+,-)}\ss_{\ell}^{(-,+)}=(2\ss_0+\ss_1)\ss_k\ss_{\ell}=\sum_{m}c_{k,\ell}^m(2\ss_0+\ss_1)\ss_m=\sum_m(c_{k,\ell}^m+c_{k,\ell}^{m-1})(\ss_m+\ss_{m-1})=\sum_m(c_{k,\ell}^m+c_{k,\ell}^{m-1})\ss_m^{(+,+)},
$$
and so $\mu_{k,\ell}^m(+,-,+)=c_{k,\ell}^m+c_{k,\ell}^{m-1}$ (here $c_{k,\ell}^{-1}=0$ by definition). An easy calculation gives $\mu_{m,k}^{\ell}(+,+,-)=c_{m,k}^{\ell}+c_{m-1,k}^{\ell}$, and hence (2). The `only if' part of (3) also follows.
\end{proof}

Thus, finally we have:

\begin{Th}\label{thm:balanced}
For each choice of parameters there exists a balanced system of cell representations.
\end{Th}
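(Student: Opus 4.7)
The plan is to assemble the balanced system from the representations already constructed in Sections~\ref{sec:balanced}, \ref{sec:4}, and~\ref{sec:5}, and then verify the single remaining axiom $\B{5}$ by a direct inspection of bounds against the Hasse diagrams in Figure~\ref{partition}. Concretely, for each parameter regime $r=a/b$ and each two-sided cell $\Gamma\in\tsc$, assign a representation as follows: take $\pi_\Gamma = \pi_0$ for $\Gamma=\Gamma_0$, $\pi_\Gamma = \pi_i$ for $\Gamma=\Gamma_i$ with $i\in\{1,2\}$, and for each finite cell take the cell representation (or direct sum of cell representations in the case $(\Gamma_3,r=1)$) provided by Theorem~\ref{thm:finitecellbalanced}.

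First I would record that $\B{1}$ is verified for each chosen representation: trivially for $\pi_0$ since $\Gamma_0$ is the lowest two-sided cell, by Theorem~\ref{thm:piiB1} for $\pi_1,\pi_2$, and by the general observation in Section~\ref{sec:def-balanced} for the finite cell modules. Axioms $\B{2}$, $\B{3}$, $\B{4}$, and $\B{4}'$ for the finite cells were established in Theorem~\ref{thm:finitecellbalanced}. For $\Gamma_0$ these axioms hold by Theorem~\ref{thm:mainlowest}, with $\ba_{\pi_0}=L(\sw_0)=3a+3b$ and leading matrices $\fc_{\pi_0,w}=\ss_{\tau_w}(\zeta)E_{\su_w,\sv_w}$. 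For $\Gamma_1$ and $\Gamma_2$ in generic parameter regimes, $\B{2}$ is Corollary~\ref{B2generic} (with bounds $\ba_{\pi_i}$ as in Theorem~\ref{thm:mainbounds}), and $\B{3}$, $\B{4}$, $\B{4}'$ follow from Theorem~\ref{thm:maxpath1} and Corollary~\ref{cor:B3B4generic}. The non-generic cases $r=2$ for $\Gamma_1$ and $r=3/2$ for $\Gamma_2$ are handled by Theorem~\ref{equal}.

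The only axiom that has not been directly checked in the preceding sections is $\B{5}$: that the bounds $\ba_{\pi_\Gamma}$ are monotone with respect to $\leq_{\cLR}$. I would verify this by compiling a table of bounds in each of the seven parameter regimes, cross-referenced with the Hasse diagrams in Figure~\ref{partition}. Using Theorem~\ref{thm:finitecellbalanced}, Theorem~\ref{thm:mainlowest}, and Theorem~\ref{thm:mainbounds}, the bounds are: $\ba_{\pi_{\Gamma_e}}=0$, $\ba_{\pi_{\Gamma_0}}=3a+3b$, $\ba_{\pi_{\Gamma_3}}=a$, $\ba_{\pi_{\Gamma_4}}=b$, $\ba_{\pi_{\Gamma_5}}=3b$ (in $r>2$), $\ba_{\pi_{\Gamma_6}}=a+b$ (in $2>r>3/2$), $\ba_{\pi_{\Gamma_7}}=3a-2b$ (in $3/2>r>1$) or $\ba_{\pi_{\Gamma_7}}=a$ (in $r<1$), together with $\ba_{\pi_{\Gamma_1}}$ and $\ba_{\pi_{\Gamma_2}}$ as in Theorem~\ref{thm:mainbounds}. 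For each of the seven diagrams in Figure~\ref{partition}, I would read off the covering relations and check, in each instance, that the inequality on bounds is consistent with the parameter regime; for example, in the regime $r>2$ one checks $3a+3b\geq a+b\geq 3b\geq a\geq b\geq 0$, which is clear from $a>2b>0$, and similarly in each of the remaining six regimes.

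The hard part is essentially behind us: the nontrivial content is in the construction of $\pi_0,\pi_1,\pi_2$, the bounding of matrix coefficients via alcove walks and folding tables (Sections~\ref{sec:4}--\ref{sec:5}), and the analysis of leading matrices in the non-generic regimes. What remains is bookkeeping, but it is sensitive to the parameter regime: one must verify $\B{5}$ separately for each of the seven cell partitions, and in particular handle the two non-generic parameters $r\in\{2,3/2\}$ where the cells $\Gamma_1$ or $\Gamma_2$ merge with cells from the adjacent generic regime. Since the bounds $\ba_{\pi_1}$ and $\ba_{\pi_2}$ are piecewise linear in $(a,b)$, these boundary values agree with the limits of the generic bounds on either side, so no new check is required beyond the generic regimes themselves.
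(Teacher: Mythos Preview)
Your approach is the same as the paper's: assemble the representations from Theorem~\ref{thm:finitecellbalanced}, Theorem~\ref{thm:mainlowest}, Corollary~\ref{B2generic}, Corollary~\ref{cor:B3B4generic}, and Theorem~\ref{equal}, and then verify $\B{5}$ by direct inspection against the Hasse diagrams. That is exactly how the paper proceeds.

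However, the bookkeeping in your $\B{5}$ check contains errors. First, several of your bounds are stated unconditionally when they in fact depend on the regime: for instance $\ba_{\pi_{\Gamma_3}}=a$ only when $r\geq 1$ (for $r<1$ the bound is $b$, as one sees from Table~\ref{dihedral-cell} with the roles of $s_1,s_2$ swapped), and $\ba_{\pi_{\Gamma_4}}=b$ only when $r>1$ (for $r<1$ it is $3b-2a$, as computed in the proof of Theorem~\ref{thm:finitecellbalanced}). Second, your sample chain for $r>2$, namely $3a+3b\geq a+b\geq 3b\geq a\geq b\geq 0$, is wrong on two counts: it omits $\ba_{\pi_{\Gamma_2}}=3a-2b$, and it asserts $3b\geq a$, which is not a consequence of $r>2$. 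The correct check uses the covering relations of the Hasse diagram (where $\Gamma_3$ and $\Gamma_5$ are incomparable), and one must verify, for example, $\Gamma_2\leq_{\cLR}\Gamma_1$ via $3a-2b\geq a+b$ and $\Gamma_1\leq_{\cLR}\Gamma_5$ via $a+b\geq 3b$, both of which follow from $a>2b$. So the strategy is right, but the execution of the final tabulation needs to be redone carefully regime by regime.
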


\begin{proof}
This follows from Theorem~\ref{thm:finitecellbalanced}, Theorem~\ref{thm:mainlowest}, Corollary~\ref{B2generic}, Corollary~\ref{cor:B3B4generic} and Theorem~\ref{equal}. Property $\B{5}$ is checked directly from our formulae for $\ba_{\pi_{\Ga}}$. 
\end{proof}

We can now explicitly compute Lusztig's $\ba$-function for $\tilde{G}_2$. 

\begin{Cor}\label{cor:computea}
In type $\tilde{G}_2$ we have $\ba(w)=\ba_{\pi_\Ga}$ if $w\in\Gamma$. 
\end{Cor}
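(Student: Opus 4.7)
The plan is to apply Theorem~\ref{thm:afn} directly. That theorem says that if a Hecke algebra admits a system of cell representations satisfying $\B{1}$--$\B{5}$ together with the auxiliary axiom $\B{4}'$, then Lusztig's $\ba$-function is constant on each two-sided cell $\Gamma$ and equals the bound $\ba_{\pi_\Gamma}$ of the associated representation. So the entire task reduces to pointing out that in type $\tilde{G}_2$ all six axioms have been verified somewhere in Sections~\ref{sec:balanced}--\ref{sec:5}, and are collected by Theorem~\ref{thm:balanced}.

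First I would invoke Theorem~\ref{thm:balanced}, which guarantees that for every choice of parameters $(a,b)$ the Hecke algebra of type $\tilde{G}_2$ admits a balanced system of cell representations; that is, axioms $\B{1}$--$\B{5}$ hold. Next I would note that axiom $\B{4}'$ has been checked case by case for each two-sided cell in $\tilde{G}_2$: for the finite cells this is part of Theorem~\ref{thm:finitecellbalanced}, for the lowest two-sided cell $\Gamma_0$ this is part of Theorem~\ref{thm:mainlowest}, for the infinite cells $\Gamma_1$ and $\Gamma_2$ in generic parameter regimes this is Corollary~\ref{cor:B3B4generic}, and in the two non-generic regimes ($r=2$ for $\Gamma_1$ and $r=3/2$ for $\Gamma_2$) it is the last part of the proof of Theorem~\ref{equal}.

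Having established that all hypotheses of Theorem~\ref{thm:afn} hold, the conclusion $\ba(w)=\ba_{\pi_\Gamma}$ for every $w\in\Gamma$ follows immediately. There is no genuine obstacle here since the real work has already been done: the construction of the $\Gamma$-balanced representations $\pi_\Gamma$ in each regime, together with the explicit determination of their leading matrices in terms of Schur functions of types $G_2$ and $A_1$, already forces $\B{4}'$ to hold (in each case one can take $x=\su_w^{-1}\sw_\Gamma$ and $y$ to be the ``complementary'' factor of the cell factorisation, so that $\fc_{\pi_\Gamma,x}\fc_{\pi_\Gamma,y}=\fc_{\pi_\Gamma,w}$, which forces the degree-$\ba_{\pi_\Gamma}$ coefficient $\tilde\gamma_{x,y,w^{-1}}$ to be nonzero). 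Thus the corollary is a direct packaging of Theorem~\ref{thm:afn} with the axioms verified in the preceding sections.
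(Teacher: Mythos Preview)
Your proposal is correct and follows essentially the same approach as the paper: the paper's proof simply cites Theorem~\ref{thm:balanced} and Theorem~\ref{thm:afn}. Your version is slightly more explicit in tracking where $\B{4}'$ is verified (since Theorem~\ref{thm:balanced} as stated only asserts $\B{1}$--$\B{5}$), but this is just added care rather than a different argument.
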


\begin{proof}
This follows from Theorem~\ref{thm:balanced} and Theorem~\ref{thm:afn}.
\end{proof}


\section{The Plancherel Theorem, conjecture \conj{1}, and the Duflo involutions}\label{sec:plancherel}

In this section we prove \conj{1} and compute the set $\cD$ of Duflo ``involutions'' for all choices of parameters (and hence see that the elements of $\cD$ are indeed involutions). The main piece of machinery is the Plancherel Theorem of Opdam~\cite{Op:04} and the explicit $\tilde{G}_2$ formulation of this theorem computed by the second author in~\cite{Par:14}. 

\medskip

Let us first briefly recall the situation for finite dimensional Hecke algebras. In this case the \textit{canonical trace} $\mathrm{Tr}:\cH\to\sR$ with $\mathrm{Tr}(\sum a_wT_w)=a_e$ decomposes as
\begin{align}\label{eq:gendeg}
\mathrm{Tr}(h)=\sum_{\pi\in\mathrm{Irrep}(\cH)}m_{\pi}\chi_{\pi}(h)\quad\text{for all $h\in\cH$},
\end{align}
where the elements $m_{\pi}$ are the \textit{generic degrees} of $\cH$ (see~\cite[Chapter~11]{GP:00}). This formula is a crucial ingredient in Geck's proof \cite{Geck:11} of Lusztig's conjectures for spherical type $F_4$. In particular, the observation that the ``$\sq$-valuation'' $\nu_{\sq}(m_{\pi})$ (see below) of $m_{\pi}$ is equal to $2\ba_{\pi}$ played a central role in Geck's proof.

\medskip

There is an analogue of~(\ref{eq:gendeg}) for affine Hecke algebras in the form of the remarkable \textit{Plancherel formula} of Opdam~\cite{Op:04} (see also Opdam and Solleveld~\cite{OpSol:10}). The summation in~(\ref{eq:gendeg}) becomes an integral over irreducible representations of a $C^*$-algebra completion of $\cH$, and the generic degrees become the \textit{Plancherel measure}~$d\mu$. 

\medskip

In this section we recall the explicit formulation of the Plancherel formula in type $\tilde{G}_2$ computed by the second author in~\cite{Par:14}, and show that in this case there is an analogue of the formula $\nu_{\sq}(m_{\pi})=2\ba_{\pi}$ in terms of the Plancherel measure. We will use this observation to prove \conj{1}, \conj{7}, and compute the set $\cD$. Along the way we will also introduce the \textit{asymptotic Plancherel measure} (which we believe is a new, although it appears to be related to recent work of Braverman and Kazhdan~\cite{BK:17}) and show that this measure induces an inner product on Lusztig's asymptotic algebra~$\mathcal{J}$ (at least in type $\tilde{G}_2$). We believe that these observations provide an intriguing connection between Kazhdan-Lusztig cells and the Plancherel formula -- see also the conjectures listed at the end of Section~\ref{sec:lusztig}.

\subsection{The Plancherel formula}\label{sec:7.1}

The main references for this section are \cite{Op:04} and \cite{Par:14}. The Plancherel Theorem is an analytic concept, and therefore we now take a slightly different view of the affine Hecke algebra. We extend the scalars to $\mathbb{C}$, and specialise $\sq$ to a real number $q>1$. Thus $\cH$ is now an algebra over $\mathbb{C}$. We write $T_w$ and $C_w$ for the images of the standard basis and Kazhdan-Lusztig basis elements in~$\cH$. Note also that the representations $\cH_{\Upsilon}$ for any right cell $\Upsilon$ can naturally be regarded as representations of the Hecke algebra $\cH$ defined over~$\mathbb{C}$ by extending scalars and specialising~$\sq$. 

\medskip

Let $(\pi,V)$ be a finite dimensional $\cH$-module (now over $\mathbb{C}$). Recall that 
$$
V=\bigoplus_{\zeta\in\mathrm{Hom}(P,\mathbb{C}^{\times})}V_{\zeta}^{\mathrm{gen}}
$$
where $
V_{\zeta}^{\mathrm{gen}}=\{v\in V\mid \text{ for each $\lambda\in P$ we have $(X^{\lambda}-\zeta^{\lambda})^kv=0$ for some $k\in\mathbb{N}$}\}
$
is the \textit{generalised $\zeta$-weight space} of $V$. Let $\mathrm{supp}(\pi)=\{\zeta\in\mathrm{Hom}(P,\mathbb{C}^{\times})\mid V_{\zeta}^{\mathrm{gen}}\neq\{0\}\}$ be the \textit{support} of $(\pi,V)$. A representation $(\pi,V)$ is \textit{tempered} if $|\zeta^{\lambda}|\leq 1$ for all $\zeta\in\mathrm{supp}(\pi)$ and all $\lambda\in P^+$, and it is \textit{square integrable} if $|\zeta^{\lambda}|<1$ for all $\zeta\in\mathrm{supp}(\pi)$ and all $\lambda\in P^+\backslash\{0\}$. 

\medskip

Define an involution $*$ on $\cH$ and the \textit{canonical trace functional} $\mathrm{Tr}:\cH\to\mathbb{C}$ by
$$
\left(\sum_{w\in W}a_wT_w\right)^*=\sum_{w\in W}\overline{a_w}\,T_{w^{-1}}\quad\text{and}\quad \mathrm{Tr}\left(\sum_{w\in W}a_wT_w\right)=a_e
$$
where now $\overline{a_w}$ denotes complex conjugation. An induction on $\ell(v)$ shows that $\mathrm{Tr}(T_uT_v^*)=\delta_{u,v}$ for all $u,v\in W$, and hence $\mathrm{Tr}(h_1h_2)=\mathrm{Tr}(h_2h_1)$ for all $h_1,h_2\in \cH$. It follows that
$
(h_1,h_2)=\mathrm{Tr}(h_1h_2^*)
$
defines a Hermitian inner product on $\cH$. Let $\|h\|_2=\sqrt{(h,h)}$ be the $\ell^2$-norm. The algebra $\cH$ acts on itself by left multiplication, and the corresponding operator norm is $\|h\|=\sup\{\|hx\|_2\colon x\in\cH,\|x\|_2\leq 1\}$. Let $\overline{\cH}$ denote the completion of $\cH$ with respect to this norm. Thus $\overline{\cH}$ is a non-commutative $C^*$-algebra. The irreducible representations of $\overline{\cH}$ are the (unique) extensions of the irreducible representations of $\cH$ that are continuous with respect to the $\ell^2$-operator norm, and it is known that these are the irreducible tempered representations of $\cH$ (see \cite[\S2.7 and Corollary~6.2]{Op:04}). In particular, every irreducible representation of $\overline{\cH}$ is finite dimensional (since every irreducible representation of $\cH$ has degree at most $|W_0|$), and it follows from the general theory of traces on ``liminal'' $C^*$-algebras that there exists a unique positive Borel measure $\mu$, called the \textit{Plancherel measure}, such that (see \cite[\S8.8]{Dix:77})
$$
\mathrm{Tr}(h)=\int_{\irreps}\chi_{\pi}(h)\,d\mu(\pi)\quad\text{for all $h\in\overline{\cH}$}.
$$
The Plancherel formula has been obtained in general by Opdam~\cite{Op:04}. We now recall the explicit formulation in type~$\tilde{G}_2$ from~\cite{Par:14}. We first describe the representations that appear in the Plancherel formula.

\medskip

We define the representations $\pi_0$, $\pi_1$, and $\pi_2$ as in Sections~\ref{sec:pi0const} and~\ref{sec:piiconst}, however now the ring of scalars is $\mathbb{C}$, and $\zeta\in\mathrm{Hom}(P,\mathbb{C}^{\times})$ in the case $\pi_0$, and $\zeta\in\mathrm{Hom}(\mathbb{Z},\mathbb{C}^{\times})$ in the cases $\pi_1$ and $\pi_2$. To emphasise the dependence on the central character $\zeta$ we write $\pi_i=\pi_i^{\zeta}$ for $i=0,1,2$, and we write $\chi_i^{\zeta}$ for the corresponding characters. These representations are tempered if and only if $|\zeta^{\lambda}|=1$ for all $\lambda\in P$ (in the case $i=0$) and $|\zeta^n|=1$ for all $n\in\mathbb{Z}$ (in the cases $i=1,2$). Therefore the tempered representations correspond to $\zeta\in\mathbb{T}^2$ (in the case $i=0$) and $\zeta\in\mathbb{T}$ (in the case $i=1,2$), where $\mathbb{T}=\{t\in\mathbb{C}\mid |t|=1\}$.\medskip

Let $\pi_3=\rho_{\emptyset}$ be the $1$-dimensional representation of $\cH$ with $\pi_3(T_j)=-q^{-L(s_j)}$ for $j=0,1,2$ (using the notation of Example~\ref{exa:balanced-one-dim}). Let $\pi_4=\rho_3^+$ and $\pi_5=\rho_3^-$ be the two three dimensional irreducible representations constructed in Proposition~\ref{prop:decompositions}, and let $\pi_6\sim\Gamma_4$ in the case $r\neq 1$, and let $\pi_6=\rho_3''$ in the case $r=1$ (recall the $\sim$ notation from Section~\ref{sec:balanced} and the definition of $\rho_3''$ from Proposition~\ref{prop:decompositions}). Finally, let $\pi_7$ be the following representation, depending on the parameter regime (if $r\in\{3/2,2\}$ then $\pi_7$ is not defined, and does not appear in the Plancherel Theorem below):
\begin{align*}
\pi_7=\begin{cases}
\text{the $1$-dimensional representation $\rho_{\{1\}}$}&\text{if $r<3/2$}\\
\text{the $5$-dimensional representation $\rho\sim\Gamma_6$}&\text{if $3/2<r<2$}\\
\text{the $1$-dimensional representation $\rho_{\{0,2\}}$}&\text{if $2<r$.}
\end{cases}
\end{align*}
 Let $\chi_3,\ldots,\chi_7$ be the characters of the above representations. 
\medskip

We now describe the Plancherel measure. Let $\omega=e^{2\pi i/3}$ and define functions $c_j(\zeta)$, $j=0,1,2$, by
\begin{align*}
c_0(\zeta)&=\frac{(1-q^{-2a}\zeta_1^{-1})(1-q^{-2a}\zeta_1^{-2}\zeta_2^{-3})(1-q^{-2a}\zeta_1^{-1}\zeta_2^{-3})(1-q^{-2b}\zeta_2^{-1})(1-q^{-2b}\zeta_1^{-1}\zeta_2^{-2})(1-q^{-2b}\zeta_1^{-1}\zeta_2^{-1})}{(1-\zeta_1^{-1})(1-\zeta_1^{-2}\zeta_2^{-3})(1-\zeta_1^{-1}\zeta_2^{-3})(1-\zeta_2^{-1})(1-\zeta_1^{-1}\zeta_2^{-2})(1-\zeta_1^{-1}\zeta_2^{-1})}\\
c_1(\zeta)&=\frac{(1+q^{-a}\omega \zeta^{-1})(1+q^{-a}\omega^{-1}\zeta^{-1})(1-q^{-2b}\zeta^{-2})(1+q^{-a-2b}\zeta^{-1})(1+q^{a-2b}\zeta^{-1})}{(1-\zeta^{-2})(1+q^{-a}\zeta^{-1})(1+q^{a}\zeta^{-3})}\\
c_2(\zeta)&=\frac{(1-q^{-2a}\zeta^{-2})(1+q^{-2a-3b}\zeta^{-1})(1+q^{-2a+3b}\zeta^{-1})}{(1-\zeta^{-2})(1+q^{3b}\zeta^{-1})(1+q^b\zeta^{-1})}.
\end{align*}
(We note that there is a change in the formulae for $c_1(\zeta)$ and $c_2(\zeta)$ from those in \cite{Par:14} to reflect the fact that our representations $\pi_1^{\zeta}$ and $\pi_2^{\zeta}$ are related to the representations in \cite{Par:14} by $\zeta\to-\zeta$). Write $F(x)=x-1$, $G(x)=x+1$, $H(x)=x^2+x+1$, and $H'(x)=x^2-x+1$ and define
\begin{align*}
C_3&=\frac{F(q^{2a+4b})F(q^{4a+6b})}{G(q^{2a})G(q^{2b})H(q^{2b})H(q^{2a+2b})}&C_4&=\frac{q^{2a}F(q^{2a})F(q^{2b})}{2G(q^{2a})G(q^{2b})H(q^{a-b})H(q^{a+b})}&C_5&=\frac{q^{2a}F(q^{2a})F(q^{2b})}{2G(q^{2a})G(q^{2b})H'(q^{a-b})H'(q^{a+b})}\\
C_6&=\frac{q^{2b}F(q^{2a})F(q^{6a})}{H(q^{2b})H(q^{2a-2b})H(q^{2a+2b})}&C_7&=\frac{q^{2a-4b}F(q^{-2a+4b})F(q^{4a-6b})}{G(q^{2a})G(q^{-2b})H(q^{-2b})H(q^{2a-2b})}.
\end{align*}

\begin{Th}[{Plancherel Theorem for $\tilde{G}_2$, \cite[Theorem~4.7]{Par:14}}]\label{thm:planch}
For each $h\in\overline{\cH}$ we have
\begin{align*}
\Tr(h)&=\frac{1}{12q^{6a+6b}}\int_{\mathbb{T}^2}\frac{\chi^{\zeta}_0(h)}{|c_0(\zeta)|^2}\,d\zeta_1d\zeta_2+\frac{F(q^{2a})^2}{2q^{2a+6b}F(q^{4a})}\int_{\mathbb{T}}\frac{\chi_1^{\zeta}(h)}{|c_1(\zeta)|^2}\,d\zeta+\frac{F(q^{2b})^2}{2q^{6a+4b}F(q^{4b})}\int_{\mathbb{T}}\frac{\chi_2^{\zeta}(h)}{|c_2(\zeta)|^2}\,d\zeta+\sum_{k=3}^7 |C_k|\chi_k(h)
\end{align*}
where $d\zeta$ denotes the normalised Haar measure on the group $\mathbb{T}$ (thus $\int_{\mathbb{T}}f(\zeta)\,d\zeta=\frac{1}{2\pi}\int_{0}^{2\pi}f(e^{i\theta})\,d\theta$). 
\end{Th}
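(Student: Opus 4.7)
The plan is to follow Opdam's general Plancherel formula for affine Hecke algebras \cite{Op:04} and then carry out the explicit computations that are possible in rank $2$ for type $\tilde{G}_2$. Opdam's theorem asserts that $\mathrm{Tr}$ decomposes as a sum of integrals over the parameter spaces of residual cosets, each family of tempered representations contributing a term whose density is determined by a Macdonald-type $c$-function and whose total mass is a rational function in $q^a,q^b$. So the first task is to classify the (equivalence classes of) irreducible tempered representations of $\cH$; this is a finite collection of continuous families together with a finite list of isolated (discrete series) representations.

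First, I would list the tempered representations. The $2$-parameter family $\{\pi_0^{\zeta}\mid \zeta\in \mathbb{T}^2\}$ is the unitary principal series induced from the maximal torus $\sR[Q]$, and is automatically tempered. The $1$-parameter families $\{\pi_1^{\zeta}\mid\zeta\in\mathbb{T}\}$ and $\{\pi_2^{\zeta}\mid\zeta\in\mathbb{T}\}$ arise as parabolic induction from the two maximal Levi subalgebras $\cH_1,\cH_2$ (generated by $X_1,X_2$ together with $T_1$ or $T_2$ respectively), where the character $\xi_i$ of $\cH_i$ is chosen so that the resulting central character lies on a residual line in $\mathrm{Hom}(P,\mathbb{C}^\times)$. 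The remaining tempered representations must be square integrable (discrete series): these are the representations $\pi_3,\ldots,\pi_7$, where $\pi_3,\pi_7$ are one-dimensional signs that are tempered precisely in the parameter ranges listed, $\pi_4,\pi_5$ are the two $3$-dimensional cell representations from Proposition~\ref{prop:decompositions}, and $\pi_6$ is the Steinberg-type representation attached to $\Gamma_4$ (or the $2$-dimensional $\rho_3''$ in the equal-parameter case). The key check is that the central characters $\zeta$ appearing in the support of each $\pi_k$ satisfy $|\zeta^{\lambda}|<1$ for all $\lambda\in P^+\backslash\{0\}$; this is done by direct calculation using the action of the $X^\lambda$ on a basis.

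Next I would determine the Plancherel densities on the continuous families by identifying the Macdonald $c$-functions $c_0,c_1,c_2$ in the formula with the explicit products of factors $(1-q^{-2L(s)}\zeta^{-\alpha^\vee})/(1-\zeta^{-\alpha^\vee})$ dictated by Opdam's general formula specialised to $\tilde{G}_2$: the six factors in $c_0$ correspond to the six positive roots, while $c_1,c_2$ are the residues of $c_0$ along the two residual lines. For the discrete series, the constants $C_k$ are the formal degrees; these can be computed either by residue calculus (evaluating the residue of the $c_0$-density along the residual points supporting each $\pi_k$) or, more elementarily, by using the constraint that the entire formula must reproduce $\mathrm{Tr}(T_e)=1$ and must also give the correct values of $\mathrm{Tr}(T_w T_w^*)=1$ for a few well chosen $w\in W$. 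A convenient bookkeeping device is to evaluate both sides on $h=C_w$ for $w$ lying in each two-sided cell, using that each $\pi_k$ vanishes on cells that are not $\leq_{\cLR}$-related to the support of the representation.

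The hard part will be the verification that the listed formulae for $C_3,\ldots,C_7$ are the correct formal degrees in every parameter regime simultaneously: the discrete series that actually appears changes as $r=a/b$ crosses the walls $r=1,3/2,2$, and several of the constants $C_k$ must be rewritten (or simply vanish) in certain regimes. The cleanest route is to apply Opdam's general residue formula uniformly and to check that the poles of $|c_0|^{-2}$ that move onto/off the unitary torus as $r$ varies match exactly the representations $\pi_k$ that become/cease to be tempered at that value of $r$; the semicontinuity of the Plancherel measure across parameter walls then pins down the $C_k$ completely. Once this is done, the global identity $\mathrm{Tr}(h)=\int \chi_\pi(h)\,d\mu(\pi)$ follows because both sides are continuous traces on $\overline{\cH}$ that agree on the dense subspace spanned by the $T_w$, which is the content of \cite[Theorem~4.7]{Par:14}.
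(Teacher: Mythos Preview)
The paper does not prove this statement: Theorem~\ref{thm:planch} is simply quoted from \cite[Theorem~4.7]{Par:14}, with no proof given beyond the citation and a brief remark that the representations $\pi_4,\ldots,\pi_7$ there are constructed differently but are easily checked to be isomorphic to the ones defined here. So there is no ``paper's own proof'' to compare against.

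Your proposal is a reasonable outline of the strategy carried out in \cite{Par:14}: specialise Opdam's general Plancherel formula \cite{Op:04} to $\tilde{G}_2$, classify the residual cosets and the associated tempered families (principal series, the two one-parameter induced families, and the discrete series), identify the $c$-functions as the relevant Macdonald products and their residues, and compute the formal degrees $C_k$ by residue calculus. That is indeed how the result is obtained. One small caution: your suggestion to pin down the $C_k$ by evaluating on a few test elements $T_w$ or $C_w$ is heuristically plausible but is not how the formal degrees are actually determined in \cite{Par:14}; they come directly from Opdam's residue formula, and the parameter-regime analysis (which $\pi_k$ are square integrable for which $r$) is done by tracking when residual points lie in the correct region, not by an ad hoc matching of trace values. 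But as a sketch of the architecture of the proof in the cited reference, your proposal is accurate.
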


\begin{Rem}
The representations $\pi_4,\ldots,\pi_7$ were constructed differently in \cite{Par:14}, however it is an easy exercise to verify that they are isomorphic to the representations given above.
\end{Rem}

\subsection{The Plancherel formula and cell decompositions}

It is convenient to group the representations that appear under integral signs in the Plancherel formula (Theorem~\ref{thm:planch}) into ``classes'' $\Pi_0=\{\pi_0^{\zeta}\mid \zeta\in\mathbb{T}^2\}$ and $\Pi_i=\{\pi_i^{\zeta}\mid \zeta\in\mathbb{T}\}$ for $i=1,2$. The remaining representations (the ``point masses'' in the Plancherel formula) are taken to be in their own classes: $\Pi_j=\{\pi_j\}$ for $3\leq j\leq 7$. We make the following observation comparing the cell decomposition and the Plancherel formula in type $\tilde{G}_2$.

\begin{Prop}\label{propobs:1}
For each parameter regime there is a well defined surjective map $\Omega:\{\Pi_j\mid 0\leq j\leq 7\}$ given by 
$$
\Omega(\Pi_j)=\begin{cases}
\Gamma_j&\text{if $j\in\{0,1,2\}$}\\
\Gamma&\text{if $3\leq j\leq 7$ and $\pi_j$ is a submodule of a cell module $\cH_{\Up}$ for some finite right cell $\Up\subseteq\Gamma$.}
\end{cases}
$$
\end{Prop}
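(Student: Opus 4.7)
The plan is to establish well-definedness of $\Omega$ on the point-mass classes and surjectivity separately; on the infinite classes $\Pi_0,\Pi_1,\Pi_2$ the value is prescribed and there is nothing to verify. The main inputs for well-definedness are Proposition~\ref{prop:decompositions}, which decomposes each cell module $\cH_\Upsilon$ of a right cell $\Upsilon\subseteq\Gamma_3$ into irreducibles, together with Theorem~\ref{thm:finitecellbalanced}, whose proof provides explicit matrices (and hence the composition factors) for the cell modules of the remaining finite cells; surjectivity then amounts to matching the two-sided cell decompositions in Figure~\ref{partition} against the list of images computed below in each regime.

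For each $j\in\{3,\ldots,7\}$ I would identify, in each relevant parameter range, a finite right cell $\Upsilon_j$ such that $\pi_j$ embeds as a submodule of $\cH_{\Upsilon_j}$, and then verify that any other right cell with this property lies in the same two-sided cell. Concretely, $\pi_3=\rho_\emptyset$ is literally the cell module $\cH_{\{e\}}$, so $\Omega(\Pi_3)=\Gamma_e$. The representations $\pi_4=\rho_3^+$ and $\pi_5=\rho_3^-$ are by Proposition~\ref{prop:decompositions}(2)--(3) the common three-dimensional summands of every right cell module in $\Gamma_3$; a dimension/character comparison against the explicit matrices of Theorem~\ref{thm:finitecellbalanced}, combined with the irreducibility statement of Proposition~\ref{prop:decompositions}(1) for the other finite cells, rules out their appearance in cell modules of right cells contained in $\Gamma_4,\Gamma_5,\Gamma_6$ or $\Gamma_7$, so $\Omega(\Pi_4)=\Omega(\Pi_5)=\Gamma_3$. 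For $\pi_6$ the definition $\pi_6\sim\Upsilon\subseteq\Gamma_4$ when $r\neq 1$ gives $\Omega(\Pi_6)=\Gamma_4$ directly; at $r=1$ we have $\pi_6=\rho_3''$, the common two-dimensional summand of $\cH_{\Upsilon_2}$ and $\cH_{\Upsilon_3}$ in Proposition~\ref{prop:decompositions}(3), giving $\Omega(\Pi_6)=\Gamma_3$. For $\pi_7$ the three generic sub-ranges $r>2$, $3/2<r<2$ and $r<3/2$ (with $r\neq 1$) directly identify $\pi_7$ with a cell module of a right cell in $\Gamma_5$, $\Gamma_6$, and $\Gamma_7$ respectively; at $r=1$ a direct check of the action of $T_1$ on the explicit generator $\mathbf{e}_1-\mathbf{e}_4+\mathbf{e}_7$ of $\rho_3'$ from the proof of Proposition~\ref{prop:decompositions}(3) shows $\rho_{\{1\}}\cong\rho_3'\hookrightarrow\cH_{\Upsilon_1}$, and hence $\Omega(\Pi_7)=\Gamma_3$.

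Surjectivity is then a regime-by-regime list-matching exercise: in each of the seven parameter ranges of Figure~\ref{partition} I would enumerate the two-sided cells and check that each is hit by one of the computed values $\Omega(\Pi_0),\ldots,\Omega(\Pi_7)$, with $\Pi_7$ removed from the domain at the two transition values $r\in\{3/2,2\}$. For instance, when $r>2$ the cells $\Gamma_e,\Gamma_4,\Gamma_5,\Gamma_3,\Gamma_1,\Gamma_2,\Gamma_0$ are respectively hit by $\Pi_3,\Pi_6,\Pi_7,\Pi_4,\Pi_1,\Pi_2,\Pi_0$, and the remaining six regimes are analogous, the only subtleties being the disappearance of $\Gamma_5$ above $r=2$, of $\Gamma_6$ outside $3/2<r<2$, of $\Gamma_7$ outside $r<3/2$, and of $\Gamma_4$ at $r=1$. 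The main obstacle is precisely this collapse at $r=1$, where $\Gamma_4$ and $\Gamma_7$ cease to be distinct two-sided cells and the right cell modules of $\Gamma_3$ are no longer all isomorphic; a naive dimension count does not separate $\rho_3''$ from $\rho_{\{1\}}$, and one must appeal to the explicit submodule generators supplied in the proof of Proposition~\ref{prop:decompositions}(3) to pin down the identifications $\pi_6\cong\rho_3''$ and $\pi_7\cong\rho_3'$ and conclude that both classes are sent to $\Gamma_3$ in this regime.
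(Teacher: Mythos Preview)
Your proposal is correct and follows essentially the same approach as the paper: both amount to reading off, case by case, the decomposition of each finite cell module from Proposition~\ref{prop:decompositions} (and the explicit data in Theorem~\ref{thm:finitecellbalanced}) and matching against the list of two-sided cells in Figure~\ref{partition}; the paper simply states this in one line and gives two sample regimes, while you spell out the identifications $\Omega(\Pi_j)$ in full. One cosmetic slip: $\Gamma_5$ exists only for $r>2$, so it disappears \emph{below} (not above) $r=2$; likewise $\Gamma_7$ is absent at $r=1$ as well as for $r\geq 3/2$, though you handle the $r=1$ case correctly in your final paragraph.
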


\begin{proof}
This follows immediately by comparing the Plancherel formula and the cell decomposition, using Proposition~\ref{prop:decompositions}. For example, if $2>r>3/2$ we have $\Omega(\Pi_3)=\Gamma_e$, $\Omega(\Pi_4)=\Omega(\Pi_5)=\Gamma_3$, $\Omega(\Pi_6)=\Gamma_4$, and $\Omega(\Pi_7)=\Gamma_6$, and if $r=1$ we have $\Omega(\Pi_3)=\Gamma_e$, and $\Omega(\Pi_4)=\Omega(\Pi_5)=\Omega(\Pi_6)=\Omega(\Pi_7)=\Gamma_3$. 
\end{proof}

We will sometimes write $\Omega(\pi)$ in place of $\Omega(\Pi)$ if $\pi\in\Pi$. 

\begin{Cor}\label{cor:nice}
Each representation $\pi$ appearing in the Plancherel Theorem for $\tilde{G}_2$ admits a basis such that \B{1} and \B{2} hold, with bound $\ba_{\pi}=\ba(w)$ for any $w\in\Omega(\pi)$. 
\end{Cor}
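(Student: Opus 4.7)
The plan is to dispatch each representation appearing in the Plancherel Theorem according to its type, using the partition of $\{\Pi_0,\ldots,\Pi_7\}$ afforded by the map $\Omega$ of Proposition~\ref{propobs:1}. In each case the required properties have essentially been established earlier in the paper, and the proof reduces to a case-by-case bookkeeping exercise.

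First I would handle the ``continuous'' families $\pi_0^\zeta, \pi_1^\zeta, \pi_2^\zeta$ corresponding to $\Pi_0,\Pi_1,\Pi_2$. These are specialisations (at $\sq=q$ and at a point $\zeta$ on the compact torus) of the representations constructed in Sections~\ref{sec:pi0const}--\ref{sec:piiconst} over an $\sR$-polynomial ring. Property \B{1} holds for them by Theorem~\ref{thm:piiB1} (the case $i=0$ is trivial since $\Gamma_0$ is the lowest two-sided cell), and property \B{2} with bound $\ba_{\pi_i}$ follows from Theorem~\ref{thm:mainlowest} for $i=0$ and from Corollary~\ref{B2generic} together with Theorem~\ref{equal} at the non-generic parameter $r=r_i$ for $i\in\{1,2\}$. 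Both properties are identities on polynomial matrix entries, hence are preserved by the specialisation. Finally, Corollary~\ref{cor:computea} identifies $\ba_{\pi_i}$ with $\ba(w)$ for every $w\in\Gamma_i=\Omega(\pi_i^\zeta)$.

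Next I would treat $\pi_3=\rho_\emptyset$ separately, since Example~\ref{exa:balanced-one-dim} already observes that $\rho_\emptyset$ is $\Gamma_e$-balanced, with trivial bound $\ba_{\pi_3}=0=\ba(e)$.

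The remaining point-mass representations $\pi_4,\pi_5,\pi_6,\pi_7$ are, by inspection of the tables in Proposition~\ref{propobs:1}, direct summands of cell representations associated to the \emph{finite} cells $\Omega(\pi_k)\in\{\Gamma_3,\Gamma_4,\Gamma_6,\Gamma_7\}$ (with the precise correspondence depending on the parameter regime, see Proposition~\ref{prop:decompositions}). By Theorem~\ref{thm:finitecellbalanced} the ambient cell representation $\rho\sim\Upsilon$ is $\Omega(\pi_k)$-balanced with bound $\tilde{\ba}_{\Omega(\pi_k)}$, and Corollary~\ref{cor:computea} identifies this bound with $\ba(w)$ for $w\in\Omega(\pi_k)$. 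Property \B{1} passes to any submodule trivially (if $\rho(C_w)=0$ then every subrepresentation vanishes on $C_w$), and for \B{2}, as recorded in Remark~\ref{rem:satisfyB}, the change of basis matrix bringing $\rho$ into block form with respect to the decomposition of Proposition~\ref{prop:decompositions} is independent of $\sq$, so the bound on the degrees of matrix coefficients is inherited by each summand.

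There is no genuine obstacle: the only work is organisational, namely verifying that the surjection $\Omega$ of Proposition~\ref{propobs:1} correctly pairs each Plancherel representation with its two-sided cell and that the bounds computed in Sections~\ref{sec:balanced}--\ref{sec:5} coincide with $\ba(w)$ via Corollary~\ref{cor:computea}. The mild subtlety is that $\pi_6$ and $\pi_7$ are assigned to different cells in different parameter regimes, but this is precisely what Proposition~\ref{propobs:1} records, and in every regime the bound of the parent cell representation equals the $\ba$-value on the target cell.
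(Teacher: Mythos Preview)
Your proposal is correct and follows essentially the same approach as the paper's proof, which cites Proposition~\ref{propobs:1}, Remark~\ref{rem:satisfyB}, and Theorem~\ref{thm:piiB1}. Your version is simply more explicit about which earlier results handle which representations (in particular you spell out the role of Theorem~\ref{thm:finitecellbalanced} and Corollary~\ref{cor:computea}, which the paper leaves implicit), but the content is the same case-by-case bookkeeping.
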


\begin{proof}
This follows immediately from Proposition~\ref{propobs:1}, Remark~\ref{rem:satisfyB}, and Theorem~\ref{thm:piiB1}.
\end{proof}

Henceforth we will assume that each representation appearing in the Plancherel Theorem is equipped with such a basis.

\medskip

We note that the tempered irreducible representations of $\cH$ are precisely the representations that appear in the Plancherel Theorem for $\tilde{G}_2$. This can be seen directly by classifying, via central characters and weight spaces, all irreducible tempered representations of $\cH$ in an analogous way to~\cite{Ram:02} and comparing with the Plancherel Theorem from Theorem~\ref{thm:planch}. Thus, using Proposition~\ref{propobs:1} we note the following.

\begin{Obs}\label{obs:2}
Every tempered irreducible representation $\pi$ in type $\tilde{G}_2$ satisfies \B{1} with respect to~$\Gamma=\Omega(\pi)$.  
\end{Obs}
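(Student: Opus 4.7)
The plan is to reduce the observation to Corollary~\ref{cor:nice} by showing that every tempered irreducible representation of $\cH$ in type $\tilde{G}_2$ is (isomorphic to) one of the representations appearing in the Plancherel formula of Theorem~\ref{thm:planch}. Once this identification is established, the statement is immediate: for each such $\pi$ the two-sided cell $\Omega(\pi)$ is defined by Proposition~\ref{propobs:1}, and Corollary~\ref{cor:nice} provides a basis in which \B{1} holds with respect to $\Gamma = \Omega(\pi)$.

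To carry out the identification, I would invoke the general $C^*$-algebraic machinery recalled in Section~\ref{sec:7.1}: the irreducible representations of $\overline{\cH}$ are exactly the irreducible tempered representations of $\cH$ (see \cite[\S2.7, Corollary~6.2]{Op:04}), and the support of the Plancherel measure is faithful on $\overline{\cH}$. Consequently, any irreducible tempered representation $\pi$ of $\cH$ must appear with positive mass in Theorem~\ref{thm:planch}, for otherwise the central projection $e_\pi$ would have $\mathrm{Tr}(e_\pi h e_\pi^*)=0$ for all $h\in \overline{\cH}$, forcing $e_\pi = 0$. Reading off Theorem~\ref{thm:planch} then shows that $\pi$ must belong to one of the classes $\Pi_0,\ldots,\Pi_7$ (for the given parameter regime), and Proposition~\ref{propobs:1} attaches to it a well-defined two-sided cell $\Omega(\pi)$.

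The main technical step, and the one which in principle requires the most work, is the identification of the $\pi_j$'s in Theorem~\ref{thm:planch} with concrete submodules of Kazhdan--Lusztig cell modules carried out in Proposition~\ref{propobs:1} and Proposition~\ref{prop:decompositions}. However, since all of this has already been verified in the earlier sections, the present observation reduces to little more than a bookkeeping exercise. An alternative route, avoiding reliance on the Plancherel support being faithful, is to classify the tempered irreducibles of $\cH$ directly through a central character and weight space analysis in the spirit of Ram~\cite{Ram:02} and compare this classification case-by-case with Theorem~\ref{thm:planch}; this is routine in type $\tilde{G}_2$ but tedious across the seven parameter regimes, which is why the statement is recorded as an observation rather than a theorem.
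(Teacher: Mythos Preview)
Your proposal is correct and lands on the same reduction as the paper: identify the tempered irreducibles with the representations appearing in Theorem~\ref{thm:planch}, then invoke Corollary~\ref{cor:nice} (together with the definition of $\Omega$ in Proposition~\ref{propobs:1}) to conclude \B{1}.

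The only difference is one of emphasis. Your primary justification for the identification is the $C^*$-algebraic one---faithfulness of the canonical trace forces the Plancherel measure to have full support on $\irreps$, so every tempered irreducible must occur in the formula. The paper instead takes the route you list as the alternative: it simply asserts (in the paragraph immediately preceding the Observation) that one can classify all tempered irreducibles of $\cH$ by a central character and weight space analysis in the style of Ram~\cite{Ram:02} and check directly that the list coincides with $\Pi_0,\ldots,\Pi_7$. Your $C^*$-argument is cleaner and type-independent, and avoids the seven-regime case check; the paper's route is more hands-on but requires no abstract harmonic analysis beyond what is already quoted. Either way the content is the same, and this is why the paper records it as an Observation rather than a Theorem.
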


\begin{Rem}
We note that any finite dimensional representation $\pi$ satisfying \B{1} with respect to a finite cell $\Gamma\in\tsc$ is necessarily tempered. To see this, note that since $\pi(C_w)=0$ whenever $w\notin\Gamma_{\geq_{\cLR}}$, and thus $\pi(C_w)$ is nonzero for only finitely many $w\in W$. Hence there is a bound on the matrix coefficients of $\pi(C_w)$, and then Casselman's Criterion applies (see Opdam \cite[Lemma~2.20]{Op:04}). In fact one can check that $\pi$ is square integrable (see Lusztig~\cite[\S3]{Lus4} and \cite{Lus:83}). 
\end{Rem}

\subsection{The asymptotic Plancherel measure}

Each rational function $f(\sq)=a(\sq)/b(\sq)$ can be written as $f(\sq)=\sq^{-N}a'(\sq^{-1})/b'(\sq^{-1})$ with $N\in\mathbb{Z}$  where $a'(\sq^{-1})$ and $b'(\sq^{-1})$  polynomials in $\sq^{-1}$ nonvanishing at $\sq^{-1}=0$. The integer $N$ in this expression is uniquely determined, and is called the \textit{$\sq$-valuation} of $f$, written $\nu_{\sq}(f)=N$. For example, $\nu_{\sq}((\sq^2+1)(\sq^3+1)/(\sq^7-\sq+1))=2$.

\begin{Def}
Let $\Pi$ be a class of representations appearing in the Plancherel Theorem, and let $C$ be the `coefficient' of a generic character $\chi_{\pi}$ with $\pi\in\Pi$. Consider this coefficient as a rational function $C=C(\sq)$ in $\sq$ by setting $q=\sq$. We define the \textit{$\sq$-valuation} of $\Pi$ to be $\nu_{\sq}(\Pi)=\nu_{\sq}(C(\sq))$. We also write $\nu_{\sq}(\pi)=\nu_{\sq}(\Pi)$ for any $\pi\in\Pi$. 
\end{Def}

For example, consider the class $\Pi_2$. The associated coefficient is
$$
\frac{(\sq^{2b}-1)^2(1-\zeta^{-2})(1-\zeta^2)(1-\sq^{3b}\zeta^{-1})(1-\sq^{3b}\zeta)(1-\sq^b\zeta^{-1})(1-\sq^b\zeta)}{2\sq^{6a+4b}(\sq^{4b}-1)(1-\sq^{-2a}\zeta^{-2})(1-\sq^{-2a}\zeta^2)(1-\sq^{-2a-3b}\zeta^{-1})(1-\sq^{-2a-3b}\zeta)(1-\sq^{-2a+3b}\zeta^{-1})(1-\sq^{-2a+3b}\zeta)},
$$
and thus
$$
\nu_{\sq}(\Pi_2)=\begin{cases}
2(a+b)&\text{if $a/b\leq 3/2$}\\
2(3a-2b)&\text{if $a/b>3/2$.}
\end{cases}
$$
For another example consider the class $\Pi_7=\{\pi_7\}$. We have
$$
\nu_{\sq}(\Pi_7)=\nu_{\sq}\left(\frac{\sq^{2a-4b}(\sq^{-2a+4b}-1)(\sq^{4a-6b}-1)}{(\sq^{2a}+1)(\sq^{-2b}+1)(\sq^{-4b}+\sq^{-2b}+1)(\sq^{4a-4b}+\sq^{2a-2b}+1)}\right)
=\begin{cases}
2a&\text{if $r\leq 1$}\\
2(3a-2b)&\text{if $1<r<3/2$}\\
2(a+b)&\text{if $3/2<r<2$}\\
2(3b)&\text{if $2<r$}.
\end{cases}
$$
Note that the values of the $\ba$-function are arising in these examples. Indeed we have the following theorem, where $\ba(\Gamma)$ denotes the constant value of Lusztig's $\ba$-function on the two-sided cell~$\Gamma$, and $\Omega$ is as in Proposition~\ref{propobs:1}. Note the similarity with the finite dimensional case described at the beginning of this section.

\begin{Th}\label{thm:nu}
For each classes $\Pi$ appearing the the Plancherel formula in type $\tilde{G}_2$ we have
$
\nu_{\sq}(\Pi)=2\ba(\Omega(\Pi)).
$
\end{Th}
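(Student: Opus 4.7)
The plan is to verify the identity $\nu_{\sq}(\Pi) = 2\ba(\Omega(\Pi))$ class by class, using that the values of $\ba$ on each two-sided cell are known explicitly by Corollary~\ref{cor:computea} together with the bounds $\ba_{\pi_\Gamma}$ computed in Theorems~\ref{thm:finitecellbalanced}, \ref{thm:mainlowest}, \ref{thm:mainbounds} and~\ref{equal}. Because the correspondence $\Omega$ is established by Proposition~\ref{propobs:1} in each of the seven parameter regimes, the problem reduces to computing $\nu_{\sq}$ of the coefficient of each generic character appearing in Theorem~\ref{thm:planch}, and comparing with $2\ba(\Omega(\Pi))$.

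First I would handle the three ``continuous'' classes $\Pi_0,\Pi_1,\Pi_2$. For $\Pi_0$ the coefficient is $(12\sq^{6a+6b}|c_0(\zeta)|^2)^{-1}$; writing $|c_0|^2$ as a product of factors of the form $(1-\sq^{-N}\zeta^{\mu})(1-\sq^{-N}\zeta^{-\mu})$ (each contributing $\sq$-valuation $0$ for generic $\zeta$ on the torus, since both numerator and denominator of $c_0$ have the same form), the only contribution to $\nu_{\sq}$ comes from the prefactor $\sq^{-6a-6b}$, giving $\nu_{\sq}(\Pi_0)=2L(\sw_0)=2(3a+3b)=2\ba(\Gamma_0)$ as required by Theorem~\ref{thm:mainlowest}. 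For $\Pi_1$ and $\Pi_2$ one performs the analogous computation: the rational factors $|c_i(\zeta)|^2$ have their own $\sq$-valuations depending on the parameter regime (this is where the regime splits $r>2$ vs.\ $r<2$ and $r>3/2$ vs.\ $r<3/2$ enter), and combined with the prefactors one recovers exactly $\ba_{\pi_i}$ from Theorem~\ref{thm:mainbounds} doubled. The only subtle point is the non-generic regimes $r=2$ and $r=3/2$, but these do not enter here because $\nu_{\sq}$ is defined by a generic rational-function identity, and the values of $\ba$ given in Theorem~\ref{equal} agree with the appropriate limit from either side.

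Next I would treat the ``point mass'' classes $\Pi_3,\ldots,\Pi_7$ by a direct calculation of $\nu_{\sq}(C_k)$ for $k=3,\ldots,7$. Each $C_k$ is a product of cyclotomic-type factors $F,G,H,H'$ evaluated at powers of $\sq$, so $\nu_{\sq}(C_k)$ is easily read off in each of the seven regimes determined by the positions of $r=a/b$ relative to $1, 3/2, 2$ (one must attend to the sign of each exponent inside $F(\sq^n)=\sq^n-1$, since $\nu_{\sq}(F(\sq^n))=\min(n,0)$). Comparing with $2\ba(\Omega(\Pi_k))$, computed from Theorem~\ref{thm:finitecellbalanced} and Corollary~\ref{cor:computea} using the map $\Omega$ from Proposition~\ref{propobs:1}, should yield equality in every case. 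For instance in the regime $3/2<r<2$, $\Omega(\Pi_7)=\Gamma_6$ with $\ba(\Gamma_6)=a+b$, and the sample computation already carried out above gives $\nu_{\sq}(\Pi_7)=2(a+b)$.

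The main obstacle is not mathematical difficulty but the sheer number of cases: seven regimes times eight classes, with the $\sq$-valuations of $|c_1|^2$ and $|c_2|^2$ requiring particular care because several factors change sign of exponent as $r$ crosses the critical values. A clean presentation would organise the verification in a single table with rows indexed by $\Pi_k$ and columns indexed by the seven regimes, giving $\nu_{\sq}(\Pi_k)$ and $2\ba(\Omega(\Pi_k))$ side by side; the equality will then be manifest. Throughout, one exploits the fact that $\Omega$ respects the natural stratification of $\tsc$ and that the bounds $\ba_{\pi_\Gamma}$ have the same piecewise-linear-in-$(a,b)$ form as the $\sq$-valuations of the Plancherel coefficients, which is the structural reason the identity holds.
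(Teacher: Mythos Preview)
Your proposal is correct and takes essentially the same approach as the paper: the paper's proof reads in its entirety ``This is by direct inspection using the formula in Theorem~\ref{thm:planch}'', and what you have written is precisely an organised plan for carrying out that inspection class by class and regime by regime. One small slip: with the paper's convention $\nu_{\sq}(\sq^n)=-n$, one has $\nu_{\sq}(F(\sq^n))=-\max(n,0)$ rather than $\min(n,0)$; this does not affect the method, only the bookkeeping in your table.
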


\begin{proof}
This is by direct inspection using the formula in Theorem~\ref{thm:planch}.
\end{proof}

\begin{Def} Using Theorem~\ref{thm:nu} we can define an \textit{asymptotic Plancherel measure} on $\irreps$ by
$$
d\mu'(\pi)=\lim_{q\to\infty} q^{2\ba_{\pi}}d\mu(\pi).
$$
\end{Def}

\begin{Prop}\label{prop:asymp}
The asymptotic Plancherel measure on the classes $\Pi_0$ and $\Pi_i$ with $i=1,2$ is as follows:
\begin{align*}
d\mu'(\pi_0^{\zeta})&=\frac{1}{12}\prod_{\alpha\in\Phi^+}|1-\zeta^{-\alpha^{\vee}}|^2\,d\zeta_1d\zeta_2&d\mu'(\pi_i^{\zeta})&=\begin{cases}
\frac{1}{2}|1-\zeta^{-2}|^2\,d\zeta&\text{if $r\neq r_i$ is generic for $\Gamma_i$}\\
\frac{1}{2}|1-\zeta^{-1}|^2\,d\zeta&\text{if $r=r_i$ is non-generic for $\Gamma_i$}
\end{cases}
\end{align*}
For the classes of finite cells we have $\mu'(\pi_3)=1$, $\mu'(\pi_5)=\frac{1}{2}$, and 
\begin{align*}
d\mu'(\pi_4)&=\begin{cases}
\frac{1}{2}&\text{if $r\neq 1$}\\
\frac{1}{6}&\text{if $r=1$}
\end{cases}&
d\mu'(\pi_6)&=\begin{cases}
1&\text{if $r\neq 1$}\\
\frac{1}{3}&\text{if $r=1$}
\end{cases}&
d\mu'(\pi_7)&=\begin{cases}
1&\text{if $r\notin\{1,\frac{3}{2},2\}$}\\
\frac{1}{3}&\text{if $r=1$}
\end{cases}
\end{align*}

\end{Prop}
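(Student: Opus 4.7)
The proof is a direct asymptotic analysis of the explicit Plancherel formula in Theorem~\ref{thm:planch} as $q\to\infty$. The central tool is the matching $\nu_{\sq}(\Pi)=2\ba(\Omega(\Pi))$ from Theorem~\ref{thm:nu}, together with $\ba_{\pi}=\ba(\Omega(\pi))$ from Corollary~\ref{cor:nice}; these guarantee that after multiplying each Plancherel coefficient by $q^{2\ba_{\pi}}$ the resulting limit is finite and nonzero. The basic mechanism is that a factor $(1-q^{-m}\zeta^{-\mu})$ with $m>0$ tends to $1$ as $q\to\infty$, whereas a factor $(1\pm q^{m}\zeta^{-\mu})$ with $m>0$ grows like $\pm q^{m}\zeta^{-\mu}$; case analysis records which factors survive.

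First I would treat the principal series class $\Pi_0$. Every factor in the numerator of $c_0(\zeta)$ is of the form $(1-q^{-m}\zeta^{-\mu})$ with $m\in\{2a,2b\}$, so the numerator tends to $1$, and by inspection the denominator of $c_0(\zeta)$ is $\prod_{\alpha\in\Phi^+}(1-\zeta^{-\alpha^{\vee}})$. Hence $|c_0(\zeta)|^{-2}\to \prod_{\alpha\in\Phi^+}|1-\zeta^{-\alpha^{\vee}}|^2$, and combined with the prefactor $\frac{1}{12q^{6a+6b}}$ and the factor $q^{2\ba_{\pi_0}}=q^{2L(\sw_0)}=q^{6a+6b}$, the claim for $d\mu'(\pi_0^{\zeta})$ follows.

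Next I would treat $\Pi_1$ and $\Pi_2$, which require a small case analysis. For $\pi_1^{\zeta}$ in the generic regime $a>2b$, the numerator factor $(1+q^{a-2b}\zeta^{-1})$ and the denominator factor $(1+q^{a}\zeta^{-3})$ are the only divergent pieces of $c_1(\zeta)$; their ratio combined with the surviving $(1-\zeta^{-2})^{-1}$ gives $|c_1(\zeta)|^{-2}\sim q^{4b}|1-\zeta^{-2}|^2$, and together with the prefactor asymptotics $F(q^{2a})^2/(2q^{2a+6b}F(q^{4a}))\sim 1/(2q^{2a+6b})$ and $q^{2\ba_{\pi_1}}=q^{2(a+b)}$ one obtains $\tfrac12|1-\zeta^{-2}|^2$. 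The case $a<2b$ is symmetric. The interesting case is the non-generic parameter $a=2b$: the factor $(1+q^{a-2b}\zeta^{-1})=(1+\zeta^{-1})$ now survives finite, and using the identity $(1-\zeta^{-2})=(1-\zeta^{-1})(1+\zeta^{-1})$ this surviving factor cancels, leaving $|c_1(\zeta)|^{-2}\sim q^{2a}|1-\zeta^{-1}|^2$ and hence $d\mu'(\pi_1^{\zeta})=\tfrac12|1-\zeta^{-1}|^2\,d\zeta$. The analysis of $\pi_2^{\zeta}$ is entirely analogous.

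Finally, the point masses $\pi_3,\ldots,\pi_7$ are handled by direct computation of the leading $q$-behaviour of $C_3,\ldots,C_7$, using the asymptotics $F(q^m),G(q^m)\sim q^m$ and $H(q^m),H'(q^m)\sim q^{2m}$ for $m>0$, that $H(q^{-m}),H'(q^{-m})\to 1$ for $m>0$, and the special values $H(1)=3$, $H'(1)=1$. Multiplication by $q^{2\ba_{\pi_k}}$ then produces the claimed constants. The notable features are: the appearance of $\tfrac13$ at $r=1$ in $\mu'(\pi_6)$ (coming from $H(q^{2a-2b})|_{a=b}=H(1)=3$ in the denominator of $C_6$) and in $\mu'(\pi_7)$ (from the analogous contribution in $C_7$), and the appearance of $\tfrac16$ in $\mu'(\pi_4)$ at $r=1$ from the same $H(1)=3$ factor in $C_4$ combined with the prefactor $\tfrac12$; by contrast $\pi_5$ involves $H'$ rather than $H$, and since $H'(1)=1$ one still obtains $\mu'(\pi_5)=\tfrac12$ at $r=1$. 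The computation is essentially bookkeeping rather than a conceptual obstacle; the main delicate point is the cancellation producing $|1-\zeta^{-1}|^2$ (rather than $|1-\zeta^{-2}|^2$) at the non-generic parameters, which is consistent with the semicontinuity behaviour of $\Gamma_1$ and $\Gamma_2$ described in Section~\ref{sec:equal}.
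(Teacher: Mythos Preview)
Your proposal is correct and takes essentially the same approach as the paper, which simply states ``This is a straightforward calculation.'' Your write-up is in fact considerably more detailed than the paper's own proof, and the case analysis (in particular the cancellation yielding $|1-\zeta^{-1}|^2$ at the non-generic parameters and the role of $H(1)=3$ versus $H'(1)=1$ in distinguishing $\pi_4$ from $\pi_5$ at $r=1$) is accurate.
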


\begin{proof}
This is a straightforward calculation.
\end{proof}

\begin{Rem}\label{rem:on}
Note that the measure $d\mu'$ on $\Pi_0=\Omega^{-1}(\Gamma_0)$ is the Hall measure, and thus the Schur functions of type $G_2$ are orthonormal with respect to this measure (see, for example, \cite[Proposition~3.1]{Ram:03}). Similarly, in the generic cases for $\Pi_1$ and $\Pi_2$ the measure $d\mu'$ is the Hall measure of type $A_1$. In the non-generic cases $d\mu'$ is the Hall measure for the modified Schur functions $\ss_k(\zeta^{1/2})$. 
\end{Rem}

\subsection{The conjecture \conj{1}}\label{sec:P1}

We can now prove that \conj{1} holds for $\tilde{G}_2$. 

\begin{Th}\label{thm:P1} Lusztig's conjecture \conj{1} holds for $\tilde{G}_2$.
\end{Th}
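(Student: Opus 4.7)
The plan is to deduce \conj{1} from the Plancherel Theorem and from the explicit match between Lusztig's $\ba$-function and the $\sq$-valuations of the Plancherel densities (Theorem~\ref{thm:nu}). For $z=e$ both $\Delta(e)=0$ and $\ba(e)=0$, so assume $z\neq e$. Since $\mathrm{Tr}(T_w)=\delta_{w,e}$, the expansion $C_z=T_z+\sum_{y<z}P_{y,z}T_y$ gives $P_{e,z}=\mathrm{Tr}(C_z)$. The inequality $\ba(z)\leq \Delta(z)$ is then equivalent to the statement $\sq^{\ba(z)}P_{e,z}\in\mathbb{Z}[\sq^{-1}]$, i.e.\ to the assertion that $q^{\ba(z)}P_{e,z}(q)$ is bounded as $q\to\infty$. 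So the whole problem reduces to a single asymptotic estimate on the rational function~$P_{e,z}(q)$.

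By Theorem~\ref{thm:planch}, the trace $\mathrm{Tr}(C_z)=P_{e,z}$ decomposes as a finite sum of discrete contributions and integrals over the classes $\Pi_0,\Pi_1,\Pi_2,\ldots$ of tempered representations described in Proposition~\ref{propobs:1}. I will equip each representation $\pi$ in any of these classes with a basis as in Corollary~\ref{cor:nice}, so that $\B{1}$ and $\B{2}$ hold with bound $\ba_\pi=\ba(\Omega(\pi))$. If the contribution of a class $\Pi$ is nonzero, then some $\pi\in\Pi$ must satisfy $\pi(C_z)\neq 0$; by $\B{1}$ this forces $z\geq_{\cLR}\Omega(\pi)$, and by $\B{5}$ together with Corollary~\ref{cor:computea} one then gets $\ba(\Omega(\pi))\geq \ba(z)$. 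Thus every nontrivially contributing class corresponds to a cell $\Omega(\Pi)$ sitting weakly above $z$ in the two-sided order.

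Next I would estimate each nontrivial integral. By $\B{2}$, the matrix entries of $\pi(C_z)$ are Laurent polynomials in the central character $\zeta$ whose coefficients are polynomials in $\sq$ of degree at most $\ba(\Omega(\pi))$, so $|\chi_\pi(C_z)|=O(q^{\ba(\Omega(\Pi))})$ uniformly in $\zeta$ on the compact torus. On the other hand, Theorem~\ref{thm:nu} shows that the Plancherel density of $\Pi$ has $\sq$-valuation exactly $2\ba(\Omega(\Pi))$, and Proposition~\ref{prop:asymp} identifies the asymptotic density $q^{2\ba(\Omega(\Pi))}\,d\mu|_\Pi$ as a bounded, finite-mass measure on the corresponding torus. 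Combining these bounds, the contribution of $\Pi$ to $P_{e,z}(q)$ is $O(q^{\ba(\Omega(\Pi))-2\ba(\Omega(\Pi))})=O(q^{-\ba(\Omega(\Pi))})=O(q^{-\ba(z)})$. Summing over the finitely many classes yields $P_{e,z}(q)=O(q^{-\ba(z)})$ as $q\to\infty$, which, as explained above, is exactly $\Delta(z)\geq\ba(z)$.

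The most delicate step will be the uniform control of the Plancherel densities $1/|c_i(\zeta)|^2$ on the compact tori: these have potential singularities arising from the denominators $(1-\zeta^{-\alpha^\vee})$ in the $c$-functions. The saving grace is that, at the level of the \emph{asymptotic} density, these singularities are cancelled by the numerators $(1-q^{-2L(s_\alpha)}\zeta^{-\alpha^\vee})$, giving the genuinely bounded expressions of Proposition~\ref{prop:asymp}. Once this pointwise asymptotic boundedness is upgraded to a bound that is uniform in $\zeta$ \emph{and} in $q$ for $q$ sufficiently large, the passage from pointwise to integrated estimates becomes routine and the argument closes.
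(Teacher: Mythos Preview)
Your argument is correct and is essentially the paper's own proof: apply the Plancherel decomposition to $P_{e,z}=\mathrm{Tr}(C_z)$, use $\B{1}$ via Corollary~\ref{cor:nice} to restrict to classes $\Pi$ with $\Omega(\Pi)\leq_{\cLR}\Gamma$, use $\B{2}$ and Theorem~\ref{thm:nu} to bound each contribution by $O(q^{-\ba(z)})$, and invoke the explicit form of the asymptotic densities (Proposition~\ref{prop:asymp}) to justify passing to the limit. One small correction to your final paragraph: the factors $(1-\zeta^{-\alpha^\vee})$ sit in the \emph{denominator} of $c_i(\zeta)$, hence they produce \emph{zeros} of $1/|c_i(\zeta)|^2$, not poles; the densities $1/|c_i(\zeta)|^2$ are genuinely bounded on the torus for $q>1$ (the numerators of $c_i$ have no zeros there except possibly at the non-generic parameter values $r=r_i$, which is exactly where Proposition~\ref{prop:asymp} records a different limiting density), so there is no cancellation to arrange and this step is less delicate than you anticipate.
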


\begin{proof}
Recall that $\Delta(w)$ is defined by $P_{e,w}=n_w\sq^{-\Delta(w)}+\text{(strictly smaller powers of $\sq$)}$, where $n_w\neq 0$. We are required to prove that $\ba(w)\leq\Delta(w)$. This is equivalent to showing that
$$
\lim_{q\to\infty}q^{\ba(w)}P_{e,w}(q)<\infty,
$$
where we write $P_{e,w}(q)$ for the specialisation of $P_{e,w}$ at $\sq=q$. By the Plancherel Theorem we have
\begin{align*}
q^{\ba(w)}P_{e,w}(q)=q^{\ba(w)}\mathrm{Tr}(C_w)&=\int_{\irreps}q^{\ba(w)}\chi_{\pi}(C_w)\,d\mu(\pi).
\end{align*}
Suppose that $w$ is in the two-sided cell~$\Ga$. In type $\tilde{G}_2$ it follows from Corollary~\ref{cor:nice}  that the integral above is over only those classes of representations associated to the cells $\Gamma'$ with $\Ga\geq_{\cLR}\Gamma'$. For each such class of representations the Plancherel measure is, by Corollary~\ref{cor:computea} and Theorem~\ref{thm:nu}, of the form 
$$
d\mu(\pi)=q^{-2\ba_{\pi_{\Ga'}}}(1+\mathcal{O}(q^{-1}))d\mu'(\pi)
$$ 
where $d\mu'$ is the asymptotic Plancherel measure. Thus the integrand (with respect to the asymptotic Plancherel measure) is $q^{\ba(w)-\ba_{\pi_{\Ga'}}}\mathrm{tr}(\fc_{\pi,w})(1+\mathcal{O}(q^{-1}))$. Since $\Gamma\geq_{\cLR}\Gamma'$ we use \B{5} to give $\ba_{\pi_{\Ga'}}\geq \ba_{\pi_{\Gamma}}=\ba(w)$ and thus the power of $q$ in the integrand is at most~$0$. It is clear from the explicit $\tilde{G}_2$ Plancherel Theorem that the limit may be passed under the integral sign, and the result follows. 
\end{proof}

\subsection{The Duflo elements}

In this section we extend the idea in the proof of \conj{1} to compute the set $\cD$ of Duflo elements. This  calculation will be used in Section~\ref{sec:lusztig} when dealing with the conjectures involving~$\cD$. We note that since we have proved \conj{1} and computed Lusztig's $\ba$-function it is also possible to use a technique of Xie~\cite{Xie:15} to compute~$\cD$.

\begin{Th}
\label{th:set-D}
For $\Ga\in\tsc$ let $\cD_{\Ga}=\cD\cap \Ga$. For the infinite cells we have
\begin{align*}
\cD_{\Ga_i}&=\{u^{-1}\sw_i u\mid w\in\sB_i\}&&\text{for $i\in\{0,1,2\}$ with $r$ generic for $\Ga_i$}\\
\cD_{\Ga_1}&=\{\sw_1^-\}\cup \{u^{-1}\sw_1^+u\mid u\in\sB_1^+\cap s_1\sB_1^-\}&&\text{if $r=2$}\\
\cD_{\Ga_2}&=\{\sw_2^+\}\cup \{u^{-1}\sw_2^-u\mid u\in\sB_2^-\cap s_0\sB_2^+\}&&\text{if $r=3/2$},
\end{align*}
and for the finite cells we have
\begin{align*}
\cD_{\Ga_3}&=\begin{cases}
\{1,212,02120\}&\text{if $r>1$}\\
\{0,1,2\}&\text{if $r=1$}\\
\{0,2,121\}&\text{if $r<1$}
\end{cases}&
\cD_{\Ga_4}&=\begin{cases}
\{0,2\}&\text{if $r>1$}\\
\{21212,0212120\}&\text{if $r<1$}
\end{cases}&
\cD_{\Ga_5}&=\{212\}\\
\cD_{\Ga_6}&=\{u^{-1}s_0s_1u\mid u\in\{e,2,21,212,2120\}\}&
\cD_{\Ga_7}&=\begin{cases}
\{12121\}&\text{if $1<r<3/2$}\\
\{1\}&\text{if $r<1$}
\end{cases}
\end{align*}
\end{Th}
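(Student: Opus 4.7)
The strategy is to detect Duflo elements analytically via the Plancherel formula. Observe that $w\in\cD$ if and only if $\lim_{q\to\infty} q^{\ba(w)} P_{e,w}(q)\neq 0$, since by definition $P_{e,w}(q)=n_w q^{-\Delta(w)}+\text{lower}$ with $n_w\neq 0$ and $\ba(w)\leq \Delta(w)$ by Theorem~\ref{thm:P1}. Applying $P_{e,w}(q)=\mathrm{Tr}(C_w)$, Theorem~\ref{thm:planch}, and the substitution $d\mu(\pi)=q^{-2\ba_\pi}(1+O(q^{-1}))\,d\mu'(\pi)$ coming from Theorem~\ref{thm:nu}, I would argue exactly as in the proof of Theorem~\ref{thm:P1}, using \B{1} together with \B{5} to discard any class $\pi$ with $\ba_\pi>\ba(w)$, and \B{3} to discard classes with $\Omega(\pi)\neq\Ga$. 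The resulting identity is
\begin{equation*}
\lim_{q\to\infty} q^{\ba(w)} P_{e,w}(q)=\sum_{\pi\in\Omega^{-1}(\Ga)}\int \mathrm{tr}(\fc_{\pi,w})\,d\mu'(\pi)\qquad(w\in\Ga),
\end{equation*}
where the integration is against central characters for the infinite classes and is a weighted sum for the finite ones.

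For $\Ga_0$ and for generic $\Ga_i$ with $i\in\{1,2\}$, the leading matrices have the form $\fc_{\pi_i^\zeta,w}=\ss_{\tau_w}(\zeta)E_{\su_w,\sv_w}$ by Theorem~\ref{thm:mainlowest} and Corollary~\ref{cor:B3B4generic}, so the trace is $\ss_{\tau_w}(\zeta)\delta_{\su_w,\sv_w}$. Since $d\mu'$ is the Hall measure of the appropriate rank (Remark~\ref{rem:on}), the Schur functions are orthonormal and the integral vanishes unless $\tau_w=0$ and $\su_w=\sv_w$, giving exactly $\{u^{-1}\sw_i u\mid u\in\sB_i\}$.

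For the non-generic cases I would plug in the four formulae of Theorem~\ref{equal}. The mixed types $(\pm,\mp)$ yield matrices $E_{u,v}$ with $u\neq v$ (for example in $\Ga_1$ at $r=2$, type $(+,-)$ forces $v=s_2s_0\notin\sB_1^+\cap s_1\sB_1^-$, hence $v\neq u$), killing the trace. For the two diagonal types a direct computation shows $\tfrac{1}{2}\int_{\mathbb{T}}\ss_N(\zeta)|1-\zeta^{-1}|^2\,d\zeta=(-1)^N$, so the combinations $\ss_N+\ss_{N-1}$ and $\ss_N+\ss_{N+1}$ telescope to zero except at the single boundary index, singling out the elements listed.

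For the finite cells I would substitute the explicit leading matrices from the proof of Theorem~\ref{thm:finitecellbalanced} and Remark~\ref{rem:6dim}, and sum over the irreducibles in $\Omega^{-1}(\Ga)$ enumerated by Proposition~\ref{prop:decompositions}; the Duflo condition reduces to diagonality of the summed leading matrix, and the listed elements appear directly. The main obstacle is the bookkeeping in the non-generic infinite cells, where leading matrices from the two neighbouring generic regimes are blended, and in $\Ga_3$ at $r=1$ where three inequivalent irreducible tempered representations with asymptotic masses $\tfrac{1}{6},\tfrac{1}{6},\tfrac{1}{3}$ must combine correctly; once the asymptotic Plancherel identity above is in hand, the rest is a direct case-by-case verification.
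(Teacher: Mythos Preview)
Your approach is essentially the paper's: derive the asymptotic identity $n_w'=\int_{\Omega^{-1}(\Ga)}\mathrm{tr}(\fc_{\pi,w})\,d\mu'(\pi)$ from the Plancherel formula and then evaluate case by case. For the non-generic infinite cells your telescoping computation $\tfrac{1}{2}\int_{\mathbb{T}}\ss_N(\zeta)|1-\zeta^{-1}|^2\,d\zeta=(-1)^N$ is a valid shortcut; the paper instead rewrites $\ss_N(\zeta)+\ss_{N-1}(\zeta)=\ss_{2N}(\zeta^{1/2})$ and invokes Hall-measure orthonormality for these modified Schur functions (Remark~\ref{rem:on}), which is equivalent but perhaps more conceptual.

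There is one concrete slip: at $r=1$ the fibre $\Omega^{-1}(\Ga_3)$ consists of \emph{four} irreducible tempered representations, namely $\rho_3^+,\rho_3^-,\rho_3',\rho_3''$ (equivalently $\pi_4,\pi_5,\pi_7,\pi_6$), with asymptotic masses $\tfrac{1}{6},\tfrac{1}{2},\tfrac{1}{3},\tfrac{1}{3}$ respectively (see Proposition~\ref{prop:decompositions} and Proposition~\ref{prop:asymp}), not three with masses $\tfrac{1}{6},\tfrac{1}{6},\tfrac{1}{3}$. The paper's formula~\eqref{eq:traces} is
\[
n_w'=\tfrac{1}{6}\mathrm{tr}(\fc_{\rho_3^+,w})+\tfrac{1}{2}\mathrm{tr}(\fc_{\rho_3^-,w})+\tfrac{1}{3}\mathrm{tr}(\fc_{\rho_3',w})+\tfrac{1}{3}\mathrm{tr}(\fc_{\rho_3'',w}),
\]
and all four terms are needed to recover $\cD_{\Ga_3}=\{s_0,s_1,s_2\}$. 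Your remark that ``the Duflo condition reduces to diagonality of the summed leading matrix'' is also not quite right in this case, since the four leading matrices live in different matrix algebras and must be weighted and summed as scalars via their traces.
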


\begin{proof}
Let $w\in W$, and let $n_w'$ denote the coefficient of $\sq^{-\ba(w)}$ in $P_{e,w}$. Thus $w\in\cD$ if and only if $n_w'\neq 0$. The formula 
$$
n_w'=\lim_{q\to\infty}q^{\ba(w)}P_{e,w}(q)
$$ 
gives, as in the proof of Theorem~\ref{thm:P1},
\begin{align}\label{eq:Dcal}
n_w'=\int_{\Omega^{-1}(\Ga)}\mathrm{tr}(\fc_{\pi,w})\,d\mu'(\pi)\quad\text{if $w\in\Gamma$}
\end{align}
(again, we are using Corollary~\ref{cor:nice} here). Thus in the case $w\in \Gamma=\Gamma_0$ we have, by Theorem~\ref{thm:mainlowest}, 
$$
n_w'=\frac{1}{12}\int_{\mathbb{T}^2}\ss_{\tau_{w}}(\zeta)\mathrm{tr}(E_{\su_w,\sv_w})\,\prod_{\alpha\in\Phi^+}|1-\zeta^{-\alpha^{\vee}}|^2\,d\zeta_1d\zeta_2.
$$
Thus $n_w'\neq 0$ if and only if $\su_w=\sv_w$ (due to the trace) and $\tau_w=0$ (since the measure is the Hall measure, see Remark~\ref{rem:on}). Thus $n_w'\neq 0$ if and only if $w=u^{-1}\sw_0u$ for some $u\in \sB_0$ (moreover, in this case $n_w'=1$). 
\medskip

The argument for $w\in\Gamma_1$ or $w\in\Gamma_2$ with $r$ generic for the cell is similar, noting that the measure in this case is the Hall measure for Schur functions of type~$A_1$. 
\medskip

Consider the case $w\in \Gamma_1$ with $r=2$. Recall the notation from Section~\ref{sec:equal}. Theorem~\ref{equal} again forces $\su_w=\sv_w$ if $n_w'\neq 0$ (where $w$ is written in ``$+$-form''). This forces $w$ to be either of $(+,+)$-type of $(-,-)$-type. In the former case we have $\fc_{\pi_1,w}=(\ss_N(\zeta)+\ss_{N-1}(\zeta))E_{\su_w,\su_w}$ with $N=\tau_w\geq 0$, and in the latter case we necessarily have $w=\sw_1^-$ where by definition $\tau_w=-1$ and hence $\fc_{\pi_1,w}=\ss_0(\zeta)=1$. Recall from the proof of Proposition~\ref{prop:Weyl-like} that $\ss_N(\zeta)+\ss_{N-1}(\zeta)=\ss_{2N}(\zeta^{1/2})$. The measure from Proposition~\ref{prop:asymp} is in this case is the Hall measure for these Schur functions, and thus we see that $n_w'\neq 0$ if and only if either $w$ is of $(+,+)$-type with $\tau_w=0$, or $w=\sw_1^-$. In the former case we have $w=u^{-1}\sw_1^+u$ for some $u\in \sB_1^+\cap s_1\sB_1^-$, and hence the result. The case $w\in\Gamma_2$ with $r=3/2$ is analogous. 
\medskip

The claims for finite cells follow by direct calculations. For example, consider the most complicated case $\Gamma=\Gamma_3$ and $r=1$. In this case~(\ref{eq:Dcal}) gives
\begin{align}\label{eq:traces}
n_w'=\frac{1}{6}\mathrm{tr}(\fc_{\rho_3^+,w})+\frac{1}{2}\mathrm{tr}(\fc_{\rho_3^-,w})+\frac{1}{3}\mathrm{tr}(\fc_{\rho_3',w})+\frac{1}{3}\mathrm{tr}(\fc_{\rho_3'',w}).
\end{align}
The matrices for $\rho_3^+$, $\rho_3^-$, $\rho_3'$, and $\rho_3''$ are computed from the decomposition in Proposition~\ref{prop:decompositions}, and the leading matrices are computed as 
$$
\fc_{\rho,w}=\lim_{q\to\infty}q^{-1}\rho(T_w)\quad\text{for $\rho\in\{\rho_3^+,\rho_3^-,\rho_3',\rho_3''\}$ and $w\in W$}.
$$
Thus a direct calculation using~(\ref{eq:traces}) gives $n_{s_0}'=n_{s_1}'=n_{s_2}'=1$ and $n_w'=0$ for all $w\in\Gamma_3\backslash\{s_0,s_1,s_2\}$, and hence the result. 

\medskip

The remaining cases are similar (in fact, easier). We note that some of the finite cells (for example $\Gamma=\Gamma_6$) can be handled using cell factorisation, in an analogous way to the infinite cells. 
\end{proof}

\subsection{An inner product on $\mathcal{J}$ and conjecture \conj{7}}

In this section we extend the above ideas further to endow Lusztig's asymptotic algebra $\mathcal{J}_{\Gamma}$ with a natural inner product inherited from the Plancherel Theorem (a kind of \textit{asymptotic Plancherel Theorem}). As a consequence we obtain a proof of \conj{7}. Recall that we have proved in Corollary~\ref{cor:J} that for each $\Gamma\in\tsc$ we have that $\mathcal{J}_{\Gamma}$ is isomorphic to the $\mathbb{Z}$-algebra spanned by the leading matrices $\{\fc_{\pi_{\Gamma},w}\mid w\in \Gamma\}$. We thus identify $\mathcal{J}_{\Gamma}$ with this concrete algebra, with $J_w\leftrightarrow \fc_{\pi_{\Gamma},w}$. Define an involution $*$ on $\mathcal{J}_{\Gamma}$ by linearly extending $J_w^*=J_{w^{-1}}$.  

\begin{Th}\label{thm:innerp}
Let $\Gamma$ be a two sided cell of $\tilde{G}_2$. The formula
$$
\langle g_1,g_2\rangle_{\Ga}=\int_{\Omega^{-1}(\Gamma)}\mathrm{tr}(g_1g_2^*)\,d\mu'(\pi)\quad\text{for $g_1,g_2\in \mathcal{J}_{\Gamma}$}
$$
defines an inner product on $\mathcal{J}_{\Gamma}$ with $\{J_w\mid w\in \Gamma\}$ an orthonormal basis.
\end{Th}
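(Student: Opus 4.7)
The plan is to establish directly that $\{J_w\mid w\in\Gamma\}$ is orthonormal for $\langle\cdot,\cdot\rangle_\Gamma$, namely
$$\langle J_u,J_v\rangle_\Gamma = \delta_{u,v}\quad\text{for all }u,v\in\Gamma,$$
from which bilinearity and symmetry are visible, and positivity $\langle g,g\rangle_\Gamma = \sum_w a_w^2$ for $g=\sum_w a_w J_w$ is immediate. The proof proceeds by a case analysis on $\Gamma$, driven by the explicit formulas for the leading matrices from Sections~\ref{sec:4} and~\ref{sec:5} and the description of $d\mu'$ given in Proposition~\ref{prop:asymp}.

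First I would handle the generic cases $\Gamma=\Gamma_0$ and $\Gamma=\Gamma_i$ ($i\in\{1,2\}$) with $r$ generic for $\Gamma_i$. By Theorem~\ref{thm:mainlowest} and Corollary~\ref{cor:B3B4generic}, for $\pi=\pi_\Gamma^\zeta\in\Omega^{-1}(\Gamma)$ one has $\fc_{\pi,w}=\ss_{\tau_w}(\zeta)E_{\su_w,\sv_w}$, and Remark~\ref{rem:useful} yields $\fc_{\pi,v^{-1}}=\ss_{\tau_v}(\zeta)E_{\sv_v,\su_v}$. Hence
$$\mathrm{tr}\!\bigl(\fc_{\pi,u}\,\fc_{\pi,v^{-1}}\bigr) = \ss_{\tau_u}(\zeta)\,\ss_{\tau_v}(\zeta)\,\delta_{\sv_u,\sv_v}\,\delta_{\su_u,\su_v}.$$
Integrating against $d\mu'$, which by Remark~\ref{rem:on} is the Hall measure (of type $G_2$ for $\Gamma_0$, of type $A_1$ for $\Gamma_1,\Gamma_2$), the orthonormality of the Schur functions forces $\tau_u=\tau_v$, and therefore $u=v$.

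For the non-generic cases $(\Gamma_1,r=2)$ and $(\Gamma_2,r=3/2)$, I would invoke Theorem~\ref{equal}, whose leading matrices depend on the $(\epsilon_1,\epsilon_2)$-type of $w$, flipping to $(\epsilon_2,\epsilon_1)$ under $w\mapsto w^{-1}$. After matching types, the statement reduces to the orthonormality relations
$$\int_{\mathbb{T}} \ss_k^{(\epsilon_1,\epsilon_2)}(\zeta)\,\ss_\ell^{(\epsilon_2,\epsilon_1)}(\zeta)\,d\mu'(\zeta) \;=\; \delta_{k,\ell}$$
with $d\mu'(\zeta)=\tfrac{1}{2}|1-\zeta^{-1}|^2\,d\zeta$, together with the vanishing of the integrals when the boundary types do not match. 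This is the principal technical point, and can be established by direct expansion using Proposition~\ref{prop:Weyl-like}, or more conceptually via the identity $\ss_N^{(+,+)}(\zeta)=\ss_{2N}(\zeta^{1/2})$ and the substitution $\zeta=\eta^2$, which converts the measure into the standard Hall measure in $\eta$. The cross-type contributions are handled analogously, the factors $(1\pm\zeta),(1\pm\zeta^{-1})$ in $\ss^{(\pm,\mp)}_N$ pairing against $|1-\zeta^{-1}|^2$ to produce the Hall weight again after change of variables.

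Finally, for the finite cells $\Gamma$ the set $\Omega^{-1}(\Gamma)$ is finite and $d\mu'$ reduces to the explicit positive combination of point masses from Proposition~\ref{prop:asymp}, so orthonormality becomes a finite identity that one verifies directly from the matrices of the tempered representations in $\Omega^{-1}(\Gamma)$ supplied by Proposition~\ref{prop:decompositions} and Corollary~\ref{cor:nice}; for cells admitting cell factorisation (such as $\Gamma_6$) one may alternatively repeat the generic-case argument using the appropriate finite Schur orthogonality. The main obstacle in the entire proof is the non-generic case: the $(\epsilon_1,\epsilon_2)$-type bookkeeping under $w\mapsto w^{-1}$ and the verification of the $\ss^{(\epsilon_1,\epsilon_2)}_N$-orthogonality require care, while everything else is essentially substitution into known orthogonality identities.
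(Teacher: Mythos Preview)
Your approach is correct in outline, but it takes a genuinely different route from the paper. The paper gives a short uniform argument that avoids all case analysis: starting from the known orthonormality $\delta_{x,y}=\mathrm{Tr}(T_xT_{y^{-1}})$ of the standard basis, it applies the Plancherel formula to write this as $\int_{\irreps}\mathrm{tr}(\pi(T_x)\pi(T_{y^{-1}}))\,d\mu(\pi)$, rescales by $q^{-2\ba_\pi}$ and lets $q\to\infty$ (passing the limit under the integral via the explicit $\tilde G_2$ formula), obtaining $\delta_{x,y}=\int_{\irreps}\mathrm{tr}(\fc_{\pi,x}\fc_{\pi,y^{-1}})\,d\mu'(\pi)$. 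Since $\fc_{\pi,x}\fc_{\pi,y^{-1}}=0$ unless $\pi\in\Omega^{-1}(\Gamma)$, this is exactly $\langle J_x,J_y\rangle_\Gamma$.

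Your route --- explicit leading matrices plus Schur orthogonality for the Hall measure in each parameter regime --- recovers the same identity, but at the cost of the non-generic bookkeeping you flag (the $\ss^{(\epsilon_1,\epsilon_2)}_N$ orthogonality against $\tfrac12|1-\zeta^{-1}|^2\,d\zeta$, and the cross-type vanishing) and of a separate finite check for each finite cell, most delicately for $\Gamma_3$ at $r=1$ where four point masses with the weights $\tfrac16,\tfrac12,\tfrac13,\tfrac13$ must combine correctly. The paper's argument sidesteps all of this by inheriting orthonormality from $\{T_w\}$ rather than rebuilding it from character theory; your argument, on the other hand, makes visible \emph{why} the particular asymptotic measures in Proposition~\ref{prop:asymp} arise (they are precisely the Hall measures for the relevant Schur-type bases), which the paper's proof leaves implicit.
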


\begin{proof}
It is clear that this formula defines a skew linear form. 
For $x,y\in \Gamma$ we have 
\begin{align*}
\delta_{x,y}=\langle T_x,T_y\rangle&=\int_{\irreps}\mathrm{tr}(\pi(T_x)\pi(T_{y^{-1}}))d\mu(\pi)=\int_{\irreps}\mathrm{tr}(q^{-\ba_{\pi}}\pi(T_x)\cdot q^{-\ba_{\pi}}\pi(T_{y^{-1}}))(1+\mathcal{O}(q^{-1}))d\mu'(\pi).
\end{align*}
Taking limits as $q\to\infty$, and using the explicit expression for the Plancherel Theorem for $\tilde{G}_2$ to see that the limit may be passed inside the integral, we see that
$$
\delta_{x,y}=\int_{\irreps}\mathrm{tr}(\fc_{\pi,x}\fc_{\pi,y^{-1}})\,d\mu'(\pi).
$$
The terms $\fc_{\pi,x}\fc_{\pi,y^{-1}}$ are zero if $\pi\notin\Omega^{-1}(\Gamma)$, and hence the integral is over $\Omega^{-1}(\Gamma)$. Thus the formula $\langle\cdot,\cdot\rangle_{\Ga}$ given in the statement of the theorem defines an inner product on $\mathcal{J}_{\Gamma}$, and $\{J_w\mid w\in \Gamma\}$ is an orthonormal basis.
\end{proof}

\begin{Cor}\label{cor:P71}
Conjecture \conj{7} holds.
\end{Cor}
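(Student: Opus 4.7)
The plan is to exploit the cyclicity of the trace, which is hidden in the definition of $\gamma_{x,y,z}$ but becomes transparent when the structure constants are expressed via the inner product on $\cJ_\Ga$ from Theorem~\ref{thm:innerp}. Recall that in $\cJ$ the multiplication reads $J_xJ_y = \sum_z \ga_{x,y,z^{-1}} J_z$, so $\ga_{x,y,z}$ is the coefficient of $J_{z^{-1}}$ in $J_xJ_y$, and the involution $*$ on $\cJ_\Ga$, which satisfies $J_w^* = J_{w^{-1}}$, corresponds on the matrix side to $\fc_{\pi,w} \mapsto \fc_{\pi,w^{-1}}$.

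First I would reduce to the case where $x, y, z$ all lie in the same two-sided cell $\Ga$. Under the identification of $\cJ_\Ga$ with the $\nZ$-algebra spanned by $\{\fc_{\pi_\Ga, w} \mid w \in \Ga\}$, axiom $\B{3}$ says that $\fc_{\pi_\Ga,w}$ is nonzero precisely when $w \in \Ga$. Since the coefficient of $J_{z^{-1}}$ in $J_xJ_y$ is read off from the matrix product $\fc_{\pi_\Ga,x}\fc_{\pi_\Ga,y}$ expanded in the basis $\{\fc_{\pi_\Ga,w}\mid w\in \Ga\}$, both $\ga_{x,y,z}$ and $\ga_{y,z,x}$ vanish unless $x$, $y$, and $z$ all lie in the same cell $\Ga$ (using that $w$ and $w^{-1}$ are in the same two-sided cell).

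Now fix $x,y,z \in \Ga$. By orthonormality of $\{J_w \mid w \in \Ga\}$ established in Theorem~\ref{thm:innerp}, together with the defining relation for $\ga$, we have
$$\ga_{x,y,z} \;=\; \langle J_xJ_y,\, J_{z^{-1}}\rangle_\Ga \;=\; \int_{\Omega^{-1}(\Ga)} \mathrm{tr}\bigl(\fc_{\pi,x}\,\fc_{\pi,y}\,(\fc_{\pi,z^{-1}})^{\!*}\bigr)\,d\mu'(\pi) \;=\; \int_{\Omega^{-1}(\Ga)} \mathrm{tr}\bigl(\fc_{\pi,x}\,\fc_{\pi,y}\,\fc_{\pi,z}\bigr)\,d\mu'(\pi).$$
Cyclicity of the ordinary matrix trace at each $\pi$ gives $\mathrm{tr}(\fc_{\pi,x}\fc_{\pi,y}\fc_{\pi,z}) = \mathrm{tr}(\fc_{\pi,y}\fc_{\pi,z}\fc_{\pi,x})$, and running the same computation with $(x,y,z)$ replaced by $(y,z,x)$ identifies the right-hand side with $\ga_{y,z,x}$. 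This yields $\ga_{x,y,z} = \ga_{y,z,x}$.

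The main obstacle, to the extent there is one, is purely bookkeeping in the reduction step: it relies on $\B{3}$ and on the symmetry $w \sim_\cLR w^{-1}$ (a consequence of the anti-involution $T_w \mapsto T_{w^{-1}}$ on $\cH$, independent of $\conj{14}$). Once that is in hand the cyclicity argument is completely formal, and in fact the inner product $\langle \cdot,\cdot\rangle_\Ga$ of Theorem~\ref{thm:innerp} is automatically a trace form in the sense that $\langle g_1g_2,g_3\rangle_\Ga = \langle g_2g_3, g_1\rangle_\Ga$; the content of $\conj{7}$ is precisely the combinatorial shadow of this cyclicity on the canonical basis $\{J_w\}$.
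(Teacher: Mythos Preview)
Your argument is correct and follows essentially the same route as the paper. The paper's one-line proof reads
\[
\gamma_{x,y,z}=\langle J_xJ_y,J_{z^{-1}}\rangle_{\Ga}=\langle J_y,J_{x^{-1}}J_{z^{-1}}\rangle_{\Ga}=\langle J_yJ_z,J_{x^{-1}}\rangle_{\Ga}=\gamma_{y,z,x},
\]
which is the adjointness formulation of precisely the cyclicity-of-trace argument you wrote out in integral form; unwinding $\langle J_xg,h\rangle_\Ga=\langle g,J_{x^{-1}}h\rangle_\Ga$ via the definition of $\langle\cdot,\cdot\rangle_\Ga$ amounts exactly to $\mathrm{tr}(\fc_{\pi,x}\fc_{\pi,y}\fc_{\pi,z})=\mathrm{tr}(\fc_{\pi,y}\fc_{\pi,z}\fc_{\pi,x})$. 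Your explicit reduction to $x,y,z\in\Ga$ (which the paper leaves implicit) is sound: the identity $\fc_{\pi_\Ga,x}\fc_{\pi_\Ga,y}=\sum_{z\in\Ga}\gamma_{x,y,z^{-1}}\fc_{\pi_\Ga,z}$ in fact holds for arbitrary $x,y\in W$ by the same specialisation argument as in Corollary~\ref{cor:tildegamma}, and then $\B{3}$ and $\B{4}$ force $\gamma_{x,y,z^{-1}}=0$ whenever $x\notin\Ga$ or $y\notin\Ga$.
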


\begin{proof}
If $x,y,z\in\Gamma$ then
$
\gamma_{x,y,z}=\langle J_xJ_y,J_{z^{-1}}\rangle_{\Gamma}=\langle J_y,J_{x^{-1}}J_{z^{-1}}\rangle_{\Gamma}=\langle J_yJ_z,J_{x^{-1}}\rangle_{\Gamma}=\gamma_{y,z,x}.
$
\end{proof}

We will give a more combinatorial proof of \conj{7} in Section~\ref{sec:lusztig}.


\section{Proof of Lusztig's conjectures \conj{2}--\conj{15}}\label{sec:lusztig}

In this section we prove Lusztig's conjectures \conj{2}--\conj{15} for $\tilde{G}_2$.
We will denote by $\tsc_{\infty}$ the set of infinite two-sided cells and by $\tsc_{{\mathsf{f}}}$ the set of finite two-sided cells. Let $(\pi_\Ga)_{\Ga\in \tsc}$ be the system of balanced cell representations afforded by Theorem~\ref{thm:balanced}. When $\Ga_i\in \tsc_{\infty}$ we have $\pi_{\Ga_i}=\pi_i$ and when $\Ga\in \tsc_{{\mathsf{f}}}$ the representation $\pi_\Ga$ is the Kazhdan-Lusztig representation associated to $\Ga$ with its natural basis. By Corollary~\ref{cor:computea} we have $\ba_{\pi_\Ga}=\ba(w)$ for all $w\in \Ga$, and by Corollary~\ref{cor:J} we see that the coefficients $\ga_{x,y,z^{-1}}$ are the structure constants of the ring $\mathcal{J}_{\Ga}$ generated by $\{\fc_{\pi_\Ga,w}\mid w\in \Ga\}$.

\subsection{The conjectures \conj{4}, \conj{7}--\conj{12}, and \conj{14}}

Knowing the value of Lusztig's $\ba$-function (from Corollary~\ref{cor:computea}), and the partition of $W$ into cells (from Figure~\ref{partition}), it is elementary that ${\bf P4}$, ${\bf P9}$--${\bf P12}$ and $\conj{14}$ hold. We prove \conj{7} and \conj{8} in the following theorem (note that we obtained a different proof of \conj{7} in Corollary~\ref{cor:P71}).

\begin{Th}\label{thm:prev} Let $x,y,z\in W$. 
\ben
\item If 
$\ga_{x,y,w^{-1}}\neq 0$ then 
$x\sim_\cR w$, $y\sim_\cL w$ and $x\sim_{\cL} y^{-1}$.
\item  We have $\ga_{x,y,w}=\ga_{y,w,x}=\ga_{w,x,y}$.
\een
\end{Th}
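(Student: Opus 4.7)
The plan is to deduce both statements from the explicit description of the leading matrices $\fc_{\pi_\Ga,w}$ obtained in Sections~\ref{sec:4},~\ref{sec:5} and~\ref{sec:balanced}, together with Corollary~\ref{cor:J} which identifies $\ga_{x,y,z^{-1}}$ with the structure constants of $\mathcal{J}_\Ga$ in the leading matrix basis. First I would observe that the two statements are equivalent to:
\begin{align*}
\ga_{x,y,w^{-1}}&=\text{coefficient of }\fc_{\pi_\Ga,w}\text{ in the product }\fc_{\pi_\Ga,x}\cdot\fc_{\pi_\Ga,y},
\end{align*}
together with the cyclic symmetry of this coefficient, where $\Ga$ is the two-sided cell of~$w$. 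In particular if $\ga_{x,y,w^{-1}}\neq 0$ then $\fc_{\pi_\Ga,x}$ and $\fc_{\pi_\Ga,y}$ are nonzero, so $x,y\in\Ga$ by $\B{3}$; this reduces everything to a computation inside a single cell.

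Next I would treat the infinite cells with parameters generic for $\Ga$. By Theorems~\ref{thm:mainlowest} and~\ref{cor:B3B4generic} we have $\fc_{\pi_\Ga,w}=\ss_{\tau_w}(\zeta)E_{\su_w,\sv_w}$, so
\begin{equation*}
\fc_{\pi_\Ga,x}\cdot\fc_{\pi_\Ga,y}=\delta_{\sv_x,\su_y}\,\ss_{\tau_x}(\zeta)\ss_{\tau_y}(\zeta)\,E_{\su_x,\sv_y}.
\end{equation*}
Combined with $\B{4}$ and the identities $\su_{w^{-1}}=\sv_w$, $\sv_{w^{-1}}=\su_w$, $\tau_{w^{-1}}=\tau_w$ from Remark~\ref{rem:useful}, matching the coefficient of $\fc_{\pi_\Ga,w}$ forces $\su_x=\su_w$, $\sv_y=\sv_w$, and $\sv_x=\su_y=\sv_{y^{-1}}$, which is exactly $x\sim_\cR w$, $y\sim_\cL w$, $x\sim_\cL y^{-1}$; this gives part~(1). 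For part~(2), the computation above shows that $\ga_{x,y,w}$ (and likewise $\ga_{y,w,x}$, $\ga_{w,x,y}$) is, up to identical index-matching conditions, the coefficient of $\ss_{\tau_w}$ in $\ss_{\tau_x}\ss_{\tau_y}$ (respectively of $\ss_{\tau_x}$ in $\ss_{\tau_y}\ss_{\tau_w}$, etc.); the equality is then the well-known cyclic symmetry of the Littlewood--Richardson coefficients of types $G_2$ and $A_1$ under the Hall inner product.

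For the two non-generic cases $(\Ga_1,r=2)$ and $(\Ga_2,r=3/2)$, the same strategy applies but using the $(\eps_1,\eps_2)$-typed leading matrices of Theorem~\ref{equal}. Here the role of Littlewood--Richardson symmetry is played by Proposition~\ref{prop:Weyl-like}, whose point~(2) gives precisely $\mu^m_{k,\ell}(\eps_1,\eps_2,\eps_3)=\mu^\ell_{m,k}(\eps_3,\eps_1,\eps_2)$, i.e.\ the required cyclic symmetry, and whose point~(3) controls when products are nonzero so that the $(\eps_1,\eps_2)$-type matching forces the correct left/right cell relations (using the type description of Section~\ref{sec:equal}). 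The index bookkeeping is more involved than in the generic case because $\su,\sv$ now come in two flavours $\su_\pm,\sv_\pm$, but the mutual compatibility relations listed at the end of Section~\ref{sec:equal} make this routine.

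Finally I would deal with the finite two-sided cells. When $\Ga$ admits a cell factorisation (see Remark~\ref{rem:cell-fact-finite}), essentially the same Schur-function argument as above applies, with the $\ss_{\tau_w}$ replaced by a finite piece (often trivial, or the $A_1$-Schur functions $\ss_0,\ss_1$ for $\Ga_3$ when $r>1$, using Remark~\ref{rem:6dim}). For the remaining finite cells, and in particular for the delicate case $(\Ga_3,r=1)$ where the cell module splits into four irreducibles of differing dimensions by Proposition~\ref{prop:decompositions}, I expect the main obstacle to be purely computational: one must verify both statements by explicitly computing all products $\fc_{\pi_\Ga,x}\cdot\fc_{\pi_\Ga,y}$ in the chosen bases and reading off the structure constants. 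This is a finite check that we carry out using $\textsf{gap3}$/$\textsf{CHEVIE}$, as was done for the finite-cell computations in Section~\ref{sec:balanced}.
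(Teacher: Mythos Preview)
Your proposal is correct and follows essentially the same route as the paper's own proof: reduce to a single cell via $\B{3}$/$\B{4}$, use the explicit leading matrices $\ss_{\tau_w}(\zeta)E_{\su_w,\sv_w}$ for the infinite generic cells, invoke Proposition~\ref{prop:Weyl-like} in the non-generic regimes, and handle the finite cells by cell factorisation where available and by direct computation otherwise. The only cosmetic differences are the order of cases and that you make the reduction $x,y\in\Ga$ explicit, whereas the paper leaves it implicit.
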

\begin{proof}
Let $w\in \Ga$ and $x,y\in W$ be such that $\ga_{x,y,w^{-1}}\neq 0$. Suppose that $\Ga\in \tsc_{\mathsf{f}}$ and let $\cB_\Ga:=\{\fc_{\pi_\Ga,w}\mid w\in \Ga\}$ (a finite set of matrices). To prove (1), we simply need to check that if $\fc_{\pi_\Ga,w}$ appears in the expansion of $\fc_{\pi_\Ga,x}\fc_{\pi_\Ga,y}$ in the basis $\cB_{\Ga}$ then we have $x\sim_\cR w$, $y\sim_\cL w$ and $x\sim_{\cL} y^{-1}$. In the case that $\Gamma$ admits a cell factorisation we have $\fc_{\pi_{\Ga},w}=\mathfrak{f}_w\,E_{\su_w,\sv_w}$ for some nonzero constant $\mathfrak{f}_w$, and hence $\fc_{\pi_{\Ga},x}\fc_{\pi_{\Ga},y}=\mathfrak{f}_x\mathfrak{f}_y\,E_{\su_x,\sv_x}E_{\su_y,\sv_y}$. Thus if $\fc_{\pi_{\Ga},w}$ appears in this expansion we have $\sv_x=\su_y$, and hence $x\sim_{\cL} y^{-1}$. Moreover, $\su_w=\su_x$ and $\sv_w=\sv_y$, giving $w\sim_{\cR} x$ and $w\sim_{\cR} y$, and hence the result. In the case that $\Ga$ does not admit a cell factorisation the result is readily checked using the explicit formulae for the leading matrices (see Theorem~\ref{thm:finitecellbalanced}). Verifying (2) is similar.

\medskip

 We now prove (1) and (2) in the case that $\Ga\in \tsc_{\infty}$ and that $r$ is generic for $\Ga$. By Theorem~\ref{thm:mainlowest} and Corollary~\ref{cor:B3B4generic}, the equality $\fc_{\pi_\Ga,x}\fc_{\pi_\Ga,y}=\sum_{z}\ga_{x,y,z^{-1}}\fc_{\pi_\Ga,z}$ becomes
$$\ss_{\tau_x}E_{\su_{x},\sv_{x}}\cdot \ss_{\tau_y}E_{\su_y,\sv_y}=\sum_{z\in \Ga} \ga_{x,y,z^{-1}}\ss_{\tau_z}E_{\su_{z},\sv_z}.$$
Since $\ga_{x,y,w^{-1}}\neq 0$, the term indexed by $w$ on the righthand side is nonzero and this implies that the whole sum is nonzero by $\B{4}$. It follows that the lefthand side is nonzero hence it is equal to $\ss_{\tau_x}\ss_{\tau_y}E_{\su_x,\sv_y}$ and we have $\sv_x=\su_y$ (or in other words $x\sim_\cL y^{-1}$). From there we see that if $\ga_{x,y,z^{-1}}\neq 0$ then we must have (a) $\su_{z}=\su_{x}$ and $\sv_{z}=\sv_{y}$ and (b)  $c^{\tau_z}_{\tau_x,\tau_y}\neq 0$ where $c^{\tau_z}_{\tau_x,\tau_y}=\langle \ss_{\tau_x}\ss_{\tau_y},\ss_{\tau_z}\rangle$. In particular, since $\ga_{x,y,w^{-1}}\neq 0$ we have 
$\su_{w}=\su_{x}$ and $\sv_{w}=\sv_{y}$ or in other words $x\sim_\cR w$ and $y\sim_\cL w$.  This completes the proof of (1).

\medskip

We now show that $\ga_{x,y,w}=\ga_{y,w,x}=\ga_{w,x,y}$. 
We may assume that $x\sim_{\cL} y^{-1}$, $x\sim_\cR w^{-1}$ and $y\sim_\cL w^{-1}$ since otherwise  $\ga_{w,x,y}=\ga_{y,w,x}=0$ by (1). We know that $\ga_{x,y,w}$ is the coefficient of $\ss_{\tau_{w^{-1}}}$ in the product $\ss_{\tau_{x}}\ss_{\tau_y}$, which is equal to the coefficient of $\ss_{\tau_w}$ since by Remark \ref{rem:useful} we have $\ss_{\tau_{w^{-1}}}=\ss_{\tau_w}$. Then using standard results on Weyl characters we get that $\ga_{x,y,w}=\ga_{w,x,y}=\ga_{y,w,x}$. 

\medskip

Consider the case where $r$ is not generic for $\Ga_i$, with $i\in\{1,2\}$. Consider the case $i=1$, and so $r=2$ (the case $i=2$ is similar). Recall the notation of Theorem~\ref{equal}. Let $x$ be of $(\eps_1,\eps_2)$ type, and let $y$ be of $(\eps_2',\eps_3)$ type. If $\eps_2\neq\eps_2'$ then $\gamma_{x,y,z}=0$ (this follows from Theorem~\ref{equal} and the cell factorisation in Section~\ref{sec:equal}). Thus suppose that $\eps_2=\eps_2'$. Moreover, if $\gamma_{x,y,w}\neq 0$ then $w^{-1}$ is of type $(\eps_1,\eps_3)$.  Statement (1) now follows as in the generic case from Theorem~\ref{equal}. Next $\gamma_{x,y,w}$ is the coefficient of $\ss_{\tau_{w^{-1}}}^{(\eps_1,\eps_3)}$ in the expansion of $\ss_{\tau_x}^{(\eps_1,\eps_2)}\ss_{\tau_y}^{(\eps_2,\eps_3)}$ in the $(\eps_1,\eps_3)$ `basis'. Similarly $\gamma_{w,x,y}$ is the coefficient of $\ss_{\tau_{y^{-1}}}^{(\eps_3,\eps_2)}$ in the expansion of $\ss_{\tau_w}^{(\eps_3,\eps_1)}\ss_{\tau_x}^{(\eps_1,\eps_2)}$. Hence by Proposition~\ref{prop:Weyl-like} we have~$\gamma_{x,y,w}=\gamma_{w,x,y}$. 
\end{proof}
Hence \conj{7} and \conj{8} are proven. %

\subsection{The conjectures \conj{2}, \conj{3}, \conj{5}, \conj{6}, and \conj{13}}

We now consider the conjectures involving the set $\cD$, computed in Theorem \ref{th:set-D}. Note that for the infinite cells~$\Gamma$, if $d\in \cD_{\Ga}$ then $\su_d=\sv_{d}$ and $\tau_d=0$. Therefore  we have $\fc_{\pi_\Ga,d}=\ss_0(\zeta)$ for all $d\in \cD\cap(\Gamma_0\cup\Ga_1\cup\Ga_2)$, this will be of crucial importance in the proof below.

\begin{Th} We have the following.
\ben
\item If $d\in\cD$ then $d^2=1$ (hence \conj{6} holds). 
\item If $d\in \cD$ and $x,y\in W$ are such that $\ga_{x,y,d}\neq 0$ then $y=x^{-1}$ (hence \conj{2} holds).
\item If $y\in W$, there exists a unique $d\in \cD$ such that $\ga_{y,y^{-1},d}\neq 0$ (hence \conj{3} holds).
\item  If $d\in \cD$, $y\in W$, $\gamma_{y,y^{-1},d}\neq 0$, then
$\gamma_{y,y^{-1},d}=n_d=\pm 1$ (hence \conj{5} holds).
\item  Any right cell $\Up$ of $W$ contains a unique element
$d\in \cD$. We have $\gamma_{x,x^{-1},d}\neq 0$ for all $x\in \Up$ (hence \conj{13} holds).
\een
\end{Th}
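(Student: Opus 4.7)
The plan is to deduce all five statements uniformly from the explicit description of the asymptotic algebras $\mathcal{J}_\Gamma$ obtained in Sections~\ref{sec:4}--\ref{sec:5} together with the classification of $\cD$ in Theorem~\ref{th:set-D}. The conceptual point is that for every Duflo element $d\in\cD_\Gamma$, the leading matrix $\fc_{\pi_\Gamma,d}$ is, up to an explicit nonzero constant, a matrix idempotent $E_{u,u}$; once this is recognised, the multiplicative rules for the $\fc_{\pi_\Gamma,w}$ impose the strong constraints required for P2, P3, P5, P13. Claim (1) will be established first and by direct inspection of Theorem~\ref{th:set-D}: for generic regimes every listed Duflo element has the form $u^{-1}\sw_\Gamma u$ with $\sw_\Gamma$ the longest element of a finite parabolic (hence an involution), so $d^2=e$; the extra Duflo elements $\sw_1^-$ and $\sw_2^+$ arising at $r=2$ and $r=3/2$ are also longest elements of parabolic subgroups, and for the finite cells one simply reads the explicit lists.

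For an infinite cell $\Gamma_i$ in a generic regime, Theorems~\ref{thm:mainlowest} and~\ref{cor:B3B4generic} give $\fc_{\pi_\Gamma,d}=E_{u,u}$ when $d=u^{-1}\sw_i u$. To prove (2), using $d=d^{-1}$ from (1), the relation $\gamma_{x,y,d}\neq 0$ says that $E_{u,u}$ appears in
\[
\fc_{\pi_\Gamma,x}\fc_{\pi_\Gamma,y}=\ss_{\tau_x}(\zeta)\ss_{\tau_y}(\zeta)\,\delta_{\sv_x,\su_y}\,E_{\su_x,\sv_y},
\]
which forces $\sv_x=\su_y$, $\su_x=\sv_y=u$, and the coefficient of $\ss_0$ in $\ss_{\tau_x}\ss_{\tau_y}$ to be nonzero, i.e.\ $\tau_x=\tau_y$; these equalities imply $y=x^{-1}$. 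For (3) and (5), a direct computation using $\su_{y^{-1}}=\sv_y$ and $\tau_{y^{-1}}=\tau_y$ gives $\fc_{\pi_\Gamma,y}\fc_{\pi_\Gamma,y^{-1}}=\ss_{\tau_y}(\zeta)^2 E_{\su_y,\su_y}$, and since the coefficient of $\ss_0$ in $\ss_k^2$ equals $1$, the element $d=\su_y^{-1}\sw_i\su_y$ is the unique Duflo making $\gamma_{y,y^{-1},d}\neq 0$, with value $1$; uniqueness of $d$ inside the right cell of $y$ is then immediate since right cells are parametrised by $\su\in\sB_i$ and the Duflo set biject with $\sB_i$. For (4) this already yields $\gamma_{y,y^{-1},d}=1$; the calculation in the proof of Theorem~\ref{th:set-D} yielded $n_d'=1$ for every $d\in\cD$, and since $d$ is Duflo we have $\Delta(d)=\ba(d)$, so $n_d=n_d'=1=\pm 1$, as required.

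The non-generic regimes $r\in\{2,3/2\}$ are handled identically, replacing Schur multiplication by the generalised product computed in Proposition~\ref{prop:Weyl-like}; in particular part~(3) of that Proposition gives $\mu_{k,\ell}^0(\epsilon_1,\epsilon_2,\epsilon_1)\neq 0$ iff $\epsilon_1=\epsilon_2$ and $k=\ell$, which is exactly the nonvanishing input needed to run the arguments above with the type-$(\epsilon,\epsilon')$ leading matrices of Theorem~\ref{equal}, using the extended data $\su_\pm,\sv_\pm,\tau_\pm$ from Section~\ref{sec:equal} to treat the anomalous Duflo elements $\sw_1^-$ and $\sw_2^+$ (which have types $(-,-)$ and $(+,+)$, respectively). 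The finite cells are then treated case by case using the explicit leading matrices from the proof of Theorem~\ref{thm:finitecellbalanced} and the decompositions of Proposition~\ref{prop:decompositions}; whenever a cell factorisation is available (the generic regimes of $\Gamma_3$, $\Gamma_4$, $\Gamma_6$) the leading matrices are again matrix units, possibly multiplied by the idempotent factor $\eps^k$ of Remark~\ref{rem:6dim}, and the arguments reduce to those above.

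The genuine obstacle is the exceptional finite cell $\Gamma=\Gamma_3$ at $r=1$, where the cell representations decompose with multiplicity into the three pairwise non-isomorphic constituents $\rho_3^+,\rho_3^-,\rho_3',\rho_3''$ of Proposition~\ref{prop:decompositions}, so no uniform cell-factorisation formalism applies. Here we proceed by direct computation: using the explicit matrices of these constituents we check that $\cD_{\Gamma_3}=\{s_0,s_1,s_2\}$ furnishes exactly one Duflo element in each of the three right cells of $\Gamma_3$, that each $\fc_{\rho,d}$ ($\rho\in\{\rho_3^\pm,\rho_3',\rho_3''\}$) is a rank-one idempotent in its component, and that the resulting structure constants $\gamma_{x,y,d}$ satisfy all of (2)--(5) by a finite (and by now routine) verification. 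Combining the generic-infinite-cell, non-generic-infinite-cell and finite-cell arguments yields the theorem.
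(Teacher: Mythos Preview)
Your proposal is correct and follows essentially the same approach as the paper: both reduce (1) to direct inspection of the explicit list in Theorem~\ref{th:set-D}, handle the infinite cells via the leading-matrix formula $\fc_{\pi_\Gamma,w}=\ss_{\tau_w}(\zeta)E_{\su_w,\sv_w}$ together with the Schur-function identity $c^0_{k,\ell}=\delta_{k,\ell}$ (and Proposition~\ref{prop:Weyl-like} in the non-generic regimes), and dispatch the finite cells by explicit matrix computation. Your account is slightly more explicit than the paper's in two places---you spell out why $n_d=n_d'=1$ via the calculation in the proof of Theorem~\ref{th:set-D}, and you say a bit more about the exceptional case $\Gamma_3$ at $r=1$---but the underlying argument is the same.
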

\begin{proof}
The first statement follows immediately from the explicit calculation of $\cD$ given in  Theorem \ref{th:set-D}. 
For the remaining statements, note that if $\Ga\in \tsc_{\mathsf{f}}$ then the results can be proved by explicit matrix calculations, and thus we will focus here on the case where $\Ga\in \tsc_{\infty}$.
Let $d\in \cD_\Ga$ and assume that $r$ is generic for $\Ga$. 
Let $x,y\in W$ be such that $\ga_{x,y,d}\neq 0$. We have the equality
$$\ss_{\tau_x}E_{\su_{x},\sv_{x}}\cdot \ss_{\tau_y}E_{\su_y,\sv_y}=\sum \ga_{x,y,z^{-1}}\ss_{\tau_z}E_{\su_{z},\sv_z}.$$
Arguing as in the proof of Theorem~\ref{thm:prev} we obtain:
\bem
\item the lefthand side is equal to $\ss_{\tau_x}\ss_{\tau_y}E_{\su_d,\su_d}$;
\item If $\ga_{x,y,z^{-1}}\neq 0$ then $\su_z=\su_d=\sv_z$ and $c^{\tau_z}_{\tau_x,\tau_y}\neq 0$.
\eem
In particular since $\tau_d=0$ we have $c^{0}_{\tau_x,\tau_y}\neq 0$ which implies that $\tau_x=\tau_y$. Finally we have 
 $$x^{-1}=(\su^{-1}_x\sw_i\tau_x\sv_x)^{-1}=\sv_x^{-1}\tau_x^{-1}\sw_i\su^{-1}_y=\su_y^{-1}\sw_i\tau_y\sv_y=y$$
 as required in (2). In the case where $r$ is not generic, we can argue in the same fashion using the result of Proposition~\ref{prop:Weyl-like} to get that $\tau_x=\tau_y$. Hence (2). 
 
 \medskip
 
Let $y\in W$ and let $\Ga_i\in \tsc_{\infty}$ be such that $y\in \Ga_i$. If $r$ is generic for $\Ga_i$ then setting $d=\sv_y^{-1}\sw_i\sv_y$  we easily see arguing as above that $\ga_{y,y^{-1},d}=1$ since $c^{0}_{\tau_y,\tau_y}\neq 1$. In the case where $r=2$ and $y\in \Ga_1$ then  we have using Proposition~\ref{prop:Weyl-like}
\bem
\item if $y$ is of type $(\eps,-)$ then  $\ga_{y,y^{-1},d}=1$  where $d=s_0s_2s_0$;
\item if $y$ is of type $(\eps,+)$ then  $\ga_{y,y^{-1},d}=1$  where $d=\sv_y^{-1}s_1s_0\sv_y$.
\eem
The case $r=3/2$ and $y\in \Ga_2$ is similar. The statements (3), (4) and (5) follow readily.
 \end{proof}

\subsection{The conjecture \conj{15}}

We now prove \conj{15}. The technique here is somewhat different to the proofs of \conj{2}--\conj{14} given above, and relies on the process of \textit{generalised induction} introduced by the first author in \cite{guilhot3}. An alternative proof of \conj{15} can also be found in \cite[Theorem 6.2]{Xie:15}.

\medskip

In order to present a uniform proof of Theorem \ref{th:P15}, we will consider the two-sided cells $\Ga_i$ which are either infinite, or for which there is a cell factorisation. Thus the proof below applies to the infinite cells $\Gamma_i$, $i=0,1,2$, and also all finite cells except for $\Gamma_3$ with $r\leq 1$ and $\Gamma_4$ with $r\geq 1$ (see Remark~\ref{rem:cell-fact-finite}). In these few remaining finite cases we have checked~\conj{15} by explicit computations, and we omit the details.

\medskip

Let $\Up$ be the right cell in $\Ga_i$ that contains $\sw_i$. In the case where $r$ is not generic, we assume that $\sw_i=\sw_i^+$ or $\sw_i^-$ and we choose the positive or negative cell factorisation in the following definitions. To lighten the notation, we will not write the superscript $\pm$ when it is clear from the context. Most of the equalities in this section will hold modulo $\cH_{>_{\cLR}\Ga_i}$ and we sometimes write simply $\equiv$ and omit mod  $\cH_{>_{\cLR}\Ga_i}$.

\medskip

We set $\sT_0=\{t_{\om_1},t_{\om_2}\}$, $\sT_i=\{\st_i\}$ for $i=1,2,3$ and write $\sP_i$ for the set of monomials in the variables $\sT_i$ (see Remark~\ref{rem:cell-fact-finite} for the case $i=3$). When $i>3$ we simply set $\sP_i=\{e\}$. One can verify that $\Up=\{\sw_i \tau v\mid \tau \in\sP_i, v\in \sB_i\}$ where the set $\sB_i$ and $\sw_i$ have been defined in Sections~\ref{sec:factor0} and~\ref{sec:factor1}. 
For all $x=\tau v$ there exists an element $\sH(x)\in\cH$ such that 
$$C_{\sw_i}\sH(x)\equiv C_{\sw_ix}\quand \sH(x)\in T_x+\sum_{y<x,y\in X_i} \sq^{-1}\nZ[\sq^{-1}]T_y.$$
These elements can be constructed using the induction process; see \cite[Proposition 4.3]{geck} and the references therein. Using the anti-involution $\flat$, we easily see that $\sH(x)^\flat C_{\sw_i}\equiv C_{x^{-1}\sw_i}$. For $\tau\in \sP_i$, $v\in \sB_i$ and $x=\tau v$ we define
$$\sh_{\tau}=\begin{cases}
\sH(t_{\om_1})^m\sH(t_{\om_2})^n&\mbox{ if $\tau=m\om_1+n\om_2\in \sP_0$}\\
\sH(\st_i)^n&\mbox{ if $\tau=\st_i^n\in\sP_i$}
\end{cases}\quand \sh_{x}=\sh_{\tau}\sH(v).$$
It is important to notice here that we do not have $\sh_{\tau}=\sH(\tau)$.  
Some basic properties of these elements are presented in Section \cite[\S 4]{GM:12} where $\sh_{x}$ is denoted ${\bf P}(x)$. In particular, it is shown  that the $\sR$-module  of residues modulo $\cH_{\Ga_i}$ generated by $\{ C_{\sw_i}\sh_{\tau}\sh_{\sv}\mid \tau\in \sP_i,\sv\in \sB_i\}$  is a right $\cH$-module. We set 
$$
C_{\sw_i}\sh_{\sv}\cdot T_w\equiv \sum_{\tau\in \sP_i, \sv'\in\sB_i} \nu_{\sv,w}^{\tau,\sv'} C_{\sw_i}\sh_{\tau}\sh_{\sv'}
\quand 
T_wC_{\su^{-1}\sw_i}\equiv \sum_{\tau\in \sP_i, \sv'\in\sB_i} \la_{\sv,w}^{\tau,\su'} \sh^{\flat}_{\su'}\sh_\tau^{\flat}C_{\sw_i}.
$$
Since  $C_{\sw_i}\sh_{\tau}=\sh^\flat_{\tau}C_{\sw_i}$, we get that the $\sR$-module  of residues modulo $\cH_{\Ga_i}$ generated by $\{\sh^\flat_{\su}C_{\sw_i}\sh_{\tau}\sh_{\sv}\mid \tau\in \sP_i, \su,\sv\in \sB_i\}$ is a two-sided $\cH$-module. Further the coefficient $\la$ and $\nu$ completely determined the structure of this module. Indeed we have
\begin{align*}
\sh^\flat_{\su}C_{\sw_i}\sh_{\tau}\sh_{\sv} T_w&=\sh^\flat_{\su}\sh_{\tau}^\flat C_{\sw_i\sv} T_w=\sum_{\tau'\in \sP_i, \sv'\in\sB_i} \nu_{\sv,w}^{\tau',\sv'} \sh^\flat_{\su}\sh_{\tau'}^\flat C_{\sw_i}\sh_{\tau}\sh_{\sv'}=\sum_{\tau\in \sP_i, \sv'\in\sB_i} \nu_{\sv,w}^{\tau,\sv'} C_{\sw_i}\sh_{\tau+\tau'}\sh_{\sv'}.
\end{align*}
A similar formula holds for left multiplication. 
\begin{Rem}
In the case where $r$ is generic, it is shown in \cite[Proposition 4.6]{GM:12} that the two-sided $\cH$-module defined above is in fact equal to the two-sided cell module $\cH_{\Ga_i}$. 
\end{Rem}

\begin{Prop}
\label{prop:Zcoeff}
Let  $u,v\in \sB_i$ and $\tau\in \sP_i$. We have  $\sh^\flat_{\su}C_{\sw_i}\sh_{\tau}\sh_{\sv}\equiv \sum_{z\in \Ga_i} a_z C_z$ where $a_z\in \nZ$. 
\end{Prop}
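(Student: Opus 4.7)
The strategy is to establish two properties of the expansion $\sh^\flat_\su C_{\sw_i}\sh_\tau\sh_\sv \equiv \sum_{z\in\Ga_i} a_z C_z$ modulo $\cH_{>_{\cLR}\Ga_i}$: bar-invariance of each $a_z$, and the degree bound $\deg(a_z)\leq 0$. Taken together these force $a_z\in\nZ$.

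\textbf{Reduction.} I would first rewrite the element using the induction identities already recalled in the text: since $\sh_\tau\sh_\sv=\sh_{\tau\sv}$ by definition and $C_{\sw_i}\sh_{\tau\sv}\equiv C_{\sw_i\tau\sv}$ modulo $\cH_{>_{\cLR}\Ga_i}$, we have $\sh^\flat_\su C_{\sw_i}\sh_\tau\sh_\sv \equiv \sh^\flat_\su C_{\sw_i\tau\sv}$ modulo higher cells. Thus the task reduces to expanding $\sh^\flat_\su C_{\sw_i\tau\sv}$ in the Kazhdan-Lusztig basis.

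\textbf{Bar-invariance.} By Geck's construction of the induction process \cite{geck}, the elements $\sH(x)$ may be chosen bar-invariant. The anti-involution $\flat:T_w\mapsto T_{w^{-1}}$ commutes with the bar involution of $\cH$, as follows directly from $\overline{T_w}=T_{w^{-1}}^{-1}$. Hence $\sh^\flat_\su$, $\sh_\tau$, $\sh_\sv$ and $C_{\sw_i}$ are all bar-invariant, and so is their product. Since the $C_z$ with $z\in\Ga_i$ are bar-invariant and form an $\sR$-basis of the quotient, one concludes $\overline{a_z}=a_z$ for every $z\in\Ga_i$.

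\textbf{Degree bound.} Apply the balanced representation $\pi_{\Ga_i}$. By $\B{1}$ it kills $\cH_{>_{\cLR}\Ga_i}$, so
$$\pi_{\Ga_i}(\sh^\flat_\su C_{\sw_i}\sh_\tau\sh_\sv)=\sum_{z\in\Ga_i}a_z\,\pi_{\Ga_i}(C_z).$$
The aim is to show the left-hand side has matrix entries of degree at most $\ba_{\pi_{\Ga_i}}$; combined with $\B{4}$ (freeness of the $\fc_{\pi_{\Ga_i},z}$ over $\nZ$) this yields $\deg(a_z)\leq 0$. To obtain the degree bound I would use the inductive identities $C_{\sw_i}\sh_\tau\sh_\sv\equiv C_{\sw_i\tau\sv}$ and $\sh^\flat_\su C_{\sw_i}\equiv C_{\su^{-1}\sw_i}$ together with the explicit form of the leading matrices (Theorem~\ref{thm:mainlowest}, Corollary~\ref{cor:B3B4generic} and Theorem~\ref{equal}): the leading part of $\pi_{\Ga_i}(\sh^\flat_\su)$, when multiplied against the rank-one leading matrix of $\pi_{\Ga_i}(C_{\sw_i\tau\sv})$, must produce the leading matrix of $\pi_{\Ga_i}(C_{\su^{-1}\sw_i\tau\sv})$, which is itself a Schur polynomial times a matrix unit of degree exactly $\ba_{\pi_{\Ga_i}}$; any lower-order contributions of $\pi_{\Ga_i}(\sh^\flat_\su)$ carry factors in $\sq^{-1}\nZ[\sq^{-1}]$ and so can only decrease the total degree.

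\textbf{Conclusion.} Combining $\overline{a_z}=a_z$ with $\deg(a_z)\leq 0$ forces each $a_z$ to be a constant in $\nZ$. The main obstacle will be making the degree analysis in the third step uniform across all seven parameter regimes: in the generic cases it reduces to a direct computation using the cell factorisation of Section~\ref{sec:factor1}, while the non-generic cases require the additional $(\pm,\pm)$-type bookkeeping from Section~\ref{sec:equal} together with the Schur-multiplication identities of Proposition~\ref{prop:Weyl-like}.
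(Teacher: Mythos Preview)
Your reduction step contains a genuine error that undermines the whole argument. You assert that $C_{\sw_i}\sh_{\tau\sv}\equiv C_{\sw_i\tau\sv}$, but this is false: the defining property of the induction elements is $C_{\sw_i}\sH(x)\equiv C_{\sw_ix}$, and the text explicitly emphasises that $\sh_\tau\neq\sH(\tau)$ in general. Concretely, for $\tau=\st_i^n$ one has $\sh_\tau=\sH(\st_i)^n$, and already for $n=2$ the element $C_{\sw_i}\sH(\st_i)^2\equiv C_{\sw_i\st_i}\sH(\st_i)$ is not a single Kazhdan--Lusztig element but an integer combination reflecting the Schur-function identity $\ss_1^2=\ss_2+\ss_0$. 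Establishing that these combinations have \emph{integer} coefficients is precisely the content of the proposition, so assuming the reduction assumes the result. Your degree-bound step then inherits this problem: the comparison with the leading matrix of $\pi_{\Ga_i}(C_{\su^{-1}\sw_i\tau\sv})$ presupposes the very identity that fails.

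The paper's argument is different and avoids this trap. For $u=v=e$ it does not attempt to bound degrees via the balanced representations; instead it multiplies the expansion $C_{\sw_i\tau}\sh_\st\equiv\sum_{\tau'}b_{\tau'}C_{\sw_i\tau'}$ by $h_{\sw_i,\sw_i,\sw_i}$ (which has exact degree $\ba(\sw_i)$) and rewrites the left-hand side as a product $C_{\tau^{-1}\sw_i}C_{\sw_i\st}$, whose structure constants $h_{\tau^{-1}\sw_i,\sw_i\st,z}$ are bounded in degree by $\ba(z)=\ba(\sw_i)$. Comparing the two forces each $b_{\tau'}$ to have degree at most $0$. This trick --- inspired by Xie --- is the key idea you are missing. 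For general $u,v$ the paper then invokes the generalised induction results of \cite{guilhot3,guilhot4} (statements (1) and (2) in the proof) together with the same multiplication trick, and in the non-generic regimes $r=r_i$ falls back on explicit computations such as $\sh^\flat_{s_2s_0}C_{\sw_1^+}\sh_{s_2s_0}\equiv C_{\sw_1^-}+C_{\sw_1^-\st_-}$, which again visibly contradict your reduction.
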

\begin{proof}
We start by proving the result when $u$ and $v$ are equal to the identity.  
By a straightforward induction, it is enough to show that $C_{\sw_i\tau}\sh_{\st}$ is $\nZ$-linear combination of Kazhdan-Lusztig elements. Here $\st=t_{\om_1}$ or $t_{\om_2}$ if $i=0$ and $\st=\st_i$ if $i=1,2,3$. Since 
$$C_{\sw_i\tau}\sh_{\st}\equiv C_{\sw_i}\sH(\tau)\sh_{\st}\equiv \sH(\tau)^\flat \sh^\flat_\st C_{\sw_i}\mod \cH_{\Ga_i}$$
we obtain that $C_{\sw_i\tau}\sh_{\st}\equiv \sum_{\tau'\in \sP_i} b_{\tau'}  C_{\tau'\sw_i}$ where $b_{\tau'}\in \sR$; see \cite[Lemma 4.3]{GM:12}. We now show that the coefficients $b_{\tau'}$ lie in $\nZ$. The following argument is inspired by \cite[Proof of Theorem 6.2]{Xie:15}.  We have
$$h_{\sw_i,\sw_i,\sw_i}C_{\sw_i\tau}\sh_{\st}
\equiv \sH(\tau)^{\flat}C_{\sw_i}C_{\sw_i}\sh_{\st}
\equiv C_{\tau^{-1}\sw_i}C_{\sw_i\st}
\equiv \sum_{z\in \Ga_i} h_{\tau^{-1}\sw_i,\sw_i\st,z}C_z
\equiv \sum_{\tau'\in \sP_i} b_{\tau'}  h_{\sw_i,\sw_i,\sw_i}C_{\tau'\sw_i} \mod \cH_{\Ga_i}$$
which implies that $b_z\in \nZ$ since $\deg(h_{\sw_i,\sw_i,\sw_i})=\ba(\sw_i)$ and $\deg(h_{\tau^{-1}\sw_i,\sw_i\st,z})\leq \ba(z)=\ba(\sw_i)$ by \conj{11}.

\medskip

In order to prove the proposition, it is now enough to show that $\sh_u^{\flat}C_{\sw_i\tau}\sh_v$ is a $\nZ$-linear combination of Kazhdan-Lusztig element. We start by proving the result in the generic case. By  the generalised induction process \cite{guilhot3} and explicit computations in $\tG_2$ \cite{guilhot4} we have 
\bem
\item[(1)] $\sh_u^{\flat}C_{\sw_i\tau}\sh_v\equiv C_{u^{-1}\sw_i\tau}\sh_v$;
\item[(2)] $\sh_u^{\flat}C_{\sw_i\tau}\sh_v\equiv \sum_{\tau'\in \sP_i} b_{\tau'}C_{u^{-1}\sw_i\tau'v}$ where $b_{\tau'}\in \sR$.
\eem
The first statement was the key fact in \cite{guilhot4} to determine the partition of $\tG_2$ into cells. Then multiplying $C_{u^{-1}\sw_i\tau}\sh_v$ by $h_{\sw_i,\sw_i,\sw_i}$ we can conclude as above that $b_{\tau'}\in \nZ$. 

\medskip

The case where $r$ is not generic for $\Ga_i$ is more delicate. We will only treat the case $i=1$ and write $\st_-$ for the translation $\st_{1,-}$ and $\st_+$ for $\st_{1,+}$.  In the case where $u,v\in \sB_1^+\cap s_1\sB_1^-$ we can proceed exactly as above since~(1) and (2) still hold.  Next  we can show by explicit computations that 
$$\sh^\flat_uC_{\sw_i^+}\sh_v\equiv 
\begin{cases}
C_{u^{-1}\sw_i^+v}&\mbox{ if $u\in \sB_1^{+}\cap s_1\sB_1^-$,}\\
C_{\sw_i^-}+C_{\sw_i^-\st_{-}}&\mbox{ if $u=v=s_2s_0$}
\end{cases}
$$
so that the result holds in this case. Assume that $\tau=\st^n_{+}$ with $n\geq 1$ and $u=s_2s_0$. We have
\begin{align*}
\sh^{\flat}_{s_2s_0}C_{\sw_i^+\st^n_{+}}\sh_v&\equiv \sh^\flat_{s_2s_0}C_{s_1\sw_1^-\st^{n-1}_{-}s_1}\sh_v\\
&\equiv \sh^\flat_{s_2s_0}C_{s_1\sw_1^-}\sH(\st^{n-1}_{-}s_1)\sh_v\\
&\equiv \left(C_{\sw_i^-}+C_{\sw_i^-t_{-}}\right)\sH(\st^{n-1}_{-}s_1)\sh_v\quad\quad\mbox{(by explicit computations)}\\
&\equiv \left(C_{\sw_i^-\st^{n-1}_{-}s_1}+C_{\sw_i^-\st^{n}_{-}s_1}\right)\sh_v\\
&\equiv \begin{cases}
C_{s_0s_2\sw_i^+\st_+^{n-1}v}+C_{s_0s_2\sw_i^+\st_+^{n}v}&\mbox{if $v\in  \sB_1^{+}\cap s_1\sB_1^-$,}\\
C_{\sw_1^-\st_-^{n-1}}+2C_{\sw_1^-\st_-^{n}}+C_{\sw_1^-\st_-^{n+1}}&\mbox{if $u=v=s_2s_0$.}\\
\end{cases}
\end{align*}
The case where $\tau=\st^n_{+}$ with $n\geq 1$, $v=s_2s_0$ and $u\in \sB_1^+\cap s_1\sB_1^-$ can be dealt with using the $\flat$ anti-involution. 

\medskip

The case where $i=2$ and $r=3/2$ is completely analogous, based on the equalities
\begin{align*}
\sh^\flat_{s_1s_2s_1s_2}C_{\sw_2^-}\sh_{s_2s_1s_2s_1}&\equiv C_{\sw_2^+}+C_{\sw_2^+t_{2,+}}\quand \sh^\flat_{s_1s_2s_1s_2}C_{s_0\sw^-_2}\equiv C_{\sw_2^-}+C_{\sw_2^-t_{2,-}}.\qedhere
\end{align*}
\end{proof}

\begin{Th}
\label{th:P15}
Let $\Ga$ be a two-sided cell of $\tilde{G}_2$ and let $x,y,w,w'\in W$ be such that $x,y\in \Ga$. Then 
$$\sum_{z\in W} h_{x,w',z}\otimes h_{w,x,y}=\sum_{z\in W}  h_{x,w',y}\otimes h_{w,x,z}.$$
In other words, Conjecture ${\bf P15}$ holds. 
\end{Th}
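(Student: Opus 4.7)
The plan is to use the generalised induction process developed in \cite{guilhot3} to obtain a factorised expression for $T_w C_x T_{w'}$ modulo higher cells, and then observe that the structure of this factorisation forces the tensor identity of $\conj{15}$.

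First, for the finite two-sided cells that do not admit a cell factorisation—namely $\Ga_3$ with $r \leq 1$ and $\Ga_4$ with $r \geq 1$—we verify $\conj{15}$ by direct computation using {\sf gap3} and {\sf CHEVIE}. By the already established $\conj{8}$, only finitely many $y' \in W$ contribute nontrivially to the sums in $\conj{15}$ when $x, y$ belong to a finite cell, so this is a finite check.

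For all remaining cells we use the cell factorisation from Sections \ref{sec:factor0} and \ref{sec:factor1}. For $x \in \Ga_i$ with factorisation $x = \su_x^{-1}\sw_i \tau_x \sv_x$, Proposition \ref{prop:Zcoeff} and the preceding discussion give
$C_x \equiv \sh^\flat_{\su_x} C_{\sw_i} \sh_{\tau_x} \sh_{\sv_x} \pmod{\cH_{>_{\cLR}\Ga_i}}$.
The generalised induction framework supplies formulas of the form
$T_w \cdot \sh^\flat_{\su_x} C_{\sw_i} \equiv \sum \la^{\tau',\su'}_{\su_x,w}\, \sh^\flat_{\su'} C_{\sw_i} \sh_{\tau'}$
for left multiplication, and $C_{\sw_i}\sh_{\sv_x} \cdot T_{w'} \equiv \sum \nu^{\tau'',\sv'}_{\sv_x,w'}\, C_{\sw_i}\sh_{\tau''}\sh_{\sv'}$
for right multiplication, where the $\la$ and $\nu$ coefficients lie in $\nZ[\sq,\sq^{-1}]$ (by Proposition \ref{prop:Zcoeff}) and depend only on $(\su_x, w)$ and $(\sv_x, w')$ respectively. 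Substituting these formulas into $T_w C_x T_{w'}$ and using that the central monomials $\sh_\tau$ commute among themselves (all translations by elements of $\sP_i$), both orders of multiplication $(T_wC_x)T_{w'}$ and $T_w(C_xT_{w'})$ collapse modulo $\cH_{>_{\cLR}\Ga_i}$ to the same factorised expression
$$\sum \la^{\tau',\su'}_{\su_x,w}\, \nu^{\tau'',\sv'}_{\sv_x,w'}\, \sh^\flat_{\su'} C_{\sw_i} \sh_{\tau'+\tau_x+\tau''}\sh_{\sv'}.$$

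Reading off the coefficient of $C_y$ on both sides then yields the structure constants $h_{w,x,y'}, h_{y',w',y}$ (first order) and $h_{x,w',y'}, h_{w,y',y}$ (second order) appearing in $\conj{15}$. The tensor identity in $\sR\otimes_\nZ \sR$ follows because the factorisation separates the left data $(w, \su_x, \su')$ from the right data $(\sv_x, w', \sv')$: the $\la$-coefficients appear purely on one side of the tensor, and the $\nu$-coefficients purely on the other, so the two expressions coincide not merely in $\sR$ (which is standard associativity) but already in $\sR\otimes_\nZ \sR$.

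The main obstacle will be the non-generic parameter regimes ($r=2$ for $\Ga_1$, $r=3/2$ for $\Ga_2$), where the cell factorisation bifurcates into $(\eps_1, \eps_2)$-types and the expression for $C_x$ modulo $\cH_{>_{\cLR}\Ga_i}$ is no longer a single clean product but involves corrections mixing positive and negative types. Here one must use the explicit case analysis from the proof of Proposition \ref{prop:Zcoeff} to compute the $\la$ and $\nu$ coefficients for all type combinations, and verify directly that the separation-of-variables argument still produces matching tensors on both sides.
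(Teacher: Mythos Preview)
Your approach is essentially the same as the paper's: both exploit the generalised induction process to produce a factorised basis $\sh^\flat_{\su} C_{\sw_i} \sh_{\tau} \sh_{\sv}$ on which left and right multiplication act through separated coefficients $\lambda$ and $\nu$, and both handle the exceptional finite cells by direct computation and the non-generic parameters by superposing the $\pm$ factorisations.

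There is one technical wrinkle worth flagging. You write that Proposition~\ref{prop:Zcoeff} gives $C_x \equiv \sh^\flat_{\su_x} C_{\sw_i} \sh_{\tau_x} \sh_{\sv_x}$, but this is not literally true: the paper explicitly warns that $\sh_\tau \neq \sH(\tau)$, so the product on the right is in general only a $\nZ$-linear combination of Kazhdan--Lusztig elements, not equal to $C_x$ itself. The paper handles this by recasting $\conj{15}$ as a bimodule statement over $A = \sR \otimes_{\nZ} \sR$: one defines $\cE$ with basis $\{e_z\}$ and the two actions via $h$-coefficients, then builds a submodule $\cE'$ with basis corresponding to the products $\sh^\flat_{\su} C_{\sw_i} \sh_{\tau} \sh_{\sv}$ and actions via $\lambda, \nu$. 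The factorised computation shows $\cE'$ is a genuine bimodule, and Proposition~\ref{prop:Zcoeff} (the $\nZ$-integrality of the change of basis) is precisely what allows one to conclude $\cE' = \cE$ in the generic case (and $\cE^+ + \cE^- = \cE$ otherwise), transferring the bimodule property back to the $C_z$-basis. Your argument implicitly relies on the same mechanism, but as written the step from the factorised identity to the $h$-identity would need this $\nZ$-change-of-basis argument made explicit.
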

\begin{proof}
First, we remark that the sum is in fact over $z\in \Ga$. Indeed, if there is a non-zero term in the left sum, that is $h_{x,w',z}\otimes h_{w,z,y}\neq 0$ then $h_{x,w',z}\neq 0$ which implies that $z\leq_{\cR} x$ and $h_{w,z,y}\neq 0$ which implies that $y\leq_{\cL} z$. Then we have $\ba(z)\geq \ba(x)$ and $\ba(y)\geq \ba(z)$ so that $\ba(y)=\ba(x)=\ba(y)$ since $x,y$ lie in the same cell. In turn, using \conj{9} and \conj{10} we get that $z\sim_{\cR} x$ and $y\sim_{\cL} z$ and therefore $z\in \Ga$. 

\medskip

Next, following \cite[Remark 2.3.7]{Geck:11book}, we note that Conjecture ${\bf P15}$ is in fact a statement of a certain bimodule structure. To see this, consider the ring $A:=\sR\otimes_{\nZ}\sR$ and let $\cE$ be a free $A$-module with basis $\{e_{w}\mid w\in \Ga\}$ where $\Ga$ is a two-sided cell of $W$. Let $\cH_1:=A\otimes_{\sR} \cH$ where $\sR$ is embedded into $A$ via $a\lmt 1\otimes a$
and $\cH_2:=\cH\otimes_{\sR} A$ where $\sR$ is embedded into $A$ via $a\lmt a\otimes1$. We can define a left $\cH_1$-action and a right $\cH_2$-action by 
$$C_w\cdot e_x=\sum_{z\in \Ga} (1\otimes h_{w,x,z})e_z\quand e_{x}\cdot C_w=\sum_{z\in \Ga} (h_{x,w,z}\otimes 1)e_z.$$
Then, \conj{15} states that $\cE$ is a two-sided $(\cH_1,\cH_2)$-bimodule.

\medskip

We have seen the set of residues modulo $\cH_{<_\cLR \Ga}$ of the form  $\sh_{\su_x}^\flat C_{\sw_i}\sh_{\tau_x}\sh_{\sv_x}$  is a two-sided submodule  of the cell module~$\cH_{\Ga_i}$. The right action (respectively the left action) of $\cH$ on this basis only depends on $\sv_x$ (respectively on $\su_x$) and is determined by the coefficients $\la$ and $\nu$. By Proposition \ref{prop:Zcoeff}, we can define a submodule $\cE'$ of $\cE$ with basis 
$\{e'_{\su_x\sw_i\tau_x\sv_x}\mid x\in \Ga_i\}$  and with action of $\cH_1$ and $\cH_2$ defined by 
$$
T_w\cdot e'_x=\sum_{(\sbb,\tau) \in \sB_i\times \sP_i}(1\otimes \la^{\sbb,\tau}_{\su_x,w})e'_{\sbb^{-1}\sw_i\tau_x\tau \sv_x}
\quand 
e'_{x}\cdot T_w=\sum_{(\sbb,\tau) \in \sB_i\times \sP_i}(\nu^{\sbb,\tau}_{\sv_x,w}\otimes 1)e'_{\su_x^{-1}\sw_i\tau_x\tau\sbb}.
$$
In the non-generic case, we defined two-submodules $\cE^\eps=\{e'_{\su_\eps(x)\sw^\eps_i\tau_\eps(x)\sv_\eps(x)}\mid x\in \Ga^\eps_i\}$ where $\eps=\pm$.
From there it is easy to see that the submodule of $\cE'$ is a two-sided $(\cH_1,\cH_2)$-module  since the coefficent of $e'_z$ in $T_{w}\cdot (e'_x\cdot T_{w'})$ and in $(T_{w}\cdot e'_x)\cdot  T_{w'}$  are equal to 
$$\underset{\tau+\tau'+\tau_x=\tau_z}{\sum_{\tau,\tau'\in \sP_i}} \nu^{\sv_z,\tau'}_{\sv_x,w}\otimes \la^{\su_z,\tau}_{\su_x,w}.$$
When the parameter $r$ is generic for $\Ga_i$, this concludes the proof since the submodule $\cE'$ is equal to $\cE$. When the parameter $r$ is not generic, we also get the result since $\cE^++\cE^-=\cE$. The inclusion $\cE\subset \cE^++\cE^-$ can be obtained using the fact that in Proposition \ref{prop:Zcoeff}, the decomposition of $\sh^\flat_{\su}C_{\sw_i}\sh_{\tau}\sh_{\sv}$ has to be of the form $\displaystyle C_{\su\sw_i\tau \sv}+\sum_{z\in \Ga_i,z<\su\sw_i\tau \sv} a_z C_z$.
\end{proof}

\subsection{Conjectures}\label{sec:7.3}

We conclude this paper with some conjectures.

\begin{conjecture}\label{conj:0}
For each affine Hecke algebra there exists a balanced system of cell representations for each choice of parameters. 
\end{conjecture}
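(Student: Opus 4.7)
The plan is to build on the framework developed in this paper and the conjectural dictionary between two-sided cells and classes of tempered representations suggested by Theorem~\ref{thm:nu} and Proposition~\ref{propobs:1}. For any affine Weyl group $(W,S)$ with weight function $L$, Opdam's Plancherel theorem decomposes the $C^*$-completion $\overline{\cH}$ as an integral of tempered irreducible representations grouped into finitely many ``classes'' $\Pi$, each carrying a residual Plancherel coefficient. My first step would be to conjecture (and verify in each resolved case) that there is a well-defined surjection $\Omega$ from the set of these classes onto $\tsc$, compatible with the valuation identity $\nu_{\sq}(\Pi) = 2\ba(\Omega(\Pi))$. Given this, every tempered irreducible $\pi \in \Omega^{-1}(\Gamma)$ should automatically satisfy \B{1} for $\Gamma$ (as in Observation~\ref{obs:2}), because tempered representations vanish on $C_w$ whenever $w$ is in a ``smaller'' cell with respect to $\leq_{\cLR}$ in the appropriate sense.

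Next, for each $\Gamma \in \tsc$ I would construct a candidate balanced representation $\pi_\Gamma$ as an induced module of the form $\mathrm{Ind}_{\cH_J}^{\cH}(M_\Gamma)$, where $\cH_J$ is a suitable parabolic subalgebra and $M_\Gamma$ is a one-dimensional module encoding the ``generic direction'' of $\Gamma$ (mimicking the constructions of $\pi_0,\pi_1,\pi_2$ in Sections~\ref{sec:pi0const}--\ref{sec:piiconst}). The matrix coefficients of $\pi_\Gamma(T_w)$ should then admit a combinatorial formula in terms of \emph{generalised} positively folded alcove walks, extending Theorem~\ref{thm:pi0} and Theorem~\ref{thm:pii}. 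This is the most concrete part of the plan: such walk formulas exist at least for lowest cells in arbitrary type (Section~\ref{sec:4} already does this) and for induced representations from rank-one parabolics, and one can hope to systematise them using Ram's alcove walk model.

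To verify \B{2}--\B{4}, I would then prove a structural lemma analogous to Lemma~\ref{lem:tech1}, bounding the modified weight $\cQ_J(p)$ of a generalised folded walk by $\ba_{\pi_\Gamma}$, with equality forcing the underlying element $w$ to admit a ``cell factorisation''. For the lowest two-sided cell this is fully general (Theorem~\ref{thm:mainlowest}), and for cells supporting a Shi--Lusztig--Blasiak type cell factorisation the argument of Theorems~\ref{thm:mainbounds} and~\ref{thm:maxpath1} should carry over with the Schur functions of type $G_2$ and $A_1$ replaced by characters of appropriate quotient Weyl groups attached to $\Gamma$. The leading matrices should then take the uniform shape $\fc_{\pi_\Gamma, w} = \ss_{\tau_w}(\zeta)\, E_{\su_w, \sv_w}$, from which \B{3}, \B{4} and \B{4}$'$ follow as in Theorem~\ref{thm:mainlowest}. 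Property~\B{5} should follow directly from Theorem~\ref{thm:nu}, since the bounds $\ba_{\pi_\Gamma}$ coincide with $\ba(\Gamma)$ and the latter is monotone under $\leq_{\cLR}$ (a property that becomes a theorem once the other axioms are verified).

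The main obstacle, as always in this area, is that for general affine type with unequal parameters neither the two-sided cell decomposition nor appropriate cell factorisations are known. Concretely, the proof above is contingent on (a) a uniform description of two-sided cells, or at least of the correspondence $\Omega$ between tempered classes and cells, and (b) for each cell $\Gamma$, the existence of a ``generating element'' $\sw_\Gamma$ and a box $\sB_\Gamma$ giving a cell factorisation of $\Gamma$ into translates of $\sw_\Gamma$ by some monoid of weights. Establishing (a) and (b) in general appears to require a substantial extension of the generalised induction methods of~\cite{guilhot3, guilhot4, GM:12} together with a systematic use of the asymptotic Plancherel measure of Proposition~\ref{prop:asymp} to detect the relevant central characters; one might proceed rank by rank, tackling $\tilde{C}_2$ (already announced in \cite{GP:18}) and then the rank~$3$ affine types with unequal parameters before attempting a uniform proof.
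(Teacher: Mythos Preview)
This statement is presented in the paper as an \emph{open conjecture}, not a theorem: the paper offers no proof in general and explicitly places it in the concluding ``Conjectures'' subsection, noting only that ``the analysis of this paper proves all four conjectures in type $\tilde{G}_2$'' (via Theorem~\ref{thm:balanced}). There is therefore no ``paper's own proof'' to compare against.

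Your proposal is not a proof either, and you are aware of this: it is a research programme that would extend the paper's $\tilde{G}_2$ constructions (induced modules, folded alcove walks, cell factorisations, the asymptotic Plancherel measure) to arbitrary affine type. The programme you sketch is essentially the one the paper itself implicitly suggests by listing Conjectures~\ref{conj:0}--\ref{conj:3} together and announcing the $\tilde{C}_2$ case in~\cite{GP:18}. However, it rests on several statements that are themselves open in general: the existence of the surjection $\Omega$ and the valuation identity $\nu_{\sq}(\Pi)=2\ba(\Omega(\Pi))$ are precisely Conjectures~\ref{conj:1} and~\ref{conj:2} of the paper, so invoking them to establish \B{1} and \B{5} is circular at this stage. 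Likewise, the assumption that every two-sided cell admits a generating element $\sw_\Gamma$ and a box $\sB_\Gamma$ giving a cell factorisation is not known beyond the lowest two-sided cell (where it is~\cite{Shil2,Blas}) and a handful of low-rank cases; without it your argument for \B{3}, \B{4} and \B{4}$'$ has no purchase. Finally, your claim that tempered representations automatically satisfy \B{1} is stated in the paper only as Observation~\ref{obs:2} for $\tilde{G}_2$, verified there by direct inspection, not as a general theorem. In short: the outline is a faithful extrapolation of the paper's methods, but each of its load-bearing steps is itself conjectural outside $\tilde{G}_2$, which is exactly why the authors recorded the statement as a conjecture.
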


As seen in Section~\ref{sec:def-balanced}, assuming the truth of this conjecture one can show that Lusztig's $\ba$-function satisfies $\ba(w)\leq \ba_{\Gamma}$ whenever $w\in \Gamma$. Further, we have equality if the system of balanced cell representations satisfies the extra axiom $\B{4}'$.

\begin{conjecture}\label{conj:1} 
There exists a well defined surjective map $\Omega$ from the set of classes of representations appearing in the Plancherel formula to the set of two-sided cells generalising the map from Proposition~\ref{propobs:1}. 
\end{conjecture}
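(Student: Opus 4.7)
The plan is to define $\Omega$ intrinsically via axiom $\B{1}$ and then match it to the $\sq$-valuation of the Plancherel measure, mimicking the strategy used in type $\tilde{G}_2$. Specifically, for any class $\Pi$ of representations appearing in the Plancherel formula, I would set $\Omega(\Pi)$ to be the two-sided cell $\Gamma \in \tsc$ of maximal $\ba$-value such that, for some (equivalently, for any generic) $\pi \in \Pi$, the restriction of $\pi$ to $\cH_{\leq_{\cLR}\Gamma}$ is nonzero modulo $\cH_{<_{\cLR}\Gamma}$. Equivalently, $\Omega(\Pi)$ is the largest cell (in the $\leq_\cLR$ order) with respect to which $\pi$ satisfies~$\B{1}$. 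This generalises Observation~\ref{obs:2} and would give the conjectured map whenever it is shown to be well defined.

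The first step is well-definedness. Within a class $\Pi$ the representations depend continuously on a central character $\zeta$, so one must show that the ``support cell'' is constant in $\zeta$. This should follow from a semicontinuity argument: the set of $w$ for which $\pi^\zeta(C_w)\neq 0$ modulo lower cells is Zariski closed, and the cell structure is preserved under small deformation of $\zeta$. Assuming Conjecture~\ref{conj:0}, one can rephrase the problem in terms of the asymptotic algebra $\mathcal{J}$ and its block decomposition, where invariance along $\Pi$ becomes constancy of the block through which $\pi^\zeta$ factors. The second step is to establish the quantitative link $\nu_{\sq}(\Pi) = 2\ba(\Omega(\Pi))$, generalising Theorem~\ref{thm:nu}. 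This should follow by combining Opdam's explicit Plancherel measure in terms of Macdonald $c$-functions with a uniform bound on the degree of matrix entries of tempered representations in the balanced basis provided by Conjecture~\ref{conj:0}; the orders of zero and pole of the $c$-function along the tempered tori match the contribution of $\ba_\pi$ on the cell side.

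The third step is surjectivity. I would partition the classes $\Pi$ according to the Langlands--Lusztig classification of tempered representations: unramified full principal series, square-integrable (discrete series), and tempered representations parabolically induced from discrete series of proper Levi subalgebras. The full principal series class should match $\Gamma_0$ (generalising Theorem~\ref{thm:mainlowest}), the square-integrable classes should match the finite cells carrying them (the parameter $\ba_\pi$ coinciding with the $\sq$-valuation of the formal degree), and induced classes should match the intermediate infinite cells, as in our treatment of $\pi_1, \pi_2$ for $\tilde{G}_2$. The main obstacle is precisely this last matching in full generality: it requires a uniform construction of cell representations attached to Levi subalgebras and an analogue of the $\alpha_i$-folded alcove walk model of Section~\ref{sec:5} capable of computing leading matrices for representations induced from arbitrary parabolics. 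A natural intermediate target is the quasisplit case of \cite[Chapter~16]{bible}, where the geometric interpretation of cells should allow one to import the necessary analytic input from Opdam's theory and from the recent work of Braverman--Kazhdan~\cite{BK:17}, thereby bridging Kazhdan--Lusztig combinatorics with the harmonic analysis of $\overline{\cH}$ in a uniform manner.
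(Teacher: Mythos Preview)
The statement you are addressing is not a theorem in the paper but an open conjecture (Conjecture~\ref{conj:1} in Section~\ref{sec:7.3}). The paper does not provide a proof; it only remarks that ``the analysis of this paper proves all four conjectures in type $\tilde{G}_2$,'' and for this particular conjecture the $\tilde{G}_2$ verification is nothing more than Proposition~\ref{propobs:1} itself, established by direct case-by-case comparison of the Plancherel classes with the cell decomposition using Proposition~\ref{prop:decompositions}. There is no general argument in the paper to compare against.

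Your proposal is therefore not a proof but a research outline, and you acknowledge this yourself (``The main obstacle is precisely this last matching in full generality''). Several of your steps are genuinely conjectural: well-definedness of $\Omega$ along a class $\Pi$ assumes that the cell supporting $\pi^\zeta$ is constant in $\zeta$, which you reduce to a semicontinuity statement that is not established; the identity $\nu_\sq(\Pi)=2\ba(\Omega(\Pi))$ you hope to derive from Opdam's $c$-functions plus Conjecture~\ref{conj:0}, but Conjecture~\ref{conj:0} is itself open; and surjectivity requires matching induced tempered classes with intermediate infinite cells, for which no general mechanism (analogous to the $\alpha_i$-folded walks of Section~\ref{sec:5}) currently exists. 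In short, you have sketched a plausible strategy toward the conjecture, consistent with the evidence in $\tilde{G}_2$, but nothing here constitutes a proof, and the paper makes no claim to have one either.
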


\begin{conjecture}\label{conj:2}
Let $\Pi$ be class of representations appearing in the Plancherel Theorem, and let $\Gamma=\Omega(\Pi)$ where $\Omega$ is as in Conjecture~\ref{conj:1}. Then $\ba(w)=\nu_{\sq}(\Pi)/2$ for all $w\in \Gamma$.
\end{conjecture}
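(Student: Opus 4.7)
The plan is to reduce the conjecture to a comparison between two asymptotic quantities: the $\sq$-valuation $\nu_\sq(\Pi)$ extracted from Opdam's explicit Plancherel density, and the bound $\ba_{\pi_\Gamma}$ coming from a balanced system of cell representations (Conjecture~\ref{conj:0}), which equals $\ba(w)$ for $w\in\Gamma$ by an analogue of Theorem~\ref{thm:afn}. The strategy is to mimic the proof of Theorem~\ref{thm:P1} in reverse: use the Plancherel formula applied to elements of the form $C_w$ with $w\in\Gamma$, renormalise by $q^{\ba(w)}$, and analyse which classes $\Pi$ contribute a nonzero limit. The claim is that the classes contributing nontrivially are precisely those with $\nu_\sq(\Pi)=2\ba(\Gamma)$, and this forces the equality after one verifies surjectivity of $\Omega$.

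In more detail, for each class $\Pi$ appearing in Opdam's formula one has an explicit formula $d\mu(\pi)=|W_\Pi|^{-1}|c_\Pi(\zeta)|^{-2}\,d\zeta$ on the generic part, and point masses $C_\Pi\chi_\pi$ on the residual part, with $c_\Pi$ a product of Macdonald-type factors of the form $(1-\sq^{-2L(s_\alpha)}\zeta^{-\alpha^\vee})/(1-\zeta^{-\alpha^\vee})$ twisted by the residual parameters. The first step is to show that $\nu_\sq(C_\Pi)$ (or the $\sq$-valuation of $1/|c_\Pi|^2$ away from its poles/zeroes on the tempered torus) admits a uniform combinatorial description in terms of the $W_0$-orbit of the residual parameter that defines $\Pi$. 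The second step is to identify this description with the known formulae for Lusztig's $\ba$-function on the cell $\Omega(\Pi)$; for the lowest cell $\Gamma_0$ both quantities should equal $2L(\sw_0)$, which can be read off directly from the principal series $c$-function. For other families one proceeds by parabolic induction, reducing to the lowest cell of a proper Levi subalgebra.

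The bridge between these two sides of the equality should go through the asymptotic algebra: using the balanced system one has $q^{-\ba_{\pi_\Gamma}}\pi_\Gamma(C_w)\to \fc_{\pi_\Gamma,w}$ in $\mathcal{J}_\Gamma$, and if $\pi$ is any tempered representation with $\Omega(\pi)=\Gamma$ then one expects $q^{-\ba(\Gamma)}\chi_\pi(C_w)$ to have a well-defined nonzero limit that extends by continuity to a character of $\mathcal{J}_\Gamma$. The cleanest way to then extract $\nu_\sq(\Pi)=2\ba(\Gamma)$ is to pair the Plancherel formula for $\mathrm{Tr}(C_xC_{y^{-1}})$ with $x,y\in\Gamma$ against this asymptotic character, producing an inner product on $\mathcal{J}_\Gamma$ as in Theorem~\ref{thm:innerp}; orthonormality of $\{J_w\mid w\in\Gamma\}$ then forces the $\sq$-valuation of the measure on $\Omega^{-1}(\Gamma)$ to match $2\ba(\Gamma)$ exactly.

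The hard part will not be either of these two computations individually but rather establishing Conjecture~\ref{conj:1}, i.e.\ constructing the map $\Omega$ in general: one needs a canonical bijection between $W_0$-orbits of residual points/subspaces in the Opdam/Heckman-Opdam picture and the set of two-sided cells, together with the compatibility that each tempered irreducible $\pi$ in class $\Pi$ satisfies $\pi(C_w)=0$ for $w\not\in\Omega(\Pi)_{\geq_{\cLR}}$ (the analogue of Observation~\ref{obs:2}). In type $\tilde G_2$ this was done by direct inspection, but in general it seems to require a uniform classification of tempered $\cH$-modules at arbitrary unequal parameters, and a matching of the Bernstein components with the cell stratification. Once $\Omega$ is in place and the balanced system exists, the calculation of $\sq$-valuations using the $c$-function product formula should complete the proof.
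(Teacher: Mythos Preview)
This statement is a \emph{conjecture} in the paper, not a theorem with a proof. The paper only establishes it in type $\tilde G_2$, and there the argument is Theorem~\ref{thm:nu}: one simply computes $\nu_\sq(\Pi)$ for each of the eight classes $\Pi_0,\ldots,\Pi_7$ directly from the explicit Plancherel density in Theorem~\ref{thm:planch}, and compares with the already-computed values $\ba_{\pi_\Gamma}$ from Corollary~\ref{cor:computea}. There is no conceptual reduction; it is a finite check.

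Your proposal, by contrast, is a sketch of a strategy for the conjecture in \emph{arbitrary} affine type. As such it is much more ambitious than anything the paper attempts, and it has genuine gaps. First, it assumes Conjectures~\ref{conj:0} and~\ref{conj:1}, both of which are open; you acknowledge this, but it means the proposal is conditional rather than a proof. Second, and more seriously, your final step is circular as written: you propose to extract $\nu_\sq(\Pi)=2\ba(\Gamma)$ from orthonormality of $\{J_w\}$ with respect to the inner product of Theorem~\ref{thm:innerp}, but in the paper that inner product is \emph{defined} using the asymptotic Plancherel measure $d\mu'(\pi)=\lim_{q\to\infty}q^{2\ba_\pi}d\mu(\pi)$, whose very existence already presupposes $\nu_\sq(\Pi)=2\ba(\Omega(\Pi))$. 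To make your argument non-circular you would need an independent reason why the renormalised limit $\lim_{q\to\infty}q^{2\ba(\Gamma)}\int_{\Omega^{-1}(\Gamma)}\mathrm{tr}(\cdot)\,d\mu$ is finite and nondegenerate, and that is precisely the content of the conjecture. Your parabolic-induction idea (reducing to the lowest cell of a Levi) is more promising as a genuine approach, but it is not developed here and would require matching the residual-coset stratification of the tempered dual with the cell order, which is again essentially Conjecture~\ref{conj:1}.
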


We note that \conj{1} can be deduced from Conjecture~\ref{conj:2} in a similar was as in Theorem~\ref{thm:P1}. Continuing in this direction, we make the following final conjecture.

\begin{conjecture}\label{conj:3}
The construction of the inner product in Theorem~\ref{thm:innerp} generalises to arbitrary affine type.
\end{conjecture}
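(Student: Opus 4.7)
The plan is to follow the structure of Theorem~\ref{thm:innerp}, deriving the inner product from Opdam's Plancherel formula for a general affine Hecke algebra, conditional on Conjectures~\ref{conj:1} and~\ref{conj:2}. Since Opdam's Plancherel Theorem holds in full generality \cite{Op:04}, the main inputs one needs beyond general knowledge are (a) a surjection $\Omega$ from the classes $\Pi$ appearing in the Plancherel decomposition onto $\tsc$, (b) the valuation formula $\nu_{\sq}(\Pi) = 2\ba(\Omega(\Pi))$, and (c) for each tempered $\pi$ a basis in which \B{1} and \B{2} hold with $\ba_{\pi} = \ba(\Omega(\pi))$ (i.e.\ an analogue of Corollary~\ref{cor:nice}). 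Granted these, the first step is to define the asymptotic Plancherel measure class-wise by
\[
d\mu'(\pi) \;=\; \lim_{q\to\infty} q^{2\ba_{\pi}}\, d\mu(\pi),
\]
and verify that this limit exists as a positive Borel measure on $\Omega^{-1}(\Gamma)$ for each $\Gamma\in\tsc$. Because the Plancherel density for an affine Hecke algebra is an explicit rational function in $q$ and the central character (built out of Opdam's $c$-functions and $\mu$-functions), the existence of this limit is equivalent to the valuation statement in Conjecture~\ref{conj:2}, and positivity on each residual coset follows from positivity of the density.

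The second step is the key calculation: for $x,y \in \Gamma$, starting from the identity $\mathrm{Tr}(T_x T_{y^{-1}}) = \delta_{x,y}$ and applying Opdam's Plancherel Theorem one writes
\[
\delta_{x,y} \;=\; \sum_{\Pi}\int_{\Pi} \mathrm{tr}\!\left(q^{-\ba_{\pi}}\pi(T_x)\cdot q^{-\ba_{\pi}}\pi(T_{y^{-1}})\right) q^{2\ba_{\pi}} d\mu(\pi).
\]
Conjecture~\ref{conj:2} and \B{2} ensure that the integrand is uniformly bounded as $q\to\infty$, and the explicit rational form of the density lets one apply a dominated convergence argument to pass the limit under the integral, obtaining
\[
\delta_{x,y} \;=\; \int_{\mathrm{Irrep}(\overline{\cH})} \mathrm{tr}(\fc_{\pi,x}\,\fc_{\pi,y^{-1}})\,d\mu'(\pi).
\]
Since $\fc_{\pi,x}=0$ unless $\pi\in\Omega^{-1}(\Gamma_x)$ by \B{1}, the integral localises on $\Omega^{-1}(\Gamma)$, yielding precisely the desired orthonormality of the Kazhdan-Lusztig basis with respect to the form $\langle g_1,g_2\rangle_{\Gamma}=\int_{\Omega^{-1}(\Gamma)}\mathrm{tr}(g_1 g_2^*)\,d\mu'(\pi)$ on $\mathcal{J}_{\Gamma}$. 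Hermiticity and positivity come for free from $\mathrm{tr}(gg^*)\geq 0$ together with the positivity of $d\mu'$.

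The hard part will be establishing Conjectures~\ref{conj:1} and~\ref{conj:2}, which form the conceptual heart of the proposal: although tempered irreducibles of affine Hecke algebras are classified (Opdam--Solleveld~\cite{OpSol:10}), matching each one to a two-sided cell $\Omega(\pi)$ and showing that the $\sq$-valuation of its contribution to the Plancherel density equals $2\ba(\Omega(\pi))$ requires a structural understanding of cells beyond what is presently available in unequal parameter cases. A promising approach is to proceed inductively along the residual cosets in Opdam's decomposition, beginning with the discrete series (which should correspond to finite cells) and the principal series (lowest two-sided cell, by Theorem~\ref{thm:mainlowest}), and to show that each parabolically induced piece matches a cell factorisation analogous to Sections~\ref{sec:factor0}--\ref{sec:equal}. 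A secondary but nontrivial analytic issue is that in higher rank the Plancherel density has nontrivial singularities on the tempered spectrum coming from reducibility points, so the dominated convergence step must be combined with a careful analysis (in the spirit of residue calculations) near those loci to ensure the asymptotic density remains integrable.
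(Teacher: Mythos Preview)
This statement is a \emph{conjecture} in the paper, not a theorem; the paper offers no proof of it beyond the closing remark that ``the analysis of this paper proves all four conjectures in type $\tilde{G}_2$'' --- and that $\tilde{G}_2$ verification is precisely Theorem~\ref{thm:innerp}. So there is no general proof in the paper to compare your proposal against.

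Your proposal is not a proof either, and you acknowledge this yourself: it is a sketch of how Conjecture~\ref{conj:3} would follow from Conjectures~\ref{conj:1} and~\ref{conj:2} together with an analogue of Corollary~\ref{cor:nice}. That reduction is sound and is exactly the mechanism behind the paper's proof of Theorem~\ref{thm:innerp} in the $\tilde{G}_2$ case, so your outline is faithful to the paper's approach. One omission worth flagging: to make sense of $\langle g_1,g_2\rangle_{\Gamma}=\int_{\Omega^{-1}(\Gamma)}\mathrm{tr}(g_1g_2^*)\,d\mu'(\pi)$ for $g_1,g_2\in\mathcal{J}_{\Gamma}$ you need a realisation of $\mathcal{J}_{\Gamma}$ as a concrete matrix algebra via $J_w\leftrightarrow\fc_{\pi_{\Gamma},w}$; in the paper this identification is Corollary~\ref{cor:J}, which rests on the full balanced system (axioms \B{1}--\B{5} and $\B{4}'$), not just \B{1} and \B{2}. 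So your hypothesis~(c) should really be strengthened to Conjecture~\ref{conj:0}.
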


The analysis of this paper proves all four conjectures in type $\tilde{G}_2$. 

\bibliographystyle{plain}

   \end{document}